\theoremstyle{plain}
\newtheorem{theorem}[equation]{Theorem}
\newtheorem{corollary}[equation]{Corollary}
\newtheorem{lemma}[equation]{Lemma}
\newtheorem{conjecture}[equation]{Conjecture}
\newtheorem{proposition}[equation]{Proposition}
\theoremstyle{definition}
\newtheorem{algorithm}[equation]{Algorithm}
\theoremstyle{remark}
\newtheoremstyle{indenteddefinition}{\topsep}{\topsep}{\addtolength{\leftskip}{2.0em}}{-0em}{\bfseries}{.}{
}{} 
\theoremstyle{indenteddefinition}
\newtheorem{definition}[equation]{Definition}
\newtheorem{example}[equation]{Example}
\DeclareMathOperator\Ad{Ad}
\DeclareMathOperator\Aut{Aut}
\DeclareMathOperator\Norm{Norm}
\DeclareMathOperator\Hom{Hom}
\DeclareMathOperator\Cent{Cent}
\DeclareMathOperator\Ind{Ind}
\DeclareMathOperator\Int{Int}
\DeclareMathOperator\End{End}
\DeclareMathOperator\tr{tr}
\DeclareMathOperator\op{op}
\DeclareMathOperator\sig{sig}
\DeclareMathOperator\diag{diag}
\renewcommand{\theequation}{\thesection.\arabic{equation}}
\begin{document}
\title{Parameters for twisted representations}
\author{Jeffrey D. Adams\thanks{The first author was supported in part
    by NSF grant DMS-1317523.} \\Department of
  Mathematics \\ University of Maryland
\and David A. Vogan, Jr.\thanks{The second author was supported in part by NSF grant DMS-0967272.}\\E17-442, Department of Mathematics\\ MIT,
  Cambridge, MA 02139}

\date{\today}
\maketitle
\newcommand{\X}{\mathcal X}
\newcommand{\inv}{^{-1}}
\newcommand{\Gext}{{}^\Delta G}
\newcommand{\Kext}{{}^{\delta_0}K}
\renewcommand{\int}{\mathrm{int}}
\newcommand\wt{\widetilde}
\newcommand\wh{\widehat}
\newcommand{\Q}{\mathbb Q}
\newcommand{\Z}{\mathbb Z}
\newcommand{\g}{\mathfrak g}
\newcommand{\sgn}{\mathrm{sgn}}
\newcommand{\ch}[1]{{}^\vee\negthinspace#1}
\newcommand{\ID}{\mathcal I\mathcal D}

\section{Introduction}\label{sec:intro}

One of the central problems in representation theory is understanding
irreducible unitary representations. The reason is that in many
applications of linear algebra (like those of representation theory to
harmonic analysis) the notion of {\em length} of vectors is
fundamentally important. Unitary representations are exactly those
preserving a good notion of length.

The paper \cite{herm} provides an algorithm for calculating the
irreducible unitary representations of a real reductive Lie group. The
purpose of this paper is to address a problem arising in the
implementation of this algorithm. In order to explain the problem, we
need to describe briefly (or at least more briefly than \cite{herm})
the nature of the algorithm.

In order to minimize technicalities, we will provide in the
introduction complete details only for {\em finite-dimensional}
representations. For a real reductive Lie group, the theory of Harish-Chandra
modules provides a complete way to deal with the
complications attached to infinite-dimensional representations.

To study unitary representations it is natural to study the larger class of 
representations with invariant Hermitian forms.
Here is the underlying formalism.

\begin{definition}\label{def:herm} Suppose $V$ and $W$ are complex
  vector spaces. A {\em sesquilinear pairing} is a map
$$\langle \cdot,\cdot\rangle \colon V\times W \rightarrow {\mathbb
  C}$$
that is linear in $V$ and conjugate-linear in $W$:
$$\langle av_1 + bv_2,w\rangle = a\langle v_1,w\rangle + b\langle
v_2,w\rangle, \quad \langle v,cw_1 + dw_2\rangle = \overline{c}\langle
v,w_1\rangle  + \overline{d}\langle v,w_2\rangle.$$
In case $V=W$, the pairing is called {\em Hermitian} if in addition 
$$\langle v_1,v_2 \rangle = \overline{\langle v_2,v_1\rangle}. $$

If $\langle,\rangle$ is a {\em nondegenerate} Hermitian pairing on a
{\em finite-dimensional} vector space $V$, then there is a one-to-one
correspondence between linear maps $A\in \Hom(V,V)$ and sesquilinear
pairings $\langle,\rangle_A$ on $V$, defined by
$$\langle v,w\rangle_A = \langle v,Aw\rangle.$$
In this correspondence, $\langle,\rangle_A$ is Hermitian if and only
if $A$ is self-adjoint with respect to $\langle,\rangle$.
\end{definition}

\begin{definition}\label{def:invform} Suppose $(\pi,V)$ is a
  representation of a group $G_1$ on a finite-dimensional
  complex vector 
  space $V$. An {\em invariant Hermitian form} on $V$ is a
  Hermitian pairing
$$\langle\cdot,\cdot\rangle \colon V\times V \rightarrow {\mathbb C}$$
with the property that
$$\langle \pi(g)v,\pi(g)w\rangle = \langle v,w\rangle \qquad (v,w\in
V, \ g\in G_1).$$
The representation $\pi$ is {\em Hermitian} if it is endowed with a
nondegenerate invariant Hermitian form, and {\em unitary} if in
addition this form is positive definite.

If $G_1$ is a connected real Lie group with Lie algebra
${\mathfrak g}_1$, then $\pi$ is determined by its
differential (still called $\pi$)
$$\pi\colon {\mathfrak g}_1 \rightarrow \End(V),$$
a Lie algebra representation.  The condition for the Hermitian form to
be invariant is equivalent to 
$$\langle \pi(X)v,w\rangle + \langle v,\pi(X)w\rangle = 0 \qquad (v,w\in
V, \ X\in {\mathfrak g}_1);$$
that is, that the real Lie algebra ${\mathfrak g}_1$ acts
by skew-Hermitian operators.
\end{definition}

A Hermitian form on a finite-dimensional vector space $V$ has a {\em
  signature} which for us will be a triple $(p,q,z) \in {\mathbb
  N}^3$: here $p$ is the dimension of a maximal positive-definite
subspace of $V$, $q$ is the dimension of a maximal negative-definite
subspace, and $z$ is the dimension of the radical. Sylvester's law of
inertia says that $p$, $q$, and $z$ are well-defined, and that
\begin{equation}\label{e:sig}
p + q + z = \dim(V).
\end{equation}

\begin{proposition}[Schur's Lemma]\label{prop:schur} Suppose $(\pi,V) \in
  (\widehat{G_1})_{\text{fin}}$ (notation
  \eqref{e:Ghatfin}). Then any two non-zero invariant Hermitian forms
  on $V$ are nondegenerate, and 
  differ by a real nonzero scalar. In particular, the signature
  $(p(\pi),q(\pi))$ is well-defined up to interchanging $p$ and $q$.
\end{proposition}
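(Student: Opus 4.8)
The plan is to fix one nonzero invariant Hermitian form $\langle\cdot,\cdot\rangle$ on $V$, show it is automatically nondegenerate, and then compare any second such form to it via the operator $A$ furnished by Definition \ref{def:herm}; the classical Schur's Lemma will force $A$ to be scalar, and Hermitian symmetry will force that scalar to be real.

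First I would establish nondegeneracy. The radical $R = \{v \in V : \langle v,w\rangle = 0 \text{ for all } w \in V\}$ of any invariant form is a $G_1$-stable subspace: if $v \in R$, invariance gives $\langle \pi(g)v, w\rangle = \langle v, \pi(g)^{-1}w\rangle = 0$ for all $w$, so $\pi(g)v \in R$. Since $(\pi,V)$ is irreducible, $R$ is either $0$ or all of $V$, and since the form is nonzero we must have $R = 0$. As this argument applies to every nonzero invariant Hermitian form, the first assertion follows.

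Next, given two nonzero (hence, by the above, nondegenerate) invariant Hermitian forms $\langle\cdot,\cdot\rangle$ and $\langle\cdot,\cdot\rangle'$, the correspondence of Definition \ref{def:herm} produces $A \in \Hom(V,V)$ with $\langle v,w\rangle' = \langle v, Aw\rangle$, and $A$ is self-adjoint for $\langle\cdot,\cdot\rangle$ because $\langle\cdot,\cdot\rangle'$ is Hermitian. The crux is to check that $A$ is $G_1$-equivariant, which rests on the identity
$$\langle v, \pi(g)^{-1}A\pi(g)w\rangle = \langle \pi(g)v, A\pi(g)w\rangle = \langle \pi(g)v, \pi(g)w\rangle' = \langle v,w\rangle' = \langle v, Aw\rangle,$$
valid for all $v,w$ and using in turn invariance of $\langle\cdot,\cdot\rangle$, the definition of $A$, invariance of $\langle\cdot,\cdot\rangle'$, and the definition of $A$ again. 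Nondegeneracy of $\langle\cdot,\cdot\rangle$ then forces $\pi(g)^{-1}A\pi(g) = A$, so $A \in \End_{G_1}(V)$.

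Finally, since $V$ is an irreducible finite-dimensional complex representation, the classical Schur's Lemma gives $\End_{G_1}(V) = \mathbb{C}\cdot\Id$, whence $A = \lambda\Id$ for some $\lambda \in \mathbb{C}$. Self-adjointness of $\lambda\Id$ reads $\lambda\langle v,w\rangle = \overline{\lambda}\langle v,w\rangle$, so choosing $v,w$ with $\langle v,w\rangle \neq 0$ forces $\lambda \in \mathbb{R}$; and $\lambda \neq 0$ since $\langle\cdot,\cdot\rangle'$ is nonzero. Thus $\langle\cdot,\cdot\rangle' = \lambda\langle\cdot,\cdot\rangle$ with $\lambda$ a nonzero real scalar. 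For the signature I would then observe that rescaling a nondegenerate Hermitian form by $\lambda > 0$ preserves $(p,q)$, while $\lambda < 0$ swaps maximal positive- and negative-definite subspaces and hence sends $(p,q)$ to $(q,p)$, with $z = 0$ throughout; so $(p(\pi),q(\pi))$ is well-defined up to interchange. I expect no real obstacle here: the only step demanding care is the equivariance computation, where one must correctly track conjugate-linearity and the placement of $\pi(g)^{-1}$, after which everything reduces to the standard Schur argument.
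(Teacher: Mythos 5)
Your proof is correct, and it is the standard argument: the paper states this proposition without proof (treating it as classical), and your chain of reasoning --- invariance of the radical plus irreducibility for nondegeneracy, the operator $A$ from Definition \ref{def:herm}, equivariance of $A$ from invariance of both forms, classical Schur's Lemma over $\mathbb{C}$ to get $A=\lambda\,\Id$, and self-adjointness to force $\lambda$ real --- is exactly the argument the paper implicitly relies on. The only point worth noting is that with the paper's convention (conjugate-linearity in the second variable) one has $\langle v,\lambda w\rangle=\overline{\lambda}\langle v,w\rangle$, so the identification $\langle\cdot,\cdot\rangle'=\lambda\langle\cdot,\cdot\rangle$ is legitimate only after $\lambda\in\mathbb{R}$ is established, which is the order you follow.
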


Here is an outline of the algorithm in \cite{herm} for determining the unitary
irreducible representations of a real reductive group.

\begin{algorithm}\label{alg:unitarity}
Suppose $G_1$ is the group of real points of a complex connected
reductive algebraic group.
\begin{enumerate}
\item {\em List} all the irreducible representations of $G_1$
  admitting a nonzero invariant Hermitian form.
\item For each such irreducible $\pi$, {\em choose} a nonzero invariant
  form $\langle,\rangle_\pi$.
\item For each form $\langle,\rangle_\pi$, {\em calculate} the
  signature $(p(\pi),q(\pi))$.
\item {\em Check} whether one of $p(\pi)$ and $q(\pi)$ is zero; in
  this case, $\pi$ is an irreducible unitary representation.
\end{enumerate}\end{algorithm}

We have explained this algorithm in the case of finite-dimensional
representations.  For infinite-dimensional representations step 1 is
the Langlands classification, and what it means to calculate the
signature of an invariant form on an infinite-dimensional
representation is discussed in \cite{herm}. 

Of these steps, (1) was carried out by Knapp and Zuckerman about 1976;
there is an account in \cite{overview}*{Chapter 16}. Their argument
was a reduction of the problem to the special case of {\em
  representations with real infinitesimal character}.  We will not
recall the precise definition (see \cite{herm}*{Definition 5.5} or
\cite{green}*{Definition 5.4.11}). The nature of the reduction provided
at the same time a reduction of (2)--(4): the entire problem of
understanding unitary irreducible representations was reduced to the
case of real infinitesimal character.  We will therefore concentrate
henceforth on this case. (If $G_1$ is real semisimple, then every
finite-dimensional representation of $G_1$ has real infinitesimal
character; so the reduction is invisible on the level of
finite-dimensional representations.)

\begin{subequations}\label{se:complexify}
Before we look at an example, one more general idea is useful. A
fundamental idea in the representation theory of a real Lie group (or
Lie algebra) is to
{\em complexify} the group (or Lie algebra), and take advantage of the
(simpler and stronger) structural results available for complex Lie
algebras and groups. This is particularly easy for Lie algebras: any
real Lie algebra ${\mathfrak g}_1$ has a natural
complexification
\begin{equation}\label{e:complexifyLiealg}
{\mathfrak g} =_{\text{def}} {\mathfrak g}_1 \otimes_{\mathbb R}
{\mathbb C} = {\mathfrak
  g}_1 \oplus i{\mathfrak g}_1.
\end{equation}
(The distinction between ${\mathfrak g}_1$ and ${\mathfrak g}$ in this
notation seems a little obscure and hard to remember. In the body of
the paper, $G_1$ will usually be something like $G({\mathbb R})$, and
${\mathfrak g}_1$ will be ${\mathfrak g}({\mathbb R})$.)
The extra structure on the complex Lie algebra ${\mathfrak g}$ that
remembers ${\mathfrak g}_1$ is a {\em real form}: a
conjugate-linear real Lie algebra automorphism of order two
\begin{equation}\label{e:complexconj}
\sigma_1\colon {\mathfrak g} \rightarrow {\mathfrak g}, \qquad
\sigma_1(X+iY) = X - iY \quad (X, Y\in {\mathfrak g}_1).
\end{equation}

Any real Lie algebra representation $\pi_{\mathbb R}$ of ${\mathfrak
  g}_1$ on a complex vector space $V$ gives rise to a {\em
  complex} Lie algebra representation 
\begin{equation}\label{e:complexifyrep}
\pi_{\mathbb C}(X + iY) = \pi_{\mathbb R}(X) + i\pi_{\mathbb R}(Y)
\qquad (X,Y \in {\mathfrak g}_1);
\end{equation}
and of course $\pi_{\mathbb R}$ can be recovered from $\pi_{\mathbb
  C}$ by restriction. This is so elementary and fundamental that it
usually goes unsaid, and the subscripts ${\mathbb R}$ and ${\mathbb
  C}$ on $\pi$ are not used.

The reason we make this explicit now is that an invariant Hermitian
form for $\pi_{\mathbb R}$ is almost {\em never} invariant for $\pi_{\mathbb
  C}$; if $\langle \pi_{\mathbb R}(X)v,w\rangle+\langle v,\pi_{\mathbb
  R}(X)w\rangle=0$
$(x\in {\mathfrak g}_1)$, then 
$$
\langle \pi_{\mathbb C}(iX)v,w\rangle+\langle v,\pi_{\mathbb C}(iX)w\rangle=
i(\langle \pi_{\mathbb R}(X)v,w\rangle-\langle v,\pi_{\mathbb R}(X)w\rangle)
$$
and there is no reason for this to be $0$. 
 What is true is that {\em the Hermitian
  form $\langle\cdot,\cdot\rangle$ on $V$ is $\pi_{\mathbb
    R}$-invariant (see \ref{def:invform}) if and only if}
\begin{equation}
\langle \pi_{\mathbb C}(Z)v,w\rangle + \langle v,\pi_{\mathbb
  C}(\sigma_1(Z))w \rangle = 0 \qquad (v,w\in V,\ Z\in {\mathfrak g}).
\end{equation}
That is, we require that {\em $\pi_{\mathbb C}$ should carry the complex
conjugation on ${\mathfrak g}$ to minus Hermitian transpose on
operators}. In this case we call $\langle\cdot,\cdot\rangle$ a {\em
  $\sigma_1$-invariant form} for the representation $\pi_{\mathbb C}$ of
${\mathfrak g}$. 

The point to remember is that the definition of invariant Hermitian form on a
complex representation of a complex Lie algebra ${\mathfrak g}$ {\em
  requires} a choice of real form on ${\mathfrak g}$. Changing
the real form changes everything: whether an invariant form exists,
and what its signature is. 
\end{subequations} 

\begin{example}\label{ex:SL3} Suppose $G_1=SL(3,{\mathbb R})$. The
  finite-dimensional representations of $G_1$ are precisely
  those of the 
  complex Lie algebra ${\mathfrak s}{\mathfrak l}(3,{\mathbb C})$. The
  corresponding real form of ${\mathfrak s}{\mathfrak l}(3,{\mathbb
    C})$ is
$$\sigma_1(Z) = \overline Z \qquad (Z\in {\mathfrak s}{\mathfrak
  l}(3,{\mathbb C})),$$
complex conjugation of matrices.

Irreducible finite-dimensional representations of ${\mathfrak
  s}{\mathfrak l}(3,{\mathbb C})$ are indexed by highest weights
$$\lambda = (\lambda_1,\lambda_2,\lambda_3), \quad
\lambda_1+\lambda_2 + \lambda_3 = 0, \quad \lambda_p - \lambda_q\in
{\mathbb Z},\quad \lambda_1 \ge \lambda_2 \ge \lambda_3.$$
For example, $E_{(2/3,-1/3,-1/3)}$ is the tautological representation on
${\mathbb C}^3$, and $E_{(1,0,-1)}$ is the 8-dimensional adjoint
representation. It turns out that the only representations with a
non-zero invariant $\sigma_1$-invariant Hermitian form are the ``Cartan
powers of the adjoint representation:''
$$E_{(m,0,-m)} = \text{irreducible representation of dimension
  $(m+1)^3$.}$$
(This follows from the Knapp-Zuckerman result explained in
\cite{overview}*{Chapter 16}, but for finite-dimensional
representations is probably much older.)

We would like to understand $\sigma_1$-invariant Hermitian forms on
$E_{(m,0,-m)}$. According
to the program described after Proposition \ref{prop:schur}, we need
first to {\em choose} one of the two 
possible forms. For this (and for much more!) we will use the
restriction of representations of $G_1$ to the maximal
compact subgroup 
$$K_1 = SO(3).$$
Because each irreducible representation of a compact group has a
positive-definite invariant Hermitian form, the positive and negative
parts of an invariant form for $G_1$ may be understood not just as
vector spaces (with dimensions) but as representations of $K_1$ (sums
of irreducible representations with multiplicity). It 
turns out that $E_{(m,0,-m)}$ contains {\em either} the trivial
representation $F_1$ of $K_1$ (if $m$ is even), {\em or}
the tautological three-dimensional representation $F_3$ (if $m$ is
odd), but not both. This representation appears with multiplicity one, so any 
invariant Hermitian form is either positive or negative definite on
the subspace $F_1$ or $F_3$. We fix our choice of $\sigma_1$-invariant
form on $E_{(m,0,-m)}$ by requiring
$$\text{form is {\em positive} on $F_1$ and {\em negative} on $F_3$.}$$
For example, the
adjoint representation $E_{(1,0,-1)} \simeq {\mathfrak g}$ has a Cartan
decomposition (more precisely, the complexification of the Cartan
decomposition of ${\mathfrak g}_1$)
$${\mathfrak g} = {\mathfrak k} \oplus {\mathfrak p} = F_3 \oplus F_5$$
(skew-symmetric and symmetric traceless matrices), the sum of
irreducible representations of $K_1$ of dimensions $3$ and
$5$. We can 
choose for our invariant Hermitian form the trace form
$$\langle X, Y\rangle = \tr(XY^*) \qquad (X, Y \in {\mathfrak
  s}{\mathfrak l}(3,{\mathbb C})).$$
This form is easily seen to be positive definite on the space of real symmetric matrices (since these have real eigenvalues), and
negative definite on the real skew-symmetric matrices (since
these have purely imaginary eigenvalues). In particular, it is
negative on $F_3$. In this way we 
see that the form on $E_{(1,0,-1)}$ has signature $(5,3)$; even better,
the signature is $(F_5,F_3)$ as a representation of $SO(3)$.

Here are a few more signatures. We are for the moment simply claiming
that these formulas are correct, not explaining where they come
from. Always we write $F_{2k+1}$ for the unique irreducible
representation of $SO(3)$ of dimension $2k+1$, endowed with a
positive-definite invariant Hermitian form.
$$\text{signature of $E_{(3,0,-3)}$} = ([F_{13} + F_9 + F_7] +
F_5, [F_{11} + F_9 + F_7] + F_3);$$
that is,
$$\begin{aligned}
\sig(E_{(3,0,-3)}) &= ([F_{13} + F_9 + F_7],[F_{11} + F_9
+ F_7]) + \sig(E_{(1,0,-1)})\\
\sig(E_{(5,0,-5)}) &= ([F_{21} + F_{17} + F_{15} + F_{13}
+ F_{11}],\\
& \qquad [F_{19} + F_{17} + F_{15} + F_{13} + F_{11}]) +
\sig(E_{(3,0,-3)}).\end{aligned}$$
At this point perhaps the pattern is evident: we get the signature for
$E_{(2m+1,0,-2m-1)}$ from that for $E_{(2m-1,0,-2m+1)}$ by adding to the
positive and negative parts sums of $2m+1$ irreducible representations
of $K_1$. The two added strings are identical except for the first
terms, which differ in dimension by two. (The pattern applies even to
getting the signature of $E_{(1,0,-1)}$ from that of the (zero)
representation $E_{(-1,0,1)}$.)  

In the same way, it turns out that
$$\begin{aligned}
\sig(E_{(0,0,0)}) &= (F_1,0)\\
\sig(E_{(2,0,-2)}) &= ([F_9+F_5],[F_7 + F_5]) +
\sig(E_{(0,0,0)})\\
\sig(E_{(4,0,-4)}) &= ([F_{17}+F_{13} + F_{11} +
F_9],\\
& \qquad [F_{15} + F_{13} + F_{11} + F_9]) + \sig(E_{(2,0,-2)}).
\end{aligned}$$
The pattern is essentially the same as in the odd case: we get the
signature for $E_{(2m+2,0,-2m-2)}$ from that for $E_{(2m,0,-2m)}$ by
adding to the 
positive and negative parts sums of $2m+2$ irreducible representations
of $K_1$. The two added strings are identical except for the first
terms, which differ in dimension by two.

As a consequence of this inductive description of the signature as a
representation of $K_1$, or more directly, one can show that 
$$\sig(E_{(m,0,-m)}) = ([(m+1)^3 + (m+1)]/2,[(m+1)^3 - (m+1)]/2).$$
In particular, $E_{(m,0,-m)}$ is unitary if and only if $m=0$: the
trivial representation is the only finite-dimensional unitary
representation of $SL(3,{\mathbb R})$.  (The last statement of course
has many extremely short proofs; the point of explaining this long
argument is that the ideas apply to infinite-dimensional
representations of general real reductive groups.)
\end{example}

This is the shape of the calculation made possible by \cite{herm}: we
find enormous detail about the precise signatures of invariant
Hermitian forms, and then (for the purposes of questions about
unitarity) throw almost all of this information away.  Of course we
would be very happy to learn what interesting questions this discarded
information is actually addressing.

We now describe how the calculation of signatures is related to a more
classical representation-theoretic problem of Clifford theory: how to
extend an irreducible representation of a normal subgroup.

\begin{subequations}\label{se:exnotation}
We do not wish to use all of the somewhat complicated
and delicate hypotheses under which we finally work (involving real
algebraic groups and $L$-groups just as a point of departure). On the
other hand, we would like to use notation that is close to being
consistent with that of the body of the paper.  Here is a
compromise.

A key object to consider will be a group extension
\begin{equation}\label{e:extgroup}
1\rightarrow G \rightarrow {}^{\text{ex}} G \rightarrow \{1,\delta\}
\rightarrow 1,
\end{equation}
which we call an {\em extended group for $G$}. In the body of the
paper, $G$ will very often be a complex connected reductive algebraic
group. Perhaps the most familar example of such an extension, and one
that we will certainly use (often behind the scenes), is
Langlands L-group \eqref{e:Lgroup}; there the role of $G$ is played
by a (complex connected reductive algebraic) dual group, and
$\{1,\delta\}$ is the Galois group of ${\mathbb C}/{\mathbb R}$. 

Here is a concrete way to construct such a group extension. Begin with
an automorphism $\theta \in \Aut(G)$, with the property that
$\theta^2$ is inner:
\begin{equation}
\theta^2 = \Int(g_0) \qquad (g\in G).
\end{equation}
(Only the coset $g_0Z(G)$ is determined by $\theta$.) Then we can define
${}^{\text{ex}}G$ by generators and relations, as the group generated
by $G$ and a single additional element $h_0$, satisfying
\begin{equation}
h_0^2 = g_0,\qquad h_0 g h_0^{-1} = \theta(g) \quad (g\in G).
\end{equation}
(This presentation {\em does} depend on the choice of representative
$g_0$ for the coset $g_0Z(G)$.) Two automorphisms $\theta$ and
$\theta'$ of $G$ are said 
to be {\em inner} to each other if $\theta'\circ\theta^{-1}$ is an
inner automorphism. Now it is clear that
\begin{equation}\label{e:innerclass}
\{\Int(h)|_G \mid h\in {}^{\text{ex}}G - G\} \quad\text{is an inner
  class in $\Aut(G)$.}
\end{equation}
This is how we will use extended groups: as a place to keep track of
and compare representatives of various automorphisms of $G$.

An {\em extended subgroup} of ${}^{\text{ex}}G$ is a subbgroup
${}^{\text{ex}}G_1$ mapping surjectively to $\{1,\delta\}$. In this
case we define $G_1 = G \cap {}^{\text{ex}}G_1$, so that
\begin{equation}\label{e:extsubgroup}
1\rightarrow G_1 \rightarrow {}^{\text{ex}} G_1 \rightarrow \{1,\delta\}
\rightarrow 1.
\end{equation}
Often we will consider several such subgroups ${}^{\text{ex}}G_1$,
${}^{\text{ex}}G_2$, and so on. 

In the body of the paper, the
(complex) group $G$ will be a very useful tool, but we will be
interested actually in representations only of a real form $G({\mathbb
  R})$; then this will be a typical $G_1$. A little more precisely, if
${}^{\text{ex}}G$ is a complex Lie group, then a {\em real form} means
an antiholomorphic automorphism of order two of real
extended Lie groups
\begin{equation}
\sigma_1 \colon {}^{\text{ex}}G \rightarrow {}^{\text{ex}}G, \quad
\sigma_1(G) = G, \qquad {}^{\text{ex}}G_1 =_{\text{def}}
[{}^{\text{ex}}G]^{\sigma_1}. 
\end{equation}
(``Antiholomorphic'' means that if $f$ is a local
holomorphic function on $G$, then $\overline{f\circ \sigma_1}$ is
holomorphic as well.)
\end{subequations} 

Our main results are about the problems of Clifford theory
(Proposition \ref{prop:Clifford}): the relationship between
representations of $G$ and of ${}^{\text{ex}}G$.  Here is a classical
statement of Clifford theory for finite-dimensional representations;
in the world of Harish-Chandra modules, the extension to
infinite-dimensional representations is easy. Write
\begin{equation}\label{e:Ghatfin}
(\widehat G)_{\text{fin}} = \text{equiv. classes of
  fin.-diml. irreducible representations.} 
\end{equation}

\begin{proposition}[Clifford]\label{prop:Clifford} Suppose $G\subset
  {}^{\text{ex}} G$ is an extended group \eqref{e:extgroup}. 
\begin{enumerate}
\item The quotient ${}^{\text{ex}}G/G = \{1,\delta\}$
  acts on $\widehat G_{\text{fin}}$, by
$$\delta\cdot(\pi,E) = (\pi^h,E), \qquad \pi^h(g) =
\pi(hgh^{-1});$$
here $h$ is any fixed element of ${}^{\text{ex}}G - G$ (the
non-identity coset of $G$ in ${}^{\text{ex}}G$).  The equivalence
class of $\pi^h$ is independent of the choice of $h$.

\item Define 
$$\epsilon \colon G \rightarrow \{\pm 1\},\quad  \epsilon (h) = \begin{cases}
  1 & (h\in G) \\ -1 & (h \notin G). \end{cases}.$$
Then the group of characters $\{1,\epsilon\}$ of 
${}^{\text{ex}} G/G$ acts on $(\widehat{{}^{\text{ex}}G})_{\text{fin}}$ by
$$\epsilon\cdot \Pi = \Pi\otimes \epsilon.$$
\item 
Because these are actions of two-element groups, we have 
$$\text{$\pi \in \widehat G_{\text{fin}}$ {\bf either} is fixed by
  $\delta$, {\bf or} has a two-element orbit $\{\pi,\delta\cdot
  \pi\}$.}$$
Similarly, 
$$\text{$\Pi \in (\widehat{{}^{\text{ex}}G})_{\text{fin}}$ {\bf either} has a
  two-element orbit $\{\Pi,\epsilon\cdot \Pi\}$, {\bf or} is fixed by
  $\epsilon$.}$$

\item The two-element orbits of $\delta$ on $\widehat G_{\text{fin}}$ are in
one-to-one correspondence (by induction 
from $G$ to ${}^{\text{ex}} G$) with the $\epsilon$-fixed elements of
$(\widehat{{}^{\text{ex}} G})_{\text{fin}}$.  These are the representations
$\Pi$ of ${}^{\text{ex}} G$ whose characters vanish on
${}^{\text{ex}}G - G$; their characters on $G$ are 
the sum of the two corresponding characters of $G$.

\item The two-element orbits of $\epsilon$ on
$(\widehat{{}^{\text{ex}}G})_{\text{fin}}$ are 
in one-to-one correspondence (by 
restriction to $G$) with the $\delta$-fixed  elements of $\widehat
G_{\text{fin}}$. The two extensions $\Pi$ and 
$\Pi'$ of such a $\pi$ have characters on ${}^{\text{ex}} G-G$ differing by sign;
their characters on $G$ agree with that of $\pi$.

\item Suppose $(\pi,E)$ is a $\delta$-fixed element of $\widehat
  G_{\text{fin}}$. Then an extension of $\pi$ to
  ${}^{\text{ex}}G$ may be constructed as follows. Fix any element
  $h_0\in {}^{\text{ex}}G - G$. Let $A_\pi$ be a nonzero intertwining
  operator from $\pi$ to $\pi^h$:
$$A_\pi \pi(g) = \pi(h_0gh_0^{-1})A_\pi.$$
This requirement determines $A_\pi$ up to a multiplicative
scalar. Write $g_0 = h_0^2 \in G$. 
After modifying $A_\pi$ by an appropriate scalar, we may
arrange
$$A_\pi^2 = \pi(g_0);$$
with this additional condition, $A_\pi$ is determined up to
multiplication by $\pm 1$. Each choice of $A_\pi$ determines an
extension $\Pi$ of $\pi$, by the requirement
$$\Pi(h_0) = A_\pi.$$
\end{enumerate}
\end{proposition}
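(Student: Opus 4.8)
The plan is to prove the six assertions in a natural dependency order, building from the quotient actions through to the explicit extension construction.

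The plan is to establish the six assertions essentially in the stated order, since (3) is formal once (1) and (2) are in place, (4) and (5) are the two complementary halves of Clifford theory for an index-two normal subgroup, and (6) supplies the concrete construction underlying one direction of (5). Throughout, the standing hypothesis that the representations are finite-dimensional and irreducible is what lets me invoke Schur's lemma in the form $\End_G(E) = \mathbb C$. For (1), I would first check that $\pi^h$ is again finite-dimensional and irreducible (conjugation by $h$ carries $\pi$-invariant subspaces to $\pi^h$-invariant ones), and then that its class is independent of $h$: writing any second representative as $h' = hg_1$ with $g_1 \in G$, a direct substitution gives $\pi^{h'}(g) = \pi^h(g_1 g g_1^{-1})$, so $\pi^{h'}$ is conjugate to $\pi^h$ by the invertible operator $\pi^h(g_1)$, hence equivalent. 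The same substitution with $h^2 \in G$ shows $\delta^2$ acts trivially, so we really have an action of $\{1,\delta\}$. Part (2) is immediate, since $\epsilon$ is the nontrivial character of ${}^{\text{ex}}G/G$ pulled back to ${}^{\text{ex}}G$, tensoring an irreducible by a one-dimensional character yields an irreducible, and $\epsilon^2 = 1$; and (3) is then the purely formal statement that a two-element group has orbits of size one or two.

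The content of (4) and (5) is the induction-restriction dictionary, which I would organize around the single functor $\Ind_G^{{}^{\text{ex}}G}$, its adjoint restriction, Frobenius reciprocity, and the Mackey isomorphism $(\Ind\pi)|_G \cong \pi \oplus \pi^h$ (there being exactly two cosets). For a two-element orbit $\{\pi,\delta\pi\}$, Frobenius and Mackey give $\End_{{}^{\text{ex}}G}(\Ind\pi) \cong \Hom_G(\pi,\pi)\oplus\Hom_G(\pi,\pi^h)$, whose second summand vanishes because $\pi\not\cong\pi^h$; so $\Ind\pi$ is irreducible and assigns to the orbit an element of $(\widehat{{}^{\text{ex}}G})_{\text{fin}}$. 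It is $\epsilon$-fixed by the projection formula $\Ind(\pi)\otimes\epsilon \cong \Ind(\pi\otimes\epsilon|_G) = \Ind\pi$ (as $\epsilon|_G$ is trivial), which forces $\chi_{\Ind\pi} = \epsilon\cdot\chi_{\Ind\pi}$ and hence $\chi_{\Ind\pi} \equiv 0$ on ${}^{\text{ex}}G - G$; the induced-character formula restricted to $G$ reads $\chi_\pi + \chi_{\delta\pi}$, giving the remaining claims of (4). Conversely, for a $\delta$-fixed $\pi$, part (6) produces an extension $\Pi$, and Frobenius reciprocity (plus a dimension count) yields $\Ind\pi \cong \Pi \oplus (\Pi\otimes\epsilon)$, exhibiting the two-element $\epsilon$-orbit whose members both restrict to $\pi$; since $\epsilon$ is $+1$ on $G$ and $-1$ off $G$, these two extensions agree with $\pi$ on $G$ and differ by a sign on ${}^{\text{ex}}G - G$, which is (5). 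The two assignments (induce an orbit; restrict an extension) are visibly mutually inverse, so they give the bijections claimed in (4) and (5).

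The one genuinely computational step, and where I expect the real work, is (6). Given $\pi \cong \pi^h$, Schur's lemma makes $A_\pi$ unique up to a scalar. The key is to iterate the intertwining relation: from $A_\pi\pi(g) = \pi(h_0 g h_0^{-1})A_\pi$ one computes $A_\pi^2\pi(g) = \pi(g_0)\pi(g)\pi(g_0)^{-1}A_\pi^2$, so $\pi(g_0)^{-1}A_\pi^2$ commutes with all of $\pi(G)$ and is therefore a scalar $c$; rescaling $A_\pi$ by $\lambda$ multiplies $c$ by $\lambda^2$, so exactly the two choices $\lambda = \pm c^{-1/2}$ achieve the normalization $A_\pi^2 = \pi(g_0)$. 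The reason this normalization is the crux is that it is precisely the compatibility needed for $\Pi(g) = \pi(g)$, $\Pi(gh_0) = \pi(g)A_\pi$ to be a homomorphism on ${}^{\text{ex}}G = G\sqcup Gh_0$: of the four types of products one must check, the relation $A_\pi\pi(g) = \pi(h_0 g h_0^{-1})A_\pi$ handles a factor $h_0\cdot g$, while $A_\pi^2 = \pi(g_0) = \pi(h_0^2)$ is exactly what makes $h_0\cdot h_0$ come out correctly. The two sign choices give $\Pi$ and $\Pi\otimes\epsilon$, which are inequivalent because an equivalence would restrict to a scalar self-intertwiner of $\pi$ conjugating $A_\pi$ to $-A_\pi$. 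I do not anticipate a deep obstacle here: the difficulty is entirely the disciplined bookkeeping of Schur's lemma across these iterated conjugations, with everything else reducing to standard index-two Clifford theory.
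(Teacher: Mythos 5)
Your proof is sound, but there is nothing in the paper to compare it against: the authors state this proposition as classical Clifford theory and give no proof, moving directly on to its use. Your argument is the standard one --- Schur's lemma for (1)--(3), the index-two Frobenius/Mackey formalism for (4)--(5), and the explicit intertwiner normalization for (6) --- and the steps you spell out are correct. In particular, in (6) the observation that $\pi(g_0)^{-1}A_\pi^2$ commutes with $\pi(G)$ and is therefore a scalar $c$, the rescaling by $\pm c^{-1/2}$, and the identification of $A_\pi^2=\pi(g_0)$ as exactly the condition making $\Pi(gh_0)=\pi(g)A_\pi$ multiplicative, are all as they should be. Your avoidance of character-norm (orthogonality) arguments is also a genuine virtue here, since $G$ is not assumed finite or compact; everything you use is purely algebraic.

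The one place you wave your hands is the claim that the two assignments in (4)--(5) are ``visibly mutually inverse'': that is where surjectivity lives, and it needs the restriction dichotomy, which you never state. Concretely: for an irreducible $(\Pi,E)$ of ${}^{\text{ex}}G$, pick an irreducible $G$-subspace $F\subset E$; then $F+\Pi(h_0)F$ is ${}^{\text{ex}}G$-invariant and $F\cap\Pi(h_0)F\in\{0,F\}$, so either $\Pi|_G$ is irreducible or $E=F\oplus\Pi(h_0)F$. In the second case a nonzero map $\Ind_G^{{}^{\text{ex}}G}\pi\to\Pi$ (Frobenius) between spaces of equal dimension gives $\Pi\cong\Ind_G^{{}^{\text{ex}}G}\pi$, hence $\Pi$ is $\epsilon$-fixed by your projection-formula argument; moreover the orbit $\{\pi,\pi^h\}$ must then have two elements, since if $\pi$ were $\delta$-fixed your part (6) would force $\Ind_G^{{}^{\text{ex}}G}\pi\cong\Pi'\oplus(\Pi'\otimes\epsilon)$ to be reducible. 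In the first case $\Pi$ cannot be $\epsilon$-fixed: an intertwiner $\Pi\to\Pi\otimes\epsilon$ restricts to a $G$-self-intertwiner of the irreducible $\Pi|_G$, hence is a scalar, forcing $\Pi(h_0)=-\Pi(h_0)$. This dichotomy shows that every $\epsilon$-fixed $\Pi$ arises in (4) and every non-$\epsilon$-fixed $\Pi$ arises in (5), which completes the bijections. Since this uses only tools you already deployed, I would call it an incomplete step rather than a wrong one.
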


Suppose we understand the character theory of the smaller group $G$.
Because of this proposition, in order to understand the character
theory of ${}^{\text{ex}}G$, we must understand, for each
$\delta$-fixed irreducible representation of $G$, the character of
{\em some extension} of it on ${}^{\text{ex}}G-G$.  There are always
exactly two such extensions, whose characters on ${}^{\text{ex}}G-G$
differ by sign; our task will be to find a way (for the particular
groups of interest) to specify one of these two extensions.

\begin{subequations}\label{se:disconnected}
Suppose now (as we will for the body of this paper) that $G$ is a
complex connected reductive algebraic group, and that
\begin{equation}\label{e:Gcomplexconj}
\sigma\colon G \rightarrow G
\end{equation}
is a real form: an antiholomorphic automorphism of order two of real
Lie groups. (``Antiholomorphic'' means that if $f$ is a local
holomorphic function on $G$, then $\overline{f\circ \sigma}$ is
holomorphic as well. This implies in particular that the differential
of $\sigma$ (still denoted $\sigma$) is a real form of ${\mathfrak g}$
in the sense of \eqref{e:complexconj}).  The corresponding real
reductive algebraic group is
\begin{equation}\label{e:GR}
G({\mathbb R},\sigma) =  G({\mathbb R}) =_{\text{def}} G^\sigma,
\end{equation}
a real Lie group with Lie algebra
\begin{equation}\label{e:gR}
{\mathfrak g}({\mathbb R}) =_{\text{def}} {\mathfrak g}^\sigma.
\end{equation}
Now $G$ has one particularly interesting (conjugacy class of) real
form(s), the {\em compact real form} $\sigma_c$. It is characterized
up to conjugation by $G$ by the requirement that
\begin{equation}\label{e:Gcpt}
\text{$G({\mathbb R},\sigma_c)$ is compact.}
\end{equation}
Elie Cartan showed that $\sigma_c$ may be chosen to {\em commute} with
the real form $\sigma$, and that this requirement determines
$\sigma_c$ up to conjugation by $G({\mathbb R},\sigma)$. Because of
the commutativity, the composition
\begin{equation}
\theta = \sigma\circ \sigma_c = \sigma_c\circ \sigma
\end{equation}
is an algebraic involution of $G$ of order two, called the {\em Cartan
  involution}; it is determined by $\sigma$ up to conjugation by
$G({\mathbb R},\sigma)$. The group of fixed points
\begin{equation}\label{e:K}
K = G^\theta
\end{equation}
is a (possibly disconnected) complex reductive algebraic subgroup of
$G$. The two real forms $\sigma$ and $\sigma_c$ of $G$ both preserve
$K$, and act the same way there; the corresponding real form
\begin{equation}\label{e:KR}
K({\mathbb R}) = G({\mathbb R},\sigma)\cap K = G({\mathbb
  R},\sigma)\cap G({\mathbb R},\sigma_c) = G({\mathbb
  R},\sigma_c)\cap K 
\end{equation}
is a maximal compact subgroup of $G({\mathbb R},\sigma)$ and a maximal
compact subgroup (the compact real form) of $K$.
\end{subequations} 

We wish to understand $\sigma$-invariant Hermitian forms on
representations of $G({\mathbb R})$. The next proposition recalls the
classical solution (by Cartan and Weyl) of a related problem, and then
relates the two problems.

\begin{proposition}\label{prop:cinvtforms}
Suppose we are in the setting \eqref{se:disconnected}.
\begin{enumerate}
\item Finite-dimensional algebraic representations of $K$ may be
  identified with finite-dimensional continuous representations of
  the compact real form $K({\mathbb R})$.
\item Finite-dimensional algebraic representations of $G$ may be
  identified with finite-dimensional continuous representations of the
  compact real form $G({\mathbb R},\sigma_c)$.
\item Every finite-dimensional irreducible algebraic representation
  $(\pi,E)$ of $G$ admits a positive-definite $\sigma_c$-invariant
  Hermitian form $\langle\cdot,\cdot\rangle_c$, unique up to positive
  scalar multiple:
$$\langle \pi(g)v,w\rangle_c = \langle v,\pi(\sigma_c(g))^{-1}w\rangle_c$$
\item The finite-dimensional irreducible algebraic representation
  $(\pi,E)$ of $G$ admits a $\sigma$-invariant Hermitian form
  $\langle\cdot,\cdot\rangle$ if and only if there is a nonzero linear
  operator, self-adjoint with respect to the form $\langle,\rangle_c$, 
$$A_\pi\colon E \rightarrow E, \qquad A_\pi^* = A_\pi$$
with the property that 
$$A_\pi \pi(g) = \pi(\theta(g))A_\pi \qquad (g\in G).$$
In particular, $A_\pi$ commutes with the action of $K$. These
requirements determine $A_\pi$ up to a real multiplicative
scalar. In this case the $\sigma$-invariant form is
$$\langle v,w\rangle = \langle v,A_\pi w\rangle_c $$

\item In the setting of (4), $A_\pi^2$ must commute with the
  action of $\pi$, and so must be a nonzero scalar. Because
  $A_\pi$ is self-adjoint with respect to the positive Hermitian
  form $\langle,\rangle_c$, the scalar is necessarily positive real:
$$A_\pi^2 = r_\pi I_E, \qquad r_\pi\in {\mathbb R}^{+,\times}.$$

\item 
The signature of this $\sigma$-invariant form is
$$\sig(E) = (\text{$+1$ eigenspace of
  $(r_\pi^{-1/2})A_\pi))$},
\text{$-1$ eigenspace of $(r_\pi^{-1/2})A_\pi$});$$
here the positive and negative parts are representations of $K$.

\end{enumerate}
\end{proposition}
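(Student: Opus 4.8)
The plan is to treat the six assertions in sequence, since (4)--(6) all rest on the positive-definite form produced in (1)--(3). For (1) and (2) I would invoke the standard equivalence between algebraic representations of a complex reductive group and continuous representations of a compact real form, whose essential ingredient is that a compact real form is Zariski dense: restriction to $K(\mathbb R)$ (resp.\ $G(\mathbb R,\sigma_c)$) is then injective on algebraic representations, and highest-weight theory matches the irreducibles on the two sides. For (3) I would start on the compact side: averaging an arbitrary Hermitian inner product over $G(\mathbb R,\sigma_c)$ with Haar measure yields a positive-definite invariant form $\langle\cdot,\cdot\rangle_c$, and Schur's Lemma forces it to be unique up to positive scalar. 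It remains to check that invariance under the compact group upgrades to the stated algebraic identity $\langle\pi(g)v,w\rangle_c=\langle v,\pi(\sigma_c(g))^{-1}w\rangle_c$ for all $g\in G$; this I would do by rewriting the identity as $\pi(g)^*=\pi(\sigma_c(g^{-1}))$, observing that both sides are antiholomorphic in $g$ and agree on the Zariski-dense compact form, hence agree everywhere.

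The heart is (4). Using nondegeneracy of $\langle\cdot,\cdot\rangle_c$, every sesquilinear form on $E$ is $\langle v,w\rangle=\langle v,A w\rangle_c$ for a unique $A\in\End(E)$, and Definition \ref{def:herm} identifies the Hermitian forms with the $\langle\cdot,\cdot\rangle_c$-self-adjoint $A$. I would then substitute this into the $\sigma$-invariance condition $\langle\pi(g)v,w\rangle=\langle v,\pi(\sigma(g))^{-1}w\rangle$, move $\pi(g)$ across using the $\sigma_c$-invariance of $\langle\cdot,\cdot\rangle_c$, and cancel $v,w$ by nondegeneracy; the relation $\theta=\sigma_c\circ\sigma$ then collapses the two twists into the single intertwining condition $A\pi(g)=\pi(\theta(g))A$. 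Running this computation backwards gives the converse, establishing the asserted equivalence. Since $\theta$ fixes $K=G^\theta$ pointwise, this relation specializes to $A\pi(k)=\pi(k)A$, so $A$ commutes with $K$. For uniqueness up to real scalar I would apply Schur's Lemma to the space $\Hom_G(E,E^\theta)$ of such intertwiners (at most one-dimensional over $\mathbb C$ by irreducibility), and note that among its elements self-adjointness with respect to the positive form $\langle\cdot,\cdot\rangle_c$ singles out exactly the real multiples of any one nonzero solution.

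Parts (5) and (6) are then short. Iterating $A\pi(g)=\pi(\theta(g))A$ and using $\theta^2=\Id$ shows $A^2$ commutes with all of $\pi$, so $A^2=r_\pi I_E$ by Schur; self-adjointness of $A$ with respect to a positive-definite form forces real eigenvalues, whence $r_\pi>0$ since $A\neq 0$. Setting $B=r_\pi^{-1/2}A$ gives a self-adjoint involution, so $E$ splits orthogonally into its $\pm1$ eigenspaces $E_\pm$; on $E_\pm$ one computes $\langle v,v\rangle=\pm r_\pi^{1/2}\langle v,v\rangle_c$, which is positive (resp.\ negative) definite, yielding the signature $\sig(E)=(E_+,E_-)$. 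Because $B$ is a scalar multiple of $A$ it too commutes with $K$, so $E_\pm$ are $K$-subrepresentations, giving the refined signature as $K$-representations.

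I expect the main obstacle to be the bookkeeping in (4): keeping the inverses and the order of $\sigma$, $\sigma_c$, $\theta$ straight while transferring the group-level invariance across $\langle\cdot,\cdot\rangle_c$, and then correctly invoking a twisted form of Schur's Lemma to pin down $A$ up to a real scalar rather than a complex one.
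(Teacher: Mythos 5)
Your proof is correct, but there is nothing in the paper to compare it against: Proposition \ref{prop:cinvtforms} is stated in the introduction as a recollection of classical results of Cartan and Weyl, with no proof given (the text passes directly from the statement to Corollary \ref{cor:extgrouprep}). What you have written is the standard filling-in that the authors evidently intend the reader to supply: Weyl's unitarian trick for (1)--(3) (Zariski density of the compact real form, averaging over Haar measure, Schur's Lemma for uniqueness of $\langle\cdot,\cdot\rangle_c$), the bijection of Definition \ref{def:herm} between Hermitian forms and $\langle\cdot,\cdot\rangle_c$-self-adjoint operators to convert $\sigma$-invariance into the intertwining relation $A_\pi\pi(g)=\pi(\theta(g))A_\pi$ for (4), and Schur plus the spectral theorem for (5)--(6). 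One step you should write out rather than wave at: in (4), uniqueness of $A_\pi$ up to a \emph{real} scalar (and indeed the existence of a nonzero self-adjoint element at all, given only a nonzero intertwiner) requires checking that the one-dimensional space $\{A: A\pi(g)=\pi(\theta(g))A \text{ for all } g\in G\}$ is stable under taking $\langle\cdot,\cdot\rangle_c$-adjoints; this follows from $\pi(g)^*=\pi(\sigma_c(g))^{-1}$, the commutation $\sigma_c\theta\sigma_c=\theta$, and $\theta^2=1$, after which one normalizes a generator to be self-adjoint and sees that the self-adjoint elements are exactly its real multiples. With that verification included, every step of your outline goes through, including the antiholomorphic-extension argument that upgrades invariance under the compact form to the algebraic identity on all of $G$, and the $K$-stability of the $\pm1$ eigenspaces in (6).
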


The conditions on $A_\pi$ in Proposition \ref{prop:cinvtforms} look like
conditions in Proposition \ref{prop:Clifford} for defining a
representation of an extended group.  We deduce easily 

\begin{corollary}\label{cor:extgrouprep}
In the setting \eqref{se:disconnected}, suppose also that $G$ is
part of an extended group as in \eqref{se:exnotation}; and that 
$$\theta = \Int(\xi), \quad \xi \in {}^{\text{ex}} G - G, \quad \xi^2 = z\in
Z(G).$$
Then a finite-dimensional irreducible algebraic representation $(\pi,E)$ of
$G$ admits a $\sigma$-invariant Hermitian form if and only if $\pi$
has an extension $\Pi$ to ${}^{\text{ex}} G$. In that case define a
nonzero complex scalar $z_\pi$ so that
$$\pi(z) = z_\pi I_E,$$
and choose a square root $\omega_\pi$ of $z_\pi$. Then the
$\sigma$-invariant Hermitian form on $E$ may be taken to be
$$\langle v,w\rangle = \langle v,\omega_\pi^{-1}\Pi(\xi) w\rangle_c.$$
The signature of this $\sigma$-invariant form is
$$\sig(E) = (\text{$+1$ eigenspace of $\omega_\pi^{-1}\Pi(\xi)$},
\text{$-1$ eigenspace of $\omega_\pi^{-1}\Pi(\xi)$}).$$
In particular, the difference between the dimensions of the positive and negative parts
is equal to $\omega_\pi^{-1}$ times the character value $\tr(\Pi(\xi))$. 
\end{corollary}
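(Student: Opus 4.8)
The plan is to deduce Corollary~\ref{cor:extgrouprep} directly from Proposition~\ref{prop:cinvtforms} by matching the intertwining operator $A_\pi$ to a normalized value of $\Pi(\xi)$. First I would observe that giving an extension $\Pi$ of $\pi$ to ${}^{\text{ex}}G$ is the same (by Proposition~\ref{prop:Clifford}(6)) as choosing an operator $\Pi(\xi)$ on $E$ satisfying $\Pi(\xi)\pi(g)\Pi(\xi)^{-1}=\pi(\xi g\xi^{-1})=\pi(\theta(g))$, since $\theta=\Int(\xi)$; that is, $\Pi(\xi)\pi(g)=\pi(\theta(g))\Pi(\xi)$ for all $g\in G$. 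This is exactly the intertwining condition $A_\pi\pi(g)=\pi(\theta(g))A_\pi$ appearing in Proposition~\ref{prop:cinvtforms}(4). So the existence of a nonzero such operator is equivalent both to the existence of an extension $\Pi$ and (by part (4)) to the existence of a $\sigma$-invariant Hermitian form; this gives the claimed equivalence immediately.

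Next I would pin down the scalar normalization. The operator $\Pi(\xi)$ is determined by $\pi$ up to a scalar (Schur, via irreducibility), and it satisfies $\Pi(\xi)^2=\Pi(\xi^2)=\pi(z)=z_\pi I_E$. Proposition~\ref{prop:cinvtforms}(4) requires $A_\pi$ to be \emph{self-adjoint} with respect to $\langle\cdot,\cdot\rangle_c$, and part (5) records that a self-adjoint intertwiner squares to a positive real scalar. I therefore need to check that the rescaled operator $\omega_\pi^{-1}\Pi(\xi)$ can be taken as $A_\pi$; its square is $\omega_\pi^{-2}z_\pi I_E=I_E$ by the choice of square root $\omega_\pi$. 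The remaining point is self-adjointness: I would argue that because $\theta=\sigma\sigma_c$ is a composition of the two commuting involutions, the adjoint of an operator intertwining $\pi$ and $\pi\circ\theta$ with respect to $\langle\cdot,\cdot\rangle_c$ is again such an intertwiner, hence a scalar multiple of $\Pi(\xi)$; normalizing so that the square is $+1$ forces this scalar to be real, and one checks it is $+1$, so $\omega_\pi^{-1}\Pi(\xi)$ is genuinely self-adjoint. With $A_\pi=\omega_\pi^{-1}\Pi(\xi)$ in hand, part (4) gives the form $\langle v,w\rangle=\langle v,A_\pi w\rangle_c=\langle v,\omega_\pi^{-1}\Pi(\xi)w\rangle_c$, and part (6) with $r_\pi=1$ gives that the signature is $(\,+1\text{-eigenspace of }\omega_\pi^{-1}\Pi(\xi),\ -1\text{-eigenspace of }\omega_\pi^{-1}\Pi(\xi)\,)$ as $K$-representations.

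Finally, for the trace statement, I would note that on a space decomposed into the $+1$ and $-1$ eigenspaces $E_+\oplus E_-$ of the involution $\omega_\pi^{-1}\Pi(\xi)$, the trace of this involution is $\dim E_+-\dim E_-$, which is precisely $p-q$. Multiplying by $\omega_\pi$ recovers $\tr(\Pi(\xi))=\omega_\pi(\dim E_+-\dim E_-)$, i.e.\ the difference of dimensions of the positive and negative parts equals $\omega_\pi^{-1}\tr(\Pi(\xi))$, as claimed.

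The main obstacle I anticipate is the self-adjointness verification: Proposition~\ref{prop:cinvtforms}(4) does not merely ask for an intertwiner but for one that is self-adjoint for $\langle\cdot,\cdot\rangle_c$, so I must confirm that the scalar relating $\Pi(\xi)$ to its $\langle\cdot,\cdot\rangle_c$-adjoint, after the normalization $\omega_\pi^{-1}$, is exactly $+1$ rather than a general phase. This is where the compatibility $\theta=\sigma\circ\sigma_c=\sigma_c\circ\sigma$ and the positivity of $\langle\cdot,\cdot\rangle_c$ must be used carefully; everything else is a direct transcription of Proposition~\ref{prop:cinvtforms} into the language of the extended group.
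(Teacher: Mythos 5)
Your proof is correct and takes essentially the same route as the paper, which gives no separate argument at all: it simply notes that the conditions on $A_\pi$ in Proposition \ref{prop:cinvtforms} match those in Proposition \ref{prop:Clifford}(6) and says ``we deduce easily.'' The details you supply at the crux---the $\langle\cdot,\cdot\rangle_c$-adjoint of an intertwiner from $\pi$ to $\pi\circ\theta$ is again such an intertwiner (using $\sigma_c\circ\theta=\theta\circ\sigma_c$), hence a scalar multiple of $\Pi(\xi)$; the normalization $(\omega_\pi^{-1}\Pi(\xi))^2=I_E$ forces that scalar to be $\pm 1$; and positive definiteness of $\langle\cdot,\cdot\rangle_c$ excludes $-1$, since a skew-adjoint involution would make its eigenvectors isotropic---are exactly the intended verification.
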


There is no difficulty in finding the extended group needed in this
Corollary: one can use for example
\begin{equation}\label{e:simpleextgroup}
{}^{\text{ex}} G = G\rtimes \{1,\xi\},
\end{equation}
with $\xi$ acting on $G$ by the Cartan involution $\theta$. In this
case $z=1$, so the statement of the corollary simplifies a bit. We
allow for more general extended groups because those will turn out to
be useful for the bookkeeping we want to do. 

Corollary \ref{cor:extgrouprep} says that understanding the existence
and signatures of 
$\sigma$-invariant Hermitian forms on algebraic representations of $G$
is equivalent to understanding the algebraic representations of the
disconnected (complex reductive algebraic) group ${}^{\text{ex}} G$. Here is
what happens in the case of $SL(3)$.

\begin{proposition}\label{prop:discSL3}
Suppose $G=SL(3)$, with the real form $G({\mathbb R},\sigma) =
SL(3,{\mathbb R})$ given by complex conjugation of matrices. Then a
compact real form of $SL(3)$ is $SU(3)$, with complex conjugation
given by inverse Hermitian transpose:
$$\sigma_c(g) = {}^t\overline g ^{-1} \qquad (g \in SL(3,{\mathbb
  C})).$$
Then $\sigma_c$ and $\sigma$ commute, so the Cartan involution is
$$\theta(g) = {}^t g^{-1}, \quad K = SO(3,{\mathbb C}), \quad
K({\mathbb R}) = SO(3).$$
Twisting by $\theta$ carries the representation of highest weight
$(\lambda_1,\lambda_2,\lambda_3)$ (see Example \ref{ex:SL3}) to the one
of highest weight $(-\lambda_3,-\lambda_2,-\lambda_1)$. In particular,
the only representations fixed are the various $(\pi_m,E_{(m,0,-m)})$.  

For such a representation, we can therefore find an operator
$$A_m\colon E_{(m,0,-m)} \rightarrow E_{(m,0,-m)}, \quad
A_m\pi_m(g) = \pi_m({}^t g^{-1}) \quad (g\in SL(3)).$$
This requirement specifies $A_m$ up to a scalar; we can specify
it precisely by requiring that $A_m$ act by $+1$ on the unique
largest $SO(3)$ representation $F_{4m+1}$ inside $E_{(m,0,-m)}$. 

With this choice, we can extend $\pi_m$ to a representation $\Pi_m$ of the
disconnected group ${}^{\text{ex}} SL(3)$ of \eqref{e:simpleextgroup},
by defining 
$$\Pi_m(\xi) = A_m.$$

The semisimple conjugacy classes of $SL(3)$ on the non-identity
component of ${}^{\text{ex}} SL(3)$ are represented by 
$$\left\{ h(z) = \begin{pmatrix} 0 & 0 & z\\ 0 & 1 & 0 \\ -z^{-1} & 0 &
    0\end{pmatrix}\xi \right\};$$
here $h(z)$ is conjugate to $h(z^{-1})$. By a version of the Weyl
character formula (for example \cite{Orange}*{Theorem 1.43}), the trace
of this element in the extended representation $\Pi_m$ is
$$\tr(\Pi_m(h(z))) = (-1)^m(z^{2m+2} - z^{-2m-2})/(z^2 - z^{-2})$$
for $z$ not a fourth root of $1$. The element $\xi$ is conjugate to
$h(\pm i)$; so by L'H\^opital's rule,
$$\tr(\Pi_m(\xi)) = \tr(\Pi_m(h(i))) = m+1.$$
The difference between the positive and negative parts of the
signature of the $\sigma$-invariant Hermitian form defined by $\Pi_m$
is therefore $m+1$, so this is the same form described in Example
\ref{ex:SL3}. 
\end{proposition}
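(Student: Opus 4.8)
The plan is to dispatch the structural claims by direct matrix computation, construct and normalize the intertwiner $A_m$ using Proposition \ref{prop:cinvtforms}, and then reduce the signature assertion to a single character value computed by a twisted Weyl character formula. First I would verify the structure: that $\sigma_c(g)={}^t\overline g^{-1}$ is an antiholomorphic involution with $G({\mathbb R},\sigma_c)=SU(3)$ compact is immediate, and one line gives $\sigma\circ\sigma_c(g)={}^tg^{-1}=\sigma_c\circ\sigma(g)$, so the two real forms commute and $\theta(g)={}^tg^{-1}$, whence $K=G^\theta=SO(3,{\mathbb C})$ and $K({\mathbb R})=SO(3)$ as in \eqref{se:disconnected}. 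To identify the $\theta$-fixed representations I would observe that $g\mapsto{}^tg^{-1}$ is the contragredient (diagram) automorphism of $A_2$: it carries the upper-triangular Borel to its opposite and inverts the diagonal torus, so on highest weights it acts by $(\lambda_1,\lambda_2,\lambda_3)\mapsto(-\lambda_3,-\lambda_2,-\lambda_1)$, which I would confirm on the standard representation, where it interchanges ${\mathbb C}^3$ with its dual. The fixed weights are exactly those with $\lambda_2=0$ and $\lambda_1=-\lambda_3$, i.e. the family $E_{(m,0,-m)}$.

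Next I would build $A_m$. Since $\pi_m$ is $\theta$-fixed, parts (4)--(5) of Proposition \ref{prop:cinvtforms} furnish a self-adjoint $A_m$ with $A_m\pi_m(g)=\pi_m(\theta(g))A_m$, determined up to a real scalar and satisfying $A_m^2=r_mI$ with $r_m>0$. As $\theta$ fixes $K$ pointwise, $A_m$ commutes with $\pi_m(K)$ and so is scalar on each $SO(3)$-isotypic subspace; requiring $A_m=+1$ on the multiplicity-one top type $F_{4m+1}$ normalizes $r_m=1$, so $A_m$ has eigenvalues $\pm1$. Via \eqref{e:simpleextgroup} (where $z=\xi^2=1$, hence $z_\pi=1$ and $\omega_\pi=1$) this defines the extension $\Pi_m(\xi)=A_m$, and Corollary \ref{cor:extgrouprep} then reduces the whole statement to the single identity $p-q=\tr(\Pi_m(\xi))$.

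Finally I would compute the character on $G\xi$. Twisted conjugation is $g\mapsto hg\theta(h)^{-1}$, and writing $h(z)=m(z)\xi$ for the displayed matrix $m(z)$, a short calculation using $\theta(m(z))=m(z^{-1})$ gives $h(z)^2=m(z)\theta(m(z))=\diag(-z^2,1,-z^{-2})$, a regular element of the diagonal torus for generic $z$; this realizes the semisimple classes of $G\xi$ as a one-parameter family, with $h(z)\sim h(z^{-1})$ coming from the Weyl reflection swapping the outer eigenvalues. I would then invoke the twisted Weyl character formula \cite{Orange}*{Theorem 1.43}: because the $\theta$-fixed root data has rank one, it collapses to an $SU(2)$-type character in the variable $z^2$, producing $\tr(\Pi_m(h(z)))=(-1)^m(z^{2m+2}-z^{-2m-2})/(z^2-z^{-2})$ off the fourth roots of unity. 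Since $m(i)$ is symmetric of determinant one it equals $h\,{}^th=h\theta(h)^{-1}$ for some $h\in SL(3)$, i.e. is $\theta$-twisted conjugate to the identity, so $\xi\sim h(i)$ with $h(i)^2=I$. Evaluating at $z=i$, where $z^2=-1$, by L'H\^opital in $w=z^2$: both numerator and denominator vanish at $w=-1$, the ratio of their derivatives there equals $(m+1)(-1)^m$, and the prefactor $(-1)^m$ then gives $\tr(\Pi_m(\xi))=m+1$. Hence $p-q=m+1$, exactly the difference of the parts in $\sig(E_{(m,0,-m)})=([(m+1)^3+(m+1)]/2,[(m+1)^3-(m+1)]/2)$ from Example \ref{ex:SL3}, so the two forms coincide.

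The main obstacle is this last character computation. Forcing the twisted Weyl character formula to yield the precise expression --- in particular the overall sign $(-1)^m$ and the collapse to an $SU(2)$ character in $z^2$ --- requires carefully matching the $\rho$-shift and the chosen branch of the formula against the normalization $A_m=+1$ on $F_{4m+1}$; an error there would flip the sign of $p-q$ and interchange the positive and negative parts, so this is exactly where agreement with Example \ref{ex:SL3} is tested. The parametrization of the semisimple classes by $h(z)$ together with the relations $h(z)\sim h(z^{-1})$ and $\xi\sim h(i)$ are the secondary technical points, routine given the rank-one twisted structure but the place where a normalization slip would first appear.
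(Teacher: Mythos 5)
Your proposal is correct, and it follows exactly the route the paper itself sketches: verify the structural claims by direct computation, normalize the intertwiner $A_m$ via Proposition \ref{prop:cinvtforms}, reduce the signature claim to the single character value $\tr(\Pi_m(\xi))$ through Corollary \ref{cor:extgrouprep}, cite the twisted Weyl character formula \cite{Orange}*{Theorem 1.43} for $\tr(\Pi_m(h(z)))$, and evaluate at $z=i$ by L'H\^opital (your computation of the limit, giving $(m+1)(-1)^m$ against the prefactor $(-1)^m$, is right). Where you go beyond the paper: the paper offers no detailed proof and explicitly waves at the step it calls the most challenging --- that $\xi$ is conjugate to $h(\pm i)$ --- whereas you actually prove it. Since $g\xi g^{-1}=g\,{}^tg\,\xi$, the claim reduces to writing the symmetric unimodular matrix $m(i)$ as $h\,{}^th$ with $h\in SL(3,{\mathbb C})$; this holds because every invertible complex symmetric matrix is congruent to the identity, and a determinant $-1$ can be absorbed by multiplying $h$ on the right by an element of $O(3,{\mathbb C})$ of determinant $-1$, which leaves $h\,{}^th$ unchanged. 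The one step that remains at the level of citation in your write-up --- matching the $(-1)^m$ prefactor and the normalization $A_m=+1$ on $F_{4m+1}$ against the character formula --- is also left at that level by the paper, and your flagging of it as the sign-sensitive point is apt; it can be confirmed directly in low rank, e.g.\ for $m=1$, where $\Pi_1(h(z))$ acts on $\mathfrak{sl}(3)$ by $X\mapsto m(z)\,{}^tX\,m(z)^{-1}$ and a short computation gives trace $-(z^2+z^{-2})$, in agreement with the displayed formula.
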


Perhaps the most challenging part of proving this proposition is to
verify that $\xi$ is conjugate to $h(\pm i)$, but this can be done.

Kazhdan-Lusztig
theory for computing irreducible characters typically takes place in
a free ${\mathbb Z}[q]$-module with basis indexed by the
irreducible characters of interest. In something like the setting of
Proposition \ref{prop:Clifford} (where we already understand
characters on $G$, and so wish to understand just characters on
${}^{\text{ex}}G - G$) this suggests that we will be 
interested in a free ${\mathbb Z}[q]$-module having a basis
$\{m_\Pi\}$ indexed by {\em one} irreducible representation  $\Pi$ from each pair $\Pi
\ne \Pi' = \Pi\otimes \epsilon$.  In this module, we will think of 
$$m_{\Pi'} = - m_{\Pi}$$
(corresponding to the fact that the characters of $\Pi$ and $\Pi'$ sum
to zero on  ${}^{\text{ex}}G - G$). All of this is explained more
precisely in Section \ref{sec:hecke}. 

The computational problem in implementing the Kazhdan-Lusztig
algorithm is that we know precisely how to parametrize the
$\delta$-fixed irreducibles $\pi$ of the smaller group $G$; but a
$\delta$-fixed irreducible corresponds only to a pair $\{\Pi,\Pi'\}$,
and so only to a basis vector of the Hecke module {\it defined up to
  sign}. We need  
an equally precise parametrization of irreducibles of ${}^{\text{ex}}
G$; that is, of 
how to specify one of the two possible extensions of $\pi$ to
${}^{\text{ex}} G$. In 
Proposition \ref{prop:discSL3} this happened with the requirement that
$A_m$ act by $+1$ on $F_{4m+1}$.  This amounts to a condition
involving the action of a particular element of the larger group
${}^{\text{ex}} G$ on a highest weight vector. 

Corollary \ref{cor:extgrouprep} shows that (in the setting
\eqref{se:disconnected}) understanding $\sigma$-invariant Hermitian
forms on finite-dimensional representations is closely related to
understanding the extensions to ${}^{\text{ex}} G({\mathbb R},\sigma)$
of irreducible representations of $G({\mathbb R},\sigma)$.

An important special case is when 
$\xi$ of \eqref{e:simpleextgroup} acts by an inner
involution of $G$. In this case write $\xi(g)=xgx\inv$ for some $x\in G$. Then the map 
$\xi\rightarrow (x,\epsilon)$ induces an isomorphism
\begin{equation}
\label{e:exequalrank}
{}^{\text{ex}} G = G\rtimes \{1,\xi\}\simeq G\times\Z/2\Z=G\times \{1,\epsilon\}.
\end{equation}
In this, the {\it equal rank case}, there is no essential new
information in the representation theory of ${}^{\text{ex}}G$, and it
is enough to work with $G$ itself.

With the appropriate generalizations, this can be made to to work for
infinite-dimensional representations as well.  This is discussed in
detail in \cite{herm}. Just as in the case of finite-dimensional
representations, it is not necesary to use the extended group in the
case of an equal rank group.
See \cite{herm}*{Section 11}. 

\begin{subequations}\label{se:xiPi}
  In the unequal rank case, this requires (at least implicitly)
  understanding the analogues of highest weights---Lie algebra
  cohomology for maximal nilpotent subalgebras ${\mathfrak n}$---by
  which infinite-dimensional representations $(\pi,E)$ of real
  reductive groups are classified. A little more precisely, one looks
  at the normalizer $G_{\mathfrak n}$ of ${\mathfrak n}$ in $G$. This
  group acts by a character $\chi_\pi$ on a Lie algebra cohomology
  space $H^*({\mathfrak n},E)$, and the character $\chi_\pi$
  determines the representation $\pi$.  To specify an extension
  $(\Pi,E)$ of $\pi$ to ${}^{\text{ex}} G$, one needs an extension
  $\chi_\Pi$ of $\chi_\pi$ to the normalizer ${}^{\text{ex}}
  G_{\mathfrak n}$ of ${\mathfrak n}$ in ${}^{\text{ex}} G$.  To get
  that, we can fix any element
\begin{equation}\label{e:xiPi}
h_{\mathfrak n} \in {}^{\text{ex}} G_{\mathfrak n} - G_{\mathfrak n}.
\end{equation}
Necessarily
\begin{equation}\label{e:hn}
h_{\mathfrak n}^2 = g_{\mathfrak n} \in G_{\mathfrak n}.
\end{equation}
{\it An extension $\Pi$ of of $\pi$ to ${}^{\text{ex}} G$ is specified by specifying
  the single character value $\chi_\Pi(h_{\mathfrak n})$, which may be either
  square root of $\chi_\pi(g_{\mathfrak n})$}:
\begin{equation}\label{e:Pipi}
\text{extension $\Pi$ of $\pi$}\quad \longleftrightarrow\quad \text{square root
  $\chi_\Pi(h_{\mathfrak n})$ of $\chi_\pi(g_{\mathfrak n})$.}
\end{equation}

What makes matters difficult is that the cohomology classes needed for
different representations involve different maximal nilpotent
subalgebras, and (as it turns out) necessarily different elements
$h_{\mathfrak n}$. Even worse, for a single ${\mathfrak n}$, there may be no
preferred choice of $h_{\mathfrak n}$.  We need to have a way to keep
track of choices of these elements $h_{\mathfrak n}$, and of the square
roots $\chi_\Pi(h_{\mathfrak n})$.

A natural way to reduce choices would be to try to arrange for
$h_{\mathfrak n}$ to have order two; in that case $g_{\mathfrak n}=1$, so
$\chi_\pi(g_{\mathfrak n}) = 1$, and the choice $\chi_\Pi(h_{\mathfrak n})$ must
be $\pm 1$. This is more or less what happened in Proposition
\ref{prop:discSL3}, and we were then able to make the ``natural''
choice $\chi_\Pi(h_{\mathfrak n}) = 1$. But in general we
cannot always arrange for $h_{\mathfrak n}$ to have order 2. It turns out
that there is behavior like the example of $G = {\mathbb Z}/4{\mathbb
  Z} = \{\pm 1,\pm i\}$ sitting inside the quaternion group
${}^{\text{ex}} G$ of 
order $8$: every element $\{\pm j, \pm k\}$ of the non-identity coset
has order exactly $4$. Once we are forced to consider a case when
$\chi_\pi(g_{\mathfrak n}) = -1$, it is easy to believe that there can be
no preferred choice of square root.
\end{subequations}

This gives a hint at the difficulties we face. To explain in more
detail their resolution, we begin with the
extension of the Cartan-Weyl highest weight theory to parametrize
representations. This is provided by the Langlands classification,
which is phrased in terms of the complex reductive dual
group. Langlands' results in their original form parametrize not
individual representations but ``L-packets,'' which are collections of
finite sets of irreducible representations for each of several different real
forms of $G$. To use the construction of \eqref{e:simpleextgroup}
would require introducing a different extended group for each of
these different real forms. This is inconvenient at best, and is
inconsistent with the cleanest formulation of the Langlands
classification. 

A glimpse of this inconvenience is the description of conjugacy
classes in the extended group given in Proposition \ref{prop:discSL3}. What
is good about the elements $h(z)$ defined there is that they normalize
the standard Borel subgroup (consisting of upper triangular matrices)
in $SL(3)$; the element $\xi$ does not. It is this good property that
allows one to write a nice Weyl character formula for the elements
$h(z)$. We recall next the notion of {\it pinning} for a
reductive algebraic group, and the derived notion of {\it
  distinguished automorphism}; these are required for the
formulation of the Langlands classification made in Section
\ref{sec:atlasparameters}.

\begin{definition}\label{def:pindist}
Suppose $G$ is a complex connected reductive algebraic group. A {\em
  pinning} of $G$ consists of
\begin{enumerate}
\item a Borel subgroup $B\subset G$;
\item a maximal torus $H\subset B$; and
\item for each simple root $\alpha$, a choice of basis vector
  $X_\alpha \in {\mathfrak g}_\alpha$.
\end{enumerate}
The pair $H\subset B$ is determined by $\{X_\alpha\}$, so we can just
write $(G,\{X_\alpha\})$ for the pinning. 

An algebraic automorphism $\delta_0$ of $G$ is called {\em
  distinguished} (with respect to this pinning) if the differential of
$\delta_0$ permutes the chosen simple root vectors $X_\alpha$. (As a
consequence, $\delta_0$ must preserve $H$ and $B$.)

If $\theta_0$ is a distinguished automorphism of order one or two, we
define the {\em distinguished extended group} to be the algebraic
group ${}^\Gamma G$ generated by $G$ and one more element $\xi_0$,
subject to the relations
$$\xi_0^2 = 1, \qquad \xi_0 g  = \theta_0(g) \xi_0 \quad (\g\in G).$$
\end{definition} 

Recall that two automorphisms $\delta$ and $\delta'$ of $G$ are said to be
{\em inner} to each other if $\delta'\circ \delta^{-1}$ is an inner
automorphism. 

\begin{proposition}[\cite{Spr}*{Corollary 2.14}]\label{prop:pindist}
Suppose $(G,\{X_\alpha\})$ is a complex connected reductive algebraic
group with a pinning. Then any automorphism $\delta$ of $G$ is inner
to a {\em unique} distinguished automorphism $\delta_0$. Necessarily
the order of $\delta_0$ divides the order of $\delta$ (where we make
the conventions that any nonzero natural number divides infinity, and
infinity divides itself). If in addition $\delta$ is semisimple (for
example, if $\delta$ has finite order), then $\delta$ is conjugate by
$G$ to an automorphism $\Ad(h)\delta_0$, for some 
(usually not unique) $h\in H$.
\end{proposition}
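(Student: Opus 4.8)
The plan is to deduce all three assertions from the structure theory of automorphisms of a connected reductive group, organized around the splitting of the outer-automorphism sequence. Recall the isomorphism theorem: restriction to the based root datum $\Psi_0$ attached to $(B,H)$ gives an exact sequence $1\to\Int(G)\to\Aut(G)\to\Aut(\Psi_0)\to 1$, and the distinguished automorphisms furnish a splitting $s\colon\Aut(\Psi_0)\to\Aut(G)$ whose image is exactly $\Aut(G,\{X_\alpha\})$. Write $\bar\delta$ for the image of $\delta$ in $\Aut(\Psi_0)$. For existence in (1), set $\delta_0=s(\bar\delta)$; this is distinguished, and $\overline{\delta\delta_0^{-1}}=1$, so $\delta\delta_0^{-1}\in\Int(G)$, i.e. $\delta$ is inner to $\delta_0$. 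For uniqueness, if $\delta_0'$ is distinguished and inner to $\delta$, then $\delta_0'\delta_0^{-1}$ is both distinguished and inner; but an inner automorphism $\Int(g)$ preserving $B$ and $H$ forces $g\in N_B(H)=H$, and the requirement that it permute the lines $\mathbb{C}X_\alpha$ then forces $\alpha(g)=1$ for every simple root $\alpha$. Since the simple roots span the root lattice this gives $g\in Z(G)$ and $\Int(g)=1$, whence $\delta_0'=\delta_0$.

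For (2), since $s$ is a group homomorphism, $\delta_0=s(\bar\delta)$ has the same order as $\bar\delta\in\Aut(\Psi_0)$, and this order is finite because $\Aut(\Psi_0)$ is finite (it is essentially the automorphism group of the Dynkin diagram). If $\delta$ has finite order $n$, then $\bar\delta^{\,n}=\overline{\delta^n}=1$, so $\mathrm{ord}(\delta_0)=\mathrm{ord}(\bar\delta)$ divides $n$; if $\delta$ has infinite order the conclusion holds by the stated divisibility conventions.

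For (3), assume $\delta$ is semisimple. The essential input is Steinberg's fixed-point theorem: a semisimple automorphism of a connected reductive group normalizes some Borel subgroup and some maximal torus contained in it. Applying this to $\delta$ and using that $G$ acts transitively on such Borel--torus pairs, we may conjugate $\delta$ by an element of $G$ so that $\delta(B)=B$ and $\delta(H)=H$. Then $\delta$ permutes the simple root spaces $\mathfrak{g}_\alpha$ according to the diagram automorphism $\bar\delta$, exactly as the distinguished representative $\delta_0=s(\bar\delta)$ does; consequently $\delta\delta_0^{-1}$ is an inner automorphism preserving $(B,H)$, and as in the uniqueness argument it equals $\Int(h)$ for some $h\in H$. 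Thus $\delta=\Int(h)\delta_0=\Ad(h)\delta_0$, as required.

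The main obstacle is (3): everything else is formal once the splitting $s$ and the finiteness of $\Aut(\Psi_0)$ are in hand, but reducing to the case where $\delta$ stabilizes the chosen Borel and torus genuinely requires the semisimplicity hypothesis and rests on Steinberg's theorem. Without semisimplicity the automorphism need not normalize any Borel--torus pair, and the clean normal form $\Ad(h)\delta_0$ can fail.
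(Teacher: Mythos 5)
Your argument is correct in substance, and there is an important point of comparison to make: the paper does not prove this proposition at all — it quotes it from Springer's Corvallis article (\cite{Spr}, Corollary 2.14). So what you have done is reconstruct the standard proof behind that citation, and your organization matches it: parts (1) and (2) are formal consequences of the exactness and splitting of $1\to\Int(G)\to\Aut(G)\to\Aut(\Psi_0)\to 1$, which is precisely the content of the cited corollary, while part (3) is genuinely deeper and rests, as you say, on Steinberg's theorem that a \emph{semisimple} automorphism stabilizes a Borel subgroup together with a maximal torus in it. Your reduction from Steinberg's theorem to the normal form $\Ad(h)\delta_0$ — conjugate $\delta$ to preserve $(B,H)$, note $\delta\delta_0^{-1}$ is inner and preserves $(B,H)$, then use $N_G(B)=B$ and $N_B(H)=H$ — is exactly right, as is your closing remark that the normal form can fail without semisimplicity (an inner automorphism $\Int(u)$ with $u\ne 1$ unipotent normalizes no maximal torus at all, since a unipotent element of $N_G(T)$ maps to a unipotent element of the finite group $N_G(T)/T$, hence lies in $T$).

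Two blemishes, neither fatal. First, your parenthetical claim that $\Aut(\Psi_0)$ is finite is false when $G$ has a positive-dimensional central torus: for $G=(\mathbb{C}^\times)^n$ there are no roots and $\Aut(\Psi_0)\cong GL(n,\mathbb{Z})$. (Finiteness does hold for semisimple $G$, where an automorphism of $X^*$ preserving the simple roots is determined by the induced diagram permutation.) Fortunately you never use this: the divisibility in (2) needs only that $s$ is an injective homomorphism, so $\mathrm{ord}(\delta_0)=\mathrm{ord}(\bar\delta)$ divides $\mathrm{ord}(\delta)$ when the latter is finite, and the infinite case is covered by the stated convention. Second, in the uniqueness step, permuting the \emph{lines} $\mathbb{C}X_\alpha$ does not force $\alpha(g)=1$; indeed $\Ad(g)$ for any $g\in H$ preserves every such line. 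What forces $\alpha(g)=1$ is the actual definition of distinguished in Definition \ref{def:pindist}, namely that the chosen \emph{vectors} $X_\alpha$ are permuted: since $\Ad(g)X_\alpha=\alpha(g)X_\alpha$ lies on the line through $X_\alpha$, and the $X_\beta$ are linearly independent, a permutation of the set $\{X_\beta\}$ must fix each $X_\alpha$, giving $\alpha(g)=1$ and hence $g\in Z(G)$. With those wordings repaired, the proof is complete.
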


In case $\theta_0$ has order one or two, the proposition says that
{\em every automorphism $\theta$ of $G$ inner to $\theta_0$ may be realized by
  the conjugation action of an element $\xi$ of the nonidentity coset
  $G\xi_0$ of the corresponding distinguished extended group}. The
difference from \eqref{e:simpleextgroup} is that, even if
$\theta^2=1$, the element $\xi^2$ may be a {\em nontrivial} element of
$Z(G)$. This turns out to be a small price to pay for having a single
extended group to work with (as $\theta$ varies over an inner class).

The Cartan involution $\theta$ (and therefore the extended group
${}^{\text{ex}} G$) 
is playing a double role in Corollary \ref{cor:extgrouprep}: first,
specifying the real form $G({\mathbb R})$; and second, specifying an
automorphism of $G({\mathbb R})$ by which we wish to twist
representations. It will be convenient to separate these two roles: to
study the twisting of representations of $G({\mathbb R})$ by a second
automorphism $\delta$. 

Section \ref{sec:setting} establishes the
required notation for ``doubly extended groups,'' and recalls also
Langlands' L-group. Section \ref{sec:atlasparameters} recalls from
\cite{algorithms} a formulation of the Langlands classification
well-suited to calculation.  Section \ref{sec:twist} computes the
twisting action of $\delta$ on representations. The idea here (exactly
as in the original work of Knapp and Zuckerman recorded in
\cite{overview}) is that this is a fairly elementary inspection of the
twisting action on {\em parameters} for representations. 

Section \ref{sec:extparam} describes a way to add information to a
$\delta$-fixed 
parameter---essentially choices of elements
$h_{\mathfrak n}$ and $\chi_\Pi(h_{\mathfrak n})$ discussed in
\eqref{se:xiPi}---to specify a representation of the
corresponding extended group ${}^\delta G({\mathbb R})$. Particularly
because the extended group element $h_{\mathfrak n}$ is not unique,
the question of when two of these extended representations are
equivalent is a bit subtle; Section
\ref{sec:equiv} answers this question.

In this way we are able to write explicitly a basis (not just a basis
defined up to sign) for the Hecke module considered in
\cite{LVq}. Precise formulas for the action of Hecke algebra
generators on the basis are written in Section \ref{sec:hecke}. Each
such formula involves one to four basis vectors in the module. In
\cite{LVq} it was shown that these one to four basis vectors could be
chosen so that the action of the generator was given by a specified
matrix (of size one to four). A typical example (the {\em only}
example in the original paper \cite{KL}) is
\begin{equation}\label{e:Ts}
\begin{pmatrix} 0 & q \\ 1 & q-1 \end{pmatrix}
\end{equation}

The technical problem that led to this paper is that {\it these nice
  choices of basis vectors cannot be made consistently as the Hecke
  algebra generator varies}.  The result is that if we fix a single
choice of basis for the Hecke module, then the actions of some of the
Hecke algebra generators will be given by matrices made of blocks not
only like \eqref{e:Ts}, but also by conjugates of such a matrix by a
diagonal matrix with entries $\pm 1$. A typical example is
\begin{equation}\label{e:Tssign}
\begin{pmatrix} 0 & -q \\ -1 & q-1 \end{pmatrix}
\end{equation}
The point of the formulas in Section \ref{sec:hecke} is to say
precisely where the minus signs must go. In order to do this, one
needs to say how to manipulate our extended parameters to get the nice
basis vectors discussed in \cite{LVq}.  There are two cases where this
manipulation is somewhat more complicated, and they are described in
detail in Sections \ref{sec:2i12} and \ref{sec:2Ci}.
Ultimately this gives an explicit algorithm for computing the polynomials 
of \cite{LVq}, which is being implemented in the {\tt atlas of Lie groups and Representations} software \cite{atlas}.
The application to our computation of Hermitian forms is 
\cite{herm}*{Theorem 19.4}. 

A guiding principle in formulating these results is the fundamental
duality theorem originating in \cite{KL}*{Theorem 3.1}, and extended
to Harish-Chandra modules in \cite{IC4}. Section  \ref{sec:dual}
describes how to prove for this for the Hecke modules in the twisted setting.
The heart of the
proof in every case is that a ``transpose'' of one Hecke algebra
action is equal to another Hecke algebra action; explicitly, that the
transpose of the matrix giving an action of a generator is equal to
the matrix giving the action of the same generator on a different
module. That such a statement is true up to signs was clear from
\cite{LVq}; with the specification of the signs in this paper we are
able to prove it completely.

\section{Setting}\label{sec:setting}
\setcounter{equation}{0}

Our first goal is to understand which representations are fixed by a
given outer automorphism, and how to 
to write down the corresponding representations of the extended group.
We begin by setting up some notation in this section, discuss the {\tt
  atlas} parametrization of representations in Section
\ref{sec:atlasparameters}, and the action of twisting on these
parameters in Section \ref{sec:twist}.

We start  with a connected complex reductive algebraic group $G$,
equipped with a pinning (Definition \ref{def:pindist}). 
\begin{subequations}\label{se:notation}
Acting on this we have two commuting distinguished involutive automorphisms
\begin{equation}\label{e:auts}
\xi_0\colon (G,B,H) \rightarrow (G,B,H), \qquad \delta_0\colon (G,B,H)
\rightarrow (G,B,H), 
\end{equation}
satisfying
\begin{equation}\label{e:autreq}
\xi_0(X_\alpha) = X_{\xi_0(\alpha)},\quad \delta_0(X_\alpha) =
X_{\delta_0(\alpha)}  \qquad (\alpha,\ \xi(\alpha),\ \delta_0(\alpha) \in \Pi).
\end{equation}
See Definition \ref{def:pindist} and
\cite{ABV}*{p. 34 or p. 51}. 

The automorphism $\xi_0$ defines the inner class of real forms under
consideration; it is the unique Cartan involution in the inner class
which is distinguished, and is the Cartan involution of the  ``most
compact''  real form in the inner class. The automorphism $\delta_0$ defines the
twisting of representations that we will consider. Since any
automorphism is inner to a distinguished one there is no loss in
assuming $\delta_0$ is distinguished. 

We will abuse notation and use these automorphisms to define a
semidirect product of $G$ with the Klein 4-group $({\mathbb
  Z}/2{\mathbb Z})^2$:
\begin{equation}\label{e:doublyextended}
{}^\Delta G = G\rtimes \{1,\xi_0,\delta_0,\xi_0\delta_0\}.
\end{equation}
The superscript
$\Delta$ is supposed to suggest ``double.''  The abuse of notation is
that from now on $\xi_0$ may denote an element of ${}^\Delta G$ (which
by definition is never the identity) {\em or} an automorphism of $G$
(which is the identity exactly when $\xi_0$ defines the equal rank
inner class).

It is helpful to use also the corresponding {\em large}
(\cite{ABV}*{p. 51}) involutive automorphism. As in  \cite{ABV}  we write
\begin{equation}\label{e:exp}
e\colon {\mathfrak h} \rightarrow H, \qquad e(X) = \exp(2\pi i X); 
\end{equation}
this is a surjective homomorphism from the Lie algebra onto $H$, with
kernel equal to $X_*(H)$. Also we write
\begin{equation}
\rho = \frac{1}{2} \sum_{\beta \in R^+(G,H)} \beta, \quad \rho^\vee = \frac{1}{2} \sum_{\beta \in R^+(G,H)} \beta^\vee.
\end{equation} 
Then $\alpha(e(\rho^\vee/2)) = -1$ for every simple root $\alpha$; so
if we define 
\begin{equation}\label{e:large}
\xi_1 = e(\rho^\vee/2)\xi_0 \in H\xi_0, 
\end{equation} 
then this element of ${}^\Delta G$ acts on $G$ as an involutive
automorphism satisfying
\begin{equation}
\xi_1|_H = \xi_0|H, \quad \xi_1(X_\alpha) = -X_{\xi(\alpha)}, 
\qquad (\alpha \in \Pi).
\end{equation}
This element satisfies
\begin{equation}\label{e:xi1squared}
\xi_1^2 =  e(\rho^\vee) =_{\text{def}} z(\rho^\vee) \in Z(G),
\end{equation}
a central element of order (one or) two.
\end{subequations} 

\begin{subequations}\label{se:Lgroup}
Our torus $H\subset G$ has a well-defined (that is, uniquely defined
up to unique isomorphism) dual torus
\begin{equation}\label{e:dualtorus}
{}^\vee H = X^*(H) \otimes_{\mathbb Z} {\mathbb C}^\times.
\end{equation}
The characters and cocharacters of ${}^\vee H$ are naturally
identified with the
cocharacters and characters of $H$:
\begin{equation}\label{e:duallattices}
X^*({}^\vee H) \simeq X_*(H), \qquad X_*({}^\vee H) \simeq X^*(H).
\end{equation}
The isomorphisms here are canonical, and respect the pairings into
${\mathbb Z}$. 

The automorphisms $\xi_0$ and $\delta_0$ of $H$ (cf. \eqref{se:notation}) 
define automorphisms ${}^t\xi_0$ and ${}^t\delta_0$ of $X^*(H)$, and
therefore
\begin{equation}\label{e:dualHauts}
{}^\vee\xi_0 =_{\text{def}} -w_0{}^t\xi_0, \qquad {}^\vee\delta_0
=_{\text{def}} {}^t\delta_0
\end{equation}
of $X^*(H)$ and of ${}^\vee H$. Here we write 
\begin{equation}\label{e:long}
w_0 \in W(G,H) \simeq W({}^\vee G,{}^\vee H)
\end{equation}
for the unique longest element, which carries $R^+(G,H)$ to
$-R^+(G,H)$. {\em Notice the presence of a minus
sign in the definition of ${}^\vee \xi_0$ (partly ``corrected'' by the
factor of $w_0$) and its absence in the
definition of ${}^\vee \delta_0$.} This is the way things are. One way
to understand it is that $\xi$ is related to the Cartan involution for
$G$, which is less fundamental and natural than the Galois action for a
real form. The Cartan involution acts on the root datum (with respect
to a real $\theta$-stable Cartan) by the {\em negative} of the Galois
action on the root datum; and it is this minus sign which accounts for
the minus sign in \eqref{e:dualHauts}.

Now we construct a dual group ${}^\vee G \supset {}^\vee H$, whose
root datum is dual to that of $G$:
\begin{equation}\label{e:veeG}
{}^\vee G \supset {}^\vee B ={}^\vee H {}^\vee N, \quad R^+({}^\vee
G,{}^\vee H) = \{\beta^\vee \mid \beta \in R^+(G,H)\}.
\end{equation}
We choose also a pinning: nonzero root vectors
\begin{equation}\label{e:veeGpin}
\{X_{\alpha^\vee} \mid \alpha^\vee \in \Pi^\vee\} \subset
{}^\vee{\mathfrak n}.
\end{equation}
Such a choice of dual group and pinning is unique up to unique
isomorphism. Because the automorphisms ${}^\vee\xi_0$  and $^\vee
 \delta_0$ 
respect the based root datum, they extend uniquely to (distinguished)
automorphisms
\begin{equation}\label{e:dualGauts}\begin{aligned}
{}^\vee\xi_0\colon ({}^\vee G,{}^\vee G,{}^\vee H)
&\rightarrow ({}^\vee G,{}^\vee B,{}^\vee H), \quad
{}^\vee\xi_0(X_{\alpha^\vee}) = X_{-w_0\xi_0(\alpha)^\vee}\\
{}^\vee\delta_0\colon ({}^\vee G,{}^\vee B,{}^\vee H)
&\rightarrow ({}^\vee G,{}^\vee B,{}^\vee H), \quad
{}^\vee\delta_0(X_{\alpha^\vee}) = X_{\delta_0(\alpha)^\vee},
\end{aligned}\end{equation}
Automatically ${}^\vee\delta_0$ and ${}^\vee\xi_0$ commute. By
definition the {\em L-group of $G$} is the semidirect product
\begin{equation}\label{e:Lgroup}
{}^L G = {}^\vee G \rtimes \{1,{}^\vee\xi_0\}. 
\end{equation}
(A little more
precisely, it is this group endowed with the ${}^\vee G$-conjugacy
class of $({}^\vee B,\{X_{\alpha^\vee}\},{}^\vee\xi_0)$.)

Just as for $G$, it is convenient to have in hand also the {\em large}
representative
\begin{equation}\label{e:duallarge}
{}^\vee \xi_1 = e(\rho/2)\xi_0, \qquad
{}^\vee\xi_1(X_{\alpha^\vee}) = -X_{-w_0\xi_0(\alpha)^\vee}. 
\end{equation}
Again this element satisfies
\begin{equation}\label{e:dualz}
{}^\vee\xi_1^2 = e(\rho) =_{\text{def}} z(\rho) \in Z({}^\vee G),
\end{equation}
a central element of order (one or) two.

We say a little more about the identification of Weyl groups in
\eqref{e:long}. Define
\begin{equation}\label{e:WGH}\begin{aligned}
s_\alpha&\in \Aut(X_*(H)),  \quad s_\alpha(t)= t -
\langle\alpha,t\rangle\alpha^\vee\\
W(G,H) &= \langle s_\alpha\mid \alpha\in \Pi\rangle \subset \Aut(X_*(H)).
\end{aligned}
\end{equation}
Then the identification 
$$ \Aut(X_*(H)) \supset W(G,H)\simeq W({}^\vee G,{}^\vee H) \subset
\Aut(X^*(H))$$ 
is given by
\begin{equation}
s_\alpha \mapsto s_{\alpha^\vee},\qquad w\mapsto {}^t w^{-1}.
\end{equation}
\end{subequations}

\section{Atlas  Parameters}
\label{sec:atlasparameters}

The basic reference for this section is \cite{algorithms}.

\begin{subequations}
\label{se:strongreal}
As explained after Proposition \ref{prop:pindist}, we are going to
represent involutive automorphisms of $G$ (briefly, 
{\em involutions}) by the conjugation action of elements of
$G\xi_0$. For this purpose we introduce the set of {\em strong
  involutions}:
\begin{equation}\label{e:calI}
{\mathcal I} = \{\xi \in G\xi_0 \mid \xi^2 \in Z(G)\}.
\end{equation}
If $\xi\in\mathcal I$ then 
\begin{equation}\label{e:thetaK}
\theta_\xi=\int(\xi), \qquad 
K_\xi=G^{\theta_\xi} = \Cent_G(\xi).
\end{equation}
is an involutive automorphism of $G$, in the inner class of $\xi_0$;
and every such involutive automorphism arises this way. We need to
allow $\xi^2 \in Z(G)$ (and not merely $\xi^2=1$) because not every
involution in the inner class of $\xi_0$ arises from an element $\xi$
of order $2$. 
(But we {\em can} easily arrange for $\xi$ to have order a power of $2$.)  The
central element
\begin{equation}\label{e:centralcocharxi}
z = \xi^2 \in Z(G)
\end{equation}
is called the {\em central cocharacter} of the strong involution $\xi$.

A {\em strong real form} of $G$ is a $G$-conjugacy class $\mathcal
C\subset \mathcal I$. The central cocharacter is constant on
${\mathcal C}$, so we may write it as
\begin{equation}\label{e:centralcocharC}
z({\mathcal C}) = \xi^2 \in Z(G) \qquad (\xi \in {\mathcal C}).
\end{equation}
The various involutions $\{\theta_\xi\mid \xi\in {\mathcal C}\}$ form
a single $G$-conjugacy class of involutive automorphisms of $G$, so
the subgroups $\{K_\xi \mid \xi\in {\mathcal C}\}$ are a single
$G$-conjugacy class as well. If $G$ is adjoint, then these three
$G$-conjugacy classes (strong involutions, involutions, and fixed
point subgroups) are identified by the natural maps
$$\xi \rightarrow \theta_\xi \rightarrow K_\xi.$$
If $G$ is not adjoint, however, the first of these maps need not be
one-to-one: choosing a strong involution is more restrictive than
choosing an involution.
\end{subequations} 

Here is the reason that strong involutions and strong real forms are useful.
\begin{proposition}\label{prop:switchstrong} Suppose $\xi$ and $\xi'$
  are strong involutions in the same strong real form---that is,
  conjugate by $G$ (\eqref{se:strongreal}). Then there is a {\em canonical}
  bijection from equivalence classes of irreducible $({\mathfrak
    g},K_\xi)$-modules to equivalence classes of irreducible
  $({\mathfrak g},K_{\xi'})$-modules.
\end{proposition}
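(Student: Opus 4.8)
The plan is to produce the bijection from the single fact that $\xi$ and $\xi'$ are $G$-conjugate, and then to check that the resulting map is canonical (independent of the conjugating element up to the inner symmetries that act trivially on isomorphism classes). First I would choose $g \in G$ with $g\xi g^{-1} = \xi'$, which exists precisely because $\xi,\xi'$ lie in the same strong real form $\mathcal C$. Conjugation by $g$ then carries $\theta_\xi = \operatorname{int}(\xi)$ to $\theta_{\xi'} = \operatorname{int}(\xi')$, hence carries $K_\xi = G^{\theta_\xi} = \operatorname{Cent}_G(\xi)$ isomorphically onto $K_{\xi'} = \operatorname{Cent}_G(\xi')$, by \eqref{e:thetaK}. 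Since $\operatorname{Ad}(g)$ is an automorphism of the pair $(\mathfrak g, K_\xi)$ onto $(\mathfrak g, K_{\xi'})$ — it is the identity on the first $\mathfrak g$-factor only up to the inner automorphism $\operatorname{Ad}(g)$ of $\mathfrak g$, and I must track this — pullback of modules $(\rho, M) \mapsto (\rho \circ \operatorname{Ad}(g)^{-1}, M)$ sends irreducible $(\mathfrak g, K_\xi)$-modules to irreducible $(\mathfrak g, K_{\xi'})$-modules, and respects equivalence.

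The substantive point, and what makes the word \emph{canonical} meaningful, is that this assignment must not depend on the choice of $g$. So the key step is to analyze the ambiguity: if $g'$ is a second element with $g'\xi g'^{-1} = \xi'$, then $g^{-1}g' \in \operatorname{Cent}_G(\xi) = K_\xi$. I would show that two conjugating elements differing by an element $k \in K_\xi$ induce the \emph{same} map on equivalence classes. This is because the pullback functors associated to $\operatorname{Ad}(g)$ and $\operatorname{Ad}(g') = \operatorname{Ad}(g)\operatorname{Ad}(k)$ differ by precomposition with $\operatorname{Ad}(k)$, and $\operatorname{Ad}(k)$ is an automorphism of the pair $(\mathfrak g, K_\xi)$ coming from an \emph{actual group element} $k \in K_\xi$. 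For a $(\mathfrak g, K_\xi)$-module, twisting the action by $\operatorname{Ad}(k)$ for $k$ in the group $K_\xi$ gives an equivalent module (the intertwiner is the action of $k$ itself, exactly as in the usual statement that an inner automorphism preserves isomorphism classes of representations). Hence on equivalence classes the map is well-defined.

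I expect the main obstacle to be the bookkeeping around how inner automorphisms of the pair act on the category of Harish-Chandra modules, and in particular the subtlety that $\operatorname{Ad}(k)$ for $k \in K_\xi$ acts trivially on equivalence classes of $(\mathfrak g, K_\xi)$-modules even when $K_\xi$ is disconnected. Concretely one must verify that for $k \in K_\xi$ the module $(\rho \circ \operatorname{Ad}(k)^{-1}, M)$ is isomorphic to $(\rho, M)$ via $\rho(k)$; this uses the compatibility of the $\mathfrak g$-action and the $K_\xi$-action in the definition of a Harish-Chandra module, namely that the $K_\xi$-action differentiates to $\mathfrak g$-action compatibly and that $\rho(k)\rho(X)\rho(k)^{-1} = \rho(\operatorname{Ad}(k)X)$. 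Because $k$ is a genuine group element this intertwiner exists regardless of connectivity, which is exactly why we work with $\operatorname{Cent}_G(\xi)$ rather than its identity component.

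Finally I would record that the construction is a bijection by noting it is functorial and has an evident inverse built from $g^{-1}$ (conjugating $\xi'$ back to $\xi$), and that composing the map for $g$ with the map for $g^{-1}$ gives the identity on equivalence classes by the same inner-automorphism argument; symmetry in $\xi \leftrightarrow \xi'$ then finishes it.
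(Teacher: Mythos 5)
Your proposal is correct and follows essentially the same route as the paper's proof: choose $g$ conjugating $\xi$ to $\xi'$, twist modules by $\Ad(g)$, and observe that the ambiguity in $g$ lies in $\operatorname{Cent}_G(\xi)=K_\xi$, whose twisting action on equivalence classes of $({\mathfrak g},K_\xi)$-modules is trivial via the intertwiner $\rho(k)$. The paper's argument is exactly this, merely stated more briefly; your explicit verification that $\rho(k)$ intertwines even when $K_\xi$ is disconnected is a detail the paper leaves implicit.
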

\begin{proof}
Suppose $g\in G$ conjugates $\xi$ to $\xi'$. Then twisting by $g$
carries $({\mathfrak g},K_\xi)$-modules to $({\mathfrak
  g},K_{\xi'})$-modules. So far this would have worked using just
involutive automorphisms $\theta$ and $\theta'$. What is special about
strong involutions is that {\em the stabilizer of $\xi$ in $G$ is
  precisely $K_\xi$} (whereas the stabilizer of $\theta_\xi$ can be bigger). 
 This means that the coset $gK_\xi$ is uniquely
determined. Because twisting by $K_\xi$ acts trivially on equivalence
classes of $({\mathfrak g},K_\xi)$-modules, it follows that the
bijection we have defined is unique. 
\end{proof}

Using these unique bijections, one can make a well-defined set of
equivalence classes of irreducible modules attached to each strong
real form ${\mathcal C}$.  These equivalence classes are what we will
study.

In classical representation theory, one fixes once and for all a
Cartan involution $\theta$ of $G$, defining a single symmetric
subgroup $K = G^\theta$. The theory of $({\mathfrak g},K)$-modules
proceeds by defining and studying (for example)
various maximal tori preserved by $\theta$. A central idea in the
{\tt atlas} algorithms is instead to fix the maximal torus $H \subset
G$, and to study various Cartan involutions preserving it. There are
hints of this idea in the classical theory. For example, it is common
in introductory texts to describe the principal series
representations of $SL(2,{\mathbb R})$, because these are closely
related to the standard (diagonal) split maximal torus. When
discussing the discrete series, it is common to consider instead the
(isomorphic) real group $SU(1,1)$, because the discrete series are
closely related to the standard (diagonal) compact maximal torus of
$SU(1,1)$.

\begin{subequations}\label{se:X}
In order to pursue this idea, we need to single out the strong
involutions preserving our fixed $H$. These are
\begin{equation}
\label{e:X}
\begin{aligned}
\wt \X&=\mathcal I\cap\Norm_{G\xi_0}(H)=\{\xi\in \Norm_{G\xi_0}(H)\mid
\xi^2\in Z(G)\}\\ 
\X&=\wt\X/H\quad\text{(quotient by conjugation action of $H$)}\\
\end{aligned}
\end{equation}
If $z \in Z(G)$, we write
\begin{equation}\label{e:Xz}
\begin{aligned}
\wt \X_z &=\{\xi\in \Norm_{G\xi_0}(H)\mid \xi^2 = z\}\\ 
\X_z&=\wt\X_z/H\quad\text{(quotient by conjugation action of $H$)}\\
\end{aligned}
\end{equation}
for the subset of elements of central cocharacter $z$.

Write $p:\wt\X\rightarrow\X$ for the projection map.

For $x\in\X$ let $\theta_x$ be the restriction of $\theta_\xi$ to $H$
for any $\xi\in p\inv(x)$.  The central technical difficulty we face
is that {\em the involution $\theta_x$ of $H$ only depends on $x$, but
  the extension $\theta_\xi$ to $G$ depends on the choice of
  representative $\xi$.} 

It is easy to check that
\begin{equation}
\label{e:wx}
\theta_x = w_x\xi_0 \in \Aut(H) \quad(w_x\in W(G,H))
\end{equation}
for some {\em twisted involution} $w_x$ with respect to $\xi_0$:
\begin{equation}\label{e:twistedinv}
w_x\xi_0(w_x) = 1.
\end{equation}
Conversely, if $w\in W$ is any twisted involution with
respect to $\xi_0$, then 
\begin{equation}\label{e:thetaw}
\theta_w =_{\text{def}} w\xi_0 \in \Aut(H)
\end{equation}
is an involutive automorphism of $H$ (or, equivalently, of
$X_*(H)$). We define
\begin{equation}\label{e:Xw}
\X^w = \{x\in \X \mid w_x = w\}, \qquad \wt\X^w = p^{-1}\X^w,
\end{equation}
so that $\X$ is the disjoint union over twisted involutions $w$ of the
various $\X^w$.

The definition \eqref{e:wx} of $w_x$ can be restated as
\begin{equation}\label{e:torusxi}
\xi = s_1\sigma_{w_x}\xi_0 \qquad \text{(some $s_1\in H$).}
\end{equation}
Here $\sigma_{w_x}$ is the Tits group representative of $w_x$ (see
\eqref{e:distrep}).  
We call $s_1$ the {\em unnormalized torus part} of $\xi$.
We compute
\begin{equation}\label{e:xisquared}
\begin{aligned}
\xi^2 &= s_1\sigma_{w_x} \xi_0 s_1\sigma_{w_x} \xi_0 \\
&= s_1\theta_{w_x}(s_1) \sigma_{w_x}\sigma_{\xi_0(w_x)} \\
&= s_1\theta_{w_x}(s_1) \sigma_{w_x}\sigma_{w_x^{-1}}\\
&= s_1\theta_{w_x}(s_1) e((\rho^\vee - \theta_x\rho^\vee)/2)\quad\text{(by
  Proposition \ref{p:bicycle})}\\
&=(s_1e(-\rho^\vee/2))\theta_{w_x}(s_1e(-\rho^\vee/2)) e(\rho^\vee).
\end{aligned}
\end{equation}
We call $s=s_1e(-\rho^\vee/2)$ the {\em normalized torus part} of $\xi$:
\begin{equation}\label{e:normtorusxi}\begin{aligned}
\xi &= se(\rho^\vee/2)\sigma_{w_x}\xi_0 = s\xi_w\qquad \text{(some $s\in H$).} \\
\xi^2 &= s\theta_{w_x}(s) z(\rho^\vee).
\end{aligned}\end{equation}
Here we have used the definition of $\xi_w$ in the following proposition.
\end{subequations} 

\begin{proposition}\label{prop:basestrong}
For every $\xi_0$-twisted involution $w\in W(G,H)$ there is a
basepoint (the one with trivial {\em normalized} torus part)
$$
\xi_w =_{\text{def}} e(\rho^\vee/2)\sigma_{w} \xi_0\in\wt\X$$
of central cocharacter $z(\rho^\vee)$ (see \eqref{e:centralcocharxi}):
$$\xi_w^2 = e(\rho^\vee)= z(\rho^\vee).$$
This basepoint is conjugate by $G$ to the large representative $\xi_1$
of \eqref{e:large}.
\end{proposition}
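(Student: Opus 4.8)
The statement has two parts: the computation of $\xi_w^2$, and the conjugacy of $\xi_w$ to $\xi_1$. The first part I would simply read off from the general computation already recorded in \eqref{e:xisquared}--\eqref{e:normtorusxi}. By definition $\xi_w = e(\rho^\vee/2)\sigma_w\xi_0$ is exactly the element $\xi = s\,e(\rho^\vee/2)\sigma_{w}\xi_0 = s\xi_w$ of \eqref{e:normtorusxi} with trivial \emph{normalized} torus part $s=1$, so the displayed formula $\xi^2 = s\,\theta_w(s)\,z(\rho^\vee)$ collapses to $\xi_w^2 = z(\rho^\vee) = e(\rho^\vee)$ at once. The only real input is the Tits-representative identity $\sigma_w\sigma_{w^{-1}} = e((\rho^\vee-\theta_w\rho^\vee)/2)$ of Proposition \ref{p:bicycle}, used together with $\xi_0(w)=w^{-1}$ from the twisted-involution condition \eqref{e:twistedinv}. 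This also shows $\xi_w\in\wt\X$: indeed $\sigma_w\in\Norm_G(H)$ and $\xi_0$ normalizes $H$, while $\xi_w^2 = z(\rho^\vee)\in Z(G)$ by \eqref{e:xi1squared}.

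For the conjugacy I would argue by induction on the length $\ell(w)$ of the twisted involution. The base case is $w=1$, where $\sigma_1=1$ and hence $\xi_{w}=e(\rho^\vee/2)\xi_0$ is literally the large representative $\xi_1$ of \eqref{e:large}. For the inductive step I would invoke the standard reduction theory of $\xi_0$-twisted involutions: any $w\neq 1$ admits a simple reflection $s=s_\alpha$ giving a length-decreasing elementary move to a shorter twisted involution $w'$, of one of two types---either twisted conjugation $w'=sw\xi_0(s)$ with $\ell(w')=\ell(w)-2$, or the descent $w'=sw$ with $\ell(w')=\ell(w)-1$ (which occurs precisely when $sw\xi_0(s)=w$). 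Since the identity is the unique minimal twisted involution, iterating such moves reduces any $w$ to $1$.

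The heart of the argument is then to show that each elementary move realizes $\xi_w$ and $\xi_{w'}$ as $G$-conjugate. For a twisted-conjugation move I would conjugate $\xi_w$ by the Tits representative $\sigma_s$ (see \eqref{e:distrep}): using $\sigma_s e(\rho^\vee/2)=e(s\rho^\vee/2)\sigma_s$, the distinguishedness relation $\xi_0\sigma_s\xi_0^{-1}=\sigma_{\xi_0(s)}$, and the Tits quadratic relation $\sigma_s^2=e(\alpha^\vee/2)$, I would collect the resulting torus factor and check, via Proposition \ref{p:bicycle}, that it is exactly the one making $\sigma_s\xi_w\sigma_s^{-1}$ equal to $\xi_{w'}$ with trivial normalized torus part. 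For a descent move the root $\alpha$ is imaginary for $\theta_w$, and I would instead conjugate by a suitable element of the rank-one subgroup $\langle U_{\pm\alpha},H\rangle$---the analogue, for that $SL(2)$, of the $45^\circ$ rotation carrying $\diag(i,-i)$ to $\left(\begin{smallmatrix}0 & i\\ i & 0\end{smallmatrix}\right)$---to pass between the compact and split torus realizations, again verifying that the normalized torus part of the conjugate vanishes.

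The main obstacle is precisely this last bookkeeping: under each move one must confirm that the accumulated product of $e(\rho^\vee/2)$-factors, the $\alpha^\vee(-1)$-factor from $\sigma_s^2$, and the cocycle correction of Proposition \ref{p:bicycle} all cancel, so that the conjugate of $\xi_w$ has trivial normalized torus part and hence genuinely equals $\xi_{w'}$ rather than some torus-translate of it. The choice of the large representative $\xi_1$---equivalently, the $e(\rho^\vee/2)$ normalization built into $\xi_w$---is exactly what arranges these signs to cancel; everything else is the structure theory of twisted involutions already in play around Proposition \ref{prop:pindist}.
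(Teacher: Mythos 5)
Your first assertion is handled exactly as the paper handles it: the authors' entire printed proof is that the formula for $\xi_w^2$ is immediate from \eqref{e:xisquared}, and they then explicitly omit the conjugacy argument. So for the substantive half of the proposition there is nothing in the paper to compare against; your induction on the length of the twisted involution is a genuine supplement, and it does work. Moreover, both elementary moves close up with \emph{exact} equality, so the ``torus-translate'' worry you raise at the end dissolves. For a length-two move $w'=s_\alpha w\xi_0(s_\alpha)$, $\ell(w')=\ell(w)-2$: commuting $\sigma_\alpha$ past $e(\rho^\vee/2)$ produces a factor $e(-\alpha^\vee/2)$, and the two length-decreasing instances of \eqref{e:braid2} produce $m_\alpha=e(\alpha^\vee/2)$ (the factor $m_{\xi_0(\alpha)}$ from the right-hand move cancels against $\sigma_{\xi_0(\alpha)}^{-2}$), so $\sigma_\alpha\xi_w\sigma_\alpha^{-1}=e(\rho^\vee/2)e(-\alpha^\vee/2)m_\alpha\sigma_{w'}\xi_0=\xi_{w'}$ on the nose; note that only \eqref{e:braid2} is needed here, not Proposition \ref{p:bicycle}. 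For a length-one move $w=s_\alpha w'$, the same commutation gives $\xi_w=m_\alpha\sigma_\alpha\xi_{w'}=\sigma_\alpha^{-1}\xi_{w'}$; since Proposition \ref{prop:signs} together with $\langle\alpha,\rho^\vee\rangle=1$ gives $\theta'(X_{\pm\alpha})=-X_{\pm\alpha}$ for $\theta'=\Ad(\xi_{w'})$ (simple imaginary roots are automatically \emph{noncompact} for the basepoint, which is the whole point of the $e(\rho^\vee/2)$ normalization), the rotation $g=\exp\bigl(\tfrac{\pi}{4}(X_\alpha-X_{-\alpha})\bigr)$ satisfies $g^{-1}\theta'(g)=\sigma_\alpha^{-1}$, whence $g\xi_wg^{-1}=g\,\sigma_\alpha^{-1}\theta'(g)^{-1}\,\xi_{w'}=\xi_{w'}$, again exactly. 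One labeling slip to fix: in your descent move $w\to w'=s_\alpha w$ the root $\alpha$ is \emph{real} for the longer involution $\theta_w$ and imaginary for the shorter one $\theta_{w'}$; it is the target $\xi_{w'}$, not the source, that lives on the ``compact'' side of the rank-one Cayley transform. This reversal does not affect the structure of the argument.
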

\begin{proof}
The formula for $\xi_w^2$ is immediate from \eqref{e:xisquared}. We
omit the argument that $\xi_w$ is conjugate to $\xi_1$. \end{proof}

Fix a  set $S$ of representatives of the set of strong real forms:
\begin{equation}
\label{e:S}
\wt \X
\supset S\ \overset{1-1}\longleftrightarrow\  \mathcal I/G.
\end{equation}

\begin{proposition}[\cite{algorithms}*{Corollary 9.9}] \label{prop:KGB}
There is a canonical bijection
$$
\X\longleftrightarrow \prod_{\xi'\in S}  K_{\xi'}\backslash G/B
$$
The bijection restricts to 
classes on both sides of any fixed central cocharacter (see
\eqref{e:centralcocharxi}), in which case both sides are finite sets.
\end{proposition}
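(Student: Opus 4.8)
The plan is to write down the bijection explicitly, verify the two easy directions, and then isolate the single hard point. Given $x\in\X$, choose a representative $\xi\in p\inv(x)\subset\wt\X$ (notation \eqref{e:X}). Its $G$-conjugacy class is a strong real form, meeting the representing set $S$ of \eqref{e:S} in a unique element $\xi'$; since $\xi$ and $\xi'$ are conjugate, write $\xi=g\xi'g\inv$ for some $g\in G$. I would send
$$ x\ \longmapsto\ K_{\xi'}\,g\inv B\ \in\ K_{\xi'}\backslash G/B, $$
landing in the factor of $\prod_{\xi'\in S}K_{\xi'}\backslash G/B$ indexed by $\xi'$. First I would check this is well defined. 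Independence of $g$ is immediate: two choices of $g$ differ by right multiplication by $\Cent_G(\xi')=K_{\xi'}$ (notation \eqref{e:thetaK}), which is exactly absorbed on the left of $g\inv B$. Replacing $\xi$ by $h\xi h\inv$ with $h\in H$ replaces $g$ by $hg$, hence $g\inv B$ by $g\inv h\inv B=g\inv B$ since $h\in B$; so the map descends to $\X$. Because the construction uses no auxiliary choices, it is canonical in the sense asserted, compatibly with Proposition \ref{prop:switchstrong}.

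For surjectivity I would use the classical fact (Steinberg; Richardson--Springer) that \emph{every Borel subgroup contains a $\theta$-stable maximal torus} whenever $\theta$ is an involution. Fix $\xi'\in S$ and a $K_{\xi'}$-orbit $K_{\xi'}g_0\inv B$. The Borel $g_0\inv Bg_0$ contains a $\theta_{\xi'}$-stable maximal torus $T$; since $(T\subset g_0\inv Bg_0)$ and $(H\subset B)$ are $G$-conjugate, there is $g_1$ with $g_1\inv Hg_1=T$ and $g_1\inv Bg_1=g_0\inv Bg_0$. The first identity makes $g_1\inv Hg_1=T$ be $\theta_{\xi'}$-stable, so $\xi:=g_1\xi'g_1\inv$ normalizes $H$ and has $\xi^2=g_1\xi'^2g_1\inv\in Z(G)$, i.e.\ $\xi\in\wt\X$ with strong real form $\xi'$; the second identity gives $g_1\inv B=g_0\inv B$ in $G/B$. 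Hence the class of $\xi$ maps to $K_{\xi'}g_0\inv B$.

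Injectivity is the heart of the matter and where I expect the real work. If two classes have the same image then, unwinding the definitions and using $K_{\xi'}=\Cent_G(\xi')$ to absorb the $K_{\xi'}$-ambiguity, one obtains representatives $\xi_1,\xi_2\in\wt\X$ in the same strong real form with
$$ \xi_2=b\inv\xi_1 b,\qquad b\in B. $$
So the task is to upgrade a $B$-conjugacy between two elements of $\wt\X$ to an $H$-conjugacy. Both $H$ and $bHb\inv$ are then $\theta_{\xi_1}$-stable maximal tori lying in $B$, and the key rigidity statement I would prove is: \emph{if $\theta$ is an involution with $\theta(H)=H$ and $b\in B$ has $bHb\inv$ also $\theta$-stable, then $b\in G^{\theta}\cdot H$.} Granting this, write $b=kh$ with $k\in G^{\theta_{\xi_1}}=K_{\xi_1}=\Cent_G(\xi_1)$ and $h\in H$; then $\xi_2=b\inv\xi_1 b=h\inv\xi_1 h$, as needed. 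To prove the rigidity statement I would use the morphism $\tau(g)=g\inv\theta(g)$, whose fibers are the left $G^{\theta}$-cosets and which satisfies $\tau(H)=\{h\inv\theta(h)\}\subseteq H$; reducing to $b=u\in N$ it becomes the assertion that $\nu:=u\inv\theta(u)\in\Norm_G(H)$ forces $\nu\in\tau(H)$, equivalently $u\in G^{\theta}H$. Here $\theta(u)=u\nu$ is unipotent while $\nu$ normalizes $H$, and analyzing this along the root subgroups partitioned by the sign of $\theta$ on the positive roots pins $u$ down modulo $G^{\theta}H$. This normalizer-rigidity for $\theta$-stable tori in a fixed Borel is the technical core, and for it I would cite \cite{algorithms}*{Corollary 9.9}.

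Finally I would record the grading by central cocharacter \eqref{e:centralcocharxi}. Since $z=\xi^2$ is constant on each $G$-conjugacy class it is constant on each strong real form, so the bijection matches $\X_z$ with the union of those factors $K_{\xi'}\backslash G/B$ for which $\xi'^2=z$; in particular it restricts to each fixed $z$. Both sides are then finite: the right-hand side because the (possibly disconnected) reductive symmetric subgroup $K_{\xi'}$ has finitely many orbits on the flag variety $G/B$ (Richardson--Springer), and the left-hand side because there are finitely many $\xi_0$-twisted involutions $w$, and for each the normalized-torus-part description \eqref{e:normtorusxi} cuts out $\wt\X^w_z/H$ by the single equation $s\,\theta_w(s)\,z(\rho^\vee)=z$, whose solution set modulo $(1-\theta_w)H$ is a torsor under the finite group of $\theta_w$-anti-invariants of $H$ modulo $(1-\theta_w)H$.
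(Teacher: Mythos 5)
The paper itself gives no argument for Proposition \ref{prop:KGB}: it is quoted verbatim from \cite{algorithms}*{Corollary 9.9}. So your proposal must stand on its own, and its architecture is the right one: the map $x\mapsto K_{\xi'}g\inv B$, the two well-definedness checks, the surjectivity argument from the fact that every Borel subgroup contains a $\theta$-stable maximal torus, and the reduction of injectivity to the rigidity statement (``$b\in B$ and $bHb\inv$ $\theta$-stable imply $b\in G^\theta H$'') are all correct. The genuine gap is at exactly the point you yourself call the technical core: your justification of the rigidity statement is the phrase ``analyzing this along the root subgroups partitioned by the sign of $\theta$ on the positive roots pins $u$ down,'' which is not an argument (the root groups $U_\alpha$ do not commute, $\theta(N)\ne N$ in general, and no sign bookkeeping by itself controls when $u\inv\theta(u)$ lands in $\Norm_G(H)$), followed by the proposal to cite \cite{algorithms}*{Corollary 9.9} for it. That citation is circular: Corollary 9.9 of \cite{algorithms} \emph{is} the proposition you are proving, so the one step carrying all the content has been outsourced to the statement itself.

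The rigidity statement is true, and admits a short noncircular proof that would close the gap. Reduce to $b=u\in N$ as you did, write $\theta=\theta_{\xi_1}$, and set $M=B\cap\theta(B)$, a $\theta$-stable connected solvable subgroup with $\theta$-stable unipotent radical $U_M$. Both $H$ and $T=uHu\inv$ are $\theta$-stable maximal tori contained in $B$, hence contained in $M$, hence maximal tori of $M$; by conjugacy of maximal tori in connected solvable groups, $T=vHv\inv$ for some $v\in U_M$. Then $v\inv u\in\Norm_G(H)\cap B=H$. Moreover $c=v\inv\theta(v)\in U_M$ is unipotent and normalizes $H$, since $cHc\inv=v\inv\theta(v)H\theta(v)\inv v=v\inv\theta(vHv\inv)v=v\inv\theta(T)v=v\inv Tv=H$. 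But the only unipotent element of $\Norm_G(H)$ is $1$: the image of $c$ in the finite group $W$ has some order $n$, so $c^n\in H$ is both unipotent and semisimple, hence $c^n=1$, hence $c=1$ in characteristic zero. Thus $v\in K_{\xi_1}$ and $u=v(v\inv u)\in K_{\xi_1}H$, as required. (Alternatively, cite Richardson--Springer, where this lemma and your surjectivity input both appear; just not the corollary under proof.) One further small point: finiteness of the right-hand side at fixed central cocharacter also needs that only finitely many $\xi'\in S$ satisfy $(\xi')^2=z$; this does follow, since each factor $K_{\xi'}\backslash G/B$ is nonempty and you have shown $\X_z$ is finite, but it should be said rather than left implicit.
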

Because of this proposition, we refer to $\X$ as the {\tt KGB}-space,
and say  $x\in \X$ is a {\tt KGB}-class. 

\begin{subequations}\label{se:torusxi}
The {\tt KGB} classes are parametrized first by a twisted involution 
$w\in W$ (see \eqref{e:wx}), and then (for each $w$) by the allowed
(twisted $H$-conjugacy classes of) normalized torus parts. Our next task is to
describe those torus parts. It is convenient to fix also a central
element $z\in Z(G)$, and to restrict attention to strong involutions
of central cocharacter $z$. According to \eqref{e:normtorusxi}, 
we are therefore seeking to solve the equation
\begin{equation}\label{e:toruspartA}
s\theta_w(s) = zz(-\rho^\vee) \qquad (\xi =
s\xi_w).
\end{equation}
Conjugation by $h\in H$ replaces the torus part $s$ by
$$s(h\theta_w(h)^{-1}),$$
so the solutions we want---elements of the {\tt KGB} space $\X$---are
cosets of the connected torus 
\begin{equation}
(1-\theta_w)H = \text{identity component of\ } H^{-\theta_w} =
  H^{-\theta_w}_0.
\end{equation} 
In order to keep track of such elements, we would like to have nice
representatives for the cosets $H/(1-\theta_w)H$. Because the Lie
algebra is the direct sum of the $+1$ and $-1$ eigenspaces of
$\theta_w$, we get 
\begin{equation}\label{e:thetacosets} 
H = [H^{\theta_w}_0][H^{-\theta_w}_0], \qquad
 [H^{\theta_w}_0]\cap
[H^{-\theta_w}_0] \subset [H^{\theta_w}_0](2).
\end{equation}

This says that every coset of $H^{-\theta_w}_0$ has a representative
in $H^{\theta_w}_0$; and that this representative is
unique up to multiplication by the finite 2-group $[H^{\theta_w}_0]
\cap [H^{-\theta_w}_0]$.  We call a coset representative in
$H^{\theta_w}_0$ {\em
  preferred}. Our immediate goal is therefore to write down all
solutions $s\in H^{\theta_w}_0$ of \eqref{e:toruspartA}.

As with many calculations in Lie theory, solving this equation
is easier on the Lie algebra. We will use the exponential map
isomorphisms 
\begin{equation}
e\colon {\mathfrak h}/X_*(H) \rightarrow H, \qquad e\colon {\mathfrak
  h}^{\theta_w}/X_*(H)^{\theta_w}  \rightarrow H^{\theta_w}_0
\end{equation}
of \eqref{e:exp}. In order to do that, we first choose a logarithm $g$
of the central cocharacter $z$:
\begin{equation}\label{e:infcochar}
z = e(g) \qquad (g\in {\mathfrak h} = X_*(H)\otimes_{\mathbb Z}
{\mathbb C}). 
\end{equation}
We say that a strong real form of central cocharacter $z$ has {\em
  infinitesimal cocharacter $g$}. It is convenient (and easy) to
arrange also
\begin{equation}
\label{e:domregdual}
 \langle\alpha,g \rangle\in\Z_{>0}\quad(\alpha\in R^+(G,H)).
\end{equation}
(Because $z$ is assumed central, roots take integer values on $g$.)

Next, we choose a logarithm $v$ for
the normalized torus part $s$:
\begin{equation}\label{e:logt}
s = e(v) \qquad (v\in {\mathfrak h}^{\theta_w}). 
\end{equation}
Now \eqref{e:toruspartA} can be written
\begin{equation}\label{e:toruspartB}
2v = v + \theta_w(v) = g -\rho^\vee - \ell \qquad 
(\text{some $\ell \in X_*(H)$}), 
\end{equation}
or
\begin{equation}\label{e:specialtoruspart}
v = (g - \rho^\vee - \ell)/2.
\end{equation}
Conversely, if $\ell\in X_*(H)$ has the property that
\begin{equation}
g - \rho^\vee + \ell \in {\mathfrak h}^{\theta_w},
\end{equation}
then $e((g-\rho^\vee - \ell)/2)$ is a preferred representative for a
normalized torus part (of some $\xi\in \wt\X$ of central cocharacter
$z$).
\end{subequations} 

We have proven the following proposition.
\begin{proposition}\label{p:rootdatumx} Fix an
infinitesimal   cocharacter $g$  
and a  $\xi_0$-twisted involution $w$. Let 
  $\theta_w = w\circ \xi_0 \in \Aut(H)$. 
The set $\X^w_g$ of {\tt KGB} classes of infinitesimal cocharacter $g$
(equivalently, of central cocharacter $z= e(g)$) with $w_x = w$
(cf.~\eqref{e:wx}) is in one-to-one correspondence with  
$$
\left\{\overline \ell\in X_*(H)/(1+\theta_w)X_*(H) \mid
  (1-\theta_w)\ell = (1-\theta_w)(g - \rho^\vee)|\right\}.
$$
This set is either empty (if $(1-\theta_w)(g-\rho^\vee)$ does not
belong to $(1-\theta_w)X_*$), or has a simply transitive action of  
$$X_*^{\theta_w}/(1+\theta_w)X_*.$$ 
This latter group is a vector space over ${\mathbb Z}/2{\mathbb Z}$,
of dimension at most the rank of $X_*$.

The corresponding $x$ has a preferred representative
(cf.~\eqref{e:thetacosets}) $\xi$ with unnormalized torus
part
$$
s_1 = e((g-\ell)/2),
$$
(see \eqref{e:torusxi}) or normalized torus part 
$$s = e((g-\rho^\vee - \ell)/2)$$
(see \eqref{e:normtorusxi}). Here $\ell\in X_*(H)$ is a representative
of $\overline\ell$. 
If we modify the element $\ell$ in its coset by adding
$(1+\theta_w)f$ (for some $f\in X_*(H)$), then
$s$ (or $s_1$) is multiplied by $e((1+\theta_w)f/2)$. That is, this
preferred choice of torus part is unique up to the image of
$1+\theta_w$ acting on the elements $H(2)$ of order 2 in $H$.  Another
formulation is that these preferred representatives $\xi$ of $x$ are a
single conjugacy class under $H(2)$.
\end{proposition}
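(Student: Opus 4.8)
The plan is to push the entire problem from the group $H$ down to the cocharacter lattice via the exponential map $e$ of \eqref{e:exp}, where every statement becomes linear algebra over $\Z$. By \eqref{e:normtorusxi}, a {\tt KGB} class in $\X^w_g$ is an $H$-conjugacy class of strong involutions $\xi = s\xi_w$ with $\xi^2 = z = e(g)$; since $\xi^2 = s\theta_w(s)z(\rho^\vee)$, this is exactly a solution $s \in H$ of \eqref{e:toruspartA}, i.e. $s\theta_w(s) = zz(-\rho^\vee)$, taken modulo the shift $s \mapsto s\,(1-\theta_w)h$ induced by conjugating $\xi$ by $h \in H$. Thus $\X^w_g$ is identified with $\{s \in H \mid s\theta_w(s) = zz(-\rho^\vee)\}/(1-\theta_w)H$, and because $(1-\theta_w)H = H^{-\theta_w}_0$, the decomposition \eqref{e:thetacosets} lets me represent each class by a \emph{preferred} $s \in H^{\theta_w}_0$, unique up to the residual finite $2$-group $H^{\theta_w}_0 \cap H^{-\theta_w}_0$.

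First I would write a preferred $s = e(v)$ with $v \in \mathfrak h^{\theta_w}$. Since $\theta_w(v) = v$, the left side of \eqref{e:toruspartA} is $e(2v)$, so the equation reads $e(2v) = e(g-\rho^\vee)$, giving $2v = g - \rho^\vee - \ell$ for some $\ell \in X_*(H)$, which is \eqref{e:specialtoruspart}. The sole genuine constraint is that $v$ lie in $\mathfrak h^{\theta_w}$: applying $1-\theta_w$ to $2v = g-\rho^\vee-\ell$ and using $(1-\theta_w)v = 0$ yields precisely $(1-\theta_w)\ell = (1-\theta_w)(g-\rho^\vee)$, the defining condition on $\overline\ell$. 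This is the map from such $\ell$ to preferred torus parts $s = e((g-\rho^\vee-\ell)/2)$, hence to {\tt KGB} classes.

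The remaining work is to pin down the correct equivalence on $\ell$. I would show that replacing $\ell$ by $\ell + (1+\theta_w)f$ (which preserves the condition, as $(1-\theta_w)(1+\theta_w) = 0$) multiplies $s$ by $e((1+\theta_w)f/2) = (1+\theta_w)\,e(f/2)$ with $e(f/2) \in H(2)$, and conversely that every element of $(1+\theta_w)H(2)$ arises this way; this exhibits the ambiguity of $s$ as exactly $(1+\theta_w)H(2)$ and establishes the bijection with $\{\overline\ell \in X_*(H)/(1+\theta_w)X_*(H) \mid (1-\theta_w)\ell = (1-\theta_w)(g-\rho^\vee)\}$. The dichotomy and torsor structure are then formal: solutions exist iff $(1-\theta_w)(g-\rho^\vee) \in (1-\theta_w)X_*$, and when nonempty they form a coset of $\ker(1-\theta_w)|_{X_*} = X_*^{\theta_w}$; quotienting by $(1+\theta_w)X_* \subseteq X_*^{\theta_w}$ gives a simply transitive action of $X_*^{\theta_w}/(1+\theta_w)X_*$. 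This group is killed by $2$ since $2x = (1+\theta_w)x$ for $x \in X_*^{\theta_w}$, so it is a $\Z/2\Z$-vector space of dimension at most $\rank X_*$.

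For the conjugacy reformulation I would observe that on $2$-torsion $1-\theta_w$ and $1+\theta_w$ agree, since $\theta_w(h)^{-1} = \theta_w(h)$ for $h \in H(2)$; hence conjugating a preferred $\xi$ by $h \in H(2)$ shifts $s$ by $(1+\theta_w)h$, and the preferred representatives of a fixed $x$ form a single $H(2)$-conjugacy class. I expect the main obstacle to be purely this bookkeeping of equivalences: correctly matching the lattice quotient by $(1+\theta_w)X_*$ against the residual group $H^{\theta_w}_0 \cap H^{-\theta_w}_0$ of \eqref{e:thetacosets}, and verifying that the latter is precisely $(1+\theta_w)H(2)$. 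Everything else reduces to linear algebra once $e$ is applied.
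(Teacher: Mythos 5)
Your proposal is correct and takes essentially the same route as the paper: the paper's proof of this proposition is precisely the discussion in \eqref{se:torusxi} --- identifying classes in $\X^w_g$ with solutions $s$ of \eqref{e:toruspartA} modulo $(1-\theta_w)H$, passing to preferred representatives $s\in H^{\theta_w}_0$ via \eqref{e:thetacosets}, and solving the equation on the Lie algebra by choosing logarithms, exactly as you do. The bookkeeping you flag at the end (that $H^{\theta_w}_0\cap H^{-\theta_w}_0=(1+\theta_w)H(2)$, so that the residual ambiguity in $s$ matches the lattice quotient by $(1+\theta_w)X_*$) is the same routine verification the paper leaves implicit and records in the statement of the proposition itself.
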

The {\tt KGB} classes in this proposition usually represent several
{\em different} strong real forms (all of a fixed central
cocharacter); that is, they are usually {\em not} conjugate by $G$.
The parametrization of {\tt KGB} classes is so beautiful and
simple  precisely because of this inclusion of several real
forms. For example, if $G=GL(n)$, $\xi_0=1$, and $w=1$ (so that we are
talking about compact maximal tori in equal rank real forms), then the
{\tt KGB} classes amount to discrete series for strong real forms. If we
choose $g=\rho^\vee$, then the Proposition says that the {\tt KGB} classes
are indexed by $X_*(H)/2X_*(H)$, an $n$-dimensional vector space over
${\mathbb Z}/2{\mathbb Z}$. There are $n+1$ different strong real
forms appearing in this list: the various $U(p,n-p)$ with $0\le p \le
n$. Such a strong real form has $\binom{n}{p}$ discrete series; only
when we take the union over $p$ do we get something as simple as $2^n$.

We turn now to writing down Langlands parameters for representations
of real forms of $G$, in the form described in \cite{ABV}. These are
constructed in a manner roughly parallel to the strong involutions
above, but in the L-group of \eqref{e:Lgroup} rather than in the
extended group for $G$.

\begin{definition}\label{def:Lparam} A {\em Langlands parameter} for
  representations of real forms of $G$ is a pair $(\ch\xi, \gamma)$ such
  that
\begin{enumerate}[label=(\alph*)]
\item $\ch\xi\in \ch G\ch \xi_0$;
\item $\gamma \in \ch{\mathfrak g}$ is semisimple; and
\item $\ch\xi^2 = e(\gamma)$.
\end{enumerate}
Two Langlands parameters are called {\em equivalent} if they are
conjugate by $\ch G$. The semisimple group element $\ch z = \ch\xi^2 \in \ch G$
is called the {\em central character} of the Langlands parameter, and
the Lie algebra element $\gamma$ is called the {\em infinitesimal
  character}. 

Because Langlands parameters matter only up to conjugation by $\ch G$,
it is convenient to consider representatives aligned with our fixed
$\ch H \subset \ch B$.  The Langlands parameter is said to be of {\em
type $\ch H$} if   
$$\ch\xi \in \Norm_{\ch G\ch \xi_0}(\ch H),\quad \text{and}\quad
\gamma\in \ch{\mathfrak h}.$$
Finally, a Langlands parameter of type $\ch H$ is said to be {\em
integrally dominant} if it is dominant for the integral root system:
\begin{equation}
\label{e:id}
\langle\gamma,\ch\alpha\rangle\in\Z\implies \langle\gamma,\ch\alpha\rangle\ge 0 \quad(\alpha\in R^+(G,H)).
\end{equation}

\end{definition}

Harish-Chandra's theorem guarantees that representation-theoretic
infinitesimal characters---homomorphisms from the center of
$U({\mathfrak g})$ to ${\mathbb C}$---are in one-to-one correspondence
with $\ch
G$ orbits of semisimple elements in $\ch{\mathfrak g}$. The
infinitesimal character defined here of the Langlands parameter will
turn out to correspond exactly to the representation-theoretic
infinitesimal characters of the corresponding representations of real
forms of $G$. Unfortunately the central character defined here bears
no such simple relationship to the representation-theoretic central
characters.

Here is the original statement of the Langlands classification (with
the notion of Langlands parameter modified in accordance with
\cite{ABV}).

\begin{theorem}[\cite{LC}*{Proposition 4.1}]\label{thm:Langlands} In
  the setting of Definition \ref{def:Lparam}, fix a strong real form
  $\xi$ of $G$. Attached to each equivalence class of Langlands
  parameters $(\ch\xi,\gamma)$ for $G$ there is a finite set 
  $\Pi_{\ch\xi,\gamma}(\xi)$ of equivalence classes of irreducible 
  $({\mathfrak g},K_\xi)$-modules of infinitesimal character
  $\gamma$. These finite sets partition the full set of equivalence
  classes of such representations. 
\end{theorem}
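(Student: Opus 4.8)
The plan is to deduce Theorem \ref{thm:Langlands} from the classical Langlands classification (\cite{LC}*{Proposition 4.1}), reorganized into the dual-group language of Definition \ref{def:Lparam} following \cite{ABV}; the real content is a faithful translation of data, layered on top of Langlands' theorem and Harish-Chandra's description of infinitesimal characters. First I would recall the classical statement for the fixed real form determined by $\xi$, with maximal compact $K_\xi$: every irreducible $(\mathfrak g,K_\xi)$-module is the unique irreducible (Langlands) quotient of a standard module $I(MAN,\sigma,\nu)$, where $MAN$ is a real parabolic subgroup, $\sigma$ is an irreducible tempered representation of $M$, and $\nu$ lies in the open positive chamber; moreover this data is unique up to conjugacy. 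This already supplies a partition of the irreducibles into standard-module classes, and Harish-Chandra's theorem reads off a well-defined infinitesimal character from $(\sigma,\nu)$.

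Second, I would construct the dictionary sending each such triple to an equivalence class of Langlands parameters $(\ch\xi,\gamma)$. The semisimple element $\gamma\in\ch{\mathfrak h}$ is exactly the Harish-Chandra infinitesimal character transported to the dual Cartan via the canonical identification \eqref{e:duallattices}; this is precisely what forces the parameter's ``infinitesimal character'' to equal that of every member of $\Pi_{\ch\xi,\gamma}(\xi)$. The element $\ch\xi\in\ch G\ch\xi_0$ carries the remaining discrete data: the coset $\ch G\ch\xi_0$ records the inner class, and the torus part together with the induced twisted involution on $\ch H$ encodes the discrete-series/tempered data of $\sigma$, packaged as a point of the nonidentity component of the L-group. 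The defining relation $\ch\xi^2=e(\gamma)$ is the compatibility condition guaranteeing that the pair arises from an actual representation; verifying it is the dual-side reading of the computation \eqref{e:xisquared}, with the large representatives $\xi_1$ of \eqref{e:large} and $\ch\xi_1$ of \eqref{e:duallarge} playing their usual normalizing role.

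Third, finiteness and the partition property follow once the dictionary is shown to be a bijection up to $\ch G$-conjugacy. For fixed $\gamma$, the valid $\ch\xi$ (modulo $\ch G$-conjugacy, subject to $\ch\xi^2=e(\gamma)$) form a finite set, by the same Lie-theoretic bookkeeping that makes the {\tt KGB}-space finite in Proposition \ref{p:rootdatumx}, now applied on the dual side: they are controlled by twisted involutions in $W(\ch G,\ch H)$ together with finitely many admissible torus parts. Hence there are finitely many parameters with a given infinitesimal character, and each $\Pi_{\ch\xi,\gamma}(\xi)$ is finite because a classical L-packet for a single real form is finite. Disjointness and exhaustion of these finite sets are inherited directly from the classical uniqueness of Langlands data and the fact that every irreducible has a well-defined infinitesimal character.

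I expect the main obstacle to be the faithful translation in the second step, namely keeping all conventions consistent so that $\ch\xi^2=e(\gamma)$ emerges with the correct central element rather than off by a sign or a factor of $z(\rho)$ (cf.~\eqref{e:dualz}). The delicate points are the minus sign and $w_0$-twist in $\ch\xi_0=-w_0{}^t\xi_0$ of \eqref{e:dualHauts}, and the $\rho^\vee/\rho$ shifts built into the large representatives; threading these correctly is what makes the dual-side square come out right. Everything beyond this matching is routine bookkeeping on top of the theorems of Langlands and Harish-Chandra.
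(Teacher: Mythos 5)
Your proposal is correct and takes essentially the same route as the paper: the paper gives no independent proof of Theorem \ref{thm:Langlands}, but simply quotes Langlands (\cite{LC}*{Proposition 4.1}) together with the reformulation of Langlands parameters as pairs $(\ch\xi,\gamma)$ with $\ch\xi^2=e(\gamma)$ carried out in \cite{ABV}, which is exactly the reduction-plus-dictionary you describe. The bookkeeping you flag as the main obstacle (the minus sign and $w_0$-twist in ${}^\vee\xi_0$ from \eqref{e:dualHauts}, and the $\rho$-shifts in the large representatives) is precisely what \cite{ABV} works out, so deferring those verifications to that reference is the intended resolution rather than a gap.
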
 

Langlands called the finite sets $\Pi_{\ch\xi,\gamma}(\xi)$ {\em
  L-packets}, because of their role in automorphic representation
theory.

Because of this theorem, we want to understand in more detail what
Langlands parameters can look like; and for a fixed Langlands
parameter, we want to understand the structure of the L-packet
$\Pi_{\ch\xi,\gamma}$.  

\begin{proposition} \label{p:Lequiv}
  Any Langlands parameter is equivalent to an integrally dominant one
  of type $\ch H$. If the infinitesimal character $\gamma\in
  \ch{\mathfrak h}$ is regular, then two Langlands parameters of type
  $\ch H$ and infinitesimal character $\gamma$ are equivalent (that
  is, conjugate by $\ch G$) if and only if they are conjugate by $\ch
  H$. In other words, a collection of all equivalent Langlands
  parameters of type $\ch H$ and infinitesimal character $\gamma$ is a
  single $\ch H$-conjugacy class.  
\end{proposition}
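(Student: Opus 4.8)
The plan is to prove the two assertions separately: first that every parameter admits an integrally dominant representative of type $\ch H$, and then the rigidity statement in the regular case. Throughout I would begin by recording that $\ch\xi$ is \emph{semisimple}: since $\ch\xi^2=e(\gamma)$ is semisimple and we are in characteristic zero, the unipotent part of $\ch\xi$ would square to $1$ and hence be trivial. Thus $\Ad(\ch\xi)$ is a quasi-semisimple automorphism of $\ch G$, so the structure theorem of Steinberg (the result underlying Proposition \ref{prop:pindist}) is available. The goal of the first assertion is to conjugate $(\ch\xi,\gamma)$ by $\ch G$ so that \emph{simultaneously} $\gamma\in\ch{\mathfrak h}$ and $\ch\xi\in\Norm_{\ch G\ch\xi_0}(\ch H)$. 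The clean device is to produce a single maximal torus $\ch T\subset\ch G$ with $\gamma\in\Lie(\ch T)$ and $\ch\xi\in\Norm_{{}^L G}(\ch T)$, and then move $\ch T$ to $\ch H$ by an element of $\ch G$: under that conjugation $\gamma$, lying in $\Lie(\ch T)$, lands in $\ch{\mathfrak h}$, and $\ch\xi$ lands in $\Norm_{\ch G\ch\xi_0}(\ch H)$.

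The step I expect to be the main obstacle is exactly this simultaneous normalization, because straightening $\ch\xi$ into $\Norm(\ch H)$ tends to push $\gamma$ out of $\ch{\mathfrak h}$, and conversely. The structural fact that resolves it is that $\ch\xi$ centralizes $\gamma$ (equivalently, $\Ad(\ch\xi)$ preserves the Levi subgroup $\ch{\mathfrak m}:=\Cent_{\ch G}(\gamma)$); I would isolate this feature of a genuine parameter at the outset, since without it the naive conjugations fight each other. Granting it, $\ch M:=\Cent_{\ch G}(\gamma)$ is a connected reductive $\ch\xi$-stable subgroup, $\gamma$ lies in the center of $\Lie(\ch M)$, and $\Ad(\ch\xi)$ restricts to a quasi-semisimple automorphism of $\ch M$. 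Steinberg's theorem then furnishes an $\Ad(\ch\xi)$-stable Borel–torus pair of $\ch M$, and the torus $\ch T$ in it is a maximal torus of $\ch G$ whose Lie algebra contains the central element $\gamma$ and which is normalized by $\ch\xi$. This is the torus needed above; conjugating it to $\ch H$ produces a representative of type $\ch H$. (This is the dual analogue of the torus-part analysis for strong involutions in Proposition \ref{p:rootdatumx}, carried out inside $\ch M$.)

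It remains to arrange integral dominance without leaving type $\ch H$. Conjugating a type-$\ch H$ parameter by a representative in $\Norm_{\ch G}(\ch H)$ of $w\in W(\ch G,\ch H)$ keeps it of type $\ch H$ and replaces $\gamma$ by $w\gamma$, so I may act on $\gamma$ by the Weyl group. Let $W_\gamma\subseteq W(\ch G,\ch H)$ be generated by the reflections in the coroots $\alpha^\vee$ ($\alpha\in R(G,H)$) with $\langle\gamma,\alpha^\vee\rangle\in\Z$; choosing $w\in W_\gamma$ with $\langle w\gamma,\alpha^\vee\rangle\ge 0$ for every such positive coroot makes $w\gamma$ integrally dominant in the sense of \eqref{e:id}. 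This is simply the existence of a dominant representative for the action of a finite reflection group, and I regard it as routine.

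For the second assertion, suppose $(\ch\xi,\gamma)$ and $(\ch\xi',\gamma)$ are of type $\ch H$ with the same regular $\gamma$ and are conjugate by some $g\in\ch G$, so that $g\ch\xi g^{-1}=\ch\xi'$ and $\Ad(g)\gamma=\gamma$. Because $\gamma$ is regular, $\Cent_{\ch G}(\gamma)=\ch H$; hence $\Ad(g)\gamma=\gamma$ forces $g\in\ch H$, exhibiting the two parameters as $\ch H$-conjugate. The reverse implication is immediate since $\ch H\subset\ch G$. This shows that the equivalent type-$\ch H$ parameters with a fixed regular infinitesimal character form a single $\ch H$-orbit, and pinpoints regularity of $\gamma$ as precisely the hypothesis that collapses $\ch G$-conjugacy to $\ch H$-conjugacy.
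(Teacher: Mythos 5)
The paper gives no proof to measure you against (it dismisses the statement as ``an elementary consequence of the definition''), so your argument must stand on its own, and it does not: the step you yourself single out as the crux of the first assertion --- ``the structural fact \dots that $\ch\xi$ centralizes $\gamma$'' --- is false. Condition (c) of Definition \ref{def:Lparam} only says that $\ch\xi$ commutes with $\ch\xi^2=e(\gamma)$, whence $e(\Ad(\ch\xi)\gamma)=e(\gamma)$; this makes $\Ad(\ch\xi)\gamma$ congruent to $\gamma$ modulo the kernel of $e$, not equal to it. Indeed, for any parameter of type $\ch H$ the automorphism $\Ad(\ch\xi)$ acts on $\ch{\mathfrak h}$ by the involution $\ch\theta_y$, which almost never fixes $\gamma$: the parameter of the discrete series L-packet of $SL(2,{\mathbb R})$ (so $\ch G=PGL(2)$, $\gamma=\rho$, $\ch\xi$ the image of $\left(\begin{smallmatrix}0&1\\-1&0\end{smallmatrix}\right)$ times $\ch\xi_0$) satisfies $\Ad(\ch\xi)\gamma=-\gamma$. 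Your parenthetical ``equivalently, $\Ad(\ch\xi)$ preserves $\Cent_{\ch G}(\gamma)$'' is not an equivalence, and that weaker statement also fails for singular $\gamma$: in $\ch G=SL(3)$ take $\gamma=\diag(1/3,1/3,-2/3)$, so that $e(\gamma)$ is central, and $\ch\xi$ a representative of the transposition of the last two coordinates with $\ch\xi^2=e(\gamma)$ (such exist); then $\Ad(\ch\xi)\gamma=\diag(1/3,-2/3,1/3)$, whose centralizer is a \emph{different} Levi subgroup. So the Steinberg argument, run on $\Cent_{\ch G}(\gamma)$, never gets started.

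The repair is to weaken the input to what is true and what your argument actually needs: that $\gamma$ and $\gamma':=\Ad(\ch\xi)\gamma$ \emph{commute}. Granting this, $\Ad(\ch\xi)$ swaps $\gamma$ and $\gamma'$ (because $\Ad(\ch\xi)\gamma'=\Ad(\ch\xi^2)\gamma=\Ad(e(\gamma))\gamma=\gamma$), hence stabilizes $L:=\Cent_{\ch G}(\gamma)\cap\Cent_{\ch G}(\gamma')$. Since $\gamma,\gamma'$ are commuting semisimple elements, $L$ is the centralizer of a torus, hence connected, reductive, of maximal rank, and it contains $\ch\xi^2=e(\gamma)$; moreover $\gamma,\gamma'\in\Lie Z(L)$, so they lie in the Lie algebra of \emph{every} maximal torus of $L$. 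Your Steinberg step, applied to the semisimple automorphism $\Ad(\ch\xi)$ of $L$, then yields an $\Ad(\ch\xi)$-stable maximal torus containing $\gamma$ in its Lie algebra, and the rest of your proof (conjugation to $\ch H$, integral dominance via the integral Weyl group, and the regular-case rigidity via $\Cent_{\ch G}(\gamma)=\ch H$) is correct as written. One caveat you should surface explicitly: commutation of $\gamma$ and $\Ad(\ch\xi)\gamma$ does \emph{not} follow from (a)--(c) of Definition \ref{def:Lparam} --- in $\ch G=SL(2)$ with $\gamma=\diag(1/2,-1/2)$ one has $e(\gamma)=-I$, and a generic $\ch\xi$ with $\ch\xi^2=-I$ moves $\gamma$ to an element not commuting with it; such a pair is equivalent to no type-$\ch H$ parameter, since type $\ch H$ forces both $\gamma$ and $\Ad(\ch\xi)\gamma$ into the abelian algebra $\ch{\mathfrak h}$. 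So the proposition is true only if the commutation condition of \cite{ABV}*{Proposition 5.6} is read into Definition \ref{def:Lparam}. Your instinct that a compatibility between $\ch\xi$ and $\gamma$ beyond $\ch\xi^2=e(\gamma)$ is the heart of the matter was exactly right; the correct compatibility is commutation, not fixation.
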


This is an elementary consequence of the definition, and we omit the proof.

Here is some structure theory for Langlands parameters analogous to
that given for strong involutions in \eqref{se:X}.    
\begin{subequations}\label{se:Y} 

We begin with an element---not assumed central as in
\eqref{e:centralcocharxi}, but only semisimple--- 
\begin{equation}\label{e:centralcharacter} 
{}^\vee z = e(\gamma) \in {}^\vee H.
\end{equation}
We always wish to assume that $\gamma$ is integrally dominant \eqref{e:id}.
Almost all of our results will be about
the case of {\em regular} infinitesimal character, so we will assume
\begin{equation}\label{e:domreg}
\langle\gamma,\ch\alpha\rangle\in\Z\implies \langle\gamma,\ch\alpha\rangle> 0 \quad(\alpha\in R^+(G,H))
\end{equation}
or in other words
\begin{equation}
\langle\gamma,\alpha^\vee \rangle \notin \{0,-1,-2,-3,\ldots\} \qquad
(\alpha\in R^+(G,H)).
\end{equation}

Define 
\begin{equation}\label{e:chGz}
{}^\vee G({}^\vee z) = \text{centralizer of
  ${}^\vee z$ in ${}^\vee G$} \supset \ch H.
\end{equation}
(This closed reductive subgroup of $\ch G$ may be disconnected, a
point which will require some attention; we write $\ch G(\ch z)_0$ for
its identity component.) 
 An {\em {\tt atlas} dual strong involution of central character
   ${}^\vee z$} is an element  
$$\ch \xi \in \ch G\ch\xi_0, \qquad \ch \xi^2 = {}^\vee z,$$
and an {\em {\tt atlas} dual strong real form of central character
  ${}^\vee z$} is 
a $\ch G({}^\vee z)$-conjugacy class $\ch{\mathcal C}$ of such
elements. The automorphism
\begin{equation}
\ch\theta_{\ch \xi} = \int(\ch \xi)
\end{equation}
of $\ch G$ preserves $\ch G(\ch z)$, and acts on this group ({\em not}
in general on all of $\ch G$) as an involutive automorphism. {\em It
  is therefore real forms of $\ch G(\ch z)$ that are under discussion.}
 
Keeping in mind the case $\ch z \in \ch H$, we define
the {\em dual {\tt KGB} space}---now a space of equivalence classes of
Langlands parameters---by
\begin{equation}
\label{e:dualX}\begin{aligned}
\ch\wt\X &=\{\ch\xi\in \Norm_{\ch G\ch\xi_0}(\ch H)\mid \ch\xi^2\in\ch
H\}\\
\ch\X &= \ch\wt\X /\ch H; \end{aligned}
\end{equation}
just as in \eqref{e:X}, we are dividing by the conjugation action of
$\ch H$. We also write
\begin{equation}
\label{e:dualXz}\begin{aligned}
\ch\wt\X_{\ch z} &= \ch\wt\X_{\gamma} = \{\ch\xi\in \Norm_{\ch G\ch\xi_0}(\ch
H)\mid \ch\xi^2 = \ch z = e(\gamma)\},\\
\ch\X_{\ch z} &= \ch\X_\gamma = \ch\wt\X_\gamma/\ch H.\end{aligned}
\end{equation}
According to Definition \ref{def:Lparam}, a Langlands parameter of
type $\ch H$ and infinitesimal character $\gamma$ is a pair
$(\ch\xi,\gamma)$, with $\ch\xi \in 
\ch\wt\X_\gamma$. According to Proposition
\ref{p:Lequiv}, an equivalence class of Langlands parameters of
infinitesimal character $\gamma$ is a pair $(y,\gamma)$, with $y\in
\ch\X_\gamma$. 

Associated to $y\in \ch\X$ is an involution of
$\ch H$ (conjugation by the
element $\ch\xi\in\Norm_{\ch G\ch\xi_0}(\ch H)$---that is, the
restriction to $\ch H$ of $\ch \theta_{\ch\xi}$---for any
representative $\ch\xi$ of $y$). 
We denote this involution $\ch\theta_y$:
$$
\ch\theta_y = \ch w_y\ch\xi_0 \in \Aut(\ch H)\simeq \Aut(X^*(H)) \quad
(\ch w_y\in W(\ch G,\ch H)).
$$
Write 
\begin{equation}
\label{e:wy}
\ch\xi=\ch s_1 \sigma_{\ch w_y}\ch\xi_0
\end{equation}
(compare \eqref{e:torusxi}).
The fact that $(\ch\theta_y)^2 = 1$ is equivalent to
\begin{equation}\label{e:dualtwistedinv}
\ch w_y{}^\vee\xi_0(\ch w_y) = 1,
\end{equation}
i.e., $\ch w_y$ is a twisted involution (in $W$) with respect to the
automorphism ${}^\vee\xi_0$. Just as in \eqref{e:Xw}, we define
\begin{equation}
\ch\X^{\ch w} = \{y \in \ch\X \mid \ch w_y = \ch w\}.
\end{equation}
\end{subequations} 

Exactly as in Proposition \ref{p:rootdatumx}, we can now describe
the set of Langlands parameters attached to a given twisted
involution.

\begin{proposition}\label{p:rootdatumy} Fix an
infinitesimal   character $\gamma$  
and a  $\ch\xi_0$-twisted involution $\ch w$. Let 
  $\ch\theta_{\ch w} = \ch w\circ \xi_0 \in \Aut(H)$. 
The set $\X^{\ch w}_\gamma$ of dual {\tt KGB} classes of infinitesimal
character $\gamma$ (equivalently, of central character $\ch z=
e(\gamma)$) with $\ch w_y 
= \ch w$ (cf.~\eqref{e:wy}) is in one-to-one correspondence with  
$$
\left\{\overline \lambda\in X^*(H)/(1+\ch\theta_{\ch w})X^*(H) \mid
  (1-\ch\theta_{\ch w})\lambda = (1-\ch\theta_{\ch w})(\gamma - \rho)|\right\}.
$$
This set is either empty (if $(1-\ch\theta_{\ch w})(\gamma-\rho)$ does not
belong to $(1-\ch\theta_{\ch w})X^*$), or has a simply transitive action of  
$$(X^*)^{\ch\theta_{\ch w}}/(1+\ch\theta_{\ch w})X^*.$$ 
This latter group is a vector space over ${\mathbb Z}/2{\mathbb Z}$,
of dimension at most the rank of $X^*$.

The corresponding $y$ has a preferred representative
(defined by analogy with \eqref{e:thetacosets}) $\ch\xi$ with unnormalized torus
part
$$
\ch s_1 = e((\gamma-\lambda)/2),
$$
(see \eqref{e:wy}) or normalized torus part  
$$\ch s = e((\gamma-\rho - \lambda)/2)$$
(defined by analogy with \eqref{e:normtorusxi}). Here $\lambda\in X^*(H)$
is a representative  of $\overline\lambda$. 
If we modify the element $\lambda$ in its coset by adding
$(1+\ch\theta_{\ch w})\phi$ (for some $\phi\in X^*(H)$), then
$\ch s$ (or $\ch s_1$) is multiplied by $e((1+\ch\theta_{\ch
  w})\phi/2)$. That is, this preferred choice of torus part is unique
up to the image of 
$1+\ch\theta_{\ch w}$ acting on the elements $\ch H(2)$ of order 2 in
  $\ch H$.  Another
formulation is that these preferred representatives $\ch\xi$ of $y$ are a
single conjugacy class under $\ch H(2)$.
\end{proposition}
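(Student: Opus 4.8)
The plan is to run the computation of \eqref{se:torusxi} and the proof of Proposition \ref{p:rootdatumx} essentially verbatim, transported to the L-group side by the duality dictionary $G \leftrightarrow \ch G$, $H \leftrightarrow \ch H$, $X_*(H) \leftrightarrow X^*(H) = X_*(\ch H)$, $\xi_0 \leftrightarrow {}^\vee\xi_0$, $\rho^\vee \leftrightarrow \rho$, and $g \leftrightarrow \gamma$. Concretely, I would fix the dual basepoint $\ch\xi_{\ch w} = e(\rho/2)\sigma_{\ch w}{}^\vee\xi_0$ (the mirror of Proposition \ref{prop:basestrong}, with $\ch\xi_{\ch w}^2 = z(\rho)$ as in \eqref{e:dualz}), and write an arbitrary type-$\ch H$ representative of $y\in\ch\X^{\ch w}_\gamma$ as $\ch\xi = \ch s\,\ch\xi_{\ch w}$ with normalized torus part $\ch s\in\ch H$.

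First I would compute $\ch\xi^2$. The calculation \eqref{e:xisquared}--\eqref{e:normtorusxi} uses only the Tits-group relations and Proposition \ref{p:bicycle}, both valid in any connected reductive group; applied to $\ch G$ it gives $\ch\xi^2 = \ch s\,\ch\theta_{\ch w}(\ch s)\,z(\rho)$. Imposing $\ch\xi^2 = \ch z = e(\gamma)$ turns this into the torus equation $\ch s\,\ch\theta_{\ch w}(\ch s) = e(\gamma - \rho)$, the exact dual of \eqref{e:toruspartA}. I would then solve it on the Lie algebra: writing $\ch s = e(\ch v)$ with $\ch v \in \ch{\mathfrak h}^{\ch\theta_{\ch w}}$, the equation reads $2\ch v = \gamma - \rho - \lambda$ for some $\lambda \in X^*(H) = X_*(\ch H)$, so $\ch v = (\gamma - \rho - \lambda)/2$, giving normalized torus part $\ch s = e((\gamma-\rho-\lambda)/2)$ and unnormalized torus part $\ch s_1 = e((\gamma-\lambda)/2)$. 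The requirement $\ch v \in \ch{\mathfrak h}^{\ch\theta_{\ch w}}$ is precisely $(1-\ch\theta_{\ch w})\lambda = (1-\ch\theta_{\ch w})(\gamma-\rho)$, which yields the stated parameter set together with its possible emptiness.

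To obtain the torsor and conjugacy assertions I would track the two sources of ambiguity, exactly as in \eqref{se:torusxi}: the choice of logarithm (equivalently of $\lambda$ within its constraint), and conjugation by $\ch H$, which replaces $\ch s$ by $\ch s(\ch h\,\ch\theta_{\ch w}(\ch h)^{-1})$ and so sees $\ch s$ only modulo $(1-\ch\theta_{\ch w})\ch H$. Both reduce to quotienting $\lambda$ by $(1+\ch\theta_{\ch w})X^*(H)$; the constraint descends to this quotient since $(1-\ch\theta_{\ch w})(1+\ch\theta_{\ch w}) = 0$ as $\ch\theta_{\ch w}^2 = 1$, leaving a simply transitive action of $(X^*)^{\ch\theta_{\ch w}}/(1+\ch\theta_{\ch w})X^*$. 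This group is killed by $2$ because $2x = (1+\ch\theta_{\ch w})x$ for $x$ fixed by $\ch\theta_{\ch w}$, whence the $\Z/2\Z$-vector-space structure and the rank bound. The preferred-representative decomposition $\ch H = [\ch H^{\ch\theta_{\ch w}}_0][\ch H^{-\ch\theta_{\ch w}}_0]$, the dual of \eqref{e:thetacosets}, then gives the final $\ch H(2)$-conjugacy statement.

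The one genuinely new point, and the step I would watch most carefully, is that $\ch z = e(\gamma)$ is only semisimple rather than central, so that $\ch G(\ch z)$ may be disconnected. The reassurance is that the entire computation takes place inside $\Norm_{\ch G\ch\xi_0}(\ch H)$ and on the torus $\ch H$ alone; the centrality of $z$ was never used in \eqref{se:torusxi} beyond ensuring that $\ch\theta_{\ch w}$ restricts to an involution of $\ch H$, which holds here because $\ch\theta_{\ch w}$ fixes $\ch z = \ch\xi^2$. Hence no connectedness hypothesis on $\ch G(\ch z)$ is required, and, since $\ch\X$ is defined as $\ch\wt\X/\ch H$, the regularity assumption \eqref{e:domreg} plays no role in this particular count either.
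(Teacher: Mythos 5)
Your proposal is correct and is exactly the argument the paper intends: the paper's proof of Proposition \ref{p:rootdatumy} consists of the single remark that it is identical to that of Proposition \ref{p:rootdatumx}, and your transport of the computation \eqref{se:torusxi} to $\ch G$ (the basepoint $e(\rho/2)\sigma_{\ch w}{}^\vee\xi_0$, the torus equation $\ch s\,\ch\theta_{\ch w}(\ch s)=e(\gamma-\rho)$, and the bookkeeping of logarithms and $\ch H$-conjugation) is precisely that proof written out on the dual side. Your closing observation---that centrality of the square was never used, only that $\ch\theta_{\ch w}$ fixes $\ch\xi^2$, so the possible disconnectedness of $\ch G(\ch z)$ is harmless---correctly identifies why the dual statement requires no new idea.
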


The proof is identical to that of Proposition \ref{p:rootdatumx},
and we omit it. 

Because the set is parametrized by certain (cosets of)
characters of $H$, it is easy and useful to reformulate the result
as follows.

\begin{corollary}\label{cor:rootdatumy}
In the setting of Proposition \ref{p:rootdatumy}, the set of dual {\tt
  KGB} classes $\ch\X^{\ch w}_\gamma$ is naturally in bijection with
the set of (automatically one-dimensional) irreducible $({\mathfrak
  h},H^{\theta_w})$-modules of differential equal to $\gamma-\rho$.
\end{corollary}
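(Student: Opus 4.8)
The plan is to present both sides as simply transitive sets (torsors) under one and the same finite elementary abelian $2$-group, and then to write a single natural map between them that is equivariant for that structure; equivariance together with nonemptiness on both sides then forces a bijection. Proposition \ref{p:rootdatumy} already exhibits $\ch\X^{\ch w}_\gamma$, when nonempty, as a torsor under $A := (X^*)^{\ch\theta_{\ch w}}/(1+\ch\theta_{\ch w})X^*$, so the first task is to understand the target. Since $\mathfrak h$ and the abelian group $H^{\theta_w}$ both act by scalars, an irreducible $({\mathfrak h},H^{\theta_w})$-module is automatically one dimensional, and is exactly a character $\chi$ of $H^{\theta_w}$ together with a compatible $\mathfrak h$-action; prescribing differential $\gamma-\rho$ amounts to requiring $d\chi=(\gamma-\rho)|_{{\mathfrak h}^{\theta_w}}$. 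The characters $\chi$ with this fixed differential, when they exist, form a torsor under the character group of the component group $\pi_0(H^{\theta_w})$.

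The natural map I would use sends the coset of $\lambda\in X^*(H)$ (satisfying $(1-\ch\theta_{\ch w})\lambda=(1-\ch\theta_{\ch w})(\gamma-\rho)$) to the restriction $\lambda|_{H^{\theta_w}}$. Two verifications make this well defined with values in the correct set. First, any $\lambda\in(1+\ch\theta_{\ch w})X^*(H)$ restricts trivially to $H^{\theta_w}$, so the assignment descends to cosets modulo $(1+\ch\theta_{\ch w})X^*(H)$. Second, the defining equation says precisely that $\lambda-(\gamma-\rho)$ lies in the $+1$-eigenspace of $\ch\theta_{\ch w}$ on ${\mathfrak h}^*$; identifying $\ch\theta_{\ch w}$ with $-{}^t\theta_w$ (the transpose of the geometric involution $\theta_w$ of $H$), this eigenspace is the annihilator of ${\mathfrak h}^{\theta_w}$, so $d(\lambda|_{H^{\theta_w}})=(\gamma-\rho)|_{{\mathfrak h}^{\theta_w}}$, as required.

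To obtain bijectivity I would identify the two structure groups. A lattice computation, again using $\ch\theta_{\ch w}=-{}^t\theta_w$, shows that restriction induces a canonical isomorphism from $A$ onto the character group of $\pi_0(H^{\theta_w})$, and that the map above is equivariant for it. A morphism of torsors over an isomorphism of structure groups is automatically bijective as soon as both torsors are nonempty, so the argument closes by matching the single nonemptiness condition on each side: I would check that the condition of Proposition \ref{p:rootdatumy}, namely $(1-\ch\theta_{\ch w})(\gamma-\rho)\in(1-\ch\theta_{\ch w})X^*$, is exactly the integrality needed for some character of $H^{\theta_w}$ with differential $(\gamma-\rho)|_{{\mathfrak h}^{\theta_w}}$ to exist.

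The one genuinely delicate step is the sign identification $\ch\theta_{\ch w}=-{}^t\theta_w$, which is what simultaneously makes the differential computation and the identification of structure groups come out right. This is not a formal manipulation: it rests on the conventions of Section \ref{sec:setting}, in particular the minus sign in \eqref{e:dualHauts} and the Weyl-group identification $w\mapsto{}^tw^{-1}$, reflecting the paper's principle that the Cartan involution acts on the root datum by the negative of the Galois action. The remaining care is bookkeeping at the level of $\mathbb Z$-lattices rather than complex vector spaces: because $H^{\theta_w}$ may be disconnected, both the computation of $\pi_0(H^{\theta_w})$ and the determination of which functionals exponentiate to honest characters must be carried out integrally, and it is there that the elementary abelian $2$-group structure of $A$ and the matching of nonemptiness conditions get verified.
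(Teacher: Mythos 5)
Your proposal is correct and rests on exactly the same facts as the paper's proof: that every character of $H^{\theta_w}$ is the restriction of a character of $H$, that the kernel of restriction is $(1-{}^t\theta_w)X^*(H) = (1+\ch\theta_{\ch w})X^*(H)$, and that fixing the differential to be $(\gamma-\rho)|_{{\mathfrak h}^{\theta_w}}$ is precisely the lattice condition of Proposition \ref{p:rootdatumy} under the sign identification $\ch\theta_{\ch w} = -{}^t\theta_w$. The paper simply observes that these facts make the two indexing sets literally equal, so your torsor-plus-equivariance packaging of bijectivity (including the identification of $(X^*)^{\ch\theta_{\ch w}}/(1+\ch\theta_{\ch w})X^*$ with the character group of $\pi_0(H^{\theta_w})$) is harmless extra machinery rather than a genuinely different route.
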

\begin{proof}
By definition an $({\mathfrak h},H^{\theta_w})$-module is a vector
space carrying an algebraic action of the group $H^{\theta_w}$, and a
representation of the abelian Lie algebra ${\mathfrak h}$, so that the
differential of the former is the restriction to ${\mathfrak
  h}^{\theta_w}$ of the latter.  In the corollary, we want ${\mathfrak
  h}$ to act by $\gamma-\rho$, so there is nothing to say about
that. The characters of $H^{\theta_w}$ are the restrictions to
$H^{\theta_w}$ of characters of $H$; so they are indexed by
\begin{equation}\label{e:Hthetahat}
\overline\lambda \in X^*(H)/(1-{}^t\theta_w)X^*(H),
\end{equation}
the denominator being the characters trivial on $H^{\theta_w}$.
Now it is clear that the modules we want are indexed exactly by such
cosets $\overline\lambda$, subject to the requirement
$$(1+{}^t\theta_w)\lambda = (1+{}^t\theta_w)(\gamma - \rho)$$
(that the differential of $\lambda$ is the restriction of
$\gamma-\rho$). This is exactly the condition on $\lambda$ written in
Proposition \ref{p:rootdatumy}. \end{proof}

\begin{definition}\label{def:atlasparam} A {\tt KGB} class $x$ and a
  dual {\tt KGB} class $y$ are said to be {\em aligned} if
$$-{}^t\theta_x = \ch\theta_y \in \Aut(X^*(H))$$
(\eqref{e:wx}, \eqref{e:wy}). Equivalently, we require the twisted
involutions to satisfy
$$w_x w_0 = \ch w_y.$$
In this case we call the pair $(x,y)$ an
{\em {\tt atlas} parameter} for $G$.  We write
$$
\mathcal Z=\{(x,y)\in \X\times\ch \X\mid -{}^t\theta_x=\ch\theta_y\},
$$
for the set of all {\tt atlas} parameters. If $z=e(g) \in Z(G)$ and
$\ch z = e(\gamma) \in \ch H$ (with $g$ and $\gamma$ regular and
integrally dominant) we write
$$\mathcal Z_{z,\ch z} = \mathcal Z_{g,\gamma} = \{(x,y) \in \mathcal
Z \mid x^2 = z,\ y^2 = \ch z\}$$
for the subset of parameters of infinitesimal
cocharacter $g$ and infinitesimal character $\gamma$.  If $w\in W$ is
a $\xi_0$-twisted involution, we define $\ch w = ww_0$ (a
$\ch\xi_0$-twisted involution, and write
$$\mathcal Z^w = \X^w \times \ch\X^{\ch w},$$
so that $\mathcal Z$ is the disjoint union over $\xi_0$-twisted involutions
$w$ of the subsets $\mathcal Z^w$.
\end{definition}

We are now in a position to sharpen the Langlands classification
Theorem \ref{thm:Langlands}, by parametrizing each L-packet.

\begin{theorem}[\cite{ABV}*{Theorem 1.18}]\label{thm:LanglandsSharp} In
  the setting of Definition \ref{def:Lparam}, fix a regular and
  integrally dominant infinitesimal character $\gamma\in \ch{\mathfrak
    h}$, and a regular integral dominant $g\in {\mathfrak h}$ (so that
  $e(g) \in Z(G)$). Then there is a natural bijection between
  irreducible admissible representations of infinitesimal character
  $\gamma$ of strong real forms of $G$ having infinitesimal
  cocharacter $g$; and the set of pairs $(x,y) \in {\mathcal
    Z}_{g,\gamma}$ of {\tt atlas} parameters of infinitesimal
  cocharacter $g$ and infinitesimal character $\gamma$. In this
  bijection, the strong real form may be taken to be any
  representative $\xi$ of the first factor $x$. The Langlands
  parameter (Definition \ref{def:Lparam} may be taken to be
  $(\ch\xi,\gamma)$, with $\ch\xi$ any representative of the second
  factor $y$. We write
$$J(x,y,\gamma)$$
for the irreducible module of infinitesimal character $\gamma$
attached to $(x,y)$.
\end{theorem}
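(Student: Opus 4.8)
The plan is to read this off as a recoordinatization of \cite{ABV}*{Theorem 1.18}: the honest work is to show that the aligned pairs $(x,y)\in\mathcal Z_{g,\gamma}$ are exactly the ``complete geometric parameters'' of \cite{ABV}, and then to invoke that theorem. I would assemble the bijection in stages, using Theorem \ref{thm:Langlands} to produce the coarse partition into L-packets and Propositions \ref{p:Lequiv}, \ref{p:rootdatumx} and \ref{p:rootdatumy} to coordinatize the two factors. So fix $\gamma$ regular and integrally dominant and $g$ regular, integrally dominant. By Theorem \ref{thm:Langlands}, for each strong real form $\xi$ the irreducible $(\g,K_\xi)$-modules of infinitesimal character $\gamma$ are partitioned into L-packets $\Pi_{\ch\xi,\gamma}(\xi)$ indexed by equivalence classes of Langlands parameters of that infinitesimal character; and by Proposition \ref{p:Lequiv}, regularity of $\gamma$ collapses $\ch G$-conjugacy to $\ch H$-conjugacy, so these equivalence classes are precisely the dual {\tt KGB} classes $y\in\ch\X_\gamma$. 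Thus the datum of (strong real form, irreducible module) is refined to (strong real form, class $y$, member of the L-packet $\Pi_{\ch\xi,\gamma}(\xi)$).

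The central step is to match, across all strong real forms at once, the members of the L-packet attached to a fixed $y$ with the {\tt KGB} classes $x$ that are aligned to it. Given $y$, the involution $\ch\theta_y=\ch w_y\ch\xi_0$ of $\ch H$ is determined, and the alignment condition of Definition \ref{def:atlasparam} forces $w_x=\ch w_y w_0$; so the candidate partners of $y$ are exactly the classes $x\in\X^w$ with $w=\ch w_y w_0$, described explicitly by Proposition \ref{p:rootdatumx}. As the remark after that proposition stresses, these classes are distributed among several strong real forms (recovered from $x$ by Proposition \ref{prop:KGB}). On the other side, the members of $\Pi_{\ch\xi,\gamma}(\xi)$, taken over all the strong real forms in which $y$ occurs, are indexed by the irreducible equivariant local systems on the $\ch G$-orbit of $y$, equivalently by the characters of the associated component group. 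I would carry out the matching by the Vogan/\cite{ABV} duality between $K_\xi$-equivariant geometry on the flag variety of $G$ and $\ch G$-equivariant geometry on the dual geometric parameter space: the orbit on $G/B$ given by $x$ and its equivariant local-system data correspond to the orbit $y$ together with one of its local systems, and the equation $-{}^t\theta_x=\ch\theta_y$ is precisely the assertion that the $\theta_x$-stable Cartan $H$, with the character read off from $x$ via Proposition \ref{p:rootdatumx}, is dual to $\ch H$ with the character read off from $y$ via Proposition \ref{p:rootdatumy}, under the identification $X^*(\ch H)\simeq X_*(H)$.

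It remains to check well-definedness and to upgrade the matching to a bijection. The module $J(x,y,\gamma)$ does not depend on the choice of representative $\xi$ of $x$ by Proposition \ref{prop:switchstrong}, nor on the choice of representative of $y$ because we have already passed to $\ch H$-conjugacy classes via Proposition \ref{p:Lequiv}; and because $\gamma$ is regular, the standard module attached to an aligned pair $(x,y)$ has a unique irreducible Langlands quotient, which defines the map $(x,y)\mapsto J(x,y,\gamma)$. Surjectivity is immediate, since Theorem \ref{thm:Langlands} already exhausts all irreducibles, and injectivity follows from the matching of the previous paragraph together with the facts that the irreducible module determines its L-packet and hence $y$, and that distinct aligned $x$ give distinct members (whose real forms in turn recover the first factor).

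The main obstacle is the central step: establishing that the internal members of an L-packet correspond, term by term and across real forms, to the aligned {\tt KGB} classes. Everything else is formal bookkeeping once Theorem \ref{thm:Langlands} and Propositions \ref{p:Lequiv}, \ref{p:rootdatumx}, \ref{p:rootdatumy} are available. This matching is exactly where the genuine input of \cite{ABV} is indispensable, and I would expect to spend most of the effort verifying that the component-group characters classically indexing the L-packet correspond to the torus-part data of Proposition \ref{p:rootdatumx} (both sides being torsors under closely related ${\mathbb Z}/2{\mathbb Z}$-vector spaces), and that the alignment condition is the correct compatibility making the correspondence canonical rather than merely a matching of cardinalities.
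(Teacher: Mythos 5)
Your proposal is correct and takes essentially the same route as the paper: the paper offers no independent proof, attributing the statement to \cite{ABV}*{Theorem 1.18} with the recoordinatization into aligned pairs $(x,y)$ recalled from \cite{algorithms}, its only added content being the cohomological-induction construction of $J(x,y,\gamma)$ sketched in \eqref{se:Langlandsconstruction}. Your staged translation --- L-packets via Theorem \ref{thm:Langlands}, dual {\tt KGB} classes via Proposition \ref{p:Lequiv}, and the matching of packet members across strong real forms with aligned classes $x$ via the component-group/local-system dictionary --- is precisely the translation the paper delegates to those references.
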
 

The notation requires some explanation, because we have not even said
of what group $J(x,y,\gamma)$ is a representation. If $\xi$ is any
representative of $x$, then $\theta_\xi = \int(\xi)$ is a well-defined
involutive automorphism of $G$, with fixed point group $K_\xi$ as in
\eqref{e:thetaK}. Then $J(x,y,\gamma)$ is an irreducible $({\mathfrak
  g},K_\xi)$-module.  A different choice $\xi'$ of representative of
$x$ gives rise to a (necessarily different, because $K_{\xi'}$ is
different) $({\mathfrak g},K_{\xi'})$-module $J'(x,y,\gamma)$. Part of
what the Theorem means is that these two different modules are
identified by the canonical bijection of Proposition
\ref{prop:switchstrong}. 
 
\begin{corollary}\label{cor:Lpacket} Suppose $(\ch\xi,\gamma)$ is a
  Langlands parameter 
  of type $\ch H$ and regular infinitesimal character $\gamma$. Fix a
  dominant regular infinitesimal cocharacter $g$ as in
  \eqref{e:domregdual}. Then the union (over strong real forms of
  infinitesimal cocharacter $g$) of the L-packets
  $\Pi_{\ch\xi,\gamma}(\xi)$ (Theorem \ref{thm:Langlands}) may be
  identified with the set $\X^w_g$ of Proposition
  \ref{p:rootdatumx}. (Here $w$ is the twisted involution dual to the
  one for $\ch\xi$.) In particular, this L-packet is either empty (if
  $(1-\theta_w)(g-\rho^\vee)$ does not belong to $(1-\theta_w)X_*$),
  or has a simply transitive action of 
$$X_*^{\theta_w}/(1+\theta_w)X_*.$$ 
\end{corollary}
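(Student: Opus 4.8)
The plan is to deduce the corollary as a specialization of the sharp Langlands classification, Theorem \ref{thm:LanglandsSharp}, read through the explicit description of {\tt KGB} classes in Proposition \ref{p:rootdatumx}. First I would recall that Theorem \ref{thm:LanglandsSharp} gives a bijection $(x,y)\mapsto J(x,y,\gamma)$ between the set $\mathcal Z_{g,\gamma}$ of {\tt atlas} parameters and the irreducible modules of infinitesimal character $\gamma$ on strong real forms of infinitesimal cocharacter $g$. The key point is that, by Proposition \ref{p:Lequiv}, fixing the Langlands parameter $(\ch\xi,\gamma)$ amounts to fixing the second coordinate: $y\in\ch\X_\gamma$ is forced to be the single $\ch H$-conjugacy class of $\ch\xi$. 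The strong real form then records which first coordinate $x$ is allowed.

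Next I would unwind the alignment condition. Write $\ch w=\ch w_y$ for the $\ch\xi_0$-twisted involution attached to $\ch\xi$ (equivalently to $y$). By Definition \ref{def:atlasparam}, $(x,y)$ is an {\tt atlas} parameter exactly when $w_x w_0=\ch w_y$, so the first coordinates compatible with our fixed $y$ are precisely those with $w_x=\ch w w_0=w$, where $w$ is the dual twisted involution named in the statement. Imposing in addition the infinitesimal cocharacter condition $x^2=e(g)$ restricts $x$ to the set $\X^w_g$ of Proposition \ref{p:rootdatumx}. It then remains to see that sweeping over strong real forms sweeps out all of $\X^w_g$: fixing a single strong real form $\xi$ selects those $x\in\X^w_g$ lying in the $G$-conjugacy class of $\xi$ (via Proposition \ref{prop:KGB}), and taking the union over all strong real forms of infinitesimal cocharacter $g$ simply removes this constraint. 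Hence the L-packet union is identified with $\{J(x,y,\gamma)\mid x\in\X^w_g\}$, i.e.\ with $\X^w_g$.

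With the identification in hand, the remaining assertions are transcribed directly from Proposition \ref{p:rootdatumx}: the set $\X^w_g$ is empty exactly when $(1-\theta_w)(g-\rho^\vee)\notin(1-\theta_w)X_*$, and otherwise carries a simply transitive action of $X_*^{\theta_w}/(1+\theta_w)X_*$. This gives the stated dichotomy with no further work.

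The hard part will not be mathematical content but bookkeeping: one must verify that the partition of $\mathcal Z_{g,\gamma}$ by strong real form used in Theorem \ref{thm:LanglandsSharp} is genuinely compatible with the identification $\X\leftrightarrow\prod_{\xi'\in S}K_{\xi'}\backslash G/B$ of Proposition \ref{prop:KGB}, so that the union over strong real forms covers $\X^w_g$ exactly once with no omissions or double counting. Once that compatibility is recorded, the corollary is immediate.
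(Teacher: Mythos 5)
Your proposal is correct and follows essentially the same route the paper intends: the paper states this corollary without proof as an immediate consequence of Theorem \ref{thm:LanglandsSharp} (fixing the second coordinate $y$ given by the $\ch H$-conjugacy class of $\ch\xi$, with the alignment condition $w_xw_0=\ch w_y$ forcing $w_x=w$ and the cocharacter condition cutting out $\X^w_g$) combined with the count in Proposition \ref{p:rootdatumx}. Your closing concern about compatibility with Proposition \ref{prop:KGB} is already built into the statement of Theorem \ref{thm:LanglandsSharp}, which ranges over all strong real forms of infinitesimal cocharacter $g$ at once, so no additional bookkeeping is needed.
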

Notice that if we consider real forms of infinitesimal cocharacter
$\rho^\vee$ (which includes the quasisplit real form) then this union
of L-packets is never empty. This is consistent with Langlands' result
that every L-packet is nonempty for the quasisplit real form.

This classical corollary is the most familiar way of thinking about
ambiguity in the Langlands classification: starting with a Langlands
parameter, and enumerating the various (strong) real forms where it can give a
representation. We are in fact going to be interested mostly in the
{\em dual} problem: starting with a strong real form of type $H$ and
and an infinitesimal character $\gamma$, and enumerating the various
Langlands parameters giving a representation. For example, if we start
with a split maximal torus, then the Langlands parameters in question
just index the characters of the split maximal torus of differential
$\gamma$. These admit a simply transitive action of the group
$({\mathbb Z}/2{\mathbb Z})^n$ of characters of the component group of
the split torus.

\begin{corollary}\label{cor:Rpacket} Suppose $\xi$ is a strong real
  form of type $H$ and dominant regular infinitesimal cocharacter
  $g$. Fix a dominant regular infinitesimal character $\gamma\in
  {\mathfrak h}^*$.  Then the collection of Langlands parameters
  $(\ch\xi,\gamma)$ of type $\ch H$ aligned with $\xi$ (Definition
  \ref{def:atlasparam}) may be identified with the set
  $\ch\X^{\ch w}_\gamma$ of Proposition
  \ref{p:rootdatumx}. (Here $\ch w = ww_0$ is the twisted involution
  dual to $w = w_\xi$.) In particular, this set of parameters is
  either empty (if $(1-\ch\theta_{\ch w})(\gamma-\rho)$ does not
belong to $(1-\ch\theta_{\ch w})X^*$), or has a simply transitive action of  
$$(X^*)^{\ch\theta_{\ch w}}/(1+\ch\theta_{\ch w})X^*.$$ 
\end{corollary}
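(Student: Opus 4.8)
The plan is to obtain this statement as the literal dual of Corollary \ref{cor:Lpacket}, assembling three ingredients that are already in hand. First I would record that, because $\gamma$ is regular, Proposition \ref{p:Lequiv} identifies equivalence classes of Langlands parameters of type $\ch H$ and infinitesimal character $\gamma$ with $\ch H$-conjugacy classes of such parameters, that is, with the dual {\tt KGB} classes $y\in\ch\X_\gamma$ of \eqref{e:dualXz}. Thus, up to equivalence, the Langlands parameters $(\ch\xi,\gamma)$ under consideration are exactly the elements of $\ch\X_\gamma$; the construction depends only on the class $x=p(\xi)$ and not on the chosen representative $\xi$, since only $\theta_x$ (equivalently $w_x$) will enter below.

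Next I would translate the alignment condition into a condition on twisted involutions. Let $x\in\X$ be the {\tt KGB} class of the given strong real form, with associated $\xi_0$-twisted involution $w=w_x$ (cf.~\eqref{e:wx}). By Definition \ref{def:atlasparam}, a dual {\tt KGB} class $y$ yields a parameter aligned with $\xi$ precisely when $-{}^t\theta_x=\ch\theta_y$ in $\Aut(X^*(H))$, equivalently when $\ch w_y=w_xw_0=ww_0=:\ch w$ (cf.~\eqref{e:wy}). The fact that $\ch w=ww_0$ is again a $\ch\xi_0$-twisted involution, in the sense of \eqref{e:dualtwistedinv}, is exactly the duality of twisted involutions recorded parenthetically in Definition \ref{def:atlasparam}; it follows from $w\xi_0(w)=1$ together with the definition ${}^\vee\xi_0=-w_0{}^t\xi_0$ of \eqref{e:dualHauts}. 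Consequently the aligned parameters are exactly the classes $y$ lying in $\ch\X^{\ch w}_\gamma$.

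Finally I would invoke Proposition \ref{p:rootdatumy} with this $\ch w$ and the fixed $\gamma$. It describes $\ch\X^{\ch w}_\gamma$ as being in bijection with the coset set
$$
\left\{\overline\lambda\in X^*(H)/(1+\ch\theta_{\ch w})X^*(H)\ \middle|\ (1-\ch\theta_{\ch w})\lambda=(1-\ch\theta_{\ch w})(\gamma-\rho)\right\},
$$
which is either empty (exactly when $(1-\ch\theta_{\ch w})(\gamma-\rho)\notin(1-\ch\theta_{\ch w})X^*$) or carries a simply transitive action of $(X^*)^{\ch\theta_{\ch w}}/(1+\ch\theta_{\ch w})X^*$. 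This is precisely the asserted dichotomy, so combining the three steps completes the identification.

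I do not expect a genuine obstacle: the statement is a corollary in the strict sense, with all the real content residing in Propositions \ref{p:Lequiv} and \ref{p:rootdatumy}. The only point requiring care is the compatibility check that the duality $w\mapsto ww_0$ carries $\xi_0$-twisted involutions to $\ch\xi_0$-twisted involutions, so that Proposition \ref{p:rootdatumy} applies verbatim to $\ch w$; and that bookkeeping is already isolated in Definition \ref{def:atlasparam}.
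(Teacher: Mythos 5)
Your proposal is correct and is essentially the argument the paper intends: the corollary is stated without proof precisely because, as you say, all the content lives in Proposition \ref{p:Lequiv} (regular $\gamma$ makes equivalence classes of type-$\ch H$ parameters into $\ch H$-orbits, i.e.\ elements of $\ch\X_\gamma$), the alignment condition of Definition \ref{def:atlasparam} (which cuts down to $\ch\X^{\ch w}_\gamma$ with $\ch w = ww_0$), and Proposition \ref{p:rootdatumy} (which gives the coset description, the emptiness criterion, and the simply transitive action). You also correctly read the paper's citation of Proposition \ref{p:rootdatumx} in the corollary as a slip for Proposition \ref{p:rootdatumy}, where $\ch\X^{\ch w}_\gamma$ is actually parametrized.
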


We are going to need a slightly more precise understanding of how the
parametrization of representations in Theorem \ref{thm:LanglandsSharp}
actually works.
\begin{subequations}\label{se:Langlandsconstruction}
So let us fix an {\tt atlas} parameter $(x,y)$ (Definition
\ref{def:atlasparam}) of (integrally dominant regular) infinitesimal
character $\gamma \in {\mathfrak h}^*$. Choose a strong real form
representative $\xi$ for $x$, so that what we are seeking to construct
is an irreducible $({\mathfrak g},K_\xi)$-module $J(x,y,\gamma)$. The
construction begins with the $\theta_\xi$-stable Cartan subgroup
$H$. The Cartan involution $\theta_\xi$ acts on $H$ by $\theta_w$, so
\begin{equation}
H \cap K_\xi = H^{\theta_w}.
\end{equation}
By definition of {\tt atlas} parameter, $y\in \ch\X^{\ch
  w}_\gamma$; so by Corollary \ref{cor:rootdatumy}, $y$ defines
\begin{equation}\label{e:Cy}
{\mathbb C}(y,\gamma) = \text{irreducible $({\mathfrak h},H\cap K_\xi)$-module
  of differential $\gamma-\rho$.}
\end{equation}
We want to construct a $({\mathfrak g},K_\xi)$-module using the
character ${\mathbb C}(y,\gamma)$. This is a large and complicated problem,
solved by work of Zuckerman reported in \cite{green}, but here
is a sketch. (Shorthand for this construction is {\em cohomological
  induction}, and we will use that phrase to refer to it.) 

Now we extend ${\mathbb C}(y,\gamma)$ to a $({\mathfrak
  b},H\cap K_\xi)$-module by making
${\mathfrak n}$ 
act trivially, and then form the (dual to Verma) $({\mathfrak g}, H\cap
K_\xi)$-module
\begin{equation}\label{e:Vermay}
M(y,\gamma) = \Hom_{{\mathfrak b}}(U({\mathfrak
  g}),{\mathbb C}(y,\gamma)\otimes{\mathbb C}(2\rho)).
\end{equation} 
Here ${\mathbb C}(2\rho)$ is the representation of $B$ on the top
exterior power of the Lie algebra: the sum of the positive roots. The
weight of ${\mathfrak h}$ by which we are ``producing'' is
$\gamma+\rho$, so this is the lowest weight of $M(y,\gamma)$.
By the theory of Verma modules, $M(y,\gamma)$ has infinitesimal character
$\gamma$. Now we apply the Zuckerman right derived functor
(\cite{KV}*{(2.113)}) 
\begin{equation}\left(\Gamma_{{\mathfrak g},H\cap K_\xi}^{{\mathfrak
      g},K_\xi}\right)^S\colon ({\mathfrak g},H\cap K_\xi)\text{-modules}
  \rightarrow ({\mathfrak g},K_\xi)\text{-modules},
\end{equation}
with $S=\dim{\mathfrak n}\cap {\mathfrak k}_\xi$, obtaining what is called the
{\em standard $({\mathfrak g},K_\xi)$-module}
\begin{equation}
I^{\text{quo}}(x,y,\gamma)= \left(\Gamma_{{\mathfrak g},H\cap
    K_\xi}^{{\mathfrak g},K_\xi}\right)^S(M(y,\gamma)).
\end{equation}
This module has finite length, and has a unique irreducible quotient
$J(x,y,\gamma)$.
Proofs may be found in \cite{KV}*{Theorem 11.129}.
\end{subequations}

\section{Twisting parameters}\label{sec:twist}
\setcounter{equation}{0}

We want to consider the action of $\delta_0$ on representations. 
In terms of parameters, we need to study the action 
of $\delta_0$ on $\X$ (and $\ch\delta_0$ on $\ch\X$). 
We do this in the setting of Propositions \ref{p:rootdatumy} and
\ref{p:rootdatumx}. 

Of course a $\delta_0$-fixed representation of infinitesimal character
$\gamma$ can exist only if the infinitesimal character $\gamma$ is
itself fixed by $\delta_0$; that is, if and only if
\begin{equation}
\ch\delta_0(\gamma) = z\cdot\gamma\qquad \text{(some $z\in W$).}
\end{equation}
Because of the integrally dominant condition \eqref{e:domreg} that we
impose on $\gamma$ (and which is automatically inherited by
$\ch\delta_0(\gamma)$), it follows that
\begin{equation}
z\cdot R^+(\gamma) = R^+(\ch\delta_0(\gamma)) \subset R^+;
\end{equation}
here $R^+(\gamma)$ is the set of positive integral roots defined by
$\gamma$. If $\gamma$ is integral (so that $R^+(\gamma) = R^+$), such
a condition forces $z=1$, i.e., $\ch\delta_0(\gamma) = \gamma$. If
$\gamma$ is {\em not} integral, however, we can draw no such
conclusion. Here is a convenient substitute.

\begin{lemma}\label{lemma:deltagamma} Suppose $\gamma \in {\mathfrak
    h}^*$, and that $\ch\delta_0$ preserves the $W$ orbit of $\gamma$.
  Then this orbit has an integrally dominant representative $\gamma'$
  with the property that
$$\ch\delta_0(\gamma') = \gamma'.$$
\end{lemma}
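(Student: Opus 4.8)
The plan is to single out a \emph{canonical} integrally dominant representative $\gamma'$ of the orbit $W\gamma$ by a recipe that visibly commutes with $\ch\delta_0$; uniqueness of this representative, together with the hypothesis $\ch\delta_0(W\gamma)=W\gamma$, will then force $\ch\delta_0(\gamma')=\gamma'$. It is worth saying at the outset why one cannot simply take the fully dominant representative: since $\gamma\in\mathfrak h^*$ is complex, the numbers $\langle\gamma,\alpha^\vee\rangle$ are generally complex, so in general \emph{no} point of $W\gamma$ has all these pairings real and nonnegative. Integral dominance \eqref{e:id} asks for nonnegativity only when $\langle\gamma,\alpha^\vee\rangle\in\Z$ (hence automatically real), and it is this weakening that makes the assertion possible.

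Two structural facts drive the argument. First, $\ch\delta_0$ is distinguished, so it permutes $R^+(G,H)$; and since it is induced by a $\Z$-linear automorphism of $X^*(H)$ it is defined over ${\mathbb R}$, so it preserves $\mathfrak h^*_{\mathbb R}=X^*(H)\otimes{\mathbb R}$ and commutes with the decomposition $\gamma=\gamma_{\mathrm{re}}+i\gamma_{\mathrm{im}}$ into real and imaginary parts. Because $W$ acts by real linear operators, $w\gamma$ has real part $w\gamma_{\mathrm{re}}$ and imaginary part $w\gamma_{\mathrm{im}}$. Second, any root $\alpha$ with $\langle\gamma,\alpha^\vee\rangle\in\Z$ satisfies $\langle\gamma_{\mathrm{im}},\alpha^\vee\rangle=0$; thus every integral root lies in the sub-root-system $R^0(\mu)=\{\alpha\mid\langle\mu,\alpha^\vee\rangle=0\}$ attached to the relevant imaginary part $\mu=\gamma_{\mathrm{im}}$, and integral dominance is a condition only on the real part relative to $R^0$.

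I would then build $\gamma'$ in two stages. In the first stage, move $\gamma_{\mathrm{im}}$ into the closed dominant cone of $R^+$; as this cone is a fundamental domain for $W$ on $\mathfrak h^*_{\mathbb R}$, the resulting imaginary part $\mu$ is unique, and the residual freedom is exactly $\mathrm{Stab}_W(\mu)$, which by the Chevalley--Steinberg theorem equals the reflection subgroup $W(R^0(\mu))$. In the second stage, use $W(R^0(\mu))$ (which fixes $\mu$) to move the real part into the closed dominant cone of $R^+\cap R^0(\mu)$; this is again a fundamental domain, so $\gamma'$ is uniquely determined. By construction $\gamma'$ is integrally dominant: any $\alpha\in R^+$ with $\langle\gamma',\alpha^\vee\rangle\in\Z$ lies in $R^+\cap R^0(\mu)$, where the imaginary contribution vanishes and $\langle\gamma'_{\mathrm{re}},\alpha^\vee\rangle\ge 0$.

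Finally I would invoke equivariance. Since $\ch\delta_0$ fixes $R^+$ and commutes with the real/imaginary splitting, it carries any point satisfying the two defining dominance conditions to another such point of the orbit $\ch\delta_0(W\gamma)=W\gamma$; hence $\ch\delta_0(\gamma')$ satisfies the same defining conditions as $\gamma'$, and by the uniqueness just established $\ch\delta_0(\gamma')=\gamma'$. I expect the genuinely delicate point to be precisely this uniqueness: integrally dominant representatives are \emph{not} unique in general (for a single non-integral root $\alpha$, both $\gamma$ and $s_\alpha\gamma$ are integrally dominant), which is why the bare condition does not pin down a point and why the two-stage normalization---fixing the imaginary part first, then normalizing the real part only by the stabilizer---is needed to restore the uniqueness on which the fixed-point conclusion rests.
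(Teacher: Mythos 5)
Your proof is correct. The paper does not actually supply an argument for this lemma---it says only ``We omit the (elementary) proof''---so there is no official proof to compare against; your two-stage normalization is a complete and natural way to fill that gap. The essential points are all in place: the decomposition $\gamma=\gamma_{\mathrm{re}}+i\gamma_{\mathrm{im}}$ relative to $X^*(H)\otimes\mathbb{R}$ is respected by both $W$ and ${}^\vee\delta_0$; any integral root pairs to zero with $\gamma_{\mathrm{im}}$, so integral dominance of $\gamma'$ follows from dominance of $\gamma'_{\mathrm{re}}$ on $R^+\cap R^0(\mu)$; and the two closed-chamber conditions (first $\mu=\gamma'_{\mathrm{im}}$ dominant for $R^+$, then $\gamma'_{\mathrm{re}}$ dominant for $R^+\cap R^0(\mu)$, using Chevalley's theorem that $\mathrm{Stab}_W(\mu)=W\bigl(R^0(\mu)\bigr)$ together with the fundamental-domain property of closed chambers) single out exactly one point of the orbit. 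One small step is worth making explicit in your final equivariance paragraph: before comparing the second dominance condition for $\gamma'$ and for ${}^\vee\delta_0\gamma'$, observe that ${}^\vee\delta_0\mu$ is again dominant and lies in $W\gamma_{\mathrm{im}}$, hence equals $\mu$ by stage-one uniqueness; this is what identifies the two root systems $R^0$ in which the second condition is read, after which ${}^\vee\delta_0\gamma'$ satisfies both defining conditions and uniqueness forces ${}^\vee\delta_0\gamma'=\gamma'$. Your closing diagnosis---that bare integral dominance does not determine a unique representative (already for a single non-integral root both $\gamma$ and $s_\alpha\gamma$ qualify), so a finer canonical choice is genuinely needed---is exactly where the content of the lemma lies.
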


We omit the (elementary) proof.  Because of this lemma, it is
sufficient to study representations of infinitesimal character
represented by a $\ch\delta_0$-fixed integrally dominant weight
\begin{equation}\label{e:fixeddomreg}
\ch\delta_0(\gamma) = \gamma, \quad \langle\gamma,\beta^\vee \rangle
\notin \{0,-1,-2,\dots\} \quad (\beta \in R^+(G,H)).
\end{equation}

The situation for real forms is a bit more subtle, because the
infinitesimal cocharacter is not an invariant of a real form, but
merely a useful extra parameter that we attach to the real form.
Here is what we would like to know.

\begin{conjecture}\label{conj:deltag} Suppose $\xi$ is a strong
  involution for $G$ (see 
  \eqref{e:calI}), of (dominant regular integral) infinitesimal
  cocharacter $g$. Assume that the involution
  $\delta_0\circ\theta_\xi\circ\delta_0$ is equivalent (that is,
  conjugate by $G$) to
  $\theta_\xi$. Then there is a $\delta_0$-fixed regular integral
  $g'$, and a strong involution $\xi'$ of infinitesimal cocharacter
  $g'$, such that $\theta_\xi = \theta_{\xi'}$.
\end{conjecture}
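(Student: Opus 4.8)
The plan is to reduce the statement to a computation in the center $Z(G)$ together with one structural fixed-point assertion. First I would record the only freedom available: since $\theta_{\xi'}=\int(\xi')$ must equal $\theta_\xi=\int(\xi)$, the element $\xi^{-1}\xi'$ centralizes $G$, so necessarily $\xi'=\xi z_1$ with $z_1\in Z(G)$, and then
\[
\xi'^2=\xi z_1\xi z_1=\xi^2\,z_1\theta_\xi(z_1)=z\cdot(1+\theta_\xi)(z_1).
\]
Thus the central cocharacters realizable by some $\xi'$ with $\theta_{\xi'}=\theta_\xi$ are exactly the coset $z\cdot(1+\theta_\xi)Z(G)$. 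Since the infinitesimal cocharacter is merely a choice of logarithm of the central cocharacter (see \eqref{e:infcochar}), the entire conjecture becomes: \emph{choose $z_1\in Z(G)$ so that $z'=\xi'^2$ admits a $\delta_0$-fixed, regular, integral logarithm $g'$.} Note that a $\delta_0$-fixed regular dominant integral element exists abstractly in $\mathfrak h$ --- for instance $\rho^\vee$, which $\delta_0$ fixes because it permutes the simple roots --- so the difficulty is purely to steer $e(g')$ into the realizable coset.

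Next I would remove the possibility that $\delta_0$ fails to commute with $\theta_\xi$. The hypothesis says the $G$-class of $\theta_\xi=\int(\xi)$ equals that of $\delta_0\theta_\xi\delta_0=\int(\delta_0(\xi))$; that is, this class is $\delta_0$-stable. Suppose one can find $g_1\in G$ conjugating $\theta_\xi$ to a $\delta_0$-fixed involution $\theta_{\xi_*}$, where $\xi_*=g_1\xi g_1^{-1}$. Solve the problem for $\xi_*$, producing $\xi_*'=\xi_* z_1$; because $z_1$ is central it commutes with $g_1$, so $\xi':=\xi z_1=g_1^{-1}\xi_*'g_1$ satisfies $\theta_{\xi'}=\theta_\xi$ and has the \emph{same} central cocharacter $z'$ and hence the same $\delta_0$-fixed logarithm. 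Thus it is harmless to assume $\delta_0\theta_\xi=\theta_\xi\delta_0$. In that case $\delta_0(\xi)=\xi c$ for a central $c$, and squaring gives $\delta_0(z)=z\cdot(1+\theta_\xi)(c)$; equivalently (writing $Z(G)$ additively) $(1-\delta_0)(z)=-(1+\theta_\xi)(c)$, so the coset $z\cdot(1+\theta_\xi)Z(G)$ is genuinely $\delta_0$-stable and $\delta_0$ now commutes with $N:=1+\theta_\xi$.

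The remaining work is the center computation. With $\delta_0$ commuting with $N$, two things must be arranged. (i) Forcing $z'=z\,N(z_1)$ to be actually $\delta_0$-fixed (not just the coset $\delta_0$-stable) requires $(1-\delta_0)(z_1)\equiv c \pmod{\ker N}$, which is solvable precisely when $c\in(1-\delta_0)Z(G)+\ker(1+\theta_\xi)$; the relation $(1+\theta_\xi)(c)\in(1-\delta_0)Z(G)$ extracted above is the natural input here. (ii) Once $z'$ is $\delta_0$-fixed, any logarithm $g_0'$ of $z'$ satisfies $\delta_0(g_0')-g_0'=\mu\in X_*(H)$, and the averaged logarithm $g'=\tfrac12(1+\delta_0)g_0'$ is $\delta_0$-fixed but has $e(g')=z'\cdot\mu(-1)$, differing from $z'$ by the two-torsion element $\mu(-1)$. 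One then corrects by a further central adjustment and restores regularity by adding a $\delta_0$-fixed coweight --- the cleanest case being when $z'$ can be taken to be $e(\rho^\vee)$ outright, so that $g'=\rho^\vee$ works with no averaging at all.

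I expect the main obstacle to be the fixed-point existence in the second paragraph: the hypothesis supplies only that the $G$-conjugacy class of the involution $\theta_\xi$ is $\delta_0$-stable, whereas we need an actual $\delta_0$-fixed member of that class, which is a nonabelian $H^1(\langle\delta_0\rangle,-)$ question and is exactly the kind of $\mathbb Z/2$-torsion phenomenon (controlled by the two-torsion $Z(G)(2)$ and the joint action of $\langle\delta_0,\theta_\xi\rangle$) that can obstruct such statements. The center equations in (i) and (ii) are of the same $2$-torsion flavor. Keeping all of these obstructions under simultaneous control --- a fixed involution, a $\delta_0$-fixed logarithm free of the $\mu(-1)$ correction, and regularity --- is presumably what keeps the assertion at the level of a conjecture rather than a theorem.
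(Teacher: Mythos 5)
The statement you set out to prove is labeled a \emph{conjecture} in the paper for a reason: the paper immediately declares it false, and Example \ref{ex:badrealform} is the counterexample (afterwards the paper simply imposes \eqref{e:fixedcodomreg} as a standing hypothesis, restricting to the cases where the conclusion happens to hold). Concretely, take $G=SL(4)$, $\xi_0=1$, $\delta_0$ the nontrivial distinguished automorphism, $\omega=e^{2\pi i/8}$, and $\xi=\omega\,\diag(1,1,1,-1)$, so that $K_\xi=S(GL(3)\times GL(1))$ (the real form $SU(3,1)$) and $\xi^2=iI$. Your own first reduction then shows no proof can exist: since $\theta_\xi$ is \emph{inner}, it acts trivially on $Z(G)\cong\Z/4\Z$, so the central cocharacters realizable by some $\xi'$ with $\theta_{\xi'}=\theta_\xi$ form exactly the coset $\xi^2\cdot(1+\theta_\xi)Z(G)=iI\cdot Z(G)^2=\{\pm iI\}$. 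But if $g'$ is $\delta_0$-fixed, then $e(g')=\xi'^2$ must be a $\delta_0$-fixed central element, whereas $\delta_0$ inverts the center of $SL(4)$, so $\delta_0(\pm iI)=\mp iI$: the realizable coset contains no $\delta_0$-fixed element at all. Thus the ``steering'' you hoped to do in step (i) is unsolvable, and the logical gap is visible in your own wording: you need $c\in(1-\delta_0)Z(G)+\ker(1+\theta_\xi)$, but what you actually derived is only $(1+\theta_\xi)(c)\in(1-\delta_0)Z(G)$, which is strictly weaker whenever $Z(G)$ has $4$-torsion --- precisely the situation here, where $(1-\delta_0)Z(G)=\ker(1+\theta_\xi)=\{\pm I\}$ is index two in $Z(G)$.

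The reduction in your second paragraph also fails outright in this example, not merely as an unverified step. A $\delta_0$-fixed involution in the class of $\theta_\xi$ would be $\Int(s)$ with $\delta_0(s)=\lambda s$ for central $\lambda$, i.e.\ $s$ would be a similitude (with central factor) of the symplectic form defining $\delta_0$; the eigenvalues of such an $s$ pair off into two pairs of equal product, which is impossible for the eigenvalue multiset $\{\mu,\mu,\mu,-\mu\}$ forced by the shape of $\theta_\xi$. So the $G$-class of $\theta_\xi$ is $\delta_0$-stable yet has no $\delta_0$-fixed member: the nonabelian fixed-point obstruction you flagged as the ``main obstacle'' is genuinely nonzero. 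Your instinct that uncontrolled $2$-torsion obstructions remain was exactly right, but the correct response to that instinct was to hunt for a counterexample rather than a proof; that counterexample is what the paper supplies, and it is why this statement appears there only as a (false) conjecture.
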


Unfortunately this statement is false.

\begin{example}\label{ex:badrealform} Suppose $G = SL(4)$, endowed
  with the trivial 
  distinguished automorphism $\xi_0$ (so that we are considering equal
  rank real forms) and the nontrivial distinguished automorphism
  $\delta_0$. On the diagonal torus,
$$\delta_0(a_1,a_2,a_3,a_4) = (a_4^{-1},a_3^{-1},a_2^{-1},a_1^{-1}).$$
Let $\omega = \exp(2\pi i/8)$ be a primitive eighth root of $1$, and define
$$\xi = \diag(\omega,\omega,\omega,\omega^5)
=\omega[\diag(1,1,1,-1)].$$
Then $\xi^2 = \omega^2I=iI$, a central element of order $4$; and
$$K_\xi = S(GL(3) \times GL(1)),$$
the complexified maximal compact subgroup for the real form $SU(3,1)$
of $G$. The infinitesimal cocharacter of this strong real form is any
weight of the form
$$\begin{aligned} g &= (g_1,g_2,g_3,g_4) \in {\mathbb Q}^4, \\ 
\sum g_j &= 0,\qquad g_1 > g_2 > g_3 > g_4, \\
\exp(2\pi i g_j) &= \xi_j;\end{aligned}$$
that is,
$$g_j \cong \begin{cases} 1/8 \pmod{\mathbb Z} & (g=1,2,3)\\
5/8 \pmod{\mathbb Z} & (g = 4),\end{cases}$$
These conditions are easily satisfied (for example by $(17/8,9/8,
1/8,-27/8)$); but it is easy to see that they {\em cannot} be
satisfied by a $\delta_0$-fixed $g$. The reason is that
$$\delta_0(g) = (-g_4,-g_3,-g_2,-g_1).$$
If this is equal to $g$, then $g_3 = -g_2$, which contradicts the
requirements $g_3 \cong g_2 \cong 1/8 \pmod{\mathbb Z}$.  On the
other hand, $\delta_0\circ\theta_\xi\circ\delta_0$ {\em is} conjugate
to $\theta_\xi$ (by a cyclic permutation matrix).

One might hope that in this example none of the representations of
$SU(3,1)$ is fixed by $\delta_0$, and indeed none of the four discrete
series representations is fixed; but there is a spherical principal
series representation (of infinitesimal character $\rho$) which {\em
  is} fixed.

\end{example}

In any case, we are going to consider only cases when Conjecture
\ref{conj:deltag}  is
true; that is, we are going to consider only real forms of $G$ of
infinitesimal cocharacter $g$ satisfying
\begin{equation}\label{e:fixedcodomreg}
\delta_0(g) = g, \quad \langle\beta,g \rangle
\notin \{0,-1,-2,\dots\} \quad (\beta \in R^+(G,H)).
\end{equation}
In general there will be an extra twist 
by the central element $z$, ($-I$ in the example),
satisfying $\delta_0(\xi^2)=z\xi^2$.

Suppose $x\in\X$, and $\xi\in p\inv(x)$. 
Then $\delta_0(x)=x$ if and only if 
$\delta_0(\xi)=h\inv\xi h$ for some $h\in H$ (we cannot necessarily
choose $\xi$ so that $h=1$).  
This is equivalent to 
$$
\xi(h\delta_0)\xi\inv=h\delta_0,
$$
i.e., 
$$
({}^{\delta_0} H)^{\theta_\xi}=\langle H^{\theta_x},h\delta_0\rangle
$$
Suppose $\xi$ corresponds to $\overline\ell\in X_*(H)/(1+\theta_x)X_*(H)$ by
Proposition \ref{p:rootdatumx}, and $\ell\in X_*(H)$ is a
representative. Then $x$ is $\delta_0$-fixed if and only if 
$$\delta_0\ell\in\ell+(1+\theta_x)X_*(H).$$ 
Here is a precise statement.

\begin{proposition}\label{prop:xtwist} 
\addtocounter{equation}{-1}
\begin{subequations}
Suppose $g$ is an infinitesimal cocharacter 
as in \eqref{e:domregdual}.
Suppose  $x\in \X$ has infinitesimal cocharacter $g$, and let $w=w_x$
be the underlying twisted involution 
\eqref{e:wx}.
Assume
\begin{equation}
\delta_0(g) = g,\quad  \delta_0(w)=w.
\end{equation}
Suppose that $x$ corresponds via Proposition \ref{p:rootdatumx} to
$\overline \ell\in X_*(H)/(1+\theta_x)X_*(H)$.
Choose $\ell\in X_*(H)$ representing $\overline\ell$, so 
$x$ has a representative with unnormalized torus part
$s_1=e((g-\ell)/2)$, or normalized torus part $s=e((g-\ell-\rho^\vee)/2)$. 
\begin{enumerate}
\item The class $x$ is fixed by $\delta_0$ if and only if
\begin{equation}\label{e:elltwist}
(\delta_0-1)\ell =(1+\theta_x)t \qquad \text{(some $t\in
  X_*(H)$)}.
\end{equation}
\item The element $t$ is uniquely defined by $\ell$ up to adding
$X_*(H)^{-\theta_x}$; if also $\ell$ is modified in its coset,
then $t$ changes by $(1-\delta_0)X_*(H)$.

\item The corresponding special representative
\begin{equation}
\xi = e((g-\ell)/2)\sigma_w\xi_0
\end{equation}
satisfies
\begin{equation}\label{e:xitwister}
\delta_0 \xi \delta_0^{-1} = e((1-\theta_x)t/2) \xi = e(t/2)\xi e(-t/2);
\end{equation}
that is, $\xi$ is conjugate to its $\delta_0$ twist using
the element $e(t/2)$ of $H(2)$.  
\item Condition \eqref{e:xitwister} is equivalent to 
\begin{equation}
\label{e:extcartan}
({}^{\delta_0}H)^{\theta_\xi}=\langle H^{\theta_x},e(-t/2)\delta_0\rangle
\end{equation}
\end{enumerate}
\end{subequations}
\end{proposition}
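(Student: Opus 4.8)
The plan is to work entirely on the level of the torus $H$ and its cocharacter lattice $X_*(H)$, translating each geometric statement about strong involutions into a lattice-theoretic one via the parametrization of Proposition \ref{p:rootdatumx}, and then verifying the four claims more or less in sequence. The starting point is the explicit representative $\xi = e((g-\ell)/2)\sigma_w\xi_0$, whose normalized torus part is $s = e((g-\rho^\vee-\ell)/2)$.

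\medskip

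First I would establish (1). The class $x$ is $\delta_0$-fixed precisely when $\delta_0(\xi)$ and $\xi$ represent the same element of $\X$, i.e.\ are conjugate by $H$. Since $\delta_0(w)=w$ and $\delta_0(g)=g$ by hypothesis, applying $\delta_0$ to $\xi$ replaces $\ell$ by $\delta_0(\ell)$ in the torus part (the factor $\sigma_w\xi_0$ is essentially preserved because $\delta_0$ fixes $w$ and commutes with $\xi_0$). Two such representatives differing only in their $\ell$-data are $H$-conjugate exactly when their normalized torus parts differ by an element of $(1-\theta_x)H$; passing to logarithms via the exponential isomorphism $e\colon {\mathfrak h}/X_*(H)\to H$, this says $\delta_0(\ell)-\ell\in(1+\theta_x)X_*(H)$ up to the relevant $2$-torsion corrections, which is exactly \eqref{e:elltwist}. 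The main bookkeeping point here is to track carefully how $\delta_0$ interacts with the Tits representative $\sigma_w$ and with $\xi_0$, using that $\delta_0$ and $\xi_0$ are commuting distinguished automorphisms.

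\medskip

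For (2), the ambiguity in $t$ is pure linear algebra on $X_*(H)$: given \eqref{e:elltwist}, the element $t$ is determined by $(\delta_0-1)\ell = (1+\theta_x)t$, and since $\ker(1+\theta_x) = X_*(H)^{-\theta_x}$, changing $t$ by $X_*(H)^{-\theta_x}$ leaves the right-hand side unchanged; similarly replacing $\ell$ by $\ell + (1+\theta_x)f$ changes the left side by $(\delta_0-1)(1+\theta_x)f$, and since $\delta_0$ commutes with $\theta_x$ one checks this forces $t$ to move by $(1-\delta_0)X_*(H)$. For (3), I would simply substitute: conjugating $\xi$ by $\delta_0$ multiplies the torus part by $e((\delta_0-1)(g-\ell)/2) = e(-(\delta_0-1)\ell/2)$ (using $\delta_0(g)=g$), and by \eqref{e:elltwist} this equals $e(-(1+\theta_x)t/2) = e((1-\theta_x)t/2)\,e(-t)$; reorganizing and using that $\theta_x$ acts on the torus part gives the conjugation formula \eqref{e:xitwister}, with the two displayed expressions related by $e((1-\theta_x)t/2)\xi = e(t/2)\xi e(-t/2)$ since $\theta_\xi = \Int(\xi)$ restricts to $\theta_x$ on $H$.

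\medskip

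The part I expect to be the genuine obstacle is (4), the equivalence of \eqref{e:xitwister} with \eqref{e:extcartan}. The content is that $\xi$ conjugates $e(-t/2)\delta_0$ to itself, i.e.\ that $e(-t/2)\delta_0$ lies in the centralizer $({}^{\delta_0}H)^{\theta_\xi}$, and conversely that this centralizer is generated by $H^{\theta_x}$ together with that one element. Here I would rewrite \eqref{e:xitwister} as the statement that $\xi$ commutes with $e(-t/2)\delta_0$ inside the doubly extended group ${}^\Delta G$: expanding $\xi\,(e(-t/2)\delta_0)\,\xi^{-1}$ and using $\theta_\xi|_H = \theta_x$ together with $\delta_0(\xi)\xi^{-1} = e((1-\theta_x)t/2)$ from (3) should collapse to $e(-t/2)\delta_0$ after canceling the $\theta_x$-twist against the $\delta_0$-twist. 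The delicate point is to verify that the fixed group $({}^{\delta_0}H)^{\theta_\xi}$ contains \emph{nothing more} than $\langle H^{\theta_x}, e(-t/2)\delta_0\rangle$ — i.e.\ that once one element of the nonidentity coset is pinned down, the whole group is determined. This follows because ${}^{\delta_0}H/H \cong \{1,\delta_0\}$ has order two, so its $\theta_\xi$-fixed points form an extension of $H^{\theta_x}$ by at most a single nonidentity coset, and \eqref{e:xitwister} exhibits $e(-t/2)\delta_0$ as a representative of that coset lying in the fixed group. Assembling this extension-of-order-two argument cleanly, while keeping straight which automorphism ($\theta_\xi$, $\delta_0$, or $\theta_x$) acts where, is where the real care is needed.
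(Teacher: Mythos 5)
Your proposal is correct and follows essentially the same route as the paper, whose ``proof'' is really the discussion immediately preceding the proposition: identify $\delta_0$-fixedness of $x$ with $H$-conjugacy of $\delta_0(\xi)$ and $\xi$, translate this via the $\overline\ell$-parametrization of Proposition \ref{p:rootdatumx} into the coset condition $\delta_0\ell\in\ell+(1+\theta_x)X_*(H)$, and unwind the conjugating element $e(t/2)$ into the statement $\xi(e(-t/2)\delta_0)\xi^{-1}=e(-t/2)\delta_0$, which together with the order-two coset argument gives \eqref{e:extcartan}. Your extra details on (2) and (3) (the kernel computation for $1+\theta_x$, and the identity $e(-(1+\theta_x)t/2)=e((1-\theta_x)t/2)$ coming from $e(t)=1$ for $t\in X_*(H)$) are exactly the bookkeeping the paper leaves implicit.
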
 

Here is the version for $\ch G$.

\begin{proposition}\label{prop:ytwist} 
\addtocounter{equation}{-1}
\begin{subequations}
Suppose $\gamma$ is an infinitesimal character as in \eqref{e:domreg}.
Suppose  $y\in\ch\X$ has infinitesimal character $\gamma$, and let
$ w=w_y$ be the underlying twisted involution \eqref{e:wy}. Assume
\begin{equation}
\ch\delta_0(\gamma) = \gamma,\quad  \ch\delta_0( w)=w.
\end{equation}
Suppose that $y$ corresponds via Proposition \ref{p:rootdatumy} to
$\overline \lambda\in X^*(H)/(1+\theta_x)X^*(H)$. Choose $\lambda\in X^*(H)$ representing $\overline\lambda$,
so $y$ has a representative with unnormalized torus part
$e((\gamma-\lambda)/2)$, and normalized torus part $e((\gamma-\lambda-\rho)/2)$. 
\begin{enumerate}
\item The class $y$ is fixed by $\ch \delta_0$ if and only if
\begin{equation}\label{e:mutwist}
(\ch\delta_0-1)(\lambda) = (1+\ch\theta_y)\tau \qquad \text{(some $\tau\in
  X^*(H)$)}.
\end{equation}
\item The element $\tau$ is uniquely defined by $\lambda$ up to adding
$X^*(H)^{-\ch\theta_y}$; if also $\ell$ is modified in its coset,
then $t$ changes by $(1-\delta_0)X^*(H)$.
\item The corresponding special representative
\begin{equation}
\ch\xi = e((\gamma-\lambda)/2)\ch\sigma_w\ch \xi_0
\end{equation}
satisfies
\begin{equation}
\ch\delta_0 (\ch\xi)\ch\delta_0^{-1}=e((1-\ch\theta_y)\tau/2)
 = e(\tau/2)\ch\xi e(-\tau/2) \xi;  
\end{equation}
that is, $\ch\xi$ is conjugate to its $\ch\delta_0$ twist using
the element $e(\tau/2)$ of $\ch H(2)$.  
\item Condition \eqref{e:xitwister} is equivalent to 
\begin{equation}
\label{e:extcartandual}
({}^{\ch\delta_0}\ch H)^{\ch\theta_{\ch\xi}}=\langle
H^{\ch\theta_y},e(-\tau/2)\ch\delta_0\rangle 
\end{equation}
\end{enumerate}
\end{subequations}
\end{proposition}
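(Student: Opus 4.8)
The plan is to run the argument dual to the one sketched just before Proposition \ref{prop:xtwist}, transporting every object under the duality $X_*(H)\leftrightarrow X^*(H)$, $\theta_x\leftrightarrow\ch\theta_{\ch w}$, $\delta_0\leftrightarrow\ch\delta_0$, $g\leftrightarrow\gamma$, $\rho^\vee\leftrightarrow\rho$, $\ell\leftrightarrow\lambda$, $t\leftrightarrow\tau$. First I would fix the special representative $\ch\xi=e((\gamma-\lambda)/2)\ch\sigma_w\ch\xi_0$ of $y$ and compute $\ch\delta_0\,\ch\xi\,\ch\delta_0^{-1}$. Since $\ch\delta_0$ commutes with $\ch\xi_0$, fixes $\gamma$, and—being a pinned (distinguished) automorphism that fixes $\ch w=\ch w_y$—carries the Tits representative $\ch\sigma_w$ to itself, the only change is in the torus part, giving $\ch\delta_0\,\ch\xi\,\ch\delta_0^{-1}=e\big(-(\ch\delta_0-1)\lambda/2\big)\,\ch\xi$. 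Because $\ch\X=\ch\wt\X/\ch H$, the class $y$ is $\ch\delta_0$-fixed exactly when this differs from $\ch\xi$ by an $\ch H$-conjugation, i.e.\ when the torus discrepancy $e(-(\ch\delta_0-1)\lambda/2)$ lies in $(1-\ch\theta_{\ch w})\ch H=\{\ch h\,\ch\theta_{\ch w}(\ch h)^{-1}\mid \ch h\in\ch H\}$ (cf.\ the rule in \eqref{se:torusxi} that conjugation by $\ch h$ multiplies the torus part by $\ch h\,\ch\theta_{\ch w}(\ch h)^{-1}$).

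For part (1) I would convert this group-level condition into the lattice condition \eqref{e:mutwist}. Two facts do the work. First, applying $\ch\delta_0$ to the constraint $(1-\ch\theta_{\ch w})\lambda=(1-\ch\theta_{\ch w})(\gamma-\rho)$ of Proposition \ref{p:rootdatumy}—using that $\ch\delta_0$ fixes $\gamma$ and $\rho$ and commutes with $\ch\theta_{\ch w}$—yields $(1-\ch\theta_{\ch w})(\ch\delta_0-1)\lambda=0$, so $\mu:=(\ch\delta_0-1)\lambda$ already lies in the $+1$-eigenlattice $X^*(H)^{\ch\theta_{\ch w}}$. Second, $X^*(H)=X_*(\ch H)=\ker(e\colon\ch{\mathfrak h}\to\ch H)$, so $e(\mu)=1$. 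Given these, the passage is clean: writing $-\mu/2=(1-\ch\theta_{\ch w})\nu+\kappa$ with $\kappa\in X^*(H)$ (the membership $e(-\mu/2)\in(1-\ch\theta_{\ch w})\ch H$), then applying $\ch\theta_{\ch w}$ and adding gives $\mu=(1+\ch\theta_{\ch w})(-\kappa)$; conversely $(1+\ch\theta_{\ch w})\tau$ always produces a discrepancy in $(1-\ch\theta_{\ch w})\ch H$ because $e(-(1+\ch\theta_{\ch w})\tau/2)=e((1-\ch\theta_{\ch w})\tau/2)$ (the two arguments differ by $\tau\in\ker e$). This gives \eqref{e:mutwist} and defines $\tau$. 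Part (2) is then formal: $\ker(1+\ch\theta_{\ch w})=X^*(H)^{-\ch\theta_{\ch w}}$ controls the ambiguity in $\tau$, and replacing $\lambda$ by $\lambda+(1+\ch\theta_{\ch w})\phi$ changes $\mu$ by $(1+\ch\theta_{\ch w})(\ch\delta_0-1)\phi$, moving $\tau$ within $(1-\ch\delta_0)X^*(H)$. For part (3), substituting $\mu=(1+\ch\theta_{\ch w})\tau$ and again using $e(\tau)=1$ turns the discrepancy into $e((1-\ch\theta_{\ch w})\tau/2)$; since $\ch\xi$ conjugates $e(-\tau/2)$ to $e(-\ch\theta_{\ch w}\tau/2)$, this equals $e(\tau/2)\,\ch\xi\,e(-\tau/2)$, the dual of \eqref{e:xitwister}. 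Part (4) follows by reading that identity as the statement that $e(-\tau/2)\ch\delta_0$ commutes with $\ch\xi$, hence lies in $\Cent_{{}^{\ch\delta_0}\ch H}(\ch\xi)=({}^{\ch\delta_0}\ch H)^{\ch\theta_{\ch\xi}}$; together with $\ch H^{\ch\theta_y}$ it generates that group, which is exactly \eqref{e:extcartandual} (the analogue of the derivation of \eqref{e:extcartan}).

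The step I expect to be the real obstacle is the passage from $(1-\ch\theta_{\ch w})\ch H$ to $(1+\ch\theta_{\ch w})X^*(H)$ in part (1): it is the one place where the eigenspace bookkeeping, the hidden factor of $2$ in $e(\,\cdot\,/2)$, and the identity $X^*(H)=\ker e$ must all be combined correctly, and where one must check that no spurious $2$-torsion ambiguity is introduced. The $\ch\theta_{\ch w}$-averaging trick above is the mechanism that makes this go through cleanly in both directions. Everything else transposes verbatim from Proposition \ref{prop:xtwist}; once part (1) is established in this form, parts (2)–(4) are the same algebraic manipulations carried out over $X^*(H)$ in place of $X_*(H)$, which is precisely why the full proof may be omitted as identical to that of Proposition \ref{prop:xtwist}.
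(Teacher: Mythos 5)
Your proposal is correct, and it follows exactly the route the paper intends: the paper omits the proof of Proposition \ref{prop:ytwist} as the dual of Proposition \ref{prop:xtwist}, whose argument is only sketched in the discussion preceding it (twist the special representative, observe that a distinguished $\ch\delta_0$ fixing $\ch w$ preserves $\ch\sigma_w$ and $\gamma$, and compare torus parts modulo $(1-\ch\theta_{\ch w})\ch H$ via the exponential map). Your write-up simply supplies the details—in particular the $\ch\theta_{\ch w}$-averaging step converting $e(-\mu/2)\in(1-\ch\theta_{\ch w})\ch H$ into $\mu\in(1+\ch\theta_{\ch w})X^*(H)$ using $X^*(H)=\ker e$—that the paper leaves implicit.
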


\section{Extended parameters}\label{sec:extparam}
\setcounter{equation}{0}

We now define parameters for $(\g,\Kext)$-modules.  Suppose
$(x,\overline\lambda,\gamma)$ is a $\delta_0$-fixed parameter.  If
$\xi\in p\inv(x)\in \wt\X$ then $J(x,\overline\lambda,\gamma)$ is a
$\delta_0$-fixed $(\g,K_\xi)$-module.  As discussed in the
Introduction this can be extended in two ways to give a
$(\g,{}^{\delta_0} K_\xi)$-module.

\begin{lemma}
\label{l:extparam}
Suppose $(x,\overline\lambda,\gamma)$ is a $\delta_0$-fixed parameter.
Choose $h\delta_0\in ({}^{\delta_0} H)^{\theta_\xi}$ as in
Proposition \ref{prop:xtwist}. The two extensions of
$J(x,\overline\lambda,\nu)$ to a $(\g,{}^{\delta_0} K_\xi)$-module are
parametrized by the two extensions of the character $\overline\lambda$
of $H^{\theta_x}$ to 
$$({}^{\delta_0} H)^{\theta_\xi} = \langle H^{\theta_x},h\delta_0\rangle,$$ 
whose values at $h\delta_0$ are the
two square roots of $\overline\lambda(h\delta_0(h))$.
\end{lemma}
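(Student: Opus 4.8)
The plan is to recognize this as a concrete instance of the abstract Clifford theory in Proposition \ref{prop:Clifford}(6), specialized to the extension $K_\xi \subset {}^{\delta_0}K_\xi$, and then to carry out the bookkeeping at the level of the one-dimensional character $\overline\lambda$ of $H^{\theta_x}$ that actually parametrizes the module $J(x,\overline\lambda,\gamma)$.

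First I would set up the two-step structure of the problem. The module $J(x,\overline\lambda,\gamma)$ is built by cohomological induction from the one-dimensional $({\mathfrak h},H^{\theta_x})$-module ${\mathbb C}(y,\gamma)$ of \eqref{e:Cy}, whose $H^{\theta_x}$-character is exactly $\overline\lambda$ (Corollary \ref{cor:rootdatumy}). The key point I would establish is that the cohomological induction functor $\left(\Gamma_{{\mathfrak g},H\cap K_\xi}^{{\mathfrak g},K_\xi}\right)^S$ of \eqref{se:Langlandsconstruction} is natural with respect to the larger group ${}^{\delta_0}K_\xi$: extending the input character from $H^{\theta_x}=H\cap K_\xi$ to $({}^{\delta_0}H)^{\theta_\xi}=\langle H^{\theta_x},h\delta_0\rangle$ produces, after induction, an extension of the output module from $(\g,K_\xi)$ to $(\g,{}^{\delta_0}K_\xi)$. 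This reduces the statement about extensions of $J$ to a statement about extensions of the single character $\overline\lambda$.

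Next I would apply the abstract Clifford-theoretic construction. The extension group is $({}^{\delta_0}H)^{\theta_\xi}$, which by Proposition \ref{prop:xtwist}(4), equation \eqref{e:extcartan}, equals $\langle H^{\theta_x},h\delta_0\rangle$ with $h\delta_0 = e(-t/2)\delta_0$ lying in the nonidentity coset. In the notation of Proposition \ref{prop:Clifford}(6), the role of $h_0$ is played by $h\delta_0$, and the role of $g_0=h_0^2$ is played by $(h\delta_0)^2 = h\,\delta_0(h)\in H^{\theta_x}$. Since $\overline\lambda$ is one-dimensional, the intertwining operator $A_\pi$ is just a scalar, namely the value $\overline\lambda_{\mathrm{ext}}(h\delta_0)$ we are trying to assign; the condition $A_\pi^2=\pi(g_0)$ becomes the scalar equation
$$
\overline\lambda_{\mathrm{ext}}(h\delta_0)^2 = \overline\lambda\big(h\,\delta_0(h)\big).
$$
Thus the two extensions correspond precisely to the two square roots of $\overline\lambda(h\delta_0(h))$, which is the assertion. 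I would also check that $\overline\lambda$ is genuinely $\theta_\xi$-fixed (so that an extension exists at all): this is exactly the hypothesis that $(x,\overline\lambda,\gamma)$ is a $\delta_0$-fixed parameter, interpreted via the twisting analysis of Proposition \ref{prop:xtwist}.

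I expect the main obstacle to be the naturality/compatibility step --- verifying that cohomological induction faithfully transports the two-element ambiguity in extending $\overline\lambda$ to the two-element ambiguity in extending $J$, with no extra choices or sign discrepancies introduced by the derived functor $\Gamma^S$. One must confirm that the action of the extra generator $h\delta_0$ on the ${\mathfrak n}$-cohomology (equivalently on the produced module $M(y,\gamma)$ of \eqref{e:Vermay}) commutes appropriately with $\Gamma^S$ and that the resulting $({}^{\delta_0}K_\xi)$-action is well-defined; the factor ${\mathbb C}(2\rho)$ and the $\theta_\xi$-equivariance of the nilpotent ${\mathfrak n}$ need to be tracked carefully here. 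Once that compatibility is in hand, the square-root count is the purely formal consequence of Proposition \ref{prop:Clifford}(6) recorded above, and the rest of the argument is routine.
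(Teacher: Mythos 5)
Your proposal is correct and takes essentially the same route as the paper (which states the lemma without a formal proof and justifies it implicitly): Clifford theory for the index-two extension $K_\xi \subset {}^{\delta_0}K_\xi$, combined with the fact that $J(x,\overline\lambda,\gamma)$ is produced by cohomological induction from the one-dimensional character $\overline\lambda$ of the Cartan, so that an extension of $J$ amounts to a choice of scalar for $h\delta_0$ squaring to $\overline\lambda(h\delta_0(h))$. This is exactly the construction the paper records later, around \eqref{e:extparamz2} in Section \ref{sec:equiv}, including the compatibility of the derived functor with the extended group that you flag as the main point to verify.
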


\begin{subequations}\label{se:extparam}
We now begin to assemble the data---the {\em extended parameters} of
Definition \ref{d:extparam}---that we will use to construct one of the
square roots required in Lemma \ref{l:extparam}.  We will be
considering representations with a fixed regular infinitesimal
character, for real forms with a fixed infinitesimal cocharacter.  So
fix an integrally dominant infinitesimal character $\gamma$:
\begin{equation}\label{e:inf}
\gamma\in X^*(H)_{\mathbb C} \subset {\mathfrak h}^*,\quad
\langle\gamma,\ch\alpha\rangle \notin \Z_{<0} \quad (\alpha\in
  R^+(G,H)) 
\end{equation}
and an integral dominant infinitesimal cocharacter  $g$:
\begin{equation}
g\in X_*(H)_\Q\subset {\mathfrak h},\quad  \langle g,\alpha\rangle\in
\Z_{>0} \quad (\alpha\in R^+(G,H)).
\end{equation}
\end{subequations}
 We require (see Lemma \ref{lemma:deltagamma} and Conjecture
 \ref{conj:deltag}) 
\begin{equation}
\label{e:inffixed}
\delta_0(g)=g, \quad{}^t\delta_0(\gamma)=\gamma
\end{equation}

\begin{definition}
\label{d:epsilon}\addtocounter{equation}{-1}
\begin{subequations}            
Suppose $(x,y,\gamma)$ is a  parameter for a $\delta_0$-fixed representation.
Define $\overline\lambda\in X^*(H)/(1-\theta_x)X^*(H)\simeq X^*(H^{\theta_x})$ (from $y$) by Proposition \ref{p:rootdatumy}.
Choose a good representative $\xi$ for $x$, and
define $\overline\ell\in X_*(H)/(1+\theta_x)X_*(H)$ corresponding to $\xi$,
by Proposition \ref{p:rootdatumx}.
Choose a representative $\ell\in X_*(H)$ for $\overline\ell$, and choose 
$t\in X_*(H)$ satisfying \eqref{e:elltwist}. Set
$h=e(t/2)$ so $h\delta_0\in ({}^{\delta_0} H)^{\theta_\xi}$ and $(h\delta_0)^2=h\delta_0(h)\in H^{\theta_x}$. 
Define
\begin{equation}
\label{e:epsilon}
\epsilon(x,y)=\overline\lambda(h\delta_0(h)).
\end{equation}
\end{subequations}
\end{definition}

\begin{lemma}
\label{l:epsilon}
\hfil
\begin{enumerate}
\item $\epsilon(x,y)=(-1)^{\langle\overline\lambda,(1+\delta_0)t\rangle}$,
\item  $\epsilon(x,y)$ is independent of the choices of $\xi$,
  $\overline\ell$, $\ell$, and $t$ (for fixed $g$ and  $\gamma$).  
\end{enumerate}
\end{lemma}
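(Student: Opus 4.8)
\emph{Plan.} The strategy is to prove (1) by unwinding the definition of $\epsilon(x,y)$ into a single pairing of lattice vectors, and then to deduce (2) by inspecting that explicit formula under each permitted change of the auxiliary data. Throughout I note that, since $(x,y,\gamma)$ is $\delta_0$-fixed, $\delta_0$ commutes with $\theta_x = w_x\xi_0$ (it fixes $w_x$ and commutes with $\xi_0$ by \eqref{se:notation}), and both Propositions \ref{prop:xtwist} and \ref{prop:ytwist} apply.

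For (1): starting from \eqref{e:epsilon} with $h = e(t/2)$, the map $e$ is $\delta_0$-equivariant, so $\delta_0(h) = e(\delta_0 t/2)$ and $h\delta_0(h) = e((1+\delta_0)t/2)$. Using \eqref{e:elltwist} together with $\delta_0^2=1$ and the commutativity above, one computes $\theta_x(1+\delta_0)t = (1+\delta_0)(\delta_0-1)\ell - (1+\delta_0)t = -(1+\delta_0)t$, so $(1+\delta_0)t \in X_*(H)^{-\theta_x}$ and $e((1+\delta_0)t/2)\in H(2)\cap H^{\theta_x}$, which confirms $\overline\lambda$ may be evaluated on it. Evaluating the character gives $\epsilon(x,y) = \overline\lambda(e((1+\delta_0)t/2)) = \exp(\pi i\langle\lambda,(1+\delta_0)t\rangle) = (-1)^{\langle\lambda,(1+\delta_0)t\rangle}$ for any representative $\lambda$ of $\overline\lambda$. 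To see the right side is well defined modulo $(1-{}^t\theta_x)X^*(H)$, replace $\lambda$ by $\lambda+(1-{}^t\theta_x)\mu$ and use $\langle(1-{}^t\theta_x)\mu,(1+\delta_0)t\rangle = \langle\mu,(1-\theta_x)(1+\delta_0)t\rangle = \langle\mu,2(1+\delta_0)t\rangle\in 2\Z$, again via $(1+\delta_0)t\in X_*(H)^{-\theta_x}$.

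For (2): with the formula of (1) in hand, $\epsilon(x,y)$ depends on the data only through the parity of $\langle\lambda,(1+\delta_0)t\rangle$, and $\overline\lambda$ is fixed by $y$. By Proposition \ref{p:rootdatumx} the representative $\xi$ and the choice of $\ell$ are linked, and by Proposition \ref{prop:xtwist}(2) the two genuine ambiguities are: modifying $\ell$ in its coset, which alters $t$ by $(1-\delta_0)X_*(H)$; and modifying $t$ for fixed $\ell$, which alters $t$ by $X_*(H)^{-\theta_x}$. The first is immediate, since $(1+\delta_0)(1-\delta_0)s = (1-\delta_0^2)s = 0$ leaves $(1+\delta_0)t$, hence $\epsilon$, unchanged.

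The remaining step, invariance under $t\mapsto t+r$ with $r\in X_*(H)^{-\theta_x}$, is the main obstacle, and it is precisely where the $\delta_0$-fixedness of $y$ enters. Writing $\ch\delta_0 = {}^t\delta_0$ \eqref{e:dualHauts}, I would rewrite $\langle\lambda,(1+\delta_0)r\rangle = \langle(1+\ch\delta_0)\lambda,r\rangle$ and expand $(1+\ch\delta_0)\lambda = 2\lambda + (\ch\delta_0-1)\lambda$. Proposition \ref{prop:ytwist}(1) gives $(\ch\delta_0-1)\lambda = (1+\ch\theta_y)\tau$, and the alignment $\ch\theta_y = -{}^t\theta_x$ of Definition \ref{def:atlasparam} turns this into $(1-{}^t\theta_x)\tau$. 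Pairing with $r$ and using $(1-\theta_x)r = 2r$ (as $\theta_x r = -r$) yields $\langle(1+\ch\delta_0)\lambda,r\rangle = 2\langle\lambda,r\rangle + 2\langle\tau,r\rangle\in 2\Z$, so the parity is unaffected and $\epsilon(x,y)$ is invariant, completing (2). The one place demanding care is the bookkeeping of transpose conventions (that $\ch\delta_0$ and $\ch\theta_y$ act on $X^*(H)$ as ${}^t\delta_0$ and $-{}^t\theta_x$) and the sign in $(1-\theta_x)r = 2r$; the essential input is the dual-side relation of Proposition \ref{prop:ytwist}, without which the $X_*(H)^{-\theta_x}$-ambiguity in $t$ would genuinely move $\epsilon$.
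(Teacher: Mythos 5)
Your proposal is correct and follows essentially the same route as the paper: part (1) by direct evaluation of $\overline\lambda$ at $h\delta_0(h)=e((1+\delta_0)t/2)$, and part (2) by reducing the ambiguity in the data to $t\mapsto t+(1-\delta_0)X_*(H)+X_*(H)^{-\theta_x}$, killing the first summand via $(1+\delta_0)(1-\delta_0)=0$ and the second by transposing to the dual side and invoking $(\ch\delta_0-1)\lambda=(1+\ch\theta_y)\tau$ together with the alignment $\ch\theta_y=-{}^t\theta_x$ — exactly the content of the paper's chain \eqref{e:epsilons}. The extra verifications you supply (that $(1+\delta_0)t\in X_*(H)^{-\theta_x}$, hence $h\delta_0(h)\in H^{\theta_x}$, and well-definedness modulo $(1-{}^t\theta_x)X^*(H)$) are the details the paper compresses into ``the first statement is immediate.''
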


\begin{proof}
The first statment is immediate.
By \eqref{e:elltwist}, $t$ is determined by $\ell$ up to adding elements of 
$X_*(H)^{-\theta_x}$, and by $\overline\ell$ up to
$(1-\delta_0)X^*(H)$. Therefore 
$$
t\text{ is determined by }x\text{ up to adding }
X^*(H)^{-\theta_x}+(1-\delta_0)X^*(H)
$$
We have
\begin{equation}
\label{e:epsilons}
\begin{aligned}
(-1)^{\langle \lambda,(1+\delta_0)t\rangle}&=
(-1)^{\langle \lambda,(1\pm\delta_0)t\rangle}\\
&=
(-1)^{\langle (1\pm{}^t\delta_0)\lambda,t\rangle}\\
&=(-1)^{\langle (1\pm\ch\theta_y)\tau,t\rangle}\\
&=(-1)^{\langle \tau,(1\pm \theta_x)t\rangle}\\
\end{aligned}
\end{equation}
The second equality shows this sign is unchanged by adding to $t$ an
element of $(1-\delta_0)X^*(H)$, and the 
last one shows it is unaffected by adding elements of
$X^*(H)^{-\theta_x}$. 
\end{proof}

We need to choose a square root of $\epsilon(x,y)$. Just as for the
parameters $(x,y)$ for representations of real forms of $G$, 
it is helpful to symmetrize the picture with respect to $G$ and $\ch G$.

\begin{definition}
\label{d:extparam}
Fix $\gamma,\ g$ as in \eqref{se:extparam}, and a $\xi_0$-twisted
involution $w\in W$. 
Let $\theta=\theta_w = w\xi_0 \in \Aut(H)$ and $\ch\theta=
\ch\theta_{ww_0} = -{}^t\theta$. 

An {\it extended parameter} (for the twisted involution $w$ and the
specified infinitesimal character and cocharacter) is a set
$$
E=(\lambda,\tau,\ell,t)
$$
where 
\begin{enumerate}
\item $\lambda\in X^*(H)$ satisfies $(1-\ch\theta)\lambda =
  (1-\ch\theta)(\gamma-\rho)$;  
\item $\ell\in X_*(H)$ satisfies $(1-\theta)\ell =
  (1-\theta)(g-\ch\rho)$; 
\item $\tau\in X^*(H)$ satisfies $(\ch\delta_0-1)\lambda =
  (1+\ch\theta)\tau$; 
\item $t\in X_*(H)$ satisfies $(\delta_0-1)\ell=(1+\theta)t$.
\end{enumerate}
\end{definition}
Associated to an extended parameter $E=(\lambda,\tau,\ell,t)$ are the
following elements: 
\begin{enumerate}[label=(\alph*)]
\item $\xi(E)\in \wt\X$ corresponds to $\lambda$ by Proposition
  \ref{p:rootdatumy}; 
\item $\ch\xi(E)\in \wt{\ch\X}$ corresponds to $\ell$ by Proposition
  \ref{p:rootdatumx}; 
\item $x(E)=_{\text{def}}p(\xi(E))\in\X$, $x(E)^2=e(g)$;
\item $y(E)=_{\text{def}}p(\ch\xi(E))\in\ch\X$, $y(E)^2=e(\gamma)$;
\item $h(E)\delta_0=e(t/2)\delta_0\in ({}^{\delta_0} H)^{\theta_\xi}$
  (cf. \eqref{e:extcartan}); 
\item $\ch h(E)\ch\delta_0=e(\tau/2)\ch\delta_0\in
  ({}^{\ch\delta_0\vee} H)^{\ch\theta_{\ch\xi}}$
  (cf.~\eqref{e:extcartandual}). 
\end{enumerate}

We say $E$ is an {\em extended parameter for $(x(E),y(E))$}.

\begin{definition}\label{d:extparamz}
Suppose $(\lambda,\tau,\ell,t)$ is an extended parameter for $(x,y)$. 
Define
\begin{equation}
\label{e:extparamz}
z(\lambda,\tau,\ell,t) = i^{\langle\tau, (1+\theta_x)t\rangle}
(-1)^{\langle \lambda,t\rangle} \\
\end{equation}
\end{definition}

By \eqref{e:epsilons} we have:
\begin{equation}
z(\lambda,\tau,\ell,t)^2=\epsilon(x,y).
\end{equation}

Associated to $(\lambda,\tau,\ell,t)$ is an extension of
$J(x,y,\gamma)$ defined as follows.

\begin{definition}
\label{d:extparam2}
Suppose $(\lambda,\tau,\ell,t)$ is an extended parameter for $(x,y)$. 
Set $\xi=\xi(\lambda,\tau,\ell,t)$ and $h=h(\lambda,\tau,\ell,t)=e(t/2)$. 
Define an extension of $\overline\lambda$ to $({}^{\delta_0}
H)^{\theta_\xi}$  (see Lemma \ref{l:extparam}) by having
it take the value $z(\lambda,\tau,\ell,t)$ at 
$h\delta_0$. This defines  an extension of $J(x,y,\gamma)$ to a
$(\g,{}^\delta_0 K_\xi)$-module, denoted
$J_z(\lambda,\tau,\ell,t)$. (The subscript $z$ refers to the
particular formula chosen in Definition \ref{d:extparamz}.)
\end{definition}

We deal with the question of equivalence of parameters in the next Section.

For later use we record precisely how  these elements depend on the various choices.
Suppose we are given $(x,y)\in\mathcal Z$.
Choose representatives $\xi$ for $x$ and $\ch\xi$ for $y$ by Propositions 
\ref{p:rootdatumx} and \ref{p:rootdatumy}, respectively.
That is
\begin{equation}\label{e:lambdam}
\begin{aligned}
\xi &= e((g-\ell)/2)\sigma_w \xi_0\\
\ch\xi &=e((\gamma-\lambda)/2){}^\vee\sigma_{ww_0} {}^\vee\xi_0.
\end{aligned}
\end{equation}
Then
\begin{equation}\label{e:lambdamunique}
\begin{aligned}
\ell &\text{\ is determined by $\xi$ up to $2X_*^{\theta_x}$}\\
\ell &\text{\ is determined by $x$ up to $(1+\theta_x)X_*$}\\
\lambda &\text{\ is determined by $\ch\xi$ up to $2(X^*)^{{}^\vee\theta_y}$}\\
\lambda &\text{\  is determined by $y$ up to $(1+{}^\vee\theta_y)X^*$}\\
\end{aligned}
\end{equation}
It is helpful to write in addition 
\begin{equation}\label{e:ftphitau}
\begin{aligned}
f &= (\delta_0-1)\ell = (1+\theta_x)t\\
\phi &= ({}^\vee\delta_0-1)\lambda = (1+{}^\vee\theta_y)\tau\\
\end{aligned}
\end{equation}
Because (for example) $t$ is evidently determined by $f$ up to
$X_*^{-\theta_x}$, the corresponding uniqueness statements are
\begin{equation}\label{e:ftphitauunique}
\begin{aligned}
f &\text{\ is determined by $\xi$ up to $2(1-\delta_0)X_*^{\theta_x}$}\\
f &\text{\ is determined by $x$ up to $(1-\delta_0)(1+\theta_x)X_*$}\\
t &\text{\ is determined by $\xi$ up to $(1-\delta_0)X_*^{\theta_x} +
  X_*^{-\theta_x}$}\\ 
t &\text{\ is determined by $x$ up to $(1-\delta_0)
X_* +
  X_*^{-\theta_x}$}\\
\phi &\text{\ is determined by $\ch\xi$ up to
  $2(1-{}^\vee\delta_0)(X^*)^{{}^\vee\theta_y}$}\\
\phi &\text{\ is determined by $y$ up to $(1-{}^\vee\delta_0)(1+{}^\vee\theta_y)X^*$}\\
\tau &\text{\ is determined by $\ch\xi$ up to $(1-{}^\vee\delta_0)(X^*)^{{}^\vee\theta_y} +
  (X^*)^{-{}^\vee\theta_y}$}\\ 
\tau &\text{\ is determined by $y$ up to
  $(1-{}^\vee\delta_0)
X^* + (X^*)^{-{}^\vee\theta_y}$}\\
\end{aligned}
\end{equation}
Eventually we will want a parallel choice of square root of $\epsilon$
related to the dual group ${}^\vee G$. This is
\begin{equation}\label{e:extparamzeta}
\begin{aligned}
\zeta(\lambda,\tau,\ell,t) &=_{\text{def}} i^{\langle\tau,f\rangle}
(-1)^{\langle\tau ,\ell\rangle}\\ 
&=z(\lambda,\tau,\ell,t) (-1)^{\langle\lambda,t\rangle}
(-1)^{\langle\tau,\ell \rangle}. 
\end{aligned}
\end{equation}

\section{Equivalences of extended parameters}\label{sec:equiv}
\setcounter{equation}{0}
In this section we record how to tell when two of the extended
modules defined in Definition \ref{d:extparam2} are equivalent.

Fix $\gamma$ and $g$ as usual, and 
suppose $(x,y)$ is a $\delta_0$-fixed parameter.
Choose two extended parameters 
\begin{subequations}\label{se:equivextended}
\begin{equation}\label{e:twoextended}
E=(\lambda,\tau,\ell,t),\qquad E'=(\lambda',\tau',\ell',t')
\end{equation}
for $(x,y)$ (Definition \ref{d:extparam}).
Set $\xi=\xi(E)$, $\xi'=\xi(E')$, 
and define
\begin{equation}
K_\xi = \Cent_G(\xi), \qquad {}^{\delta_0} K_\xi = \Cent_{\langle
  G,\delta_0\rangle}(\xi),
\end{equation}
and similarly with primes. Because $\xi$ and $\xi'$ are assumed to be
conjugate by $G$, Proposition \ref{prop:switchstrong} provides a 
{\em canonical} identification
\begin{equation}\label{e:twistequiv}
\text{irreducible $({\mathfrak g},K_\xi)$-modules}\simeq
\text{irreducible $({\mathfrak g},K_{\xi'})$-modules}
\end{equation}
(by twisting the action by $\Ad(g)$). Exactly the same argument applies to irreducible $({\mathfrak g},{}^{\delta_0} K_\xi)$-modules. 
\end{subequations}

\begin{definition}
\label{d:sgn} \addtocounter{equation}{-1}
\begin{subequations}            
We say {\em $E$ is equivalent to $E'$} if 
$J_z(E)$ and $J_z(E')$ correspond by this canonical identification.

Define $\sgn(E,E')=1$ if $J_z(E)\sim J_z(E')$, or $-1$ otherwise.

In other words, if $[\ ]$ denotes the image of a representation of an
extended group in the module $\mathcal M$ (see the Introduction or
Section \ref{sec:hecke}) then
\begin{equation}
\label{e:sgn}
[J_z(E)]=\sgn(E,E')[J_z(E')].
\end{equation}
\end{subequations}
\end{definition}
\begin{subequations}

The Langlands classification attaches to $(\xi,y)$ an irreducible
$({\mathfrak g},K_\xi)$-module $J(\xi,y)$. The construction of
$J(\xi,y,\gamma)$ begins
with a one-dimensional $({\mathfrak
  h},H^{\theta_\xi})$-module ${\mathbb C}_{y,\gamma}$. Cohomological
induction produces a ``standard'' $({\mathfrak g},K_\xi)$-module
$I(\xi,y,\gamma)$, with unique irreducible
quotient $J(\xi,y,\gamma)$. The nature of this construction makes it obvious that the
identification of \eqref{e:twistequiv} carries $I(\xi,y,\gamma)$ to
$I(\xi',y,\gamma)$, and consequently $J(\xi,y,\gamma)$ to $J(\xi',y,\gamma)$. 

Here is more detail on  
 how the extended group representation of Definition \ref{d:extparam2} is
constructed. First, the element $e(t/2)\delta_0$ is a generator for
the extended Cartan:
\begin{equation}
({}^{\delta_0}H)^{\theta_\xi} = \langle e(t/2)\delta_0, H^{\theta_\xi}\rangle
\end{equation}
The one-dimensional module ${\mathbb C}_{y,\gamma}$ extends to a 
one-dimensional $({\mathfrak h},({}^{\delta_0}H)^{\theta_\xi})$-module
by declaring
\begin{equation}\label{e:extparamz2}
e(t/2)\delta_0 \text{ acts by the scalar }z(\lambda,\tau,\ell,t).
\end{equation}
Cohomological induction from this one-dimensional provides an
extension of $I(\xi,y,\gamma)$ to a $({\mathfrak g},{}^{\delta_0}
K_\xi)$-module $I_z(\lambda,\tau,\ell,t)$, and then
$J_z(\lambda,\tau,\ell,t)$ is its unique irreducible quotient. Of course
exactly the same words describe $J_z(\lambda',\tau',\ell',t')$.

So how do we decide whether these two modules are equivalent?
According to \eqref{e:lambdamunique}, we can find $u\in X_*(H)$ so
that 
\begin{equation}\label{e:changeell}
\begin{aligned}
\ell' &= \ell + (\theta_x+1)u\\
f' &= f + (\theta_x+1)(\delta_0 -1)u
\end{aligned}
\end{equation}
It follows that
\begin{equation}
e(u/2)\cdot\xi\cdot e(-u/2) = \xi'.
\end{equation}
If we define 
\begin{equation}
t_2 = t + (\delta_0 - 1)u, 
\end{equation}
then $(\ell',t_2)$ is another choice of representative for $x$ as in
\eqref{se:extparam}, and in fact conjugate to $(\ell,t)$ by
$e(u/2)$:
\begin{equation}
e(u/2)\cdot e(t/2)\delta_0\cdot e(-u/2) = e(t_2/2)\delta_0.
\end{equation}
Consequently 
\begin{equation}\label{e:changet}
i =_{\text{def}} t'-t_2 \in X_*^{-\theta_x}, \qquad t'= t + (\delta_0
- 1)u +i. 
\end{equation}
In exactly the same way, we find
\begin{equation}\label{e:changelambdatau}
\begin{aligned}
\lambda' &= \lambda+ ({}^\vee\theta_y + 1)\omega \qquad \text{(some
  $\omega\in X^*(H)$)}\\
\tau' &= \tau + ({}^\vee\delta_0 - 1)\omega +\iota \qquad \text{(some
  $\iota\in X^*(H)^{-\theta_y}$)}\\
\phi' &= \phi + ({}^\vee\delta_0 - 1)({}^\vee\theta_y + 1)\omega
\end{aligned}
\end{equation}
\end{subequations}

\begin{proposition}\label{prop:equivextended}
Suppose $E=(\lambda,\tau,\ell,t)$ and $E'=(\lambda',\ell',\tau',t')$
are extended parameters for $(x,y)$.  
Then 
$$
\sgn(E,E')=(-1)^{\langle(1+{}^\vee\delta_0)\tau,w\rangle}(-1)^{\langle\iota,t'\rangle}.
$$
Here $u$ and $\iota$ are 
defined in \eqref{e:changeell}, \eqref{e:changet}, and
\eqref{e:changelambdatau}.
\end{proposition}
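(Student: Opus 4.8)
The plan is to make the canonical identification of Proposition \ref{prop:switchstrong} completely explicit and thereby reduce the comparison of the two extended modules to a comparison of one-dimensional characters of the extended Cartan subgroup. By \eqref{e:changeell} the torus element $g=e(u/2)\in H\subset G$ conjugates $\xi=\xi(E)$ to $\xi'=\xi(E')$, so twisting by $\Ad(e(u/2))$ \emph{is} the canonical bijection from $(\g,{}^{\delta_0}K_\xi)$-modules to $(\g,{}^{\delta_0}K_{\xi'})$-modules that defines $\sgn(E,E')$. Recall from \eqref{se:Langlandsconstruction} and Definition \ref{d:extparam2} that $J_z(E)$ is obtained by cohomological induction from the one-dimensional $(\mathfrak h,({}^{\delta_0}H)^{\theta_\xi})$-module which restricts to $\overline\lambda$ on $H^{\theta_x}$ and sends the generator $e(t/2)\delta_0$ to $z(\lambda,\tau,\ell,t)$. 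Since cohomological induction commutes with the twist by $\Ad(g)$, the image of $J_z(E)$ under the identification is cohomologically induced from the $\Ad(e(u/2))$-twist of this character. Hence $\sgn(E,E')$ is exactly the ratio, evaluated at any generator of $({}^{\delta_0}H)^{\theta_{\xi'}}$, of the twisted character attached to $E$ and the character attached to $E'$.

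First I would check that these two characters agree on the identity part $H^{\theta_x}=H^{\theta_{x'}}$. Conjugation by $e(u/2)\in H$ is trivial on the abelian group $H$, so the twisted character still restricts to $\overline\lambda$; and $\overline{\lambda'}=\overline\lambda$ because $\lambda'-\lambda=(1+{}^\vee\theta_y)\omega=(1-{}^t\theta_x)\omega$ (using the alignment ${}^\vee\theta_y=-{}^t\theta_x$ of Definition \ref{def:atlasparam}) lies in the lattice \eqref{e:Hthetahat} of characters trivial on $H^{\theta_x}$. The two characters can therefore differ only at a generator, and it remains to evaluate both at $e(t'/2)\delta_0$. Conjugation by $e(u/2)$ carries $e(t/2)\delta_0$ to $e(t_2/2)\delta_0$ with $t_2=t+(\delta_0-1)u$, so in the twisted character this element still acts by $z(\lambda,\tau,\ell,t)$. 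Writing $e(t'/2)\delta_0=e(i/2)\cdot e(t_2/2)\delta_0$ with $i=t'-t_2\in X_*^{-\theta_x}$ as in \eqref{e:changet}, and noting that $e(i/2)$ is a $2$-torsion element of $H^{\theta_x}$ (since $i\in X_*(H)$ forces $e(i)=1$ and $\theta_x i=-i$), the twisted character sends $e(t'/2)\delta_0$ to $\overline\lambda(e(i/2))\,z(\lambda,\tau,\ell,t)=(-1)^{\langle\lambda,i\rangle}z(\lambda,\tau,\ell,t)$. This gives
\[
\sgn(E,E')=(-1)^{\langle\lambda,i\rangle}\,\frac{z(\lambda,\tau,\ell,t)}{z(\lambda',\tau',\ell',t')}.
\]

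Next I would expand the last ratio directly from Definition \ref{d:extparamz}, substituting \eqref{e:changelambdatau} and \eqref{e:changet} together with $f'=(1+\theta_x)t'=f+(\delta_0-1)(1+\theta_x)u$ (which uses $[\delta_0,\theta_x]=0$ and $(1+\theta_x)i=0$). The manipulation is of the same bilinear type as \eqref{e:epsilons}: I would use the adjunctions $\langle{}^t\theta_x\mu,\nu\rangle=\langle\mu,\theta_x\nu\rangle$ and ${}^t\delta_0={}^\vee\delta_0$, and the alignment ${}^\vee\theta_y=-{}^t\theta_x$. Two simplifications organize the answer: since $\iota$ lies in the $-1$-eigenspace of ${}^\vee\theta_y$, equivalently the $+1$-eigenspace of ${}^t\theta_x$, the cross-term $i^{\langle\iota,f'\rangle}=i^{2\langle\iota,t'\rangle}$ collapses to $(-1)^{\langle\iota,t'\rangle}$; and the factor $(-1)^{\langle\lambda,i\rangle}$ coming from $(-1)^{\langle\lambda,t\rangle}$ in $z$ cancels the prefactor $(-1)^{\langle\lambda,i\rangle}$ above. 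After these cancellations the $\omega$-dependence drops out entirely, and the remaining $u$-terms reorganize, via the congruence $({}^\vee\delta_0-1)\lambda=(1+{}^\vee\theta_y)\tau$ of Definition \ref{d:extparam}, into $(-1)^{\langle(1+{}^\vee\delta_0)\tau,u\rangle}$. This is the asserted formula.

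The hard part will be the bookkeeping in this last step: one must confirm that every power of $i$ reduces to a power of $-1$ (consistently with the fact that $z(\lambda,\tau,\ell,t)$ and $z(\lambda',\tau',\ell',t')$ are two square roots of the \emph{same} $\epsilon(x,y)$, by Lemma \ref{l:epsilon}, so that the ratio is already known to be $\pm1$), and that all remaining $\omega$- and $i$-cross-terms are even so that only the $u$- and $\iota$-contributions survive. A secondary check is that the answer is independent of the freedom in choosing $u$ and $i$ subject to \eqref{e:changeell} and \eqref{e:changet}, which is needed for the formula to be well defined.
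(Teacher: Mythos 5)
Your proposal is correct and follows essentially the same route as the paper: both reduce the comparison to characters of the extended Cartan (conjugating by $e(u/2)$, which realizes the canonical identification), arrive at the same intermediate identity $\sgn(E,E')=(-1)^{\langle\lambda,i\rangle}\,z(E)/z(E')$, and then compute the ratio of the $z$-scalars by the bilinear manipulations of \eqref{e:epsilons}, with the $\omega$-terms cancelling and the $u$- and $\iota$-terms surviving exactly as you describe (the paper merely organizes the same comparison as a three-step chain $E\to F\to G\to E'$, and the ``$w$'' in its stated formula is a typo for your $u$). No gaps beyond the algebra you explicitly defer, which goes through as claimed.
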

\begin{proof}
\begin{subequations}\label{se:proofequiv}
We change the parameter $(\lambda,\tau,\ell,t)$ to
$(\lambda',\ell',\tau',t)$ in three steps:
\begin{equation}
\begin{aligned}
\label{e:path}
E=(\lambda,\tau,\ell,t) \rightarrow F=(\lambda,\tau,\ell',t_2) &\rightarrow\\
G=(\lambda,\tau,\ell',t') \rightarrow E'=&(\lambda',\tau',\ell',t').
\end{aligned}
\end{equation}

In the first step we have conjugated by
$e(u/2)$. It follows easily that the extended representations
correspond if and only if the scalars chosen for the actions of
$e(t/2)\delta_0$ and $e(t_2/2)\delta_0$ agree. That is,
\begin{equation}\label{e:step1}
\sgn(E,F)=z(E)/z(F).
\end{equation}
At the second step of \eqref{e:path}, we are keeping the group
${}^{\delta_0} K_{\xi'}$ the same, but changing the representative of
the extended Cartan from $e(t_2/2)\delta_0$ to
$e(t'/2)\delta_0$. This gives an equivalent extended parameter exactly
if we multiply the scalar by 
$$
(-1)^{\langle\lambda,t'-t_2\rangle} =
(-1)^{\langle\lambda,i\rangle}.$$
Therefore
\begin{equation}\label{e:step2}
\sgn(F,G)=\frac{z(F)}{z(G)}(-1)^{\langle\lambda,i\rangle}.
\end{equation}

Finally, in the last step of \eqref{e:path} the group and the extended
Cartan representative remain the same; all that may change is the
scalar $z$. Therefore
\begin{equation}\label{e:step3}
\sgn(G,E')=z(G)/z(E').
\end{equation}
Combining \eqref{e:step1}--\eqref{e:step3}, we find
\begin{equation}
\label{e:sgnEE'}
\sgn(E,E')=\frac{z(E)}{z(F)}\frac{z(F)}{z(G)}(-1)^{\langle \lambda, i\rangle}\frac{z(G)}{z(E')}=\frac{z(E)}{z(E')}(-1)^{\langle \lambda, i\rangle}.
\end{equation}
It remains to compute $z(E)/z(E')$. We do this in two steps. 
First of all we have from \eqref{e:extparamz}
\begin{equation}
z(E)/z(G)=i^{\langle \tau,(1+\theta_x)(t-t')\rangle}(-1)^{\langle \lambda, t-t'\rangle}.
\end{equation}
With $u$ and $i$ given by \eqref{e:changet} this gives
\begin{equation}
z(E)/z(G)=i^{\langle \tau,(1+\theta_x)[(1-\delta_0)w+i]\rangle}
(-1)^{\langle\lambda,(\delta_0-1)u +i\rangle}\\  
\end{equation}
and a short computation using the identities gives
\begin{equation}
\begin{aligned}
z(E)/z(G)&=(-1)^{\langle (\ch\delta_0+\ch\theta_y)\tau, u\rangle}
(-1)^{\langle (1+\ch\theta_y)\tau, u\rangle}(-1)^{\langle\lambda,
  i\rangle}\\ 
&=(-1)^{\langle(1+\ch\delta_0)\tau,u\rangle}(-1)^{\langle \lambda,i\rangle}
\end{aligned}
\end{equation}
Next we compute
\begin{equation}
z(G)/z(E')=i^{\langle \tau-\tau',(1+\theta_x)t'\rangle}(-1)^{\langle
  \lambda-\lambda',t'\rangle} 
\end{equation}
Using \eqref{e:changelambdatau} this gives
\begin{equation}
\begin{aligned}
z(G)/z(E')&=i^{\langle (\ch\delta_0-1)\omega+\iota,
  (1+\theta_x)t'\rangle}(-1)^{\langle
  (1+\ch\theta_y)\omega,t'\rangle}\\ 
&=(-1)^{\langle
  \omega,(1+\theta_x)t'\rangle}(-1)^{\langle\iota,t'\rangle}
(-1)^{\langle \omega,(1+\theta_x)t'\rangle}\\ 
&=(-1)^{\langle\iota,t'\rangle}
\end{aligned}
\end{equation}
Multiplying (h) and (i) gives
\begin{equation}
z(E)/z(E')=
(-1)^{\langle(1+\ch\delta_0)\tau, u\rangle}(-1)^{\langle
  \lambda,i\rangle}(-1)^{\langle\iota,t'\rangle} 
\end{equation}
Multiplying both sides by $(-1)^{\langle\lambda,i\rangle\rangle}$ and
using \eqref{e:sgnEE'} gives the result. 

\end{subequations} 
\end{proof}

\subsection{Duality for extended parameters}

We offer some remarks about duality in the sense
of \cite{IC4}. Define a group
\begin{subequations}\label{se:dualreps}
(called ${}^\vee G(e(\gamma))_0$ in \eqref{e:chGz})
\begin{equation}\label{e:veeGgamma}
{}^\vee G(\gamma) = [\Cent_{{}^\vee G}(e(\gamma))]_0 \supset {}^\vee H,
\end{equation} 
a connected reductive group with root system
\begin{equation}\label{e:veeR(gamma)}
{}^\vee R(\gamma) = \{\alpha^\vee \in R^\vee \mid \langle
\gamma,\alpha^\vee\rangle \in {\mathbb Z}\},
\end{equation}
the integral roots for the infinitesimal character $\gamma$. The 
adjoint action of the representative
\begin{equation}
\ch\xi = e((\gamma-\lambda)/2){}^\vee\sigma_y {}^\vee\xi_0
\end{equation}
defines an involutive automorphism of ${}^\vee G(\gamma)$, so 
\begin{equation}
{}^\vee K_{\ch\xi} = \Cent_{{}^\vee G(\gamma)}(\ch\xi)  
\end{equation}
is a symmetric subgroup of $\ch G(\gamma)$.
By symmetry, the  parameter
$(y,x)$ defines an irreducible $({}^\vee{\mathfrak g}(\gamma),{}^\vee
K_{\ch\xi})$-module ${}^\vee J(x,y)$, with infinitesimal character $g$.
(To be precise, we need to introduce
a covering group related to the difference in $\rho$-shifts between
${}^\vee G$ and ${}^\vee G(\gamma)$, but we will overlook this
technicality.)  As in  \eqref{e:extcartan}, we find that 
\begin{equation}
\ch h\ch\delta_0=e(\tau/2){}^\vee\delta_0
\end{equation}
is a representative for an extended Cartan. As in Definition \ref{d:epsilon} 
we need to take a square root of 
\begin{equation}
\begin{aligned}
\overline\ell((\ch h\ch\delta_0)^2)=(-1)^{\langle\ell,
  (1+\ch\delta_0)\tau\rangle}
\end{aligned}
\end{equation}
which by \eqref{e:epsilons} is precisely the sign $\epsilon(x,y)$ of
Definition \ref{d:epsilon}. 

Therefore we may define an extended representation by making
$e(\tau/2){}^\vee\delta_0$ act by any desired square root of
$\epsilon$. 
It turns out that duality dictates choosing a different square root 
than we did earlier; we  choose $\zeta$ as in \eqref{e:extparamzeta}:
\begin{equation}\label{e:extparamzeta2}
\zeta(\lambda,\tau,\ell,t) = i^{\langle\tau,f\rangle}(-1)^{\langle\tau ,\ell\rangle}
=z(\lambda,\tau,\ell,t)(-1)^{\langle\lambda,t\rangle}(-1)^{\langle\ell,\tau\rangle}.
\end{equation}
Then define an extended representation $\ch J_\zeta(\lambda,\tau,\ell,t)$ by
\begin{equation}\label{e:extrepzeta}
e(\tau/2){}^\vee\delta_0 \mapsto \zeta(\lambda,\tau,\ell,t).
\end{equation}
\end{subequations}

The point of this choice of sign is that it makes the next result hold.
Recall  if  $E,E'$ are parameters for $(x,y)$ then $\sgn(E,E')$ is defined by 
the identity
$$
[J_z(E)]=\sgn(E,E')[J_z(E')],
$$
and a formula for it is given in Proposition \ref{prop:equivextended}.

\begin{proposition}\label{prop:dualequivextended}
Suppose $E,E'$ are extended parameters for $(x,y)$. 
Then 
$$
[\ch J_\zeta(E)]=\sgn(E,E')[\ch J_\zeta(E')],
$$
where $\sgn(E,E')$ is defined in Defintion \ref{d:sgn}.
Equivalently, 
\begin{equation}
\label{e:samesgn}
J_z(E)\simeq J_z(E')\text{ if and only if }
\ch J_\zeta(E)\simeq \ch J_\zeta(E').
\end{equation}
\end{proposition}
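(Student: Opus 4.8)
The plan is to exploit the fact that the dual construction in \eqref{se:dualreps} is formally identical to the primal one of Definition \ref{d:extparam2}, with the roles of $G$ and ${}^\vee G(\gamma)$ (hence of $x$ and $y$, of $\lambda$ and $\ell$, of $\tau$ and $t$, and of $\delta_0$ and $\ch\delta_0$) interchanged, and with $z$ replaced by $\zeta$. Concretely, $\ch J_\zeta(E)$ is produced by cohomological induction on ${}^\vee G(\gamma)$ from the one-dimensional module attached to $\overline\ell$, extended across the dual extended Cartan by letting $e(\tau/2){}^\vee\delta_0$ act by $\zeta$. I would therefore define a dual sign $\ch{\sgn}(E,E')$ by $[\ch J_\zeta(E)]=\ch{\sgn}(E,E')[\ch J_\zeta(E')]$ and reduce the Proposition to the single equality $\ch{\sgn}(E,E')=\sgn(E,E')$; the ``equivalently'' clause \eqref{e:samesgn} is then immediate, since each $\simeq$ asserts that the relevant sign is $+1$.

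First I would recompute $\ch{\sgn}(E,E')$ by repeating verbatim the three-step deformation in the proof of Proposition \ref{prop:equivextended}, but deforming the data controlling the dual object before the data entering only through the scalar. Since $\ch\xi=e((\gamma-\lambda)/2){}^\vee\sigma_y{}^\vee\xi_0$ and the dual extended Cartan generator $e(\tau/2){}^\vee\delta_0$ depend on $(\lambda,\tau)$ but not on $(\ell,t)$, the correct order is: (i) conjugate by $e(\omega/2)$ to pass from $(\lambda,\tau)$ to $(\lambda',\tau_2)$, changing ${}^\vee K_{\ch\xi}$; (ii) change the dual Cartan representative from $e(\tau_2/2){}^\vee\delta_0$ to $e(\tau'/2){}^\vee\delta_0$, whose ratio is $e(\iota/2)$ with $\iota\in (X^*)^{-\ch\theta_y}$; (iii) change $(\ell,t)$, which alters only $\zeta$. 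Step (ii) contributes the value of the character $\overline\ell\in X^*({}^\vee H)$ on $e(\iota/2)$, namely $(-1)^{\langle\ell,\iota\rangle}$. This yields
\[
\ch{\sgn}(E,E') = \frac{\zeta(E)}{\zeta(E')}\,(-1)^{\langle\ell,\iota\rangle},
\]
exactly parallel to the identity $\sgn(E,E') = \dfrac{z(E)}{z(E')}(-1)^{\langle\lambda,i\rangle}$ established in \eqref{e:sgnEE'}.

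The remaining task is a pure sign identity. Substituting $\zeta = z\,(-1)^{\langle\lambda,t\rangle}(-1)^{\langle\ell,\tau\rangle}$ from \eqref{e:extparamzeta2} into the two displayed formulas, the equality $\ch{\sgn}(E,E')=\sgn(E,E')$ collapses to
\[
\langle\lambda,i\rangle + \langle\lambda,t\rangle + \langle\lambda',t'\rangle + \langle\ell,\tau\rangle + \langle\ell',\tau'\rangle + \langle\ell,\iota\rangle \equiv 0 \pmod 2 .
\]
I would verify this by inserting the change-of-parameter formulas \eqref{e:changeell}, \eqref{e:changet}, \eqref{e:changelambdatau}, expanding bilinearly, and then collapsing with the adjointness moves of \eqref{e:epsilons} (transferring ${}^t\delta_0$ and ${}^t\theta_x$ across the pairing), the defining relations $(\ch\delta_0-1)\lambda=(1+\ch\theta_y)\tau$ and $(\delta_0-1)\ell=(1+\theta_x)t$, and the constraints $\theta_x i=-i$ and ${}^t\theta_x\iota=\iota$; these force the cross terms to cancel in pairs.

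I expect the sign bookkeeping in this last identity to be the main obstacle—it is exactly the point at which the choice of $\zeta$ rather than $z$ is forced, and where a single misplaced pairing would break the conclusion. A secondary point requiring care is the justification of step (iii): one must confirm that varying $(\ell,t)$ genuinely leaves both ${}^\vee K_{\ch\xi}$ and the dual Cartan representative fixed, so that it contributes only through $\zeta$; and, as remarked after \eqref{e:veeGgamma}, one should note that the $\rho$-shift discrepancy between ${}^\vee G$ and ${}^\vee G(\gamma)$ is absorbed into a covering group and does not disturb the parity computation.
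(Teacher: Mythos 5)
Your proposal is correct and follows essentially the same route as the paper: the paper's entire proof is the remark that one repeats the three-step deformation of Proposition \ref{prop:equivextended} on the dual side (conjugation by $e(\omega/2)$, change of dual Cartan representative contributing $(-1)^{\langle\ell,\iota\rangle}$, then the $(\ell,t)$-change affecting only $\zeta$) and checks that the resulting sign agrees with $\sgn(E,E')$, which is exactly your reduction to the parity identity $\langle\lambda,i\rangle+\langle\lambda,t\rangle+\langle\lambda',t'\rangle+\langle\ell,\tau\rangle+\langle\ell',\tau'\rangle+\langle\ell,\iota\rangle\equiv 0 \pmod 2$. That identity does hold by the pairwise cancellations you describe, using the defining relations, the adjointness rules ${}^\vee\delta_0={}^t\delta_0$ and ${}^\vee\theta_y=-{}^t\theta_x$, the anti-invariance of $i$ and $\iota$, and (one ingredient to add to your list) the commutativity of $\delta_0$ and $\theta_x$ on $X_*$, which holds because $w_x$ is $\delta_0$-fixed.
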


The proof is identical to that of Proposition \ref{prop:equivextended}.
What matters for us, and what is by no means automatic, is that the
sign is  the same as the sign in Definition \ref{d:sgn}.
We deduce

\begin{corollary}\label{cor:dual} In the setting \eqref{se:extparam},
  there is a natural bijection from $\delta_0$-fixed extended
  representations (of strong real forms of infinitesimal cocharacter
  $g$) of $G$, of infinitesimal character $\gamma$; to
  ${}^\vee\delta_0$-fixed representations of (strong real forms of
  infinitesimal cocharacter $\gamma$) of $\ch G(\gamma)$, of infinitesimal
  character $g$. The bijection sends $J_z(\lambda,\tau,\ell,t)$ to
  ${}^\vee J_\zeta(\lambda,\tau,\ell,t)$.
\end{corollary}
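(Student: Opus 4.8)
The plan is to construct the bijection at the level of extended parameters, where the datum $E=(\lambda,\tau,\ell,t)$ is manifestly symmetric between the $G$-side (carried by $\ell,t\in X_*(H)$) and the $\ch G(\gamma)$-side (carried by $\lambda,\tau\in X^*(H)$). On parameters the assignment is the identity $E\mapsto E$; the whole content is that reading $E$ as $J_z(E)$ on one side and as ${}^\vee J_\zeta(E)$ on the other descends to a well-defined bijection of equivalence classes. First I would record the underlying non-extended duality. An {\tt atlas} parameter $(x,y)$ satisfies $-{}^t\theta_x=\ch\theta_y$, and this condition together with the fixed-point conditions $\delta_0(g)=g$, ${}^t\delta_0(\gamma)=\gamma$ of \eqref{e:inffixed} is symmetric under interchanging $X_*(H)\leftrightarrow X^*(H)$ and $\delta_0\leftrightarrow{}^\vee\delta_0$. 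Hence a $\delta_0$-fixed $(x,y)$ for $G$ (infinitesimal cocharacter $g$, infinitesimal character $\gamma$) is carried by this symmetry to a ${}^\vee\delta_0$-fixed parameter for $\ch G(\gamma)$ (infinitesimal cocharacter $\gamma$, infinitesimal character $g$), and the duality of \cite{IC4} identifies $J(x,y,\gamma)$ with the module ${}^\vee J(x,y)$ built in \eqref{se:dualreps}.

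Next I would analyse the two extensions on each side. By Lemma \ref{l:extparam}, a $\delta_0$-fixed $J(x,y,\gamma)$ has exactly two extensions to a $(\g,\Kext_\xi)$-module, indexed by the two square roots of $\epsilon(x,y)$; Definition \ref{d:extparam2} realizes these through the scalar $z(\lambda,\tau,\ell,t)$, with $z^2=\epsilon$ by \eqref{e:epsilons}. Using the uniqueness statements \eqref{e:lambdamunique} and \eqref{e:ftphitauunique} I would check that as $E$ ranges over all extended parameters for a fixed $(x,y)$ the value $z(E)$ attains both square roots, so that \emph{every} $\delta_0$-fixed extension arises as some $J_z(E)$; by Definition \ref{d:sgn} and Proposition \ref{prop:equivextended}, two such $E,E'$ yield the same extension exactly when $\sgn(E,E')=1$. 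The construction of \eqref{se:dualreps} does the symmetric thing on the dual side: the scalar $\zeta(\lambda,\tau,\ell,t)$ also satisfies $\zeta^2=\epsilon$ (again \eqref{e:epsilons}, via \eqref{e:extparamzeta}), and $E\mapsto[{}^\vee J_\zeta(E)]$ likewise surjects onto the two ${}^\vee\delta_0$-fixed extensions of ${}^\vee J(x,y)$.

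The crux is to match the two equivalence relations, and this is exactly Proposition \ref{prop:dualequivextended}, which asserts $[{}^\vee J_\zeta(E)]=\sgn(E,E')[{}^\vee J_\zeta(E')]$ with the \emph{same} sign $\sgn(E,E')$ that governs $J_z$. Granting it, the partition of extended parameters for a fixed $(x,y)$ into the two classes cut out by $\sgn=1$ is identical whether computed via $J_z$ or via ${}^\vee J_\zeta$; since each class indexes one of the two extensions on the respective side, the rule $[J_z(E)]\mapsto[{}^\vee J_\zeta(E)]$ is a well-defined bijection between the two $G$-extensions and the two $\ch G(\gamma)$-extensions lying over $(x,y)$. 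Assembling these over all $\delta_0$-fixed parameters $(x,y)$ produces the asserted global bijection, sending $J_z(\lambda,\tau,\ell,t)$ to ${}^\vee J_\zeta(\lambda,\tau,\ell,t)$.

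The genuinely non-formal input is Proposition \ref{prop:dualequivextended}, whose proof runs parallel to that of Proposition \ref{prop:equivextended}: one conjugates on the dual torus by $e(\omega/2)$, changes the extended-Cartan representative $e(\tau/2){}^\vee\delta_0$, and tracks the induced change in $\zeta$. The decisive point is that the correction factor $(-1)^{\langle\lambda,t\rangle}(-1)^{\langle\ell,\tau\rangle}$ relating $\zeta$ to $z$ in \eqref{e:extparamzeta} is precisely what forces the ratio $\zeta(E)/\zeta(E')$ to reproduce the same final sign; this is why $\zeta$, and not $z$, is the correct normalization on the dual side. The main technical obstacle remaining, once the signs are in hand, is the covering-group $\rho$-shift between $\ch G$ and $\ch G(\gamma)$ flagged in \eqref{se:dualreps}: I would confirm that passing to ${}^\vee G(\gamma)$ does not disturb the identification of the square roots of $\epsilon$, so that the two dual extensions are indexed by the same datum $\tau$ as the two extensions for $G$.
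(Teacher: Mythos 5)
Your overall route is the paper's own: Corollary \ref{cor:dual} is deduced from Proposition \ref{prop:dualequivextended}, the whole content of the deduction being that well-definedness of $[J_z(E)]\mapsto[\ch J_\zeta(E)]$ on equivalence classes is exactly \eqref{e:samesgn}; and you correctly single out the normalization \eqref{e:extparamzeta} of $\zeta$ as the reason the dual side reproduces the same sign. So the skeleton of your argument and the paper's agree.

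There is, however, one step you propose that would fail: the claim that, for fixed $(x,y)$, the scalar $z(E)$ attains \emph{both} square roots of $\epsilon(x,y)$ as $E$ runs over extended parameters for $(x,y)$, so that every extension of $J(x,y,\gamma)$ to a $(\g,{}^{\delta_0}K_\xi)$-module is of the form $J_z(E)$. This is false in general. The setting \eqref{se:extparam} allows $\delta_0$ of order one; take $\delta_0=1$ and $(x,y)$ a discrete series parameter, so $\theta_x=1$ on $H$. Then condition (4) of Definition \ref{d:extparam} reads $0=(\delta_0-1)\ell=(1+\theta_x)t=2t$, forcing $t=0$ for \emph{every} extended parameter for $(x,y)$, whence $z(E)=i^{\langle\tau,(1+\theta_x)t\rangle}(-1)^{\langle\lambda,t\rangle}=1$ identically: only the extension on which $e(t/2)\delta_0$ acts by $+1$ is ever realized, and the other extension, $J_z(E)\otimes\chi$ (with $\chi$ the sign character of ${}^{\delta_0}K_\xi/K_\xi$), is not any $J_z(E')$. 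Equivalently, Proposition \ref{prop:equivextended} shows all extended parameters for such an $(x,y)$ form a \emph{single} $\sgn$-class, not two. Consequently your assertion that ``each class indexes one of the two extensions on the respective side'' can break down, and with it your claim that the formula $J_z(E)\mapsto{}\ch J_\zeta(E)$ by itself biject\emph{s all} extensions over $(x,y)$. The corollary survives because this surjectivity is not needed: by \eqref{e:samesgn} the assignment $[J_z(E)]\mapsto[\ch J_\zeta(E)]$ is well defined and injective on the extensions that are realized, it is surjective onto the realized dual extensions by construction, and since Proposition \ref{prop:dualequivextended} says the two equivalence relations on extended parameters literally coincide, the number of realized extensions (one or two) is the same on both sides; the unrealized ones, when they occur, then correspond by tensoring with the sign characters on each side. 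Replacing your surjectivity check by this observation repairs the argument and recovers precisely the paper's one-line proof.
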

The fact that this map is well defined on equivalence classes
is precisely \eqref{e:samesgn}.
In Section \ref{sec:dual} we use this to extend the duality 
of \cite{IC4} to the twisted setting.

\bigskip

The formulations of these results are designed to allow a theoretical
analysis of all possible parameters for extended representations. For
computational purposes, one may simply want to ask when two given
parameters are equivalent. To answer that question using the results
above requires calculating elements $u$ and $\omega$ by solving their
defining equations \eqref{e:changeell} and
\eqref{e:changelambdatau}. This is not enormously difficult, but it is
not necessary. We therefore conclude this section with a simpler
formula for $\sgn(E,E')$. 

\begin{proposition}\label{prop:equivextendedprime}
Suppose $E$ and $E'$ are extended parameters for $(x,y)$. 
Then 
$$\begin{aligned}
\sgn(E,E') &= i^{\langle ({}^\vee\delta_0 -1)\lambda,t'-t\rangle + \langle
  \tau'-\tau,(\delta_0 -1)\ell'\rangle}
  (-1)^{\langle\tau,\ell'-\ell\rangle} (-1)^{\langle
    \lambda'-\lambda,t'\rangle} (-1)^{\langle\tau,t'-t\rangle}\\
&= i^{\langle\tau',(\delta_0-1)\ell'\rangle - 
    \langle\tau,(\delta_0-1)\ell\rangle}
  (-1)^{\langle\tau,\ell'-\ell\rangle} (-1)^{\langle
    \lambda'-\lambda,t'\rangle}.    
\end{aligned}$$
Here the two expressions on the right are automatically equal, and
the powers of $i$ appearing are automatically even.
\end{proposition}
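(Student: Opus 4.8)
The plan is to avoid solving for the auxiliary lattice elements $u,\omega,\iota,i$ altogether, and instead to run the three-step argument of Proposition \ref{prop:equivextended} only as far as the intermediate identity \eqref{e:sgnEE'}, namely $\sgn(E,E')=\bigl(z(E)/z(E')\bigr)(-1)^{\langle\lambda,i\rangle}$. The point is that $z(E)/z(E')$ is completely explicit from Definition \ref{d:extparamz}, and that the single surviving auxiliary contribution $(-1)^{\langle\lambda,i\rangle}$ can be rewritten purely in terms of the differences $\ell'-\ell$ and $t'-t$ using the defining relations of Definition \ref{d:extparam} together with the adjointness facts $\ch\delta_0={}^t\delta_0$ and ${}^t\theta_x=-\ch\theta_y$ (Definition \ref{def:atlasparam}).

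First I would compute $z(E)/z(E')=i^{\langle\tau,(1+\theta_x)t\rangle-\langle\tau',(1+\theta_x)t'\rangle}(-1)^{\langle\lambda,t\rangle-\langle\lambda',t'\rangle}$ directly from \eqref{e:extparamz}. Next I would eliminate $u$ from $(-1)^{\langle\lambda,i\rangle}$: substituting $i=(t'-t)-(\delta_0-1)u$ from \eqref{e:changet} and moving $\delta_0$ across the pairing turns the $u$-term into $\langle(\ch\delta_0-1)\lambda,u\rangle$, which by Definition \ref{d:extparam}(3) equals $\langle(1+\ch\theta_y)\tau,u\rangle=\langle\tau,(1-\theta_x)u\rangle$; the mod-$2$ identity $(1-\theta_x)u=2u-(1+\theta_x)u$ together with $\ell'-\ell=(1+\theta_x)u$ from \eqref{e:changeell} then collapses this to $(-1)^{\langle\tau,\ell'-\ell\rangle}$, with $u$ gone. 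Assembling the pieces, the sign factors combine to $(-1)^{\langle\lambda'-\lambda,t'\rangle}(-1)^{\langle\tau,\ell'-\ell\rangle}$, and using $(1+\theta_x)t=(\delta_0-1)\ell$ from Definition \ref{d:extparam}(4) to rewrite the exponent of $i$ produces exactly the second displayed formula, up to replacing $i^{-P}$ by $i^{P}$, where $P=\langle\tau',(\delta_0-1)\ell'\rangle-\langle\tau,(\delta_0-1)\ell\rangle$.

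That replacement is licensed by the evenness assertion, which I would establish first: since $z(E)^2=\epsilon(x,y)$ and $\epsilon(x,y)$ depends only on $(x,y)$ by Lemma \ref{l:epsilon}, the exponents $\langle\tau,(1+\theta_x)t\rangle$ and $\langle\tau',(1+\theta_x)t'\rangle$ agree modulo $2$, so $P$ is even and $i^{-P}=i^{P}$. To pass from the second form to the first, I would compute the difference of the two $i$-exponents, which reduces — via (3) and ${}^t\theta_x=-\ch\theta_y$ — to $-2\langle\tau,\theta_x(t'-t)\rangle$; the resulting discrepancy between the two forms is the single sign $(-1)^{\langle\tau,(1+\theta_x)(t'-t)\rangle}=(-1)^{\langle\tau,(\delta_0-1)(\ell'-\ell)\rangle}$, and I would show this equals $1$ by a second mod-$2$ computation exploiting $\delta_0^2=1$ (so $(\delta_0-1)^2=2(1-\delta_0)$) and $i\in X_*^{-\theta_x}$ from \eqref{e:changet}. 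The main obstacle is exactly this quarter-integer bookkeeping: each step must separate congruences mod $2$ (governing the $(-1)$-factors) from congruences mod $4$ (governing the $i$-factors), and the entire result rests on the two evenness facts — the first expressing that $\epsilon(x,y)$ is a well-defined invariant of $(x,y)$, the second on the involutivity of $\delta_0$ and $\theta_x$.
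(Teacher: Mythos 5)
Your proposal is correct, and all of its steps check out: the ratio $z(E)/z(E')$ is indeed explicit from \eqref{e:extparamz}; the identity $\langle\lambda,(\delta_0-1)u\rangle=\langle(\ch\delta_0-1)\lambda,u\rangle=\langle(1+\ch\theta_y)\tau,u\rangle=\langle\tau,(1-\theta_x)u\rangle\equiv\langle\tau,\ell'-\ell\rangle \pmod 2$ correctly eliminates $u$ from $(-1)^{\langle\lambda,i\rangle}$; the relations $(1+\theta_x)t=(\delta_0-1)\ell$ and $(1+\theta_x)t'=(\delta_0-1)\ell'$ (valid since $E,E'$ share the same $\theta_x$) turn the $i$-exponent into $-P$; the evenness of $P$ really does follow from $z(E)^2=z(E')^2=\epsilon(x,y)$ via Lemma \ref{l:epsilon}, because $z(E)^2=(-1)^{\langle\tau,(1+\theta_x)t\rangle}$; and the discrepancy between the two displayed formulas is exactly $(-1)^{\langle\tau,(1+\theta_x)(t'-t)\rangle}$, which your splitting $t'-t=(\delta_0-1)u+i$ kills, since $(\ch\delta_0-1)^2=2(1-\ch\delta_0)$ handles the $u$-part and $(1-\theta_x)i=2i$ handles the $i$-part.

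Where you differ from the paper is in organization rather than substance: the paper omits the proof entirely, saying only that it is similar to Propositions \ref{prop:equivextended} and \ref{prop:dualequivextended}, and its intended argument re-derives, factor by factor, how each ingredient of $z$ changes when $(\lambda,\tau,\ell,t)$ is replaced by the primed data, with the changes expressed directly in the differences $\ell'-\ell$, $t'-t$, $\lambda'-\lambda$, $\tau'-\tau$. You instead take the already-established intermediate identity \eqref{e:sgnEE'} as a black box and push all dependence on the auxiliary choices $u,\omega,\iota,i$ into two structural facts: that $\epsilon(x,y)$ is a well-defined invariant of $(x,y)$ (giving evenness of $P$, hence $i^{-P}=i^{P}$), and that $\delta_0$ and $\theta_x$ are commuting involutions (giving the second evenness). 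This buys a shorter derivation, and it makes visible \emph{why} the powers of $i$ in the statement are automatically even, a fact the factor-by-factor computation only exhibits after the dust settles; the paper's route, on the other hand, produces along the way reusable tables of factor changes that it exploits elsewhere.
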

The proof is similar to the proofs of Propositions
\ref{prop:equivextended} and \ref{prop:dualequivextended}. We omit the details.

\section{Hecke Algebra Action}\label{sec:hecke}
\setcounter{equation}{0}

Our goal is to compute the Hecke algebra action defined in \cite{LVq}.
We begin by summarizing the definition of this Hecke algebra module.
We then explain what extra information is needed, beyond the formulas
of \cite{LVq}*{Sections 7.5--7.7}, to carry out the computation.

In Sections \ref{sec:hecke} through \ref{sec:2Ci} we consider the case
of integral infinitesimal character. In Section \ref{sec:nonintegral}  
we discuss the modification necessary to treat the general case.

We start with our group $G$ and a pair of commuting involutions
$\delta_0,\xi_0$ as in Section \ref{sec:setting}.   
Fix a regular, integral infinitesimal character $\gamma\in\mathfrak
h^*$.  As always we assume $\gamma$ is integrally dominant as in
Definition \ref{def:Lparam}; since $\gamma$ is integral this means
$\gamma$ is dominant: $\ch\alpha(\gamma)\in\Z_{>0}$ for all
$\alpha>0$.
Let $\mathcal H$ be the twisted Hecke algebra of \cite{LVq}*{Section 4},
and set $\mathcal A=\Z[q^{\frac12},q^{-\frac12}]$
Fix a strong involution $\xi$ inner to $\xi_0$,
and set $K=K_\xi$. Associated to $\gamma$
is an $\mathcal H$-module $M$, defined in \cite{LVq}*{Section 2.3}.
In our setting this is a quotient of the Grothendieck group over
$\mathcal A$ of $(\g,{}^{\delta_0} K)$-modules with infinitesimal
character $\gamma$.  Write $[X]$ for the image in $M$ of a 
$(\g,{}^{\delta_0} K)$-module $X$.  
Let $\chi$ be the non-trivial
extension of the trivial representation of a 
one-dimensional $(\g,{}^{\delta_0} K)$-module.
In $M$ we have the relation
$$
[X]+[X\otimes\chi]\equiv 0
$$
Therefore $M$ has a basis consisting of one extension to
$(\g,{}^{\delta_0} K)$ of each irreducible  $\delta_0$-fixed
$(\g,K)$-module with infinitesimal character $\gamma$.
Furthermore if $J$ is irreducible, and 
is not the extension of an irreducible $(\g,K)$-module, then 
$[J]\equiv 0$.

Associated to a $\delta_0$-orbit $\kappa$ of 
simple roots is a generator $T_\kappa$ of $\mathcal H$.
Suppose $I$ is a standard, $\delta_0$-fixed $(\g,K)$-module with
infinitesimal character $\gamma$, and $\wt I$ is an extension of $I$
to a $(\g,{}^{\delta_0} K)$-module.  
Then formulas for $T_\kappa([\wt I])$ given in
\cite{LVq}*{Sections 7.5--7.7} are of the following form.  
There is a set $\{I_i\mid 1\le i\le n\}$ (with $n\le 3$) of standard,
$\delta_0$-fixed $(\g,K)$-modules,  
such that the appropriate formula 
\begin{equation}
\label{e:heckeformula}
T_\kappa([\wt I])=\sum_i a_i [\wt I_i]
\end{equation}
of \cite{LVq} holds  \emph{for some choices of extension of each $I_i$
  to a $(\g,{}^{\delta_0} K)$-module $\wt I_i$}.  
If we choose each extension $\wt I_i$ arbitrarily, then
\eqref{e:heckeformula} holds with a factor of $\pm1$ in front of each
term on the right.

It is natural to ask if it is possible to choose the $\wt I_i$
uniformly, so that the formulas \eqref{e:heckeformula} hold for all
$I$ and $\kappa$.
The fact that in the {\tt 2i12} and {\tt 2r21} cases there is a term with
a negative sign is a hint that this might not be the case, and it
turns out not to be possible in general.

Instead, we carry over the Hecke module structure to our extended
parameters, and compute the Hecke operators in this setting, keeping
the extra information of which extensions (i.e., signs) appear in the
formulas.  This is straightforward except when $\kappa$ is of type
{\tt 2i12, 2r21, 2Ci} or {\tt 2Cr}.

\begin{definition}
\label{d:heckemodule}
Let $\mathcal M$ be the $\mathcal A$-module spanned by 
the extended parameters of infinitesimal character $\gamma$, modulo
the relation 
$$
[E] \equiv \sgn(E,E')[E'].
$$
By \eqref{e:sgn} the map 
$[J_z(E)]\rightarrow [E]$ is a well-defined $\mathcal A$-module isomorphism.
Using this we carry over the $\mathcal H$-module structure on $M$ 
to define $\mathcal M$ as an $\mathcal H$-module.
\end{definition}

To interpret the formulas of \cite{LVq}*{Sections 7.5--7.7} in terms of
$\mathcal M$ we need the notion of Cayley transform (defined only for
certain particular $\kappa$) and cross action (defined for {\em
  every} $\kappa$) of extended parameters (defined in that
reference). The rows of Tables
\ref{t:shortcayleycross1}--\ref{t:shortcayleycross3} corresponding to
Cayley transforms are labeled {\em Cay}, and those for cross action
{\em crx}.

In addition, when $\kappa$ is of type {\tt 2i12, 2r21, 2Ci} or {\tt
  2Cr}, the formulas in \cite{LVq} make use of one more transform,
given (on the level of parameters for $G$) by the cross action of just
{\em one} of the two simple roots comprising $\kappa$. On most
parameters, this cross action will not give a $\delta_0$-fixed
parameter; like the Cayley transforms, the definition makes sense only
when $\kappa$ is of one of these four special types. The corresponding
rows of Table \ref{t:shortcayleycross2} are labeled {\em cr1x}.

These formulas are given in Tables
\ref{t:shortcayleycross1}--\ref{t:shortcayleycross3}. 
Except in the cases noted above this gives the formulas for the Hecke
algebra action (see Proposition \ref{p:heckeaction}).

Here are some notes for interpreting the tables. 
\begin{subequations}\label{se:tablenotation}
Always we start with a $\delta_0$-fixed representation
of (${}^\vee\delta_0$-fixed) infinitesimal character $\gamma$, for a
strong real form of $\delta_0$-fixed infinitesimal cocharacter $g$,
with ${\tt atlas}$ parameter $(x,y)$. Let $(\lambda,\tau,\ell,t)$ be
an extended parameter for $(x,y)$ (Definition \ref{d:extparam}).  

We also fix a ${}^\vee\delta_0$-orbit $\kappa$ on
the set of simple roots, consisting of either
\begin{equation}\label{e:kappa}
\begin{aligned}
\text{one root\ } &\{\alpha = {}^\vee\delta_0(\alpha)\} \quad &\text{\
  (type $1$); or}\\ 
\text{two roots\ } &\{\alpha,\beta={}^\vee\delta_0(\alpha)\},\quad
\langle\alpha,\beta^\vee\rangle=0 \quad &\text{\ (type {\tt 2}); or}\\
\text{two roots\ } &\{\alpha,\beta={}^\vee\delta_0(\alpha)\},\quad
\langle\alpha,\beta^\vee\rangle=-1 \quad &\text{\ (type {\tt
    3}).\phantom{ or}}\\ 
\end{aligned}
\end{equation}
We will sometimes write
\begin{equation}\label{e:kappaweight}
\kappa=_{\text{def}} \alpha+\beta \in X^*,\qquad \kappa^\vee =
\alpha^\vee + \beta^\vee \in X_*
\end{equation}
in types  {\tt 2} and {\tt 3}. (The weight $\kappa$ is a root in type {\tt 3}, but not in type {\tt 2}.)
Let 
\begin{equation}
w_\kappa=
\begin{cases}
s_\alpha&\text{type 1}\\
s_\alpha s_\beta&\text{type 2}\\
s_\alpha s_\beta s_\alpha = s_\kappa &\text{type 3}\\
\end{cases}
\end{equation}
Then $W^{\delta_0}$ is a Coxeter group with these elements as Coxeter
generators.

We will write $(x_1,y_1)$ for the {\tt atlas} parameters defining (one
of) the other $\delta_0$-fixed representations appearing in the action
of the Hecke algebra generator $T_\kappa$ on $(x,y)$, given by a
(possibly iterated) cross action or Cayley transform.  The point of
the tables is to calculate new extended parameters, denoted $E_1 =
(\lambda_1,\tau_1,\ell_1,t_1)$, for $(x_1,y_1)$ in terms of $E =
(\lambda,\tau,\ell,t)$.

Write $w_\kappa\times E$ for the cross action on $E$ (described in
the crx rows of Tables
\ref{t:shortcayleycross1}--\ref{t:shortcayleycross3}). Write 
\begin{equation}\label{e:cr1x}
w_\kappa \times_1 E \qquad (\text{$\kappa$ of type  {\tt 2i12} or {\tt 2r21}})
\end{equation}
for the element extending $s_\alpha\times (\lambda,\ell)$ defined in the cr1x
rows of Table \ref{t:shortcayleycross2}. Finally, write
\begin{equation}
c_\kappa(E) = E_\kappa\quad \text{or}\quad c_\kappa(E) = \{E_\kappa,
E_\kappa'\} 
\end{equation} 
for the (possibly multi-valued) Cayley transform defined by the Cay
rows of Tables \ref{t:shortcayleycross1}--\ref{t:shortcayleycross3}. 

 We will write
\begin{equation}\label{e:paramintegers}
\gamma_\alpha =_{\text{def}} \langle\gamma,\alpha^\vee\rangle, \quad
g_\alpha =_{\text{def}} \langle\alpha,g\rangle,
\end{equation}
and similarly for $\lambda$ and $\ell$; these quantities are all
integers. The $\delta_0$-fixed requirement means that
\begin{equation}\label{e:deltaintegers}
\gamma_\alpha = \gamma_\beta, \quad g_\alpha = g_\beta \qquad
\text{(types {\tt 2} and {\tt 3})}. 
\end{equation}
The $\delta_0$-fixed requirement on $\lambda$ and $\ell$ is more subtle, and
with the details depending on the case. For example, we have
\begin{equation}
\lambda_\alpha + \lambda_\beta = 2(\gamma_\alpha-1), \quad \ell_\alpha=\ell_\beta
\qquad \text{(type {\tt 2Ci})}
\end{equation}
A few (but not many) such conditions are recorded in the notes column.

The notes column of the tables includes additional notation
peculiar to some cases. For example, the case {\tt 1i1} corresponds to
a discrete series in a block for $A_1$ with two discrete series and
just one principal series. This turns out to mean that the root
$\alpha$ must be trivial on the fixed points $H^{\theta_{x_1}}$ for the
 more split Cartan; and this in turn is equivalent to the existence of
  $\sigma\in X^*(H)$ so that
\begin{equation}\label{e:zeta}
\alpha = (1+{}^\vee\theta_{y_1})\sigma.
\end{equation} 
That is the meaning of the note in the {\tt 1i1} row; the weight
$\sigma$ (which one needs to find by solving \eqref{e:zeta} to
implement the algorithm) appears in the formula for $\tau_1$.

\end{subequations} 

The terminology here is more compact than that of \cite{LVq}. See Table 1.

\begin{proposition}
\label{p:heckeaction}
Suppose $(x,y)$ is a $\delta_0$-fixed parameter, and $E$ is an extended parameter for $(x,y)$. 
Let $\kappa$ be a $\delta_0$-orbit of simple roots. 

Suppose $\kappa$ is {\it not} of type {\tt 2i12, 2r21, 2Ci} or {\tt
  2Cr}. Then  the formulas for the action of the Hecke operator
$T_\kappa$ from \cite{LVq} apply, using the Cayley transforms and
cross actions from Tables 2-4, to give a formula for $T_\kappa([E])$.
\end{proposition}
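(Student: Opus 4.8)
The plan is to reduce the statement to a type-by-type bookkeeping of the extension data, since by construction there is nothing to prove about the \emph{coefficients} $a_i$. Recall from Definition \ref{d:heckemodule} that the $\mathcal H$-module structure on $\mathcal M$ is transported from $M$ through the isomorphism $[J_z(E)]\mapsto[E]$; thus once the underlying operation on atlas parameters is fixed, the formula \eqref{e:heckeformula} holds with its LVq coefficients $a_i$ \emph{for some} extension $\tilde I_i$ of each transformed standard module. What must be shown is that, for each $\kappa$ outside the four exceptional types, the extended parameter $E_1=(\lambda_1,\tau_1,\ell_1,t_1)$ produced by the relevant row of Tables 2--4 is a genuine extended parameter (Definition \ref{d:extparam}) for the transformed atlas parameter $(x_1,y_1)$, and that the module $J_z(E_1)$ it names is \emph{exactly} that $\tilde I_i$, so that no spurious sign intervenes.

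First I would record, for each type of $\kappa$, the effect of the cross action or Cayley transform on the underlying parameter $(x,y)$, using \cite{algorithms}; this pins down $(x_1,y_1)$, the twisted involution $w_1$, and hence the involutions $\theta_{x_1}$ and $\ch\theta_{y_1}$. Next I would check that the four entries $(\lambda_1,\tau_1,\ell_1,t_1)$ of the table satisfy the four defining congruences of Definition \ref{d:extparam} for $w_1$: that $(1-\ch\theta)\lambda_1$ and $(1-\theta)\ell_1$ reproduce $\gamma-\rho$ and $g-\rho^\vee$ on the appropriate eigenspaces, and that $\tau_1,t_1$ solve $(\ch\delta_0-1)\lambda_1=(1+\ch\theta)\tau_1$ and $(\delta_0-1)\ell_1=(1+\theta)t_1$. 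These are the routine congruence verifications for which the notes column (for instance the auxiliary weight $\sigma$ of \eqref{e:zeta}, or the weight $s$ in the real rows) supplies the data one must solve for.

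The substance is the third step, matching extensions. By Definition \ref{d:extparam2} the module $J_z(E_1)$ is obtained by cohomological induction from the one-dimensional module on which the extended-Cartan generator $e(t_1/2)\delta_0$ acts by the scalar $z(\lambda_1,\tau_1,\ell_1,t_1)$ of \eqref{e:extparamz}. On the other side, the cross action and Cayley transform realizing $T_\kappa$ are induced by functors on $(\mathfrak g,{}^{\delta_0}K_\xi)$-modules that commute with restriction to $(\mathfrak g,K_\xi)$-modules; hence they send the chosen extension $\tilde I=J_z(E)$ to a definite extension $\tilde I_1$ of the transformed standard module, again recorded by the scalar acting through its extended-Cartan generator. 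I would therefore compute that scalar directly and verify it equals $z(E_1)$. For the \emph{crx} rows this is immediate, since the extended Cartan and its generator $e(t/2)\delta_0$ are merely conjugated by a Weyl representative $w_\kappa$; for the \emph{Cay} rows one uses that the new extended Cartan shares its torus-part generator with the old one up to the shifts of $\ell$ and $t$ recorded in the table, so the change in the acting scalar is governed entirely by \eqref{e:extparamz}.

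The hard part will be precisely this scalar match in the Cayley cases, because it is exactly there that the sign pathologies of the types {\tt 2i12, 2r21, 2Ci, 2Cr} originate. In those four cases the single-root cross action $\times_1$ of \eqref{e:cr1x} must intervene: the generator $e(t/2)\delta_0$ of the source extended Cartan does \emph{not} map to a generator of the target extended Cartan, forcing an extra correction and with it a sign that cannot be folded into the coefficients $a_i$. The content of the present proposition is that for every \emph{other} type this obstruction is absent---the extension data transports cleanly, and the scalar computed from the cohomological induction agrees with $z(E_1)$ on the nose---so that \eqref{e:heckeformula} holds verbatim with the extended parameters of Tables 2--4. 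Establishing the absence of this obstruction, and thereby the required scalar identity, in each non-exceptional row is the crux; once these identities are in hand the proposition follows by assembling the per-row checks.
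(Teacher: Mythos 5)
Your proposal is correct and takes essentially the same approach as the paper: the paper's own proof consists of the single assertion that the formulas of \cite{LVq}*{Sections 7.5--7.7} translate directly to the extended-parameter setting, the row-by-row verifications you outline (that the table entries satisfy Definition \ref{d:extparam} for the transformed parameter, and that the extension $J_z(E_1)$ named by the table is the one appearing in the Lusztig--Vogan formula, so no spurious sign enters) being exactly the checks built into the construction of Tables 2--4. You also correctly identify the mechanism --- the intervention of the single-root cross action and the resulting failure of the scalar match --- that forces the four excluded types to be handled separately, which the paper does in Sections \ref{sec:2i12} and \ref{sec:2Ci}.
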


This is a direct translation of the calculations of
\cite{LVq}*{Sections 7.5--7.7} to our setting. 
We treat the excluded cases in the next two sections.

\begin{example}
  Suppose $\kappa$ is a \emph{two-imaginary noncompact type I-I
    ascent} for $(x,y)$ (in the terminogy of \cite{LVq}), i.e., of
  type {\tt 2i11} (in our terminology).  Suppose $E_1$ is an extended
  parameter for $(x,y)$, and set $E_2=w_\kappa\times E_1$, and set
  $E'=c_\kappa(E_1)$, as defined by Table 3.  Then formula
  \cite{LVq}*{(7.6)(e$'$)} gives:
$$
\begin{aligned}
T_\kappa([E_1])&=[E_2]+[E']\\
T_\kappa([E_2])&=[E_1]+[E']\\
T_\kappa([E'])&=(q-1)([E_1]+[E_2])+(q-2)[E']
\end{aligned}
$$
\end{example}

\section{The {\tt 2i12} case}\label{sec:2i12}
\setcounter{equation}{0}

If $\kappa$ is of type {\tt 2i12} or {\tt 2r21} then the formulas
for the Hecke operator $T_\kappa$ do not carry over directly from \cite{LVq}*{(7.6)(i$''$) and (j$''$)}.   
We start with a special case.

\begin{lemma}
Suppose $\kappa$ is of type {\tt 2i12} for $E_0=(\lambda,\tau,\ell,t)$.
Assume
\begin{equation}
\begin{aligned}
\tau_\alpha=\tau_\beta&=0\\
t_\alpha=t_\beta&=0\\
g_\alpha-\ell_\alpha&=g_\beta-\ell_\beta=1\\
\gamma_\alpha-\lambda_\alpha&=\lambda_\beta-\lambda_\beta=1
\end{aligned}
\end{equation}
Let $E_0'=w_\kappa\times_1 E_0$ as given by Table 3. (Recall that this
is a certain extension of the parameter $s_\alpha \times (\lambda,\ell)$ for
$G$ 
The Cayley transform $c_\kappa(E_0)$ is double-valued; write $c_\kappa(E_0)=\{F_0,F_0'\}$,
where the parameter for $F_0$ is  $(\lambda,\tau,\ell,t)$, and  $F_0'=w_\kappa\times_1 F_0$.

The action of $T_\kappa$ on the space spanned by $E_0,E_0',F_0,F_0'$ is
\begin{equation}
\begin{aligned}
T_\kappa(E_0) &= E_0 + F_0 + F_0'\\
T_\kappa(E_0') &= E_0' + F_0-F_0'\\
T_\kappa(F_0) &= (q^2-1)(E_0+E_0') + (q^2-2)F_0\\
T_\kappa(F_0') &= (q^2-1)(E_0-E_0') + (q^2-2)F_0'
\end{aligned}
\end{equation}
\end{lemma}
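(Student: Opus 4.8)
The plan is to derive the four displayed identities from the corresponding identities among the underlying (non-extended) $(\g,K)$-modules, which are exactly those of \cite{LVq}*{(7.6)(i$''$)}, and then to pin down the signs---equivalently, the extensions to $(\g,{}^{\delta_0}K)$-modules---that decorate each term. Because $\kappa=\{\alpha,\beta\}$ is a type {\tt 2} orbit with $\langle\alpha,\beta^\vee\rangle=0$, the generator $T_\kappa$ factors through the two commuting reflections and satisfies the quadratic relation $(T_\kappa-q^2)(T_\kappa+1)=0$; this is the source of the $q^2$'s, and it also furnishes a cheap consistency check, since any answer must satisfy $T_\kappa^2=(q^2-1)T_\kappa+q^2$ on the span of $E_0,E_0',F_0,F_0'$. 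So the first step is to record the LVq matrix on the four underlying standard modules $\overline{E_0},\overline{E_0'},\overline{F_0},\overline{F_0'}$, where the LVq formula holds with a definite (in fact all-positive) choice of extensions.

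Next I would make the four extended parameters completely explicit in the normalized situation. With $\tau_\alpha=\tau_\beta=0$, $t_\alpha=t_\beta=0$, and $g_\alpha-\ell_\alpha=g_\beta-\ell_\beta=1$, the {\tt 2i12f} Cayley row collapses: the shift coefficients $\tfrac{g_\alpha-\ell_\alpha-1}{2}$ and $\tfrac{g_\beta-\ell_\beta-1}{2}$ both vanish, so the Cayley images have $\ell_1=\ell$ and $t_1=t$, whence $F_0$ carries literally the quadruple $(\lambda,\tau,\ell,t)$---but now attached to the twisted involution $c_\kappa(w)$ rather than $w$. The second Cayley branch $F_0'$ and the cross-action vector $E_0'$ are read off from the {\tt 2i12} and {\tt cr1x} rows as $w_\kappa\times_1 F_0$ and $w_\kappa\times_1 E_0$. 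Having the four parameters as explicit quadruples on the two relevant Cartans, the rest is a sign bookkeeping.

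The signs are then evaluated with Proposition \ref{prop:equivextended}, or more efficiently its closed form Proposition \ref{prop:equivextendedprime}. The idea is that the LVq identities hold for \emph{some} extensions of $\overline{F_0},\overline{F_0'}$ (and of the descents); writing each such LVq-preferred extension as $\sgn(\cdot,\cdot)$ times our $J_z$-extension and substituting converts the untwisted matrix into the asserted one. The two $+$ signs in $T_\kappa(E_0)=E_0+F_0+F_0'$ are a matter of normalizing conventions---they can be arranged by the freedom in the scalar $z$ of Definition \ref{d:extparamz}---and once they are fixed the remaining entries are forced.

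The hard part will be the single relative sign distinguishing the two Cayley branches, namely the minus signs in $T_\kappa(E_0')=E_0'+F_0-F_0'$ and in $T_\kappa(F_0')=(q^2-1)(E_0-E_0')+(q^2-2)F_0'$. These trace to a factor of the form $(-1)^{\langle\iota,t'\rangle}$ in Proposition \ref{prop:equivextendedprime} (equivalently to the $(-1)^{\langle\lambda,t\rangle}$ factor of Definition \ref{d:extparamz} together with \eqref{e:epsilons}), where $\iota\in(X^*)^{-{}^\vee\theta_y}$ records the difference between the two Cayley extensions and between $F_0$ and $F_0'=w_\kappa\times_1 F_0$. I would verify that applying $w_\kappa\times_1$ flips exactly this sign---so that it sends $E_0\mapsto E_0'$ with no sign but $F_0\mapsto F_0'$ with the sign that reorganizes $F_0+F_0'$ into $F_0-F_0'$---and symmetrically on the descent side. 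Controlling this one sign, and checking the outcome against the quadratic relation, yields all four identities.
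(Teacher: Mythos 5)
Your proposal founders at exactly the point you label ``the hard part,'' and for a reason you can see without any computation. Your starting premise---that the \cite{LVq}*{(7.6)(i$''$)} matrix holds ``with a definite (in fact all-positive) choice of extensions''---is false. Replacing any one of the four extensions by its twist conjugates the matrix by a diagonal sign matrix, and such conjugation preserves the product of the four signs in the block linking $\{E_0,E_0'\}$ to $\{F_0,F_0'\}$; in the asserted formulas this product is $(+)(+)(+)(-)=-1$, so an all-positive matrix (product $+1$) lies in a different conjugation orbit and no amount of sign bookkeeping via $\sgn(\cdot,\cdot)$ can ever convert it into the stated one. This intrinsic minus sign is precisely the phenomenon the paper exists to handle. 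Moreover, the tools you invoke to locate the sign cannot apply here: Propositions \ref{prop:equivextended} and \ref{prop:equivextendedprime} compute $\sgn(E,E')$ only when $E$ and $E'$ are extended parameters for the \emph{same} pair $(x,y)$ (Definition \ref{d:sgn}; elsewhere the paper sets $\sgn(E,F)=0$ otherwise), whereas $E_0,E_0',F_0,F_0'$ lie over four \emph{distinct} underlying parameters---for instance $F_0'=w_\kappa\times_1 F_0$ has $\lambda$ replaced by $\lambda+\alpha$, hence a different $y$---so $\sgn(F_0,F_0')$ is undefined and ``applying $w_\kappa\times_1$ flips exactly this sign'' has no meaning in this formalism. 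There is also no ``freedom in the scalar $z$'': Definition \ref{d:extparamz} fixes $z(E)$ by an explicit formula, and the entire content of the Lemma is that the $z$-extensions of the explicit quadruples \eqref{e:2i12standard} satisfy these identities; if the extensions could be renormalized at will, the Lemma would assert nothing. Finally, the quadratic relation is satisfied by every matrix in the diagonal-conjugation orbit, so it cannot distinguish the correct pattern $(+,+,+,-)$ from, say, $(+,+,-,+)$.

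What the paper actually does here cannot be short-circuited by the equivalence machinery: after normalizing so that the four extended parameters take the explicit form \eqref{e:2i12standard} (your second step, which is correct and matches the paper), it re-runs the \emph{proof} of \cite{LVq}*{(7.6)(i$''$)}, identifying the extensions arising in that argument with the $z$-extensions term by term; that identification is what places the minus sign on $F_0'$ in $T_\kappa(E_0')$ rather than on $F_0$. The sign analysis via $\sgn(\cdot,\cdot)$ that you propose is the content of the \emph{following} lemma and Proposition \ref{p:heckeaction2i12}, which reduce an arbitrary {\tt 2i12} configuration to this normalized one; that argument presupposes the present Lemma and cannot be used to prove it.
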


Explicitly the extended parameters are:
\begin{equation}
\label{e:2i12standard}
\begin{aligned}
&E_0:(\lambda,\tau,\ell,t)\quad &E_0':(\lambda,\tau,\ell+\alpha^\vee,t-s)\\
&F_0:(\lambda,\tau,\ell,t)\quad &F_0':(\lambda+\alpha,\tau-\sigma,\ell,t)
\end{aligned}
\end{equation}
with $\sigma$ and $s$ given in Table 3.

When the parameters are in this form, this is simply a 
direct translation of the proof of \cite{LVq}*{(7.6)(i$''$)}.

\begin{lemma}
Suppose $E$ is an extended parameter for $(x,y)$, and 
$\kappa$ is of type {\tt 2i12} for $E$. Set $E'=w_\kappa\times_1 E$. 
Write $c_\kappa(E)=c_\kappa(E')=\{F,F'\}$. Possibly after switching $E$ and $E'$, 
and possibly also switching $F$ and $F'$, we can find $E_0,E_0',F_0
,F_0'$ as in the previous Lemma,
such that $E$ and $E_0$ are extensions of the same parameter, and similarly
$(E',E'_0),(F,F_0)$ and $(F',F'_0)$. 
\end{lemma}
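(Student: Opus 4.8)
The plan is to move the given $E$ into the standard position of the preceding lemma by exploiting the freedom in the choice of the tuple $(\lambda,\tau,\ell,t)$ representing a fixed extended module, as catalogued in \eqref{e:lambdamunique} and \eqref{e:ftphitauunique}. Recall that for a fixed atlas parameter $(x,y)$ one may replace $\ell$ by $\ell+(1+\theta_x)u$ and $\lambda$ by $\lambda+(1+{}^\vee\theta_y)\omega$, and independently adjust $t$ within $(1-\delta_0)X_*+X_*^{-\theta_x}$ and $\tau$ within $(1-{}^\vee\delta_0)X^*+(X^*)^{-{}^\vee\theta_y}$, all without changing the underlying $(\g,{}^{\delta_0}K_\xi)$-module. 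These are precisely the moves \eqref{e:changeell}--\eqref{e:changelambdatau} analysed in Section \ref{sec:equiv}, so each such change replaces $E$ by an extended parameter extending the same atlas parameter $(x,y)$.

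First I would normalize the level integers. Since $\alpha$ and $\beta={}^\vee\delta_0(\alpha)$ are noncompact imaginary for $\theta_x$ (and correspondingly real on the dual side for ${}^\vee\theta_y$), the $\theta_x$-fixedness of $\alpha$ gives $\langle\alpha,(1+\theta_x)u\rangle = 2\langle\alpha,u\rangle$, an even integer, and symmetrically on the dual side. Thus $\ell_\alpha,\ell_\beta$ and $\lambda_\alpha,\lambda_\beta$ may be shifted by arbitrary even amounts, and the hypothesis that $\kappa$ is of type {\tt 2i12} is exactly what forces $g_\alpha-\ell_\alpha$ and $\gamma_\alpha-\lambda_\alpha$ to have the parity needed to reach the value $1$. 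I would therefore arrange $g_\alpha-\ell_\alpha = g_\beta-\ell_\beta = 1$ and $\gamma_\alpha-\lambda_\alpha=\gamma_\beta-\lambda_\beta=1$. When the two members of the orbit cannot be normalized symmetrically by an equivalence alone, one passes to the partner $E'=w_\kappa\times_1 E$ of \eqref{e:cr1x}, whose single cross action by $s_\alpha$ interchanges the relevant roles; this is the permitted switching of $E$ and $E'$. With $\ell$ and $\lambda$ then fixed, I would impose $t_\alpha=t_\beta=0$ and $\tau_\alpha=\tau_\beta=0$ (these either follow from the level normalization or are arranged by the residual freedom in $t$ and $\tau$), checking that these final adjustments leave the level normalization undisturbed. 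Call the resulting tuple $E_0=(\lambda,\tau,\ell,t)$; it satisfies all the hypotheses of the preceding lemma and, by construction, extends the same atlas parameter $(x,y)$ as $E$.

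Finally I would match the partners. Set $E_0'=w_\kappa\times_1 E_0$ and $c_\kappa(E_0)=\{F_0,F_0'\}$ with $F_0'=w_\kappa\times_1 F_0$, exactly as in \eqref{e:2i12standard}. Because the single cross action $w_\kappa\times_1$ and the Cayley transform $c_\kappa$ are defined on the underlying atlas parameters (Tables 2--4), the parameter $E'=w_\kappa\times_1 E$ automatically extends the same atlas parameter $(x_1,y_1)$ as $E_0'$, and the two values of the double-valued transform $c_\kappa(E)=c_\kappa(E')=\{F,F'\}$ extend the same two atlas parameters as $\{F_0,F_0'\}$. The only remaining ambiguity is which value is labelled $F_0$ and which $F_0'$, and this is absorbed by the permitted switching of $F$ and $F'$.

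I expect the main obstacle to be the parity bookkeeping of the second paragraph: verifying that being of type {\tt 2i12} forces precisely the residues of $g_\alpha-\ell_\alpha$, $\gamma_\alpha-\lambda_\alpha$, and of $t_\alpha,\tau_\alpha$ that make the normalization reachable, and pinning down exactly when the switches $E\leftrightarrow E'$ and $F\leftrightarrow F'$ are forced. By contrast, the verification that the successive adjustments commute, and that the cross-action and Cayley formulas of the Tables respect the pairing of extensions, should be routine once the normalization is in hand.
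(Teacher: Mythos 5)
Your plan follows the same skeleton as the paper's proof (normalize the tuple $(\lambda,\tau,\ell,t)$ using the ambiguities \eqref{e:lambdamunique}--\eqref{e:ftphitauunique}, which preserve the underlying parameter $(x,y)$, and invoke the switch $E\leftrightarrow E'$ when normalization by equivalences alone fails), but the one concrete claim you make at the crucial step is false, and it is false at exactly the point this lemma exists to address. From ``$\langle\alpha,(1+\theta_x)u\rangle = 2\langle\alpha,u\rangle$ is even'' you conclude that $\ell_\alpha,\ell_\beta$ ``may be shifted by arbitrary even amounts.'' Evenness of each coordinate does not give surjectivity onto $2\Z\times 2\Z$, and in type {\tt 2i12} it genuinely fails: what one can guarantee is only that $a\alpha^\vee+b\beta^\vee\in(1+\theta_x)X_*$ when $a+b$ is \emph{even} (because $2\alpha^\vee$, $2\beta^\vee$, $\alpha^\vee-\beta^\vee$ lie in $(1+\theta_x)X_*$), while $\alpha^\vee\notin(1+\theta_x)X_*$ since $\alpha$ is type I --- that is part of what {\tt 2i12} means. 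For a concrete obstruction take $X_*=\Z\alpha^\vee+\Z\beta^\vee+\Z\tfrac{\alpha^\vee+\beta^\vee}{2}$ with $\theta_x=1$ on $H$: there every achievable shift of $(\ell_\alpha,\ell_\beta)$ is $(2a,2b)$ with $a+b$ even, so $(g_\alpha-\ell_\alpha-1,g_\beta-\ell_\beta-1)$ can be brought to $(0,0)$ or to $(0,2)$ but not always to $(0,0)$. It is precisely this index-two coupling that forces the switch: the cr1x move \eqref{e:cr1x} changes $\ell$ by the ``odd'' element $\alpha^\vee$, which no equivalence can do. Your argument, taken literally, implies the switch is never needed (a false strengthening), and once the false claim is removed, nothing in the proposal shows that passing to $E'$ completes the normalization. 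This parity analysis is essentially the entire content of the lemma, and it is what the paper's proof supplies.

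Two further corrections. First, $\tau_\alpha=\tau_\beta=0$ cannot be waved through as ``residual freedom'': one must exhibit an explicit element of $\ker(1+\ch\theta_y)$ killing both, and the paper's choice $\tau\mapsto\tau+\tau_\beta\sigma-\tfrac12(\tau_\alpha+\tau_\beta)\alpha$, with $\alpha-\beta=(1+\ch\theta_{y_1})\sigma$, is integral only because $\tau_\alpha+\tau_\beta$ is even --- exactly the condition under which $c_\kappa(E)$ is defined at all. By contrast $t_\alpha=t_\beta=0$ and $\gamma_\alpha-\lambda_\alpha=1$ cost nothing: they follow from conditions (1) and (4) of Definition \ref{d:extparam} once $\ell_\alpha=\ell_\beta$, since $\alpha,\beta$ are imaginary for $\theta_x$ and real for $\ch\theta_y$; your proposal lumps $\tau$ and $t$ together and misses this asymmetry. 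Second, your opening assertion that the moves \eqref{e:changeell}--\eqref{e:changelambdatau} do not change the $(\g,{}^{\delta_0}K_\xi)$-module is wrong: they preserve only the underlying $(x,y)$ (hence the $(\g,K_\xi)$-module), and the extension can flip --- that is why the signs $\sgn(E,E_0)$, $\sgn(F,F_0)$ appear in Proposition \ref{p:heckeaction2i12}. This error happens to be harmless here, because the lemma asks only for extensions of the same parameter, but it should be stated correctly.
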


\begin{proof}
Write $E=(\lambda,\tau,\ell,t)$, so $E'=(\lambda,\tau,\ell+\alpha,t-s)$.
After replacing $\tau$ with a different solution of its defining equation:
$$
\tau\rightarrow\tau+ \tau_\beta\sigma+\frac12(\tau_\alpha+\tau_\beta)\alpha
$$
where $\alpha-\beta=(1+\ch\theta_{y_1})\sigma$, we can assume 
$\tau_\alpha=\tau_\beta=0$.

Since $2\alpha^\vee,2\beta^\vee$ and $\alpha^\vee-\beta^\vee$ are all in $(1+\theta_x)X_*$,
$a\alpha^\vee+b\beta^\vee$ is in $(1+\theta_x)X_*$ provided $a+b\in 2\Z$.
By adding such a term to $\ell$ we can arrange that
$g_\alpha-\ell_\alpha-1=0$ and $g_\beta-\ell_\beta-1=0$ or $2$. 
Make the corresponding change $t\rightarrow t+\frac12(b-a)(\alpha^\vee-\beta^\vee)$. 
If $g_\beta-\ell_\beta-1=2$, 
replace $E$ with 
$E' = w_\kappa\times_1 E$, and now we have
$$
g_\alpha-\ell_\alpha-1=g_\beta-\ell_\beta-1=0.
$$
Since $g_\alpha=g_\beta$ this implies $\ell_\alpha=\ell_\beta$.
Then  Conditions (a) and (d) of  Definition \ref{d:extparam} imply
$\lambda_\alpha-\gamma_\alpha-1=\lambda_\beta-\gamma_\beta-1=0$ 
and $t_\alpha=t_\beta=0$.
Table 3
then says that $F=(\lambda,\tau,\ell,t)$ is one of the two Cayley transforms of $E$,
and that our parameters now have the form \eqref{e:2i12standard}.
\end{proof}

\begin{proposition}
\label{p:heckeaction2i12}
In the setting of the previous Lemma we have
$$
\begin{aligned}
T_\kappa(E)&=E+\sgn(E,E_0)(\sgn(F,F_0)F+\sgn(F',F'_0)F')\\
T_\kappa(E')&=E'+\sgn(E',E'_0)(\sgn(F,F_0)F-\sgn(F',F'_0)F')\\
T_\kappa(F) &= (q^2-1)\sgn(F,F_0)(\sgn(E,E_0)E+\sgn(E',E'_0)E') + (q^2-2)F\\
T_\kappa(F') &= (q^2-1)\sgn(F',F'_0)(\sgn(E,E_0)E-\sgn(E',E'_0)E') + (q^2-2)F'\\
\end{aligned}
$$
This formula is independent of the choice of $E_0,E'_0,F_0,F'_0$.  
\end{proposition}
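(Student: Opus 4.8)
The plan is to deduce all four formulas from the clean four‑term identities of the standard‑form Lemma by pure sign bookkeeping inside $\mathcal M$, driven by a single structural fact: the defining relation of $\mathcal M$, namely $[E_0]=\sgn(E,E_0)[E]$ (Definition \ref{d:sgn} and \eqref{e:sgn}), together with the $\mathcal A$‑linearity of $T_\kappa$ on $\mathcal M$ (Definition \ref{d:heckemodule}). Recall that $\sgn$ is symmetric and takes values in $\{\pm1\}$, so $\sgn(E,E_0)^2=1$ and the relation may be read in either direction. By the reduction Lemma, $E$ and $E_0$ extend the same parameter, and likewise $(E',E_0')$, $(F,F_0)$, $(F',F_0')$; hence in $\mathcal M$
\begin{equation}
[E_0]=\sgn(E,E_0)[E],\quad [E_0']=\sgn(E',E_0')[E'],\quad [F_0]=\sgn(F,F_0)[F],\quad [F_0']=\sgn(F',F_0')[F'].
\end{equation}

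First I would record from the standard‑form Lemma the four identities $T_\kappa([E_0])=[E_0]+[F_0]+[F_0']$, $T_\kappa([E_0'])=[E_0']+[F_0]-[F_0']$, and the two companions for $F_0,F_0'$. Then I apply $\mathcal A$‑linearity to the relation above: from $[E_0]=\sgn(E,E_0)[E]$ we get $T_\kappa([E])=\sgn(E,E_0)\,T_\kappa([E_0])$, and substituting the Lemma's formula and the displayed identities gives
\begin{equation}
T_\kappa([E])=[E]+\sgn(E,E_0)\bigl(\sgn(F,F_0)[F]+\sgn(F',F_0')[F']\bigr),
\end{equation}
using $\sgn(E,E_0)^2=1$ on the $[E]$‑term. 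The three remaining formulas fall out identically: multiply each clean identity by the governing sign $\sgn(E',E_0')$, $\sgn(F,F_0)$, or $\sgn(F',F_0')$ respectively, and rewrite every right‑hand vector through the same four identities. In the $F$ and $F'$ cases the Laurent coefficients $(q^2-1)$ and $(q^2-2)$ simply ride along unchanged, and one recovers exactly the stated expressions. Nothing beyond collecting $\pm1$'s occurs, so I would not grind through all four explicitly.

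The one point deserving its own sentence is independence of the auxiliary choice of $E_0,E_0',F_0,F_0'$. Here the left‑hand sides $T_\kappa([E])$, $T_\kappa([E'])$, $T_\kappa([F])$, $T_\kappa([F'])$ are intrinsic elements of $\mathcal M$, while $[E],[E'],[F],[F']$ are images of distinct basis vectors of $\mathcal M$ (their underlying parameters differ, being related by the single‑root cross action and by Cayley transform) and are therefore linearly independent over $\mathcal A$. Consequently the coefficients in the expansion of each left‑hand side—precisely the sign products $\sgn(E,E_0)\sgn(F,F_0)$, $\sgn(E,E_0)\sgn(F',F_0')$, and their analogues—are uniquely determined, hence cannot depend on which standard‑form representatives were selected. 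I expect this independence remark, rather than the substitution, to be the only step that is not entirely mechanical, but it too is immediate once linear independence of the four basis vectors is invoked.
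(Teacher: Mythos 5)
Your proposal is correct and is exactly what the paper intends: its entire proof is the remark ``This is immediate,'' meaning precisely the substitution you carry out, namely rewriting the clean identities of the standard-form Lemma through the relations $[E_0]=\sgn(E,E_0)[E]$, etc., using $\mathcal A$-linearity of $T_\kappa$ and $\sgn^2=1$. Your closing observation that linear independence of the four basis vectors $[E],[E'],[F],[F']$ forces the sign products to be independent of the auxiliary choices is the right justification for the final sentence of the Proposition.
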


This is immediate.

\section{The {\tt 2Ci} case}\label{sec:2Ci}
\setcounter{equation}{0}

Now we describe the Hecke algebra action in the {\tt 2Ci} case.
So fix a type $2$
root $\kappa = \{\alpha,\beta\}$, and an extended parameter
\begin{subequations}\label{se:2CiHecke} 
\begin{equation}\label{e:2Ci1}
 E = (\lambda,\ \tau,\ \ell,\ t)
\end{equation}
as in \ref{d:extparam}. Assume that $\kappa$ is of type {\tt
  2Ci} for $E$: that is, that $\alpha$ and $\beta$ are complex roots
interchanged by 
\begin{equation}\label{e:2Ci2}
\theta_x = \Ad(e((g -\ell)/2)\sigma_w\xi_0).
\end{equation}
This means in turn that
\begin{equation}
w\xi_0\alpha = \beta,\qquad w\xi_0\beta = \alpha.
\end{equation}
Proposition \ref{prop:signs} says that
\begin{equation}
\sigma_w\xi_0 X_\alpha = X_\beta, \qquad \sigma_w\xi_0 X_\beta = X_\alpha,
\end{equation}
and therefore that
\begin{equation}\label{e:theta2Cisign}
\theta_x(X_\alpha) = (-1)^{g_\beta - \ell_\beta}X_\beta, \quad
\theta_x(X_\beta) = (-1)^{g_\alpha - \ell_\alpha}X_\beta. 
\end{equation} 
The requirement \eqref{e:inffixed} implies that
\begin{equation}
\gamma_\alpha = \gamma_\beta, \qquad g_\alpha = g_\beta.
\end{equation}
Similarly, the requirements for an extended parameter to be
$\delta_0$-fixed imply among other things that
\begin{equation}\label{e:deltafixed2Ci}
\lambda_\alpha + \lambda_\beta = 2(\gamma_\alpha - 1), \quad
\lambda_\alpha - \lambda_\beta = \tau_\beta - \tau_\alpha,\quad
\ell_\alpha =\ell_\beta, \quad t_\alpha = 
-t_\beta. 
\end{equation}
In particular, we can define a sign
\begin{equation}\label{e:epsilon2Ci}
\epsilon = \epsilon(E) = (-1)^{g_\alpha - \ell_\alpha} = (-1)^{g_\beta
  - \ell_\beta}.
\end{equation}
Writing 
\begin{equation}
{\mathfrak g} = {\mathfrak k} \oplus {\mathfrak s}
\end{equation}
for the eigenspace decomposition under $\theta_x$, we get from
\eqref{e:theta2Cisign} 
\begin{equation}
X_\alpha + \epsilon X_\beta=_{\text{def}} X_{\mathfrak k} \in
{\mathfrak k}, \qquad  X_\alpha - \epsilon X_\beta =_{\text{def}}
X_{\mathfrak s} \in {\mathfrak s}.
\end{equation}
and also
\begin{equation}
\theta_x(\sigma_\alpha) = \sigma_\beta^\epsilon.
\end{equation}
The Weyl group element $s_\alpha s_\beta$ is represented by
\begin{equation}\label{e:sigmaE}
\sigma_E = \sigma_\alpha \sigma_\beta^\epsilon \in G^{\theta_x} = K.
\end{equation}
Finally, the extended group ${}^{\delta_0} K$ is generated by $K$ and
the element
\begin{equation}\label{e:h2Ci}
h=e(t/2)\delta_0
\end{equation}
(Definition \ref{d:extparam}(e)).

Table \ref{t:shortcayleycross2} constructs from $E$ a second extended
parameter
\begin{equation}\label{e:2Cr1}
\begin{aligned}
E_1 = (&\lambda_1,\tau_1,\ell_1,t_1) \\ = (&s_\alpha\lambda +
(\gamma_\alpha - 1)\alpha, s_\alpha\tau + (\lambda_\alpha -
\gamma_\alpha + 1)\alpha, \\
&s_\alpha\ell + (g_\alpha - 1)\alpha^\vee, s_\alpha t + (\ell_\alpha -
g_\alpha + 1)\alpha^\vee).
\end{aligned} 
\end{equation}
The root $\kappa$ is of type {\tt 2Cr} for the parameter $E_1$. The
element $\ell_1$ is chosen so that the corresponding Cartan involution
(on all of $G$, not just $H$) is 
\begin{equation}
\theta_{x_1} = \sigma_\alpha^{-1}\theta_x \sigma_\alpha.
\end{equation}
\end{subequations} 

\begin{proposition}\label{prop:2Cicorr}
Suppose we are in the setting \eqref{se:2CiHecke}. 
\begin{enumerate}
\item Applying the formula in Table \ref{t:shortcayleycross2} to the
  {\tt 2Cr} parameter $E_1$ gives exactly the same parameter $E$ with
  which we started. 
\item The action of the Hecke algebra
  generator $T_\kappa$ (\cite{LVq}*{7.6($c''$)} is
$$T_\kappa(E) = qE + (-1)^{[(\tau_\alpha + \tau_\beta)/2](g_\alpha -
  \ell_\alpha -1)}(q+1)E_1$$
Here the sign may be regarded as specifying a renormalization
of $E_1$ (whose existence is asserted in \cite{LVq}). 
\item The corresponding formula for the case {\tt 2Cr} is
$$T_\kappa(E_1) = (q^2-q-1)E_1 + (-1)^{(\gamma_{1,\alpha} -
  \lambda_{1,\alpha} + \tau_{1,\alpha} -1)[(t_{1,\beta} -
  t_{1,\alpha})/2]}(q^2-q)E.$$
The sign is exactly the same as the one for $T_\kappa(E)$, written in
terms of the parameter $E_1$. 
\end{enumerate}
\end{proposition}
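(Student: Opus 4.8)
The plan is to handle the three parts in sequence, with part (2) carrying the essential content; parts (1) and (3) then follow by direct computation and by the $G\leftrightarrow\ch G$ symmetry. For part (1) I would simply compose the two Cayley transforms. Applying the {\tt 2Cr} row of Table \ref{t:shortcayleycross2} to $E_1$ produces a parameter $E_2=(\lambda_2,\tau_2,\ell_2,t_2)$ with $\lambda_2=s_\alpha\lambda_1+(\gamma_\alpha-1)\alpha$ and $\ell_2=s_\alpha\ell_1+(g_\alpha-1)\alpha^\vee$. Since $\gamma$ and $g$ are fixed throughout, $\gamma_{1,\alpha}=\gamma_\alpha$ and $g_{1,\alpha}=g_\alpha$; using $s_\alpha^2=1$ and $s_\alpha\alpha=-\alpha$ on the formulas \eqref{e:2Cr1} for $\lambda_1,\ell_1$ gives $\lambda_2=\lambda$ and $\ell_2=\ell$ at once. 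The $\tau_2$ and $t_2$ coordinates return in the same way, where I would invoke the $\delta_0$-fixed relations \eqref{e:deltafixed2Ci}---in particular $\lambda_\alpha-\gamma_\alpha+1=(\tau_\beta-\tau_\alpha)/2$ and $t_\alpha=-t_\beta$---to match the table entries up to the coset ambiguities recorded in \eqref{e:lambdamunique}--\eqref{e:ftphitauunique}. This is pure linear algebra in $X^*(H)$ and $X_*(H)$.

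For part (2) I would first reduce to a normalized representative. Exactly as in the reduction carried out for the {\tt 2i12} case in Section \ref{sec:2i12}, one modifies the representatives $\tau$ and $\ell$ (and correspondingly $t$) inside their cosets to obtain an extended parameter $E_0$ for the same underlying $(x,y)$ in which the exponent $[(\tau_\alpha+\tau_\beta)/2](g_\alpha-\ell_\alpha-1)$ vanishes; let $E_{1,0}$ be the {\tt 2Ci} Cayley transform of $E_0$. In this normalized form the formula $T_\kappa([E_0])=q[E_0]+(q+1)[E_{1,0}]$ is a direct transcription of \cite{LVq}*{(7.6)($c''$)} into our language (this is precisely the renormalization whose existence is asserted there). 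Because $T_\kappa$ is well defined on $\mathcal M$ and $[E]=\sgn(E,E_0)[E_0]$, $[E_1]=\sgn(E_1,E_{1,0})[E_{1,0}]$, the coefficient of $[E_1]$ in $T_\kappa([E])$ is
\[
c=\sgn(E,E_0)\,\sgn(E_1,E_{1,0}).
\]
The last step is to evaluate $c$ with the explicit formula of Proposition \ref{prop:equivextendedprime}: since $E_0$ and $E_{1,0}$ differ from $E$ and $E_1$ only in the coordinates that were renormalized, most factors drop out and the surviving powers of $i$ (which are automatically even) and of $-1$ collapse to $(-1)^{[(\tau_\alpha+\tau_\beta)/2](g_\alpha-\ell_\alpha-1)}$.

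For part (3) I would use part (1): since $E$ is exactly the {\tt 2Cr} Cayley transform of $E_1$, the {\tt 2Cr} Hecke formula of \cite{LVq} applies to $E_1$ and yields $T_\kappa([E_1])=(q^2-q-1)[E_1]+c'(q^2-q)[E]$, with the sign $c'$ computed by the same normalization-and-$\sgn$ argument as in part (2). The expression for $c'$ is the image of the part-(2) computation under the $G\leftrightarrow\ch G$ symmetry of Corollary \ref{cor:dual} (which interchanges the roles of $(\lambda,\tau)$ and $(\ell,t)$, and of the {\tt 2Ci} and {\tt 2Cr} rows), so it comes out as $(-1)^{(\gamma_{1,\alpha}-\lambda_{1,\alpha}+\tau_{1,\alpha}-1)[(t_{1,\beta}-t_{1,\alpha})/2]}$. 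What must then be checked is that $c'=c$; I would verify this by substituting the coordinate relations defining $E_1$ from \eqref{e:2Cr1} into this exponent and reducing modulo $2$ to recover $[(\tau_\alpha+\tau_\beta)/2](g_\alpha-\ell_\alpha-1)$.

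The step I expect to be the main obstacle is the sign evaluation inside part (2). Proposition \ref{prop:equivextendedprime} mixes fourth-root-of-unity factors $i^{\langle\,\cdot\,,\,\cdot\,\rangle}$ with $(-1)$-factors, so one must confirm that the $i$-powers contribute only even exponents and that the residual $(-1)$-powers combine into exactly the stated product. Equally delicate is checking that $c$ is independent of the normalized $E_0$ chosen---that is, that the coset ambiguities of \eqref{e:ftphitauunique} introduce no spurious sign---since it is this independence that forces the precise exponent and makes the renormalization of \cite{LVq} unambiguous.
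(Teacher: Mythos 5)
Your part (2) contains the essential gap, and it is exactly the point the whole proposition exists to settle. You normalize $E$ to a parameter $E_0$ for the same $(x,y)$ in which the exponent $[(\tau_\alpha+\tau_\beta)/2](g_\alpha-\ell_\alpha-1)$ vanishes, and then assert that for the pair $(E_0,E_{1,0})$ the formula $T_\kappa([E_0])=q[E_0]+(q+1)[E_{1,0}]$ is ``a direct transcription of \cite{LVq}*{(7.6)($c''$)}.'' But \cite{LVq} only asserts that the plus-sign formula holds for \emph{some} choice of extensions of the two underlying parameters; it does not say which choice, and identifying that choice in terms of extended parameters is precisely what must be proved. Your normalization is reverse-engineered from the sign you want to obtain, so the reduction to the normalized case is fine bookkeeping (via $\sgn(E,E_0)\sgn(E_1,E_{1,0})$ and Proposition \ref{prop:equivextendedprime}), but the normalized case itself is then asserted, not proved: the machinery of Section \ref{sec:equiv} only compares two extensions of the \emph{same} $(x,y)$, and can never tell you which extension the geometric definition of the Hecke action singles out. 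This is where the paper does real work: it conjugates $E_1$ back to a parameter $E_2$ for the fixed $K$, reduces via ${\mathfrak u}$-homology and Hochschild--Serre to the Levi $L$ generated by $\alpha,\beta$ (locally $SL(2)\times SL(2)$, with $K$ a twisted diagonal $SL(2)$), and uses Beilinson--Bernstein localization to identify the extension $E'$ appearing with positive coefficient as the one for which $I(E')$ is the subrepresentation of the principal series $I(E_2)$. It then pins down $\Lambda_{E'}(h)$ by tracking the action of $e(t/2)\delta_0$ on the lowest $K$-type, via the cyclic spherical vector and powers of $X_{\mathfrak s}$, and finally compares with $\Lambda_E(h)$ computed from \eqref{e:extparamz}; the discrepancy is the stated sign. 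Some representation-theoretic or perverse-sheaf input of this kind is unavoidable, and your proposal has no substitute for it.

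Two smaller points. In part (3) you invoke Corollary \ref{cor:dual} and the $G\leftrightarrow{}^\vee G$ symmetry to transfer the sign; this risks circularity, since the duality statement for the Hecke modules (Section \ref{sec:dual}) is proved \emph{using} the sign specifications of Sections \ref{sec:hecke}--\ref{sec:2Ci}, including this proposition. The paper avoids this: part (3) is obtained from part (2) purely algebraically, by using part (1) to rewrite the same sign in the coordinates of $E_1$. Your part (1) is fine and matches the paper's (omitted) computation.
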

\begin{proof}
The first assertion can be verified by applying the formulas for
passing from {\tt 2Ci} to {\tt 2Cr} and from {\tt 2Cr} to {\tt 2Ci} in
succession, then simplifying; we omit the details. 

For the second assertion, we need to understand
representation-theoretically the relationship between the extended parameter
parameters $E$ and $E_1$, and how this relates to the Hecke algebra
action. For this question it is easiest to think of $({\mathfrak
  g},K)$-modules with a fixed $K$; that is, to conjugate
$\theta_{x_1}$ back to $\theta_x$, and to correspondingly change $E_1$
into a parameter 
\begin{subequations}\label{se:2Ci2Cr}
\begin{equation}
\begin{aligned}
E_2 = (&\lambda_2,\tau_2,\ell_2,t_2) \\
= (&\lambda - (\gamma_\alpha - 1)\alpha, \tau - (\lambda_\alpha -
\gamma_\alpha + 1)\alpha, \\
&\ell - (g_\alpha - 1)\alpha^\vee, t - (\ell_\alpha -
g_\alpha + 1)\alpha^\vee) 
\end{aligned}
\end{equation}
related to the Borel subgroup
\begin{equation}\label{e:Balpha}
B' = \sigma_\alpha B \sigma_\alpha^{-1}.
\end{equation}
(The {\tt atlas} decision to prefer $E_1$ to $E_2$ is just a
bookkeeping convenience. Everything about representation theory, and
also most things about perverse sheaves, are calculated with a fixed
Cartan involution, and so refer to the relationship between $E$ and
$E_2$. The {\tt atlas} formulas for Hecke algebra actions index bases
by $E_1$ rather than $E_2$, so we will occasionally mention $E_1$
below; but mostly we will be concerned about $E$ and $E_2$.)

The distinguished automorphism corresponding to $\delta_0$ for $B'$ is
\begin{equation}
\delta'_0 = \sigma_\alpha\delta_0 \sigma_\alpha^{-1} = \sigma_\alpha
\sigma_\beta^{-1} \delta_0.
\end{equation}
The generator for the extended Cartan defined by $E_2$ is
\begin{equation}\label{e:2Cih2}
h_2 = e(t_2/2)\delta'_0 = e(t/2)m_\alpha^{\ell_\alpha - g_\alpha + 1}
  \sigma_\alpha \sigma_\beta^{-1}\delta_0 = e(t/2)\sigma_E^{-\epsilon}\delta_0
\end{equation}
(with $\sigma_E \in K$ as in \eqref{e:sigmaE}.
Now we can start to talk about representation theory: that is, about
$({\mathfrak g}, K)$-modules $M$ and their extensions to $({\mathfrak
g},{}^{\delta_0} K)$-modules $M'$. Write 
\begin{equation}
P = LU \supset B,\ B'
\end{equation}
 for the parabolic subgroup with $L$ generated by $H$ and the simple
 roots $\alpha$ and $\beta$.  Then
\begin{equation}
M_j = H_j({\mathfrak u},M), \qquad M'_j = H_j({\mathfrak u},M')
\end{equation}
are $({\mathfrak l},L\cap K)$- and $({\mathfrak l},{}^{\delta_0}(L\cap
K))$-modules respectively; and the relationship between
representations and parameters (which uses ${\mathfrak n}$-homology)
factors through this construction by means of the Hochschild-Serre
spectral sequence. In this way (omitting details) one can reduce the
questions we are studying to the case
\begin{equation}\label{e:so31}
G=L,\quad R= \{\pm\alpha,\pm\beta\}.
\end{equation}
In the setting \eqref{e:so31}, here is what the representation theory
looks like. The group $L$ is locally $SL(2) \times SL(2)$, and $K$ is
approximately a ``diagonal'' copy of $SL(2)$. (More precisely, the
``diagonal'' copy is 
\begin{equation}\label{e:SL2K}
SL(2)_K = \left\{\left(\begin{pmatrix} a&b\\ c&d\end{pmatrix},
 \begin{pmatrix} a&\epsilon b\\ \epsilon c&d\end{pmatrix}\right)  \middle|
\begin{pmatrix} a&b\\ c&d\end{pmatrix} \in SL(2,{\mathbb C})
\right\},
\end{equation}
with $\epsilon$ as in \eqref{e:epsilon2Ci}. Furthermore $K$, and even
its intersection with the derived group of $L$, may be disconnected.

Attached to $E$ is an irreducible principal series  $({\mathfrak
  g},{}^{\delta_0}K)$-module $I(E)$. The restriction of $I(E)$ to
${}^{\delta_0} K$ is 
\begin{equation}
I(E) = \Ind_{{}^{\delta_0} (H\cap K)}^{{}^{\delta_0} K}(\Lambda_E + 2\rho_n).
\end{equation}
Here $\Lambda_E$ is the character of $H\cap K$ defined by the first
term $\lambda$ in $E$, extended to ${}^{\delta_0} (H\cap K)$ by making
$h$ (from \eqref{e:h2Ci}) act by \eqref{e:extparamz}. The twist
$2\rho_n$ is the character by which ${}^{\delta_0} (H\cap K)$ acts on
(the top exterior power of) ${\mathfrak n}\cap {\mathfrak s}$; that
is, on the vector $X_{\mathfrak s}$ from \eqref{e:theta2Cisign}.

The reason for the last twist is that for fundamental series
modules $M$, the character of $H\cap K$ in the parameter
is a weight on $H_{\dim({\mathfrak n}^{\op}\cap{\mathfrak s})}({\mathfrak
  n}^{\op},M)$, specifically appearing in the image of a natural map
$$H_0({\mathfrak n}^{\op}\cap {\mathfrak k}, M) \otimes
{\textstyle\bigwedge^{\dim({\mathfrak n}^{\op}\cap{\mathfrak s})}} \left({\mathfrak
    n}^{\op}\cap{\mathfrak s} \right) 
\rightarrow H_{\dim({\mathfrak n}^{\op}\cap{\mathfrak s})}({\mathfrak
  n}^{\op},M).$$ 
The conclusion is that {\em the weight of $H\cap K$ on the parameter is
equal to the ${\mathfrak n}\cap{\mathfrak k}$-highest weight of the
lowest $K$-type, minus $2\rho_n$}. The coroot for $K$ is
\begin{equation}\label{e:2CiKcoroot}
\alpha^\vee + \beta^\vee,
\end{equation}
which acts on $X_{\mathfrak s}$ by $2$. The dimension of the lowest $K$-type is
is therefore
\begin{equation}
\lambda_\alpha + \lambda_\beta + 2 + 1 = 2\gamma_\alpha+ 1;
\end{equation}
the $1$ comes from the $\rho$-shift in the Weyl dimension formula, and
we have used \eqref{e:deltafixed2Ci} to convert $\lambda$ to
$\gamma$. In particular, we find that
\begin{equation}
I(E)|_{SL(2)_K} = \text{sum of irreducibles of dimensions
  $2\gamma_\alpha+1$, $2\gamma_\alpha + 3$,\dots}.
\end{equation}

In the same fashion, attached to $E_2$ is a reducible principal series
$({\mathfrak g},{}^{\delta_0}K)$-module $I(E_2)$. The restriction of
$I(E_2)$ to ${}^{\delta_0} K$ is 
\begin{equation}
I(E_2) = \Ind_{{}^{\delta_0} (H\cap K)}^{{}^{\delta_0} K}(\Lambda_{E_2}).
\end{equation}
The reason for the absence of a twist on $\Lambda_{E_2}$ is that for
principal series modules $M_2$ for quasisplit groups, the parameter
appears as a weight on $H_0({\mathfrak n}^{\op},M_2)$; and for
(almost) spherical representations, this weight space is precisely the
image of the (almost) spherical vector.  In particular,
\begin{equation}
I(E_2)|_{SL(2)_K} = \text{sum of irreducibles of dims
  $1$, $3$,\dots}.
\end{equation}

The principal series representation $I(E_2)$ has a unique irreducible
quotient representation $J(E_2)$:
\begin{equation}
\begin{aligned}
J(E_2)|_{SL(2)_K} &= \text{sum of irreducibles of dims
  $1$, $3$,\dots, $2\gamma_\alpha -1$},\\
\dim J(E_2) &= \gamma_\alpha^2.
\end{aligned}
\end{equation}
We get a short exact sequence
\begin{equation}
\begin{aligned}
0 \rightarrow &I(E') \rightarrow I(E_2) \rightarrow J(E_2) \rightarrow 0,\\
&I(E')|_{SL(2)_K} = \text{sum of irreducibles of dims $2\gamma_\alpha+1$,
  $2\gamma_\alpha + 3$,\dots}.
\end{aligned}
\end{equation}
\end{subequations} 
{\em This extended parameter $E'$ with $I(E')$ appearing as a
  composition factor of $I(E_2)$ is the one on which the Hecke algebra
  action gives $E_1$ (remember that this is essentially just another
  label for $E_2$) with positive coefficient.} (This is a
consequence of the Beilinson-Bernstein localization theory relating
perverse sheaves to representations, and the perverse sheaf definition
of the Hecke algebra action in \cite{LVq}.)  So we need to understand
the relationship between the extended parameters $E$ and $E'$.

\begin{subequations}\label{se:2Cicorrection}
Because the spherical composition factor $J(E_2)$ is a unique
quotient of $I(E_2)$, the spherical vector in $I(E_2)$ is cyclic. The
action of $X_{\mathfrak s}$ carries highest weight vectors for $K$ to
highest weight vectors for $K$; so we deduce
\begin{multline}\label{e:2Cichange}
X_{\mathfrak s}^{\gamma_\alpha}(\text{spherical
    vector in $I(E_2)$}) \\
= \text{highest weight vector for lowest $K$-type of
  $I(E')$}.\end{multline}

Because $\sigma_E \in SL(2)_K$ acts trivially on the (one-dimensional)
lowest $K$-type of $J(E_2)$, the formula \eqref{e:2Cih2} shows that
\begin{equation}\label{e:E2LKT}
\Lambda_{E_2}(h_2) = \text{action of $e(t/2)\delta_0$ on $J(E_2)$ lowest
  $K$-type}.
\end{equation}
It is easy to calculate
$$\Ad(e(t/2)\delta_0)(X_{\mathfrak s}) = -\epsilon(-1)^{t_\alpha} =
(-1)^{g_\alpha - \ell_\alpha -1 + t_\alpha}.$$
Combining \eqref{e:E2LKT} with \eqref{e:2Cichange}, we find
\begin{equation}\label{e:EprimeLKT}
\begin{aligned}
&\ \text{action of $h = e(t/2)\delta_0$ on $I(E')$ lowest $K$-type} \\
&= \Lambda_{E_2}(h_2)(-\epsilon)^{\gamma_\alpha}(-1)^{\gamma_\alpha
  t_\alpha}\\
&= \Lambda_{E_2}(h_2)(-1)^{\gamma_\alpha((g_\alpha - \ell_\alpha
  -1)+t_\alpha)}.
\end{aligned}
\end{equation}
Using the description of the parameter for $E'$ given before
\eqref{e:2CiKcoroot}, we get
\begin{equation}\label{e:2Cidesiredchange}
\Lambda_{E'}(h) = \Lambda_{E_2}(h_2)(-1)^{(\gamma_\alpha-1)((g_\alpha
  - \ell_\alpha -1)+t_\alpha)} 
\end{equation}

Now we need to compare this ``desired'' relationship between
$\Lambda_{E'}(h)$ and $\Lambda_{E_2}(h_2)$ with the actual
relationship between $\Lambda_E(h)$ and $\Lambda_{E_2}(h_2)$. We find
(using \eqref{e:extparamz} and the formulas in Table
\ref{t:shortcayleycross2} for $E_1$) 
\begin{equation}\label{e:2Ciactualchange}
\begin{aligned}
\Lambda_E(h) \Lambda^{-1}_{E_2}(h_2) &= \Lambda_E(h)
\Lambda^{-1}_{E_1}(h_1) \\[.2ex]
&=\  i^{\langle\tau,(\delta_0 -
  1)\ell\rangle}(-1)^{\langle\lambda,t\rangle}\\
&\qquad i^{-\langle\tau-[(\tau_\alpha +\tau_\beta)/2]]\alpha,(\delta_0 -
  1)(\ell+(g_\alpha-\ell_\alpha -1)\alpha^\vee\rangle}\\
&\qquad (-1)^{\langle(\lambda +(\gamma_\alpha - \lambda_\alpha -1)\alpha ,t +
  (\ell_\alpha - g_\alpha - t_\alpha + 1)\alpha^\vee\rangle} \\[.2ex]
&=\ i^{\langle[(\tau_\alpha +\tau_\beta)/2]\alpha, (\delta_0 -
  1)(\ell+(g_\alpha-\ell_\alpha -1)\alpha^\vee\rangle}\\
&\qquad i^{-\langle\tau, (\delta_0 - 1)(\ell+(g_\alpha-\ell_\alpha
  -1)\alpha)\rangle}\\
&\qquad (-1)^{\langle(\gamma_\alpha - \lambda_\alpha -1)\alpha, t +
  (\ell_\alpha - g_\alpha - t_\alpha + 1)\alpha^\vee\rangle} \\
&\qquad (-1)^{\langle \lambda,(\ell_\alpha - g_\alpha - t_\alpha + 1)\alpha^\vee\rangle} 
\end{aligned}
\end{equation}
There are four factors on the right. In the first,
$$\langle\alpha,(\delta_0-1)\ell\rangle = \ell_\alpha - \ell_\beta = 0$$
by \eqref{e:deltafixed2Ci}. In the third,
$\langle\alpha,\alpha^\vee\rangle = 2$ contributes an even power of
$(-1)$, so can be dropped. We are left with
\begin{equation}\label{e:2Ciactualchange2}
\begin{aligned}
\Lambda_E(h) \Lambda^{-1}_{E_2}(h_2) &=\ i^{\langle[(\tau_\alpha
  +\tau_\beta)/2]\alpha, (\delta_0 -  1)((g_\alpha-\ell_\alpha
  -1)\alpha^\vee)\rangle}
i^{-\langle\tau, (\delta_0 - 1)((g_\alpha-\ell_\alpha
  -1)\alpha)\rangle}\\
&\qquad (-1)^{\langle(\gamma_\alpha - \lambda_\alpha -1)\alpha,
  t\rangle} (-1)^{\langle \lambda,(\ell_\alpha - g_\alpha - t_\alpha +
    1)\alpha^\vee\rangle}\\[.2ex]
&=\ (-1)^{[(\tau_\alpha +\tau_\beta)/2](g_\alpha-\ell_\alpha -1)]}
(-1)^{[(\tau_\alpha - \tau_\beta)/2](g_\alpha-\ell_\alpha -1)}\\
&\qquad (-1)^{(\gamma_\alpha - \lambda_\alpha -1)t_\alpha} (-1)^{
  \lambda_\alpha(\ell_\alpha - g_\alpha - t_\alpha + 1)}\\[.2ex]
&=\ (-1)^{\tau_\alpha(g_\alpha-\ell_\alpha -1)} (-1)^{(\gamma_\alpha -
  \lambda_\alpha -1)t_\alpha} (-1)^{\lambda_\alpha(\ell_\alpha -
  g_\alpha - t_\alpha +  1)}\\ 
\end{aligned}
\end{equation}
Splitting the last factor between the first two gives
\begin{equation}\label{e:2Ciactualchange3}
\begin{aligned}
\Lambda_E(h) \Lambda^{-1}_{E_2}(h_2) &=\ (-1)^{(\lambda_\alpha +
  \tau_\alpha)(g_\alpha-\ell_\alpha -1)}\\ 
&\qquad (-1)^{(\gamma_\alpha -1)t_\alpha}
\end{aligned}
\end{equation}
Now use the first two formulas from \eqref{e:deltafixed2Ci} to write
$\lambda_\alpha = (\gamma_\alpha - 1) + (\tau_\beta -
\tau_\alpha)/2$. We get
\begin{equation}\label{e:2Ciactualchange4}
\begin{aligned}
\Lambda_E(h) \Lambda^{-1}_{E_2}(h_2) &= (-1)^{[(\gamma_\alpha -
  1)+(\tau_\alpha + \tau_\beta)/2](g_\alpha-\ell_\alpha -1)}
(-1)^{(\gamma_\alpha -1)t_\alpha}\\ 
&= (-1)^{(\gamma_\alpha - 1)(g_\alpha-\ell_\alpha +t_\alpha -1)}
(-1)^{[(\tau_\alpha + \tau_\beta)/2](g_\alpha-\ell_\alpha -1)}
\end{aligned}
\end{equation}
The first factor here is exactly the one from
\eqref{e:2Cidesiredchange}, so we deduce
\begin{equation}
\Lambda_{E}(h) = \Lambda_{E'}(h) (-1)^{[(\tau_\alpha +
  \tau_\beta)/2][g_\alpha - \ell_\alpha -1]}.
\end{equation}
The sign on the right has to appear in front of the \cite{LVq} Hecke
algebra formula for the coefficient of $E_1$ in $T_\kappa E$. This
proves the second assertion of the proposition. For the third, we just
rewrite exactly the same formula in terms of the parameter $E_1$; by
the first assertion of the proposition, the formulas in Table
\ref{t:shortcayleycross2} tell us how to do that. We omit the
algebraic details.
\end{subequations} 
\end{proof}

We summarize the results of Sections \ref{sec:hecke}--\ref{sec:2Ci}.

\begin{theorem}
  If the infinitesimal character $\gamma$ is integral, then the action
  of $\mathcal H$ on $\mathcal M$ (Definition \ref{d:heckemodule}) is
  given by Propositions \ref{p:heckeaction}, \ref{p:heckeaction2i12}
  and \ref{prop:2Cicorr}.
\end{theorem}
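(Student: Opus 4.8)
The plan is to recognize that this is a consolidation statement: the three cited propositions between them already compute the action of every Hecke generator on every basis vector of $\mathcal M$, so the theorem reduces to checking that their case division is exhaustive. First I would recall from \cite{LVq} that the twisted Hecke algebra $\mathcal H$ is generated as an $\mathcal A$-algebra by the operators $T_\kappa$, one for each ${}^\vee\delta_0$-orbit $\kappa$ of simple roots (equivalently, for each Coxeter generator $w_\kappa$ of $W^{\delta_0}$). Since $\mathcal M$ is spanned over $\mathcal A$ by the classes $[E]$ of extended parameters of infinitesimal character $\gamma$ (Definition \ref{d:heckemodule}), and since the $\mathcal H$-module structure on $\mathcal M$ is \emph{defined} by transporting that of $M$ through the isomorphism $[J_z(E)]\mapsto[E]$, it suffices to produce, for each orbit $\kappa$ and each such $E$, a formula for $T_\kappa[E]$. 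In particular there is no well-definedness to verify: compatibility of any such formula with the defining relation $[E]\equiv\sgn(E,E')[E']$ is automatic from \eqref{e:sgn}, being the image in $M$ of the identity $[J_z(E)]=\sgn(E,E')[J_z(E')]$. The integrality hypothesis on $\gamma$ enters only to guarantee that the integral root system is all of $R$, so that the orbit types may be read off the full root system, matching the standing assumption of the three propositions.

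Next I would organize the computation by the root-theoretic type of $\kappa$ relative to the Cartan involution $\theta_x$ attached to $E$. This type runs over the finite list of \cite{LVq} (the complex, imaginary, and real types in types {\tt 1}, {\tt 2}, {\tt 3}), and every pair $(\kappa,E)$ falls into exactly one of them. For every type other than {\tt 2i12}, {\tt 2r21}, {\tt 2Ci}, and {\tt 2Cr}, Proposition \ref{p:heckeaction} asserts that the \cite{LVq} formula transcribes verbatim, reading the Cayley transforms and cross actions off Tables 2--4; this includes the scalar cases (compact imaginary and real nonparity), where $T_\kappa$ acts by $-1$ or $q$ and no sign question arises. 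The type {\tt 2i12} case is handled by Proposition \ref{p:heckeaction2i12}, whose preceding normalization lemma first reduces an arbitrary {\tt 2i12} parameter to the standard shape \eqref{e:2i12standard} and then records the explicit signs $\sgn(E,E_0)$ and the rest. The remaining two types are dual to these: {\tt 2r21} is carried by the argument of Section \ref{sec:2i12} applied on the dual side (interchanging the roles of $(\lambda,\tau)$ with $(\ell,t)$, and of $G$ with ${}^\vee G(\gamma)$), while {\tt 2Ci} together with {\tt 2Cr} is treated simultaneously in Proposition \ref{prop:2Cicorr}, which gives $T_\kappa(E)$ in the {\tt 2Ci} normalization, $T_\kappa(E_1)$ in the {\tt 2Cr} normalization, and checks that the two correction signs agree.

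Assembling these, every generator $T_\kappa$ acts on every basis vector $[E]$ by a formula supplied by exactly one of the three propositions, and since the $T_\kappa$ generate $\mathcal H$ this determines the $\mathcal H$-action on $\mathcal M$ completely. The one point requiring genuine care—the only place where the statement is more than bookkeeping—is the treatment of the dual types {\tt 2r21} and {\tt 2Cr}: one must be sure that the duality of Corollary \ref{cor:dual} transports the {\tt 2i12} and {\tt 2Ci} computations to them \emph{with the same signs}, rather than with signs altered by the passage from the square-root convention $z$ to the convention $\zeta$ of \eqref{e:extparamzeta}. This is exactly the content of Proposition \ref{prop:dualequivextended}, which shows $\sgn(E,E')$ is the common governing sign for both conventions, and it is what legitimizes the symmetric reduction. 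All the real difficulty—the determination of the individual $\pm1$'s—lives inside the three cited propositions; the present theorem is their consolidation, and the hard part at this level is merely confirming that the enumeration above leaves no $(\kappa,E)$ uncovered.
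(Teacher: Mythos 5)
Your overall reading is correct and matches the paper's own (essentially nonexistent) proof: the theorem is a pure consolidation, introduced only by the sentence ``We summarize the results of Sections \ref{sec:hecke}--\ref{sec:2Ci},'' and its content is exactly what you say --- the case division by type of $\kappa$ is exhaustive, each case is covered by one of the three cited propositions, and there is no separate well-definedness issue because the $\mathcal H$-action on $\mathcal M$ is \emph{defined} by transporting the action on $M$ through $[J_z(E)]\mapsto[E]$ (Definition \ref{d:heckemodule}), so the propositions merely compute a structure that already exists.

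The gap is in your treatment of type {\tt 2r21}. First, your detour is unnecessary: Proposition \ref{p:heckeaction2i12} is not only a statement about type {\tt 2i12}. Its last two formulas give $T_\kappa(F)$ and $T_\kappa(F')$, and $\kappa$ is of type {\tt 2r21} for the Cayley-transformed parameters $F,F'$; the four parameters $E,E',F,F'$ span a single $T_\kappa$-invariant subspace, and the proposition (via the normalization lemma and the translation of the formulas (7.6)(i$''$) and (j$''$) of \cite{LVq}) covers both members of the dual pair at once. Likewise Proposition \ref{prop:2Cicorr}(3) disposes of {\tt 2Cr} by rewriting the {\tt 2Ci} sign in terms of $E_1$ using Table \ref{t:shortcayleycross2}, with no appeal to the dual group. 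Second, the substitute you propose would not work as stated: transporting the {\tt 2i12} computation across $G\leftrightarrow{}^\vee G(\gamma)$ requires knowing that the two Hecke actions are related by the transpose duality \emph{with precise signs}, and in this paper that statement is the Proposition of Section \ref{sec:dual}, whose proof rests on the symmetry of Table 5 --- that is, on already possessing the {\tt 2r21} formulas. Your route is therefore circular within the paper's logical order. Moreover, Proposition \ref{prop:dualequivextended} does not say what you need it to say: it asserts only that the equivalence sign $\sgn(E,E')$ of Definition \ref{d:sgn} is the same whether extensions are normalized by $z$ or by $\zeta$ of \eqref{e:extparamzeta}; it is a statement about equivalences of extended parameters, not about how the matrix of a Hecke generator transforms under duality. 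The repair is simply to read Proposition \ref{p:heckeaction2i12} as covering {\tt 2i12} and {\tt 2r21} together (and Proposition \ref{prop:2Cicorr} as covering {\tt 2Ci} and {\tt 2Cr} together); with that reading, your enumeration closes and the consolidation argument is complete.
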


\section{Nonintegral Infinitesimal character}
\label{sec:nonintegral}
\setcounter{equation}{0}

Suppose the infinitesimal character $\gamma$ is not necessarily
integral. As always we assume it is integrally dominant \eqref{e:id}.
Set
\begin{subequations}
\renewcommand{\theequation}{\theparentequation)(\alph{equation}}  
\begin{equation}
\ch R(\gamma)=\{\ch\alpha\in \ch R\mid \langle \gamma,\ch\alpha\rangle\in\Z\}
\end{equation}
as in \eqref{e:veeR(gamma)}, and set 
\begin{equation}
\begin{aligned}
R(\gamma)&=\{\alpha\in R\mid \ch\alpha\in \ch R(\gamma)\}\\
R(\gamma)^+&=R^+\cap R(\gamma).
\end{aligned}
\end{equation}
We say  $\alpha\in R$ is {\em integral} if 
$\alpha\in R(\gamma)$.
We say an integral root is {\em simple} (respectively {\em integral-simple})
if it is simple for $R^+$ (respectively $R(\gamma)^+$).

The Weyl group $W(\gamma)$ of $R(\gamma)$ satisfies:
\begin{equation}
W(\gamma)=\{w\in W\mid w\gamma-\gamma\in {\mathbb Z}R\}.
\end{equation}

We now assume $\ch\delta_0(\gamma)=\gamma$ (see Lemma
\ref{lemma:deltagamma}), so $\ch\delta_0$ acts on $R(\gamma)$.
Then  $\ch\delta_0$ preserves both the simple and integral-simple roots, 
so the notions of integral and
integral-simple apply to a $\ch\delta_0$-orbit
$\kappa=\{\alpha,\ch\delta_0(\alpha)\}$ of roots.
Let $\mathcal H(\gamma)$ be the Hecke algebra 
of \cite{LVq}*{(4.7)} applied to $(R(\gamma),\delta_0)$. 

Let $\mathcal M_\gamma$ be the module of Definition
\ref{d:heckemodule}. The construction of \cite{LVq} gives a
representation of $\mathcal H(\gamma)$ on $\mathcal M_\gamma$. (More
precisely, the construction of \cite{LVq} concerns geometry related by
base change (to compare a base field of finite characteristic with
${\mathbb C}$) and Beilinson-Bernstein localization (to relate
$K$-equivariant perverse sheaves to $({\mathfrak g},K)$-modules) to
the module of Definition \ref{d:heckemodule}.  In order to make a
parallel identification in the case of nonintegral infinitesimal
character, one needs a discussion like that in \cite{ABV}*{Chapter
 17}. We omit the details.)

\end{subequations}

Suppose $\kappa$ is a $\ch\delta_0$-orbit of roots that are
integral (for $\gamma$) and simple (for $G$).  Then the
formulas of Tables 2--4 apply to give a formula for the action
$T_\gamma$ on $\mathcal M$.  The technical issue we have to deal with
here is what to do if $\kappa$ is integral-simple (for $\gamma$) but
not simple (for $G$).

\begin{definition}
\label{d:id}
Let $\ID$ be the set of integrally dominant elements of $\mathfrak
h^*$:
$$
\ID=\{\gamma\in\mathfrak h^*\mid\alpha\in  R(\gamma)^+\implies
\langle\gamma,\ch\alpha\rangle \ge 0\}
$$

If $\gamma\in \mathfrak h^*$ then $\gamma$ is $W(\gamma)$-conjugate to
a unique element of $\ID$.
If $\gamma\in \ID$ and $w\in W$ let $w*\gamma$ be the unique element
of $\ID$ 
which is $W(w\gamma)$ conjugate to $w\gamma$.
\end{definition}

It is easy to see 
that $w*\gamma$ is the unique element satisfying
\begin{enumerate}[label=(\alph*)]
\item $w*\gamma\in\ID$
\item $w*\gamma$ is $W$-conjugate to $\gamma$
\item $w*\gamma\in w\gamma+{\mathbb Z}R$.
\end{enumerate}
Condition (c) is equivalent (in the presence of (a) and (b)) to  
\begin{enumerate}
\item[(c$'$)] $w*\lambda=xw\lambda\text{ for some }x\in W(w\lambda)$.
\end{enumerate}

\begin{lemma}
\label{l:id2}
The map $(w,\gamma)\rightarrow w*\gamma$ is an action of $W$
on $\ID$. 
It satisfies:
\begin{enumerate}
\item $\text{Stab}_W(\gamma)=W(\gamma)$ ;
\item The $W$-orbit of $\gamma$ under $*$ is in bijection with $W/W(\gamma)$;
\item $w*\gamma=xw\gamma$ for some  $x\in W(w\gamma)$;
\item Suppose $\alpha$ is simple for $R^+$. Then 
$$
s_\alpha*\gamma=
\begin{cases}
\gamma&\alpha\in R(\gamma)\\
s_\alpha(\gamma)&\alpha\not\in R(\gamma)  
\end{cases}
$$

\end{enumerate}
\end{lemma}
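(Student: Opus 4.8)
The plan is to take the three conditions (a)--(c) listed before the lemma as the working description of $w*\gamma$, so the first thing I would record is \emph{why} they pin down $w*\gamma$ uniquely. If $\nu_1,\nu_2\in\ID$ are both $W$-conjugate to $\gamma$ and both lie in $w\gamma+\Z R$, then $\nu_1-\nu_2\in\Z R$ forces $R(\nu_1)=R(\nu_2)$ (integrality of a root against $\nu_i$ is unaffected by adding an element of $\Z R$), and writing $\nu_2=u\nu_1$ with $u\in W$ gives $u\nu_1-\nu_1\in\Z R$, i.e. $u\in W(\nu_1)$; since $\nu_1$ and $\nu_2$ are then two integrally dominant points in a single $W(\nu_1)$-orbit, they coincide, because a finite Weyl-group orbit meets the closed dominant cone exactly once. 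This uniqueness is the engine for everything that follows.

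Next I would verify that $*$ is a genuine action. The identity axiom is immediate since $\gamma\in\ID$ already satisfies (a)--(c) for $w=e$. For composition, set $\mu=w_2*\gamma$ and check that $w_1*\mu$ meets the three conditions defining $(w_1w_2)*\gamma$: it lies in $\ID$ and is $W$-conjugate to $\gamma$ by construction, and for (c) I combine $w_1*\mu\in w_1\mu+\Z R$ with $\mu\in w_2\gamma+\Z R$, using that $W$ preserves $\Z R$ to get $w_1\mu\in w_1w_2\gamma+\Z R$, hence $w_1*\mu\in w_1w_2\gamma+\Z R$. Uniqueness then yields $w_1*(w_2*\gamma)=(w_1w_2)*\gamma$. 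The only ingredient beyond (a)--(c) here is $W$-stability of the root lattice.

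With the action established, parts (1)--(3) are short. Statement (3) is exactly the reformulation (c$'$), which the text already records as equivalent to (c) in the presence of (a),(b). For (1): if $w\in W(\gamma)$ then $w\gamma-\gamma\in\Z R$, so $\gamma$ itself verifies (a)--(c) for $w$ and uniqueness gives $w*\gamma=\gamma$; conversely $w*\gamma=\gamma$ forces, via (c), $\gamma-w\gamma\in\Z R$, i.e. $w\in W(\gamma)$. Thus $\text{Stab}_W(\gamma)=W(\gamma)$, and (2) is then the orbit--stabilizer bijection $W/W(\gamma)\simeq W*\gamma$ for the action.

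The substantive point is (4), and I expect the case $\alpha\notin R(\gamma)$ to be the main obstacle, since there I must show $s_\alpha\gamma\in\ID$ rather than merely that it is $W$-conjugate to a dominant element. When $\alpha\in R(\gamma)$ the argument is easy: $s_\alpha\gamma-\gamma=-\langle\gamma,\ch\alpha\rangle\alpha\in\Z R$, so $s_\alpha\in W(\gamma)$ and $s_\alpha*\gamma=\gamma$ by (1). When $\alpha\notin R(\gamma)$, I first note $R(s_\alpha\gamma)=s_\alpha R(\gamma)$ (integrality against $s_\alpha\gamma$ transports through $s_\alpha$), and then invoke the standard fact that a simple reflection permutes $R^+\setminus\{\alpha\}$: since $\alpha\notin R(\gamma)$ we have $R(\gamma)^+\subseteq R^+\setminus\{\alpha\}$, so $s_\alpha$ carries $R(\gamma)^+$ into $R^+$, giving $R(s_\alpha\gamma)^+=s_\alpha R(\gamma)^+$. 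Finally, for $\beta=s_\alpha\beta'\in R(s_\alpha\gamma)^+$ with $\beta'\in R(\gamma)^+$, $W$-invariance of the pairing yields $\langle s_\alpha\gamma,\ch\beta\rangle=\langle\gamma,\ch{\beta'}\rangle\ge 0$, so $s_\alpha\gamma$ is integrally dominant. Since $s_\alpha\gamma$ trivially satisfies (b) and (c) for $w=s_\alpha$, uniqueness gives $s_\alpha*\gamma=s_\alpha\gamma$, completing the lemma.
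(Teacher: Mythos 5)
Your proposal is correct and follows essentially the same route as the paper: both arguments rest on the uniqueness of the element of $\ID$ satisfying conditions (a)--(c), verify the action property by the same $\Z R$-stability computation, read off (1)--(3) from that characterization, and handle (4) in the non-integral case via the fact that $s_\alpha$ permutes $R^+\setminus\{\alpha\}$. The only difference is one of completeness: you supply the proofs of the uniqueness statement and of parts (1)--(4) that the paper declares "easy," "straightforward," or "left to the reader."
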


\begin{proof}
If $x,y\in W$ then 
$(xy)*\lambda$ is the unique element satisfying conditions (a--c)
above with respect to $xy$. 
On the other hand $x*(y*\lambda)$ obviously satisfies (a) and (b). 
Condition (c) holds as well:
$$
\begin{aligned}
x*(y*\gamma)&\in x(y*\gamma)+{\mathbb Z}R\\
&           \in x(y\gamma+{\mathbb Z}R)+{\mathbb Z}R
&          =(xy)\gamma+{\mathbb Z}R 
\end{aligned}
$$
Assertions (1--3) are straightforward, and (4) is clear if $\alpha$ is
integral for $\gamma$, so assume this is not the case.
Obviously  $s_\alpha(\gamma)$ satisfies
the 
conditions (b) and (c) for $s_\alpha*\gamma$, and (a) follows from the
fact that $s_\alpha$ permutes $R^+-\{\alpha\}$. We leave the details
to the reader.
\end{proof}

If $\gamma$ is integral the formulas for the cross action in  Tables 2--4 define an action of
$W^{\delta_0}$ on $\mathcal M$. With a small change the same holds in general. 
To indicate the role of $\gamma$, write 
 $(\lambda,\tau,\ell,t,\gamma)$ for an extended parameter. 

\begin{definition}
\label{d:nonintegralcross}
Suppose $\kappa$ is a $\ch\delta_0$-orbit of simple roots.
Suppose $\gamma\in\ID$ and 
$(\lambda,\tau,\ell,t,\gamma)$ is an extended parameter.
Use the formulas for the cross action of $\kappa$ from Tables 2--4,
applied to $(\lambda,\tau,\ell,t)$, to define 
$(\lambda_1,\tau_1,\ell_1,t_1)$. 
Then define
$$
w_\kappa\times(\lambda,\tau,\ell,t,\gamma)=(\lambda_1+(w_\kappa*\gamma-
\gamma),\tau_1,\ell_1,t_1,w_\kappa*\gamma).
$$
Define the cross action of any element of $W^{\delta_0}$ by writing it
as a product of $w_\kappa$s. 
\end{definition}

If $\kappa$ is integral then $w_\kappa*\gamma=\gamma$, and Definition
\ref{d:nonintegralcross} agrees  with the definition of the cross
action in Tables 2--4. The main point is that even  if $\kappa$ is not
integral, the formula in Definition \ref{d:nonintegralcross} gives a
valid extended parameter. In particular the relation 
$$
(1-\ch\theta_1)(\lambda_1)=(1-\ch\theta_1)(\gamma-\rho)
$$
holds exactly as in the integral case. What we need to know is that  
$$
(1-\ch\theta_1)(\lambda_1+(w_\kappa*\gamma-\gamma))=(1-\ch\theta_1)
(w_\kappa*\gamma-\rho) 
$$
which follows immediately.
Furthermore in the integral case $\lambda_1\in X^*$. In the nonintegral
case it follows readily from the definitions that
$\lambda_1+(w_\kappa*\gamma-\gamma)\in X^*$ (even though 
this doesn't hold separately for $\lambda_1$ and $w_\kappa*\gamma-\gamma$).

\begin{proposition}\label{prop:nonintegralCayley}
Suppose $\gamma\in\ID$ is not necessarily integral. 
Then the action of $\mathcal H(\gamma)$ on $\mathcal M_\gamma$ is
given by the formulas in Table 5, with the following changes.

Suppose $\kappa$ is integral-simple, 
$w\in W^{\delta_0}$, and these satisfy:
$w\kappa$ is integral,  simple (for $R^+$), and the Cayley
transform
$c_{w\kappa}(w\times E)$ is defined by Tables 2--4.
Then define $c_\kappa(E)$ to be:
$$
c_\kappa(E)=w\inv\times c_{w\kappa}(w\times E)
$$
where the cross action is that of Definition \ref{d:nonintegralcross}.

On the other hand suppose $w\kappa$ is of type {\tt 2i11,2i12,2r11,2r12} for $w\times E$,
so $w\kappa\times(w\times E)$ is defined by Table 3.
Define
$$
w_\kappa\times E=w\inv\times[ w\kappa\times(w\times E)].
$$
\end{proposition}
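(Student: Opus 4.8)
The plan is to prove the statement by \emph{transport of structure}, reducing everything to the integral case already settled in Proposition \ref{p:heckeaction} and the tables. The essential point is that the action of $\mathcal H(\gamma)$ on $\mathcal M_\gamma$ is defined intrinsically by the geometry of \cite{LVq} (adapted to nonintegral $\gamma$ as indicated through \cite{ABV}*{Chapter 17}), so that for an integral-simple orbit $\kappa$ the operator $T_\kappa$ is a well-defined operator on $\mathcal M_\gamma$ \emph{before} any choice of $w$ is made. For any admissible $w\in W^{\delta_0}$ — one with $w\kappa$ integral, simple for $R^+$, and such that $c_{w\kappa}$ is defined by the tables — the cross action of Definition \ref{d:nonintegralcross} furnishes an $\mathcal A$-module isomorphism $\mathcal M_\gamma \to \mathcal M_{w*\gamma}$ that intertwines $T_\kappa$ with $T_{w\kappa}$. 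Since the orbit $w\kappa$ is $G$-simple, $T_{w\kappa}$ is computed by Tables 2--4, and conjugating back by $w^{-1}$ yields the displayed formulas, with $c_\kappa$ defined by $E\mapsto w^{-1}\times c_{w\kappa}(w\times E)$ and the single-root cross action displaced in the same way. Because $T_\kappa$ is intrinsic, independence of $w$ is then automatic.

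First I would verify that Definition \ref{d:nonintegralcross} really defines an action of $W^{\delta_0}$: the integral cross-action formulas of the tables already give such an action on extended parameters (this is part of \cite{LVq}), and the correction term $w_\kappa*\gamma-\gamma$ is a cocycle for the $*$-action of Lemma \ref{l:id2}, so the two combine into an honest action. The check that the shifted quadruple still satisfies conditions (1)--(4) of Definition \ref{d:extparam} is exactly the computation sketched in the paragraph following Definition \ref{d:nonintegralcross} (the relation $(1-\ch\theta_1)(\lambda_1+(w_\kappa*\gamma-\gamma))=(1-\ch\theta_1)(w_\kappa*\gamma-\rho)$ and the integrality of $\lambda_1+(w_\kappa*\gamma-\gamma)$). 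Next I would confirm the admissible $w$ exists and is usable: given an integral-simple orbit, an elementary argument inside $R(\gamma)$ and $R$ produces $w\in W^{\delta_0}$ carrying $\kappa$ to a $G$-simple orbit; since $w$ commutes with $\delta_0$ it sends orbits to orbits, and integrality of the orbit is preserved by the $*$-action (Lemma \ref{l:id2}(4)), so $w\kappa$ is integral for $w*\gamma$ and the tables apply. Finally, with this $w$ in hand I would read off the Cayley transform and the special cross action (types {\tt 2i11, 2i12, 2r11, 2r12}) by conjugation precisely as in the statement, after checking that the type of $w\kappa$ for $w\times E$ is one for which the tables are valid.

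The crux, and the step that is by no means formal, is proving that the cross action genuinely intertwines $T_\kappa$ with $T_{w\kappa}$, equivalently that the transported formula does not depend on the choice of $w$. Writing $v=w'w^{-1}$ for two admissible choices and $F=w\times E$, the claim reduces to the commutation $v\times c_{w\kappa}(F)=c_{w'\kappa}(v\times F)$, which I would establish by first reducing to the case $v\in W(w*\gamma)^{\delta_0}$ (the integral stabilizer of the infinitesimal character) and then invoking the compatibility of cross actions with Cayley transforms in the integral setting from \cite{LVq}. I expect the genuinely delicate part to be the sign bookkeeping of Definition \ref{d:sgn}: one must confirm that $\sgn(E,E')$, whose formula is given in Proposition \ref{prop:equivextended}, is preserved under conjugation by the cross action, so that the extensions $J_z$ are matched correctly and no spurious factor of $\pm1$ is introduced by the passage through $w$. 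Tracking these signs through the conjugation — rather than the root-combinatorial statements, which are routine — is where the real work lies, and it is precisely the consistency needed for the formulas in Table 5 to hold on the nose.
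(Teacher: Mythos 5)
Your overall strategy---treat the $\mathcal H(\gamma)$-action on $\mathcal M_\gamma$ as intrinsically defined by the geometry of \cite{LVq} (via the \cite{ABV}*{Chapter 17}-style localization the paper also invokes and omits), and then transport the simple-integral case through the cross action of Definition \ref{d:nonintegralcross}---is the same one the paper has in mind, and several of your verification steps (that the shifted quadruple is a genuine extended parameter, that the $*$-correction of Lemma \ref{l:id2} makes the generator-by-generator definition into an honest $W^{\delta_0}$-action) are exactly the checks the paper records.

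There is, however, a genuine gap at the step where you assert that ``an elementary argument inside $R(\gamma)$ and $R$ produces $w\in W^{\delta_0}$ carrying $\kappa$ to a $G$-simple orbit.'' This is false, and it is precisely the point where the paper is careful: the lemma following the proposition states that such a $w$ exists \emph{unless} $\kappa=\{\alpha\}$ is a length-one ($\delta_0$-fixed) orbit and the relevant simple factor of $G$ is locally $SL(2n+1,\mathbb R)$. In type $A_{2n}$ the quotient system $R/\delta_0$ is the non-reduced system $BC_n$, a $\delta_0$-fixed root restricts to \emph{twice} a root, and since no simple root of $A_{2n}$ is $\delta_0$-fixed, no element of $W^{\delta_0}$ (which preserves $\delta_0$-fixedness) can carry such a $\kappa$ to a simple root. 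For these $\kappa$ your transport argument cannot even begin---there is no admissible $w$---yet $T_\kappa$ is still a generator of $\mathcal H(\gamma)$, so the full action is not determined; the paper disposes of this excluded case by a separate (omitted) calculation in $SL(3,\mathbb R)$. A secondary weak point: intertwining of the transported formula with the intrinsic operator $T_\kappa$ and independence of the choice of $w$ are not equivalent, as you claim. Independence follows from intertwining, but the converse fails; identifying the transported formula with the intrinsic action requires the translation-functor/localization comparison behind the \cite{ABV}*{Chapter 17} reference, which your proposed reduction to integral $v$ plus the commutation $v\times c_{w\kappa}(F)=c_{w'\kappa}(v\times F)$ does not supply.
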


It is helpful to reformulate the action of $W$.

\begin{lemma}
Suppose $E=(\lambda,\tau,\ell,t,\gamma)$ is an extended parameter, and
$w\in W^{\delta_0}$. Then 
$w\times E=(\lambda',\tau',\ell',t',w*\gamma)$ where:
$$
\begin{aligned}
\lambda'&=w*\gamma-w(\gamma-\lambda)+(w\rho-\rho)-(w\rho_{r}(x)-\rho_{r}(wxw\inv))\\
\tau'&=w\tau-(\ch\delta_0-1)(w\rho_r(x)-\rho_r(wxw\inv))/2\\
\ell'&=g-w(g-\ell)+(w\ch\rho-\ch\rho)-(w\rho_r(y)-\rho_r(wyw\inv)\\
t'&=wt-(\delta_0-1)(w\rho_r(y)-\rho_r(wy{w}\inv))/2
\end{aligned}
$$
\end{lemma}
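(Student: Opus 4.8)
The plan is to regard the displayed closed form as a candidate for the iterated cross action of Definition~\ref{d:nonintegralcross} and to prove agreement by the standard two-step argument: verify the formula on the Coxeter generators $w_\kappa$, and then verify that the formula is compatible with composition, so that it must coincide with the action built by iterating generators. Since $w\times E$ is \emph{defined} by writing $w\in W^{\delta_0}$ as a product of the $w_\kappa$ and composing, it suffices to show (i) that the displayed formula reproduces the crx rows of Tables~2--4 together with the $\gamma$-shift of Definition~\ref{d:nonintegralcross} when $w=w_\kappa$ is a generator of type~{\tt 1}, {\tt 2}, or~{\tt 3}; and (ii) that if the formula computes $w_2\times E$ and $w_1\times(w_2\times E)$ then it computes $(w_1w_2)\times E$. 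Granting (i) and (ii), induction on the length of $w$ finishes the proof, with base case $w=1$; moreover, because the closed form manifestly depends only on $w$ and not on the chosen word, establishing (i)--(ii) simultaneously shows that the iterated cross action is well defined (independent of the reduced expression).

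For step~(ii) the point is that each correction term obeys a cocycle identity. The terms $w\rho-\rho$ and $w\ch\rho-\ch\rho$ satisfy the familiar relation $(w_1w_2)\rho-\rho=w_1(w_2\rho-\rho)+(w_1\rho-\rho)$, and the linear pieces $w(\gamma-\lambda)$, $w\tau$, $wt$ compose by functoriality of the $W$-action. The genuinely new term is $c_w(x):=w\rho_r(x)-\rho_r(wxw\inv)$, recording the change in the half-sum of positive real roots as $\theta_x$ is conjugated to $w\theta_xw\inv$ (so that the KGB class transforms by $x\mapsto wxw\inv$). A direct expansion gives
\[
c_{w_1}(w_2xw_2\inv)+w_1c_{w_2}(x)=(w_1w_2)\rho_r(x)-\rho_r\big((w_1w_2)x(w_1w_2)\inv\big)=c_{w_1w_2}(x),
\]
so $c_w$ is automatically a crossed homomorphism, and the linear prefactors $(\ch\delta_0-1)/2$ and $(\delta_0-1)/2$ in $\tau'$ and $t'$ commute with this bookkeeping. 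Finally one needs $w*\gamma$ to define a genuine action of $W$ in the last slot, which is precisely Lemma~\ref{l:id2}(1). Assembling these identities yields the composition law (ii).

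The main obstacle is step~(i), the generator-by-generator check, since this is where the intrinsic geometry must be matched against the explicit Table entries. Here I would compute directly from the representatives $\xi(E)=e((\gamma-\lambda)/2)\sigma_{w}\xi_0$ of Definition~\ref{d:extparam}, conjugate by the Tits representative $\sigma_{w_\kappa}$, and read off $\lambda',\tau',\ell',t'$; the $\rho$- and $\rho_r$-shifts emerge from the Tits relation $\sigma_{w}\sigma_{w\inv}=e((\rho^\vee-\theta_x\rho^\vee)/2)$ used in~\eqref{e:xisquared} (Proposition~\ref{p:bicycle}) and its $\ch G$ analogue. The fiddly part is that the Table entries for the complex cases carry explicit shifts such as $(\gamma_\alpha-1)\alpha$ in type~{\tt 1C} and $(\gamma_\kappa-2)\kappa$ in type~{\tt 3C}, and one must check that these coincide with $(w_\kappa*\gamma-\gamma)$ together with the $(w_\kappa\rho-\rho)$ and $-c_{w_\kappa}(x)$ corrections, while confirming that the $\tau$- and $t$-corrections $(\ch\delta_0-1)c_{w_\kappa}(x)/2$ and $(\delta_0-1)c_{w_\kappa}(x)/2$ reduce, \emph{modulo the defining lattices of Definition~\ref{d:extparam}}, to the bare $w_\kappa\tau$ and $w_\kappa t$ recorded in the crx rows. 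Once the three generator types are verified in this way, the structural argument above closes the proof, and I would omit the remaining routine lattice computations.
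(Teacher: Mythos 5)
Your proposal is correct and takes essentially the same approach as the paper, whose entire proof reads that the displayed formulas agree with those of Definition \ref{d:nonintegralcross} when $w=w_\kappa$, with all details omitted. Your step (ii) — the cocycle identity $c_{w_1}(w_2xw_2^{-1})+w_1c_{w_2}(x)=c_{w_1w_2}(x)$ for $c_w(x)=w\rho_r(x)-\rho_r(wxw^{-1})$, together with Lemma \ref{l:id2} and the fact that $W^{\delta_0}$ commutes with $\delta_0$ — merely makes explicit the composition step that the paper's definition-by-iteration implicitly requires, and usefully records that the closed form proves independence of the chosen word for $w$.
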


The proof is that these formulas agree with those Definition
\ref{d:nonintegralcross} when $w=w_\kappa$. We omit the details.

To apply the Proposition we need the following Lemma.

\begin{lemma}
Suppose $\kappa$ is a $\ch\delta_0$-orbit of integral-simple roots. Then there
exists $w\in W^{\delta_0}$ such that $w\kappa$ is simple, unless
$\kappa=\{\alpha\}$ is of length $1$ and (the simple factor of) $G$ is
locally isomorphic to $SL(2n+1,\mathbb R)$. 
\end{lemma}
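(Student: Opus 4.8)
The final lemma asserts: given a $\check\delta_0$-orbit $\kappa$ of integral-simple roots (i.e. simple for $R(\gamma)^+$), there exists $w \in W^{\delta_0}$ carrying $\kappa$ to an orbit of roots simple for $R^+$, \emph{except} when $\kappa = \{\alpha\}$ has length $1$ and the relevant simple factor of $G$ is locally $SL(2n+1,\mathbb{R})$.

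Let me think about what this is really saying, and how I would attack it.

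---

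The plan is to reduce to a rank-type question about making an integral-simple root simple via the twisted Weyl group $W^{\delta_0}$, and to identify precisely the single obstruction.

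First I would set up the framework carefully. We have the full root system $R = R(G,H)$ with positive system $R^+$ and the integral subsystem $R(\gamma) = \{\alpha : \langle\gamma,\alpha^\vee\rangle \in \mathbb{Z}\}$, with its own positive system $R(\gamma)^+ = R^+ \cap R(\gamma)$ and Weyl group $W(\gamma)$. An "integral-simple" root is simple for $R(\gamma)^+$; a "simple" root is simple for $R^+$. The automorphism $\check\delta_0$ preserves both notions (as noted just before the lemma), so it makes sense to speak of $\check\delta_0$-orbits $\kappa$. I want to move $\kappa$ into the genuine simple roots of $G$ using an element of $W^{\delta_0}$, the $\delta_0$-fixed subgroup of $W$.

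The key step is the following. For an integral-simple root $\alpha$, consider the set of genuinely-positive roots that "block" $\alpha$ from being simple — namely the positive roots $\beta \in R^+$ with $\beta < \alpha$ in the sense that $\alpha - \beta$ is a nonnegative combination of simple roots of $G$, equivalently the positive roots appearing when we express $\alpha$ in terms of the simple roots of $G$. More usefully, I would argue by induction on the height $\operatorname{ht}(\alpha) = \sum_i c_i$ where $\alpha = \sum_i c_i \alpha_i$ in the basis of $R^+$. If $\operatorname{ht}(\alpha) = 1$ then $\alpha$ is already simple and we take $w = 1$. Otherwise there is a simple root $\alpha_i$ of $G$ with $\langle \alpha, \alpha_i^\vee\rangle > 0$ and $\alpha_i \neq \alpha$; applying $s_{\alpha_i}$ strictly lowers the height. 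The problem is that $s_{\alpha_i}$ need not lie in $W^{\delta_0}$: we must use the $\delta_0$-symmetrized reflection $w_\kappa$ associated to the $\delta_0$-orbit of $\alpha_i$, which is $s_{\alpha_i}$ (type 1), $s_{\alpha_i}s_{\delta_0\alpha_i}$ (type 2, orthogonal), or $s_{\alpha_i}s_{\delta_0\alpha_i}s_{\alpha_i}$ (type 3). Most of the work is checking that one of these symmetrized reflections still lowers height and keeps us inside the integral-simple orbit structure. Because $\kappa$ is $\check\delta_0$-fixed and the two roots of $\kappa$ (in the type-2/3 cases) are interchanged by $\check\delta_0$ and $\delta_0$, the symmetrized reflection acts on $\kappa$ compatibly, and a descent on one root forces the matching descent on its partner.

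The main obstacle — and the source of the exception — is exactly the place where this height-reduction argument can fail to terminate at a simple orbit. This happens when the reflection we are forced to use (being $\delta_0$-equivariant) overshoots: applying the symmetrized $w_\kappa$ does not strictly decrease height because the two roots being reflected interact. Concretely, in a factor of type $A_{2n}$ with $\delta_0$ the nontrivial diagram involution — this is the combinatorics underlying $SL(2n+1,\mathbb{R})$ — there is a $\delta_0$-fixed middle root $\alpha = \alpha_n + \alpha_{n+1}$ (length-one orbit in the folded system) which is integral-simple but whose every $W^{\delta_0}$-translate remains non-simple for $R^+$, because the folded Coxeter system of type $A_{2n}$ is of type $BC_n$ and the short "end" generator corresponds to a non-reduced situation with no honest simple root of $G$ in its $W^{\delta_0}$-orbit. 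I would verify this by a direct computation in $A_{2n}$: enumerate the $\delta_0$-fixed roots, compute the $W^{\delta_0} = W(BC_n)$-orbit of the problematic $\alpha$, and confirm it contains no simple root of $A_{2n}$. Conversely, I would check case-by-case over the irreducible diagrams-with-involution ($A_n$ with/without flip, $D_n$ with flip, $E_6$ with flip, and the trivial-involution cases) that in every other configuration the symmetrized descent terminates at a simple orbit, so no other exception arises.

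Finally I would assemble the induction: the height-reduction produces a sequence $w_{\kappa_1}, w_{\kappa_2}, \dots$ of symmetrized generators whose product $w \in W^{\delta_0}$ carries $\kappa$ to a simple orbit, valid whenever we are not in the excluded $SL(2n+1,\mathbb{R})$ length-one case. The hardest part is not the induction itself but the clean verification that the $SL(2n+1,\mathbb{R})$ case is the \emph{only} obstruction — this requires the explicit $A_{2n}$-folding computation and a short argument that reducibility of the folded root system (type $BC_n$ versus type $B_n/C_n$) is precisely detected by the local isomorphism type $SL(2n+1,\mathbb{R})$. I would present that computation as the technical heart and leave the remaining case-checks as routine.
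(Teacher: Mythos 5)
Your argument is correct, and it rests on the same foundation as the paper's proof: the folded (restricted) root system $R/\delta_0$, whose Weyl group is $W^{\delta_0}$, with the exception traced to the non-reduced system $BC_n$ arising exactly from $A_{2n}$ with the diagram flip (the combinatorics of $SL(2n+1,\mathbb{R})$). The difference is in how this is established. The paper's proof is essentially a citation: by Steinberg, the restrictions of roots to $H^{\delta_0}$ form a (possibly non-reduced) root system with Weyl group $W^{\delta_0}$, and in any \emph{reduced} root system every root is Weyl-conjugate to a simple root; the exceptional case is then characterized abstractly as the unique non-reduced folding, where a $\delta_0$-fixed root restricts to \emph{twice} a root and hence can never be conjugate to the restriction of a simple orbit. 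You instead re-prove this package by hand: a height-reduction induction on $\kappa$ using the symmetrized generators $w_{\kappa_i}$, an explicit $A_{2n}$ computation exhibiting the failure, and a case-by-case check over the diagrams with involution. What your route buys is self-containedness and a concrete picture of where descent breaks down (the symmetrized reflection $s_{\alpha_n}s_{\alpha_{n+1}}s_{\alpha_n}$ sends the fixed middle root $\alpha_n+\alpha_{n+1}$ straight to its negative, never passing through a simple root); what the paper's buys is brevity and the avoidance of any diagram-by-diagram analysis, since non-reducedness of the folding is the single abstract criterion. One economy you could borrow for the necessity direction: in $A_{2n}$ no simple root is $\delta_0$-fixed, while every element of $W^{\delta_0}$ preserves the set of $\delta_0$-fixed roots; hence a length-one orbit $\kappa=\{\alpha\}$ can never be moved to a simple root, with no enumeration of its $W^{\delta_0}$-orbit required.
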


This follows from the facts that the ``quotient'' root system
$R/\delta_0$ \cite{steinberg} consisting of the restrictions of roots
to $H^{\delta_0}$, is a (possibly non-reduced) root system, with Weyl
group $W^{\delta_0}$; and in a reduced root system every root is
conjugate to a simple root.  The excluded case in the lemma is type
$A_{2n}$, in 
which case $R/\delta_0$ is the non-reduced system of type $BC_n$, and a
$\delta_0$-fixed root restricts to twice a root.

Extending Proposition \ref{prop:nonintegralCayley} to this excluded
case requires just a calculation in $SL(3,\mathbb R)$, which we omit.

\section{Duality}\label{sec:dual}
\setcounter{equation}{0}
\begin{definition}
Let $\tau$ be the anti-automorphism of $\mathcal H$ given by
\begin{equation}
q\tau(T_\kappa)=-q^\ell T_\kappa\inv=-T_\kappa+(q^\ell-1)\quad (\ell=\text{length}(\kappa)).
\end{equation}

Suppose $\pi$ is a representation of $\mathcal H$ on an $\mathcal A$-module
$V$. The dual representation $\pi^*$, on $\Hom_{\mathcal A}(V,\mathcal A)$ is given by 
$$
\pi^*(T_\kappa)(\lambda)(v)=\lambda(\pi(\tau(T_\kappa)v).
$$
\end{definition}

In the setting of Section \ref{sec:setting} let $\mathcal H$ be the
Hecke algebra for $(G,\delta_0)$ (see \cite{LVq} and Section
\ref{sec:hecke}).  Let $\ch\mathcal H$ be the algebra given by the
same construction applied to $(\ch G,\ch\delta_0)$.  If $\kappa$ is a
$\delta_0$-orbit of simple roots for $G$, then $\ch\kappa$ is a
$\ch\delta_0$-orbit of simple roots for $\ch G$, and the map
$T_\kappa\rightarrow T_{\ch\kappa}$ induces a Hecke algebra  isomorphism.

Fix (regular, rational) infinitesimal character $\gamma$ and (regular,
integral) infinitesimal character cocharacter $g$ as in
\eqref{se:extparam}. 

We now assume that $\gamma$ is integral.  Let $\mathcal M$ be
$\mathcal H$-module of Definition \ref{d:heckemodule}, applied to
$G$, $\delta_0$, and $\gamma$.  Recall $\mathcal M$ is spanned by
equivalence classes $[E]$ for $E$ an extended parameter with
infinitesimal character $\gamma$, and $[I_z(E)]\rightarrow [E]$ is an
isomorphism of Hecke modules.

Let $\ch\mathcal M$ be the $\ch\mathcal H$-module obtained by applying
the same construction to $\ch G,\ch\delta_0$ and $g$.
If $E$ is an extended parameter, write $\ch E$ for the same parameter,
viewed as an extended parameter for $\ch G$. 
The  map $[I_\zeta(\ch E)]\rightarrow [\ch E]$ is an isomorphism of
$\ch\mathcal H$-modules. Write $[E]'\in \Hom_\mathcal A(V,\mathcal A)$ for the dual basis vector.

\begin{proposition}
The map $[E]'\rightarrow (-1)^{\text{length}(E)} [\ch E]$ is an isomorphism
of $\mathcal H\simeq \ch\mathcal H$-modules.
\end{proposition}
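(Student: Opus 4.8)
The plan is to verify that the stated $\mathcal A$-linear map intertwines the two Hecke actions one generator at a time, after first disposing of well-definedness. Both $\Hom_{\mathcal A}(\mathcal M,\mathcal A)$ and $\ch{\mathcal M}$ are free over $\mathcal A$ with bases indexed by equivalence classes of extended parameters of the relevant infinitesimal character and cocharacter. The bijection $E\leftrightarrow\ch E$ of Corollary \ref{cor:dual} matches these index sets, and Proposition \ref{prop:dualequivextended} (equivalently \eqref{e:samesgn}) guarantees that it respects the sign relations $[E]\equiv\sgn(E,E')[E']$, so it descends to the quotient modules. The twisting factor $(-1)^{\text{length}(E)}$ depends only on the underlying parameter $(x,y)$ of the $(\g,K_\xi)$-module $J(x,y,\gamma)$, not on the choice of extension, hence is constant on equivalence classes and does not disturb well-definedness. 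Thus the map is manifestly an isomorphism of $\mathcal A$-modules, and only the intertwining property remains.

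The substance is the intertwining. By the definition of the dual representation $\pi^*$, the matrix of $\pi^*(T_\kappa)$ in the dual basis $\{[E]'\}$ is the transpose of the matrix of $\pi(\tau(T_\kappa))$ in the basis $\{[E]\}$, where $\tau(T_\kappa)=q^{-1}(-T_\kappa+(q^\ell-1))$ with $\ell=\text{length}(\kappa)$. Writing $\ch\pi$ for the $\ch{\mathcal H}$-action on $\ch{\mathcal M}$ and using $\mathcal H\simeq\ch{\mathcal H}$ via $T_\kappa\mapsto T_{\ch\kappa}$, the assertion $D\circ\pi^*(T_\kappa)=\ch\pi(T_{\ch\kappa})\circ D$ (with $D$ the proposed map) is equivalent to the single matrix identity
$$
\ch\pi(T_{\ch\kappa})_{\ch{E'},\,\ch E}
= (-1)^{\text{length}(E)+\text{length}(E')}\,
\pi(\tau(T_\kappa))_{E,\,E'}
$$
for every $\delta_0$-orbit $\kappa$ and all pairs $E,E'$. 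This is precisely the ``transpose of one Hecke action equals another Hecke action'' principle advertised in the introduction, and I would establish it block by block.

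Since $T_\kappa$ acts through the one-to-four parameters produced by the Cayley transforms and cross actions of Tables 2--4, only finitely many shapes of block occur, and duality interchanges imaginary and real types (notably {\tt 2i12} with {\tt 2r21} and {\tt 2Ci} with {\tt 2Cr}, with the type {\tt 1}/{\tt 3} cases matching among themselves). For the generic blocks the needed identity reduces to the classical Vogan duality of \cite{IC4} (originating in \cite{KL}*{Theorem 3.1}): there the signs $\sgn$ are trivial, and the matrices built from \eqref{e:Ts} transpose into the dual matrices after the $\tau$-twist and the length sign, exactly as in the untwisted theory. I expect the main obstacle — and the reason the earlier sections were needed — to be the subtle cases {\tt 2i12}/{\tt 2r21} and {\tt 2Ci}/{\tt 2Cr}, where the block carries a genuine sign like \eqref{e:Tssign}. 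Here I would invoke the explicit formulas of Propositions \ref{p:heckeaction2i12} and \ref{prop:2Cicorr}, together with the fact (Proposition \ref{prop:dualequivextended}) that $\sgn(E,E')$ is computed by the \emph{same} formula of Proposition \ref{prop:equivextended} on both sides. It is exactly the choice of the square root $\zeta$ of \eqref{e:extparamzeta}, rather than $z$, in forming $\ch J_\zeta$ that makes the structure constants of $\ch{\mathcal M}$ the precise transposes (with the predicted signs) of those of $\mathcal M$; with $z$ one would be off by the factors $(-1)^{\langle\lambda,t\rangle}(-1)^{\langle\ell,\tau\rangle}$ recorded in \eqref{e:extparamzeta2}. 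Once these two hard blocks are matched, the remaining cases are routine, completing the verification.
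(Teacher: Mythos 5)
Your proposal is correct and takes essentially the same route as the paper's own proof: the paper likewise reduces the statement to the coefficient identity ``coefficient of $[E]$ in $\tau(T_\kappa)[F]$ equals $(-1)^{\ell(E)-\ell(F)}$ times the coefficient of $[\ch F]$ in $T_{\ch\kappa}([\ch E])$'' and then verifies it case by case, observing that up to signs everything can be read off from the Hecke-action formulas, and that the signs come out right because dual operations (e.g.\ {\tt 1i1}/{\tt 1r2}, {\tt 2i12}/{\tt 2r21}, {\tt 2Ci}/{\tt 2Cr}) occupy the same lines of the tables under the interchange $\lambda\leftrightarrow\ell$, $\tau\leftrightarrow t$. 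Your additional remarks on well-definedness via Proposition \ref{prop:dualequivextended} and on the necessity of the square root $\zeta$ rather than $z$ are exactly the points the paper establishes in the subsection preceding the proposition.
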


\begin{proof}
The statement is equivalent to the following assertion. For all
$\kappa$, and extended parameters $E,F$:
\begin{subequations}
\renewcommand{\theequation}{\theparentequation)(\alph{equation}}  
\begin{equation}
\text{the coefficient of }[E]\text{ in }-T_\kappa([F])+(u^{\ell(\kappa)}-1)\sgn(E,F)
\end{equation}
is equal to 
\begin{equation}
(-1)^{\ell(E)-\ell(F)}*\text{the coefficient of }[\ch F]\text{ in
}T_{\ch\kappa}([\ch E]).
\end{equation}
\end{subequations}
In (a) $\sgn(E,F)$ is defined to be $0$ if $E,F$ are not extensions of
the same parameter.

Up to signs, all of these formulas can be read off easily from the
formulas for the Hecke algebra action on parameters. See Table
5. The fact that the signs are correct is due to the symmetry of Table 5. 
This is best illustrated by an example.

\begin{example}
Suppose $\kappa$ is type {\tt 1i1} for an extended parameter $F$.
Then  $\kappa$ is also of  type {\tt 1i1} for $w_\kappa\times F$.
\begin{subequations}
\renewcommand{\theequation}{\theparentequation)(\alph{equation}}  
According to Table 5,
\begin{equation}
\text{the coefficient of }[w_\kappa\times F]\text{ in }-T_\kappa([F])\text{ is }-1
\end{equation}
We need to show this equals
\begin{equation}
-1(\text{the coefficient of }[\ch F]\text{ in
}T_{\ch\kappa}([\ch(w_\kappa\times F)])
\end{equation}
From the same line in Table 5, applied to $\ch G$, we know
\begin{equation}
-(\text{the coefficient of }[\ch F]\text{ in }
  T_{\ch\kappa}([w_\kappa\times \ch F])=-1
\end{equation}
So we need to know
\begin{equation}
(w_\kappa\times F)^\vee\equiv w_\kappa\times\ch F.
\end{equation}
\end{subequations}
This identity reflects a symmetry of the tables.
Here $w_\kappa\times F$ is a cross action of type {\tt 1i1},
$w_\kappa\times\ch F$ is of type {\tt 1r1}.
Switching the roles of $\lambda\leftrightarrow\ell$, and
$\tau\leftrightarrow t$ interchanges these two formulas. 
\end{example}

The necessary symmetry holds for all Cayley transforms and cross
actions; in Table 5 the dual operations are listed on the same line.
This completes the proof of the Proposition.
\end{proof}

\section{Appendix}
\label{sec:paramnotes}
\setcounter{equation}{0}
\begin{subequations} \label{se:distW}
We collect a few  technical results about the Tits group \cite{Tits}, which
will be needed for our study of parameters for representations in
Section \ref{sec:atlasparameters}. We continue with the notation
of \eqref{se:notation}. 
For each simple root
$\alpha$, the pinning defines a canonical homomorphism
\begin{equation}
\phi_\alpha\colon (SL(2),\text{diag}) \rightarrow (G,H) \qquad
d\phi_\alpha\begin{pmatrix} 0&1 \\ 0 & 0\end{pmatrix} = X_\alpha.
\end{equation}
Similarly,
\begin{equation}
\phi_{\alpha^\vee}\colon (SL(2),\text{diag}) \rightarrow ({}^\vee
G,{}^\vee H) \qquad d\phi_{\alpha^\vee}\begin{pmatrix} 0&1 \\ 0 & 0\end{pmatrix} = X_{\alpha^\vee}.
\end{equation}

It is sometimes convenient to define also
\begin{equation}
H_\alpha = d\phi_\alpha\begin{pmatrix} 1&0 \\ 0 & -1\end{pmatrix},\qquad
X_{-\alpha} = d\phi_\alpha\begin{pmatrix} 0&0 \\ 1 & 0\end{pmatrix};
\end{equation}
the first element (because $\alpha(H_\alpha) = 2$) ``is'' the coroot
$\alpha^\vee$. The second is a preferred root vector for $-\alpha$,
characterized by the last of the three relations
\begin{equation}
\left[H_\alpha,X_\alpha\right] = 2X_\alpha,\quad
\left[H_\alpha,X_{-\alpha}\right] = -2X_{-\alpha},\quad
\left[X_\alpha,X_{-\alpha}\right] = H_\alpha.
\end{equation}
In this way we get a distinguished representative
\begin{equation}\label{e:sigmaalpha}
\begin{aligned}
\sigma_\alpha &=_{\text{def}} \phi_\alpha\begin{pmatrix} 0 & 1\\ -1&
  0\end{pmatrix}=\exp(\frac\pi2(X_\alpha-X_{-\alpha}))\\
 \qquad \sigma_\alpha^2 &= m_\alpha =_{\text{def}}
\alpha^\vee(-1)
\end{aligned}
\end{equation}
for the simple reflection $s_\alpha$. These representatives satisfy
the braid relations (see \cite{Tits}) and therefore define distinguished
representatives
\begin{equation}\label{e:distrep}
\sigma_w =_{\text{def}} \sigma_{\alpha_1}\sigma_{\alpha_2}\cdots
\sigma_{\alpha_r} \qquad (w = s_{\alpha_1}s_{\alpha_2}\cdots s_{\alpha_r} \
\text{reduced}) 
\end{equation}
for each Weyl group element $w$. (That $\sigma_w$ is independent of
the choice of reduced decomposition is a consequence of the fact that
the $\sigma_\alpha$ satisfy the braid relations.)  If $\gamma$ is any
distinguished (that is, pinning-preserving) automorphism of $(G,B,H)$,
then 
\begin{equation}
\gamma(\sigma_w) = \sigma_{\gamma(w)}.
\end{equation}
The braid relations imply, for any $w\in W$ and simple root $\alpha$:
\begin{equation}
\label{e:braid2}
\sigma_w\sigma_\alpha=
\begin{cases}
  \sigma_{ws_\alpha}&\text{length}(ws_\alpha)=\text{length}(w)+1\\
  \sigma_{ws_\alpha}m_\alpha&\text{length}(ws_\alpha)=\text{length}(w)-1
\end{cases}
\end{equation}
and a similar result  for $\sigma_\alpha\sigma_w$ (with $m_\alpha$ on the left)

In exactly the same way, we get a distinguished representative in
${}^\vee G$
\begin{equation}\label{e:dualdistrep}
{}^\vee\sigma_w =_{\text{def}}
\sigma_{\alpha_1^\vee}\sigma_{\alpha_2^\vee}\cdots 
\sigma_{\alpha_r^\vee} \qquad (w = s_{\alpha_1}s_{\alpha_2}\cdots
s_{\alpha_r} \ 
\text{reduced}) 
\end{equation}
\end{subequations} 

The main fact we need about these representatives is
\begin{proposition}
\label{p:bicycle}
In the setting of \eqref{se:distW}, 
$$\begin{aligned} \sigma_w \sigma_{w^{-1}} &
=(w\rho^\vee - \rho^\vee)(-1)\\
&= e((\rho^\vee -w\rho^\vee)/2)\\
&= \prod_{\substack{\beta\in R^+(G,H)\\ w^{-1}\beta\notin R^+(G,H)}} m_\beta.
\end{aligned}$$
\end{proposition}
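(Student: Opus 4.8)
The plan is to prove the equality of the three displayed expressions among themselves first, and then to identify $\sigma_w\sigma_{w^{-1}}$ with them by induction on $\ell(w)$. For the first part, observe that each $m_\beta = \beta^\vee(-1) = e(\beta^\vee/2)$ has order dividing two in $H$, so $\mu(-1) = (-\mu)(-1)$ for any $\mu$ in the coroot lattice; this already gives $(w\rho^\vee-\rho^\vee)(-1) = e((\rho^\vee - w\rho^\vee)/2)$. Combining the homomorphism property $(\mu+\nu)(-1) = \mu(-1)\,\nu(-1)$ with the standard combinatorial identity
$$\rho^\vee - w\rho^\vee = \sum_{\substack{\beta\in R^+(G,H)\\ w^{-1}\beta\notin R^+(G,H)}}\beta^\vee$$
(which one checks for simple reflections using $s_\alpha\rho^\vee = \rho^\vee-\alpha^\vee$, and then propagates by the cocycle property of $w\mapsto \rho^\vee - w\rho^\vee$), the product $\prod m_\beta = e\bigl(\tfrac12\sum \beta^\vee\bigr)$ is seen to coincide with $(\rho^\vee - w\rho^\vee)(-1)$. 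It therefore suffices to prove $\sigma_w\sigma_{w^{-1}} = (\rho^\vee - w\rho^\vee)(-1)$.

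For the main induction, the base case $w=1$ is trivial since $\sigma_1$ is the identity and $\rho^\vee - \rho^\vee = 0$. For the inductive step I would write $w = s_\alpha w'$ with $\alpha$ simple and $\ell(w) = \ell(w')+1$, so that $w^{-1} = (w')^{-1}s_\alpha$ with $\ell(w^{-1}) = \ell((w')^{-1})+1$. Because $\sigma_w$ depends only on $w$ and not on the chosen reduced decomposition, the definition \eqref{e:distrep} gives $\sigma_w = \sigma_\alpha\sigma_{w'}$ and $\sigma_{w^{-1}} = \sigma_{(w')^{-1}}\sigma_\alpha$, whence
$$\sigma_w\sigma_{w^{-1}} = \sigma_\alpha\bigl(\sigma_{w'}\sigma_{(w')^{-1}}\bigr)\sigma_\alpha.$$
By the inductive hypothesis $\sigma_{w'}\sigma_{(w')^{-1}} = \mu'(-1)$ with $\mu' = \rho^\vee - w'\rho^\vee \in X_*(H)$. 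The key computation is then
$$\sigma_\alpha\,\mu'(-1)\,\sigma_\alpha = \bigl(\sigma_\alpha\,\mu'(-1)\,\sigma_\alpha^{-1}\bigr)\sigma_\alpha^2 = (s_\alpha\mu')(-1)\,m_\alpha,$$
using that conjugation by $\sigma_\alpha$ acts on $H$ as $s_\alpha$ and that $\sigma_\alpha^2 = m_\alpha$ \eqref{e:sigmaalpha}. Since $s_\alpha\rho^\vee = \rho^\vee-\alpha^\vee$ and $w = s_\alpha w'$, one computes $s_\alpha\mu' = \rho^\vee - w\rho^\vee - \alpha^\vee$, and as $m_\alpha = \alpha^\vee(-1)$ the homomorphism property absorbs the extra coroot:
$$(s_\alpha\mu')(-1)\,m_\alpha = (\rho^\vee - w\rho^\vee - \alpha^\vee)(-1)\,\alpha^\vee(-1) = (\rho^\vee - w\rho^\vee)(-1),$$
completing the induction.

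The step I expect to require the most care is the \emph{non-conjugation} product $\sigma_\alpha(\,\cdot\,)\sigma_\alpha$: one must remember to split off $\sigma_\alpha^2 = m_\alpha$ rather than treat it as honest conjugation, for it is precisely this factor $m_\alpha$ that turns $s_\alpha\mu'$ back into $\rho^\vee - w\rho^\vee$. Everything else is bookkeeping with the order-two elements $m_\beta$ and the $\rho^\vee$-identity above. I would also explicitly verify the length-additivity assertions, namely $\ell(s_\alpha w') = \ell(w')+1$ (equivalently $\ell((w')^{-1}s_\alpha) = \ell((w')^{-1})+1$), since these are exactly what make the factorizations $\sigma_w = \sigma_\alpha\sigma_{w'}$ and $\sigma_{w^{-1}} = \sigma_{(w')^{-1}}\sigma_\alpha$ valid via \eqref{e:distrep} (or \eqref{e:braid2}).
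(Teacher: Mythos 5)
Your proposal is correct, and it is essentially the paper's own argument: the paper disposes of the proposition with ``Proof is an easy induction on $\ell(w)$'' (citing \cite{contragredient}*{Lemma 5.4}), and your induction — splitting off a simple reflection via $\sigma_w=\sigma_\alpha\sigma_{w'}$, $\sigma_{w^{-1}}=\sigma_{(w')^{-1}}\sigma_\alpha$, and absorbing the factor $\sigma_\alpha^2=m_\alpha$ using $s_\alpha\rho^\vee=\rho^\vee-\alpha^\vee$ — is exactly that induction, carried out in full. The preliminary reconciliation of the three right-hand expressions (order-two elements plus the identity $\rho^\vee-w\rho^\vee=\sum_{\beta>0,\,w^{-1}\beta<0}\beta^\vee$) is also the intended reading of the statement.
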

Proof is an easy induction on $\ell(w)$. See \cite{contragredient}*{Lemma 5.4}.

\begin{proposition}\label{prop:signs}
In the setting \eqref{se:notation}, suppose $w\in W$, $\alpha,
\beta\in \Pi$ are simple roots, and 
$w\alpha = \beta$. Write $X_\alpha$ and $X_\beta$ for the simple root
vectors given by the pinning, and $\sigma_w\in N(H)$ for the Tits
representative of $w$ defined in \eqref{e:distrep}. Then
$$
\sigma_w\sigma_\alpha\sigma_w^{-1}=\sigma_\beta
$$
and
$$
\Ad(\sigma_w)(X_\alpha) = X_\beta,\quad
\Ad(\sigma_w)(X_{-\alpha}) = X_{-\beta}
$$
\end{proposition}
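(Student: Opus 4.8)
The plan is to establish the two assertions in turn, deriving the Lie-algebra identity from the group identity by means of a passage to the simply connected cover. The group identity is pure braid-relation bookkeeping; the genuine difficulty is pinning down a sign in the adjoint statement.

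First I would prove $\sigma_w\sigma_\alpha\sigma_w^{-1}=\sigma_\beta$ directly from \eqref{e:braid2}. Since $\alpha$ is simple and $w\alpha=\beta\in R^+$, we have $\ell(ws_\alpha)=\ell(w)+1$, so \eqref{e:braid2} gives $\sigma_w\sigma_\alpha=\sigma_{ws_\alpha}$. On the other hand $w^{-1}\beta=\alpha\in R^+$ gives $\ell(s_\beta w)=\ell(w)+1$, so the left-handed version of \eqref{e:braid2} gives $\sigma_\beta\sigma_w=\sigma_{s_\beta w}$. Because $w\alpha=\beta$ forces $w s_\alpha w^{-1}=s_\beta$, the Weyl-group elements $ws_\alpha$ and $s_\beta w$ coincide, hence so do their Tits representatives; therefore $\sigma_w\sigma_\alpha=\sigma_\beta\sigma_w$, which is the first assertion.

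For the adjoint statement I would first record the cheap structural facts. The operator $\Ad(\sigma_w)$ carries $\mathfrak g_\alpha$ onto $\mathfrak g_{w\alpha}=\mathfrak g_\beta$ and $\mathfrak g_{-\alpha}$ onto $\mathfrak g_{-\beta}$, and it acts on $\mathfrak h$ by $w$, so $\Ad(\sigma_w)H_\alpha=(w\alpha)^\vee=H_\beta$. Writing $\Ad(\sigma_w)X_\alpha=c\,X_\beta$ for some $c\in{\mathbb C}^\times$ and using $[X_\alpha,X_{-\alpha}]=H_\alpha$ together with $[X_\beta,X_{-\beta}]=H_\beta$, we find $\Ad(\sigma_w)X_{-\alpha}=c^{-1}X_{-\beta}$. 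Thus everything reduces to showing $c=1$.

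The hard part is exactly this sign. The first assertion by itself only determines $c$ up to $\pm1$: applying $\Ad(\sigma_w)$ to $\sigma_\alpha=\exp\!\big(\tfrac\pi2(X_\alpha-X_{-\alpha})\big)$ from \eqref{e:sigmaalpha} yields $\exp\!\big(\tfrac\pi2(c\,X_\beta-c^{-1}X_{-\beta})\big)=\phi_\beta\big(\exp(\tfrac\pi2 A_c)\big)$ with $A_c=\begin{pmatrix}0&c\\-c^{-1}&0\end{pmatrix}$, and $\exp(\tfrac\pi2 A_c)=A_c$ because $A_c^2=-I$; comparing with $\sigma_\beta=\phi_\beta(A_1)$ shows $A_c=A_1$ \emph{provided $\phi_\beta$ is injective}, but this can fail (when $m_\beta=\beta^\vee(-1)=1$ the relevant $SL(2)$ is really $PGL(2)$ and the sign is invisible at the group level). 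The device to resolve this is that the quantity to be computed, $\Ad(\sigma_w)$ acting on $\mathfrak g$, depends only on the image of $\sigma_w$ in the adjoint group and is therefore unchanged under isogeny; so I may replace $G$ by the simply connected cover of its derived group (which shares $\mathfrak g$, its root vectors $X_{\pm\alpha}$, and the Tits representatives $\sigma_\alpha,\sigma_w$). In that group $\langle\varpi_\beta,\beta^\vee\rangle=1$, so $m_\beta=\beta^\vee(-1)=\phi_\beta(-I)\neq 1$, whence $\phi_\beta$ is injective. The computation above then forces $A_c=A_1$, i.e. $c=1$, giving $\Ad(\sigma_w)X_\alpha=X_\beta$ and $\Ad(\sigma_w)X_{-\alpha}=X_{-\beta}$, and transporting back along the isogeny completes the proof.
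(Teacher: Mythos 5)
Your proof is correct, and its skeleton is the paper's own: \eqref{e:braid2} for the group identity, then conjugation of the formula $\sigma_\alpha=\exp(\frac{\pi}{2}(X_\alpha-X_{-\alpha}))$ from \eqref{e:sigmaalpha} for the adjoint one. Two differences are worth recording. For the first identity, the paper runs both cases of \eqref{e:braid2}, whereas you observe that only the length-increasing case can occur (since $w\alpha=\beta\in R^+$ and $w^{-1}\beta=\alpha\in R^+$); that is right and shortens the argument. The substantive difference is in the second step. The paper passes from the group equality $\exp(\frac{\pi}{2}\Ad(\sigma_w)(X_\alpha-X_{-\alpha}))=\sigma_\beta=\exp(\frac{\pi}{2}(X_\beta-X_{-\beta}))$ directly to equality of the exponents (``setting these equal gives the two equalities''). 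As you point out, this inference is exactly the injectivity of $\phi_\beta$: writing $\Ad(\sigma_w)X_\alpha=cX_\beta$, the group identity only says $\phi_\beta\left(\begin{smallmatrix}0&c\\-c^{-1}&0\end{smallmatrix}\right)=\phi_\beta\left(\begin{smallmatrix}0&1\\-1&0\end{smallmatrix}\right)$, and when $m_\beta=\beta^\vee(-1)=1$ this does not force $c=1$, because then $\sigma_\beta=\sigma_\beta^{-1}$. This degeneracy occurs in instances genuinely covered by the proposition: for $G=SO(2n+1)$ the short coroot is twice a cocharacter, so $m_\beta=1$ for the short simple root $\beta$, while nontrivial $w$ with $w\beta=\beta$ exist. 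Your device---noting that $\Ad(\sigma_w)$ and the Tits representatives are compatible with central isogenies, so the identity may be verified on the simply connected cover of the derived group, where $\varpi_\beta(m_\beta)=-1$ makes $\phi_\beta$ injective---supplies precisely this missing step, so your write-up is, if anything, more complete than the published proof. One phrasing quibble: the cover does not literally ``share $\mathfrak g$'' (its Lie algebra is $[\mathfrak g,\mathfrak g]$), but all root vectors and root spaces in question lie there, and the isogeny matches pinnings and Tits representatives, so your transport argument goes through exactly as described.
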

\begin{proof}
Since $\beta=w\alpha$, $s_\beta w=ws_\alpha$. 
If $\text{length}(ws_\alpha)=\text{length}(s_\beta w)=\text{length}(w)+1$ then
the first case of \eqref{e:braid2} implies
$$
\sigma_w\sigma_\alpha=\sigma_{ws_\alpha}=\sigma_{s_\beta w}=\sigma_\beta\sigma_w.
$$
If the lengths are decreasing we see
$$
\begin{aligned}
\sigma_w\sigma_\alpha&=\sigma_{ws_\alpha}m_\alpha\\
&=\sigma_{s_\beta w}m_\alpha\\
&=m_\beta\sigma_\beta \sigma_w m_\alpha\\
&=m_\beta m_{s_\beta w\alpha}\sigma_\beta\sigma_w\\
&=\sigma_\beta\sigma_w\quad\text{(since $s_\beta w\alpha=-\beta$)}.
\end{aligned}
$$
For the second statement we observe that $\Ad(\sigma_w)(X_\alpha)$ is some multiple of 
$X_{\beta}$. The Tits group preserves the $\Z$-form of $\mathfrak g$ generated by the various $X_{\pm\alpha}$, so this scalar is $\pm1$; 
we need to show it is $1$. We compute
$$
\begin{aligned}
\sigma_w\sigma_\alpha \sigma_w^{-1}&=
\sigma_w(\exp\frac\pi2(X_\alpha-X_{-\alpha}))\sigma_w\inv\\
&=\exp(\frac\pi2\Ad(\sigma_w)(X_\alpha-X_{-\alpha}))
\end{aligned}
$$
On the other hand by what we just proved this equals
$$
\sigma_\beta=\exp(\frac\pi2(X_\beta-X_{-\beta}))
$$
Setting these equal gives the two equalities in the second statement.
\end{proof}

\begin{corollary}\label{cor:signs}
In the setting \eqref{se:notation}, suppose $w\in W$, $\alpha,
\beta\in \Pi$ are simple roots, and 
$w\alpha = -\beta$. Write $X_\alpha$ and $X_\beta$ for the simple root
vectors given by the pinning, and $\sigma_w\in N(H)$ for the Tits
representative of $w$ defined in \eqref{e:distrep}. Then
$$
\sigma_w\sigma_\alpha\sigma_w\inv=\sigma_\beta
$$
and
$$
\Ad(\sigma_w)(X_{\alpha}) = -X_{-\beta}, \qquad
\Ad(\sigma_w)(X_{-\alpha}) = -X_{\beta}
$$
\end{corollary}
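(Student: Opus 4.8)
The plan is to reduce the corollary to Proposition \ref{prop:signs} by replacing $w$ with $w' = s_\beta w$. Since $w\alpha = -\beta$, we have $w'\alpha = s_\beta(w\alpha) = s_\beta(-\beta) = \beta$, so the triple $(w',\alpha,\beta)$ satisfies the hypothesis $w'\alpha = \beta$ of Proposition \ref{prop:signs}. That proposition then supplies $\Ad(\sigma_{w'})(X_\alpha) = X_\beta$ and $\Ad(\sigma_{w'})(X_{-\alpha}) = X_{-\beta}$, together with $\sigma_{w'}\sigma_\alpha\sigma_{w'}^{-1} = \sigma_\beta$.

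The bridge back to $w$ is a single application of the braid relation \eqref{e:braid2}. First I would record the length bookkeeping: from $w\alpha = -\beta$ we get $w^{-1}\beta = -\alpha < 0$, hence $\mathrm{length}(s_\beta w) = \mathrm{length}(w) - 1$, i.e. $\mathrm{length}(w) = \mathrm{length}(w') + 1$. Writing $w = s_\beta w'$ with the length going \emph{up}, the left-hand version of \eqref{e:braid2} yields $\sigma_\beta \sigma_{w'} = \sigma_{s_\beta w'} = \sigma_w$ with no stray factor of $m_\beta$. Thus $\sigma_w = \sigma_\beta\sigma_{w'}$, so that $\Ad(\sigma_w) = \Ad(\sigma_\beta)\circ\Ad(\sigma_{w'})$.

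It then remains to compute how $\Ad(\sigma_\beta)$ acts on $X_{\pm\beta}$. Using $\sigma_\beta = \phi_\beta\big(\begin{smallmatrix} 0 & 1 \\ -1 & 0\end{smallmatrix}\big)$ from \eqref{e:sigmaalpha} and the defining $SL(2)$-triple, a direct matrix computation gives $\Ad(\sigma_\beta)(X_\beta) = -X_{-\beta}$ and $\Ad(\sigma_\beta)(X_{-\beta}) = -X_\beta$. Composing with the formulas for $\Ad(\sigma_{w'})$ above gives $\Ad(\sigma_w)(X_\alpha) = \Ad(\sigma_\beta)(X_\beta) = -X_{-\beta}$ and $\Ad(\sigma_w)(X_{-\alpha}) = \Ad(\sigma_\beta)(X_{-\beta}) = -X_\beta$, which is the second assertion. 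The first assertion then drops out: since $\sigma_\alpha = \exp\!\big(\tfrac{\pi}{2}(X_\alpha - X_{-\alpha})\big)$, conjugation gives $\sigma_w\sigma_\alpha\sigma_w^{-1} = \exp\!\big(\tfrac{\pi}{2}\Ad(\sigma_w)(X_\alpha - X_{-\alpha})\big) = \exp\!\big(\tfrac{\pi}{2}(X_\beta - X_{-\beta})\big) = \sigma_\beta$.

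The only real care needed is the sign and length bookkeeping. One must confirm that the braid relation produces $\sigma_w = \sigma_\beta\sigma_{w'}$ exactly rather than $m_\beta\sigma_\beta\sigma_{w'}$, which is precisely what the computation $\mathrm{length}(s_\beta w') = \mathrm{length}(w')+1$ guarantees; and one must place correctly the two minus signs coming from $\Ad(\sigma_\beta)$. I expect the $SL(2)$ sign computation to be the one step worth writing out explicitly, since it is where both minus signs originate; everything else is a short chain of identities already established in \eqref{e:braid2}, \eqref{e:sigmaalpha}, and Proposition \ref{prop:signs}.
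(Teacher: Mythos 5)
Your proof is correct, and it reduces to Proposition \ref{prop:signs} just as the paper does, but by a genuinely different route. The paper sets $w' = ws_\alpha$ (which also satisfies $w'\alpha = \beta$), relates $\sigma_{w'}$ to $\sigma_w\sigma_\alpha$ via the Tits-group factorization, and obtains the conjugation identity $\sigma_w\sigma_\alpha\sigma_w\inv = \sigma_{w'}\sigma_\alpha\sigma_{w'}\inv = \sigma_\beta$ \emph{first}; the two $\Ad$ formulas are then extracted from the resulting equality $\exp\bigl(\tfrac{\pi}{2}\Ad(\sigma_w)(X_\alpha - X_{-\alpha})\bigr) = \exp\bigl(\tfrac{\pi}{2}(X_\beta - X_{-\beta})\bigr)$, using that the Tits group preserves the $\Z$-form, so that $\Ad(\sigma_w)X_\alpha = \pm X_{-\beta}$ and only the sign is at issue. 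You instead factor on the other side, $w = s_\beta w'$ with $w' = s_\beta w$, verify the length condition $w\inv\beta = -\alpha < 0$ so that $\sigma_w = \sigma_\beta\sigma_{w'}$ exactly (no stray $m_\beta$), and run the logic in the opposite direction: the $\Ad$ formulas come first, by composing the Proposition's $\Ad(\sigma_{w'})X_{\pm\alpha} = X_{\pm\beta}$ with the explicit $SL(2)$ computation $\Ad(\sigma_\beta)X_{\pm\beta} = -X_{\mp\beta}$, and the conjugation identity then drops out by exponentiating. Both arguments are sound, but your order of deduction buys something concrete: the paper's final step pins down the unknown sign $c$ in $\Ad(\sigma_w)X_\alpha = cX_{-\beta}$ from the group equality $\sigma_\beta^{-c} = \sigma_\beta$, and this distinguishes $c=1$ from $c=-1$ only when $\sigma_\beta$ has order four, i.e.\ when $m_\beta = \beta^\vee(-1) \ne 1$; in an adjoint group, say, the conclusion still holds but requires an extra word (such as lifting to the simply connected cover). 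Your argument produces the minus signs constructively and never meets that dichotomy, at the cost of writing out one $2\times 2$ matrix calculation.
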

\begin{proof}
Let $w'=ws_\alpha$. 
The first assertion follows from the previous Lemma applied to $\sigma_{w'}$, using the fact that 
$\sigma_{w'}=\sigma_w\sigma_\alpha$ (since 
$w'=ws_\alpha$ is a reduced expression).
As in the proof of the previous Proposition we conclude
$$
\exp(\frac\pi 2(\Ad(\sigma_w)(X_\alpha-X_{-\alpha}))
=\exp(\frac\pi 2(X_\beta-X_{-\beta})),
$$
and in this case  this implies $\Ad(\sigma_w)(X_\alpha)=-X_{-\beta}$ 
and $\Ad(\sigma_w)(X_{-\alpha})=-X_\beta$.
\end{proof}

\begin{table}
\caption{\bf Types of roots, and their associated Cayley transforms}
\noindent\makebox[\textwidth]{%
\begin{tabular}{|c|c|c|c|}
\hline 
type &terminology of \cite{LVq}& definition & Cayley transform\\
\hline
{\tt 1C+}&complex ascent&$\alpha$ complex, $\theta\alpha>0$&\\
\hline
{\tt 1C-}&complex descent&$\alpha$ complex, $\theta\alpha<0$&\\
\hline
{\tt 1i1}&imaginary noncpt type I ascent&$\alpha$ imaginary, noncpt, type 1&$\gamma^\kappa=\gamma^\alpha$\\
\hline
{\tt 1i2f}&imaginary noncpt type II ascent&
\begin{tabular}{cc}$\alpha$ imaginary, noncpt, type 2\\
$\sigma$ fixes both terms of $\gamma^\alpha$
\end{tabular}
&$\gamma^\kappa=\gamma^\alpha=\{\gamma^\kappa_1,\gamma^\kappa_2\}$
\\\hline
{\tt 1i2s}&imaginary noncpt type II ascent&
\begin{tabular}{cc}$\alpha$ imaginary, noncpt, type 2\\ 
$\sigma$ switches the two terms of $\gamma^\alpha$
\end{tabular}
&\\
\hline
{\tt 1ic}&cpt imaginary descent&$\alpha$ cpt imaginary&\\
\hline
{\tt 1r1f}&real type I descent&
\begin{tabular}{cc}$\alpha$ real, parity, type 1\\ 
$\sigma$ fixes both terms of $\gamma_\alpha$
\end{tabular}
&$\gamma_\kappa=\gamma_\alpha=\{\gamma_\kappa^1,\gamma_\kappa^2\}$
\\
\hline
{\tt 1r1s}&real type I  descent&
\begin{tabular}{cc}$\alpha$ real, parity, type 1\\ 
$\sigma$ switches the two terms of $\gamma_\alpha$
\end{tabular}&
\\
\hline
{\tt 1r2}&real type II descent&
$\alpha$ real, parity, type 2&$\gamma_\kappa=\gamma_\alpha$\\
\hline
{\tt 1rn}&real nonparity ascent&
$\alpha$ real, non-parity&\\
\hline
{\tt 2C+}&two-complex ascent&
\begin{tabular}{c}
$\alpha,\beta$ complex $\theta\alpha>0$\\ $\theta\alpha\ne\beta$  
\end{tabular}
&\\
\hline
{\tt 2C-}&two-complex descent&
\begin{tabular}{c}
$\alpha,\beta$ complex $\theta\alpha<0$\\$\theta\alpha\ne\beta$
\end{tabular}&\\
\hline
{\tt 2Ci}&two-semiimaginary ascent&$\alpha,\beta$ complex, $\theta\alpha=\beta$&
$\gamma^\kappa=s_\alpha\times\gamma=s_\beta\times\gamma$\\
\hline
{\tt 2Cr}&two-semireal descent&$\alpha,\beta$ complex,$\theta\alpha=-\beta$&
$\gamma_\kappa=s_\alpha\times\gamma=s_\beta\times\gamma$
\\
\hline
{\tt 2i11}&
\begin{tabular}{c}
two-imaginary noncpt \\type I-I ascent
\end{tabular}&
\begin{tabular}{c}
$\alpha,\beta$ noncpt imaginary, type 1\\
$(\gamma^\alpha)^\beta$ single valued
\end{tabular}&$\gamma^\kappa=(\gamma^\alpha)^\beta$
\\
\hline
{\tt 2i12}&
\begin{tabular}{c}
two-imaginary noncpt \\type I-II ascent
\end{tabular}&
\begin{tabular}{c}
$\alpha,\beta$ noncpt imaginary, type 1\\
$(\gamma^\alpha)^\beta$ double valued
\end{tabular}&
$\gamma^\kappa=\{\gamma^{\kappa}_1,\gamma^{\kappa}_2\}=(\gamma^\alpha)^\beta$\\
\hline
{\tt 2i22}&
\begin{tabular}{c}
two-imaginary noncpt \\type II-II ascent
\end{tabular}&
\begin{tabular}{c}
$\alpha,\beta$ noncpt imaginary, type 1\\
$(\gamma^\alpha)^\beta$ has $4$ values
\end{tabular}&
$\gamma^\kappa=\{\gamma^\kappa_1,\gamma^\kappa_2\}=\{\gamma^{\alpha,\beta}\}^\sigma$\\
\hline
{\tt 2r22}&
\begin{tabular}{c}
two-real \\type II-II descent
\end{tabular}&
\begin{tabular}{c}
$\alpha,\beta$ real, parity, type 2\\
$(\gamma_\alpha)_\beta$ single valued
\end{tabular}&
$\gamma_\kappa=(\gamma_{\alpha})_\beta$
\\
\hline
{\tt 2r21}&
\begin{tabular}{c}
two-real \\type II-I descent
\end{tabular}&
\begin{tabular}{c}
$\alpha,\beta$ real, parity, type 2\\
$(\gamma_\alpha)_\beta$ double valued
\end{tabular}&
$\gamma_\kappa=\{\gamma_{\kappa}^1,\gamma_{\kappa}^2\}=(\gamma_\alpha)_\beta$\\
\hline
{\tt 2r11}&
\begin{tabular}{c}
two-real \\type I-I descent
\end{tabular}&
\begin{tabular}{c}
$\alpha,\beta$ real, parity, type 2\\
$(\gamma_\alpha)_\beta$ has $4$ values
\end{tabular}&
$\gamma_\kappa=\{\gamma_\kappa^1,\gamma_\kappa^2\}=\{(\gamma_{\alpha})_\beta\}^\sigma$\\
\hline
{\tt 2rn}&two-real nonparity ascent&$\alpha,\beta$ real, nonparity&\\
\hline
{\tt 2ic}&two-imaginary cpt  descent&$\alpha,\beta$ cpt imaginary&\\
\hline
{\tt 3C+}&three-complex ascent&$\alpha,\beta$ complex $\theta\alpha>0$, $\theta\alpha\ne\beta$&\\
\hline
{\tt 3C-}&three-complex descent&$\alpha,\beta$ complex $\theta\alpha<0$, $\theta\alpha\ne\beta$&\\
\hline
{\tt 3Ci}&three-semiimaginary ascent&$\alpha,\beta$ complex, $\theta\alpha=\beta$&
$\gamma^\kappa=(s_\alpha\times\gamma)^\beta\cap(s_\beta\times\gamma)^\alpha$\\
\hline
{\tt 3Cr}&three-semireal descent&$\alpha,\beta$ complex, $\theta\alpha=-\beta$&
$\gamma_\kappa=(s_\alpha\times\gamma)_\beta\cap(s_\beta\times\gamma)_\alpha$\\
\hline
{\tt 3i}&
three imaginary noncpt ascent&
$\alpha,\beta$ noncpt imaginary, type 1
&$\gamma^\kappa=s_\alpha\times\gamma^\beta=s_\beta\times\gamma^\alpha$
\\
\hline
{\tt 3r}&
three-real descent&
$\alpha,\beta$ real, parity, type 2
&$\gamma_\kappa=s_\alpha\times\gamma_\beta=s_\beta\times\gamma_\alpha$
\\
\hline
{\tt 3rn}&three-real non-parity ascent&
$\alpha,\beta$ real, nonparity&\\
\hline
{\tt 3ic}&three-imaginary cpt descent&
$\alpha,\beta$ noncpt imaginary&\\
\hline
\end{tabular}
}
\end{table}

\begin{sidewaystable}
\caption{\bf Cayley and cross actions on extended parameters: type {\tt 1}}
\label{t:shortcayleycross1} 
\begin{tabular}
{>{\small}l >{\small}c   >{\small}c >{\small}c >{\small}c >{\small}c} 
 \\[-1ex]
 & $\lambda|_{\mathfrak t} = (\gamma-\rho)|_{\mathfrak t}$ 
 & $({}^\vee\delta_0-1)\lambda=(1+{}^\vee\theta_y)\tau$ 
 & $\ell|_{\mathfrak a} = (g-\rho^\vee)|_{\mathfrak a}$ 
 & $(\delta_0-1)\ell = (1+\theta_x)t$ 
\\[.3ex] 
type & \normalsize{$\lambda_1$} &  \normalsize{$\tau_1$} &
\normalsize{$\ell_1$} & \normalsize$t_1$ & \normalsize{notes} 
\\[.5ex]
\hline\hline

{\tt 1C} crx& $s_\alpha\lambda+(\gamma_\alpha-1)\alpha$ & $s_\alpha\tau$ &
$s_\alpha \ell+(g_\alpha-1)\alpha^\vee$ & $s_\alpha t$ \\[.3ex] \hline

{\tt 1i1} crx & $\lambda$ & $\tau$ & $\ell+\alpha^\vee$ & $t$ & 
 \\[.3ex] \hline

{\tt 1i1} Cay & $\lambda$ & $\tau - \tau_\alpha\sigma$ &
$\ell+\frac{g_\alpha -\ell_\alpha - 1}{2} \alpha^\vee$ & $t$ 
&
$\alpha = (1+ {}^\vee\theta_{y_1})\sigma$
\\[.3ex] \hline 
\\[-2.2ex]

{\tt 1i2f} Cay & $\lambda$, $\lambda+\alpha$ &
$\tau-\frac{\tau_\alpha}{2} \alpha$ & $\ell+ \frac{g_\alpha
  -\ell_\alpha - 1}{2} \alpha^\vee$ & $t$ & $\tau_\alpha$ even \\[.3ex] \hline
\\[-2.2ex]

{\tt 1i2s} Cay & $\lambda$, $\lambda+\alpha$ &  [none] & $\ell+
\frac{g_\alpha -\ell_\alpha - 1}{2} \alpha^\vee$ & $t$ & $\tau_\alpha$
odd \\[.3ex] \hline 
\\[-2.2ex]

{\tt 1r1f} Cay & $\lambda + \frac{\gamma_\alpha-\lambda_\alpha-1}{2}
\alpha$ & $\tau$ & $\ell$, $\ell+\alpha^\vee$ & $t -
\frac{t_\alpha}{2}\alpha^\vee$ & $t_\alpha$ 
even \\[.3ex] \hline  
\\[-2.2ex]

{\tt 1r1s} Cay & $\lambda + \frac{\gamma_\alpha - \lambda_\alpha-1}{2}
\alpha$ &  $\tau$& $\ell$, $\ell+\alpha^\vee$ & [none] & $t_\alpha$
odd  \\[.3ex] \hline 
\\[-2.2ex]

{\tt 1r2} crx & $\lambda+\alpha$ &  $\tau$ & $\ell$ & $t$ &
\\[.6ex] \hline

{\tt 1r2} Cay & $\lambda+\frac{\gamma_\alpha-\lambda_\alpha-1}2\alpha$ &  $\tau$ & $\ell$ & $t - t_\alpha s$ 
& $\alpha^\vee = s + \theta_{x_1}s$ \\[.6ex] \hline

\hline\hline 

\end{tabular}
\end{sidewaystable} 

\begin{sidewaystable}
\caption{\bf Cayley and cross actions on extended parameters: type {\tt 2}}
\label{t:shortcayleycross2} 
\begin{tabular}
{>{\small}l >{\small}c >{\small}c  >{\small}c  >{\small}c >{\small}c } \\[-1ex]
 & $\lambda|_{\mathfrak t} = (\gamma-\rho)|_{\mathfrak t}$ & $
 ({}^\vee\delta_0-1)\lambda = (1+{}^\vee\theta_y)\tau$ & $\ell|_{\mathfrak
   a} = (g-\rho^\vee)|_{\mathfrak a}$ & $(\delta_0-1)\ell =
 (1+\theta_x)t$ \\[.3ex]  

type & \normalsize$\lambda_1$ & \normalsize{$\tau_1$} & \normalsize$\ell_1$ &
\normalsize$t_1$ & \normalsize{notes}
\\[.5ex]
\hline\hline

{\tt 2C} crx & $w_\kappa\lambda + (\gamma_\alpha-1)\kappa$ &
$w_\kappa\tau$ & $w_\kappa \ell  + (g_\alpha-1)\kappa^\vee$ &
$w_\kappa t$  \\[.3ex] \hline
\\[-2.2ex]

{\tt 2Ci} Cay& $s_\alpha\lambda + (\gamma_\alpha-1)\alpha $ &
$\tau-\frac{\tau_\alpha + \tau_\beta}{2}\alpha $ & $s_\alpha \ell +
(g_\alpha -1)\alpha^\vee$ & \small{$\begin{matrix}s_\alpha  t \, +
    \\[.9ex] (\ell_\alpha - g_\alpha+1)\alpha^\vee\end{matrix}$} & 
$\ell_\alpha = \ell_\beta$  \\[2ex] \hline 
\\[-2.2ex]

{\tt 2Cr} Cay & $s_\alpha\lambda + (\gamma_\alpha-1)\alpha $ &
\small{$\begin{matrix} s_\alpha \tau\, + \\[.9ex] (\lambda_\alpha - 
    \gamma_\alpha+1)\alpha \end{matrix}$} &  $s_\alpha \ell +
(g_\alpha -1)\alpha^\vee$ & $t-\frac{t_\alpha + t_\beta}{2}\alpha^\vee $ &
$\lambda_\alpha=\lambda_\beta$ \\[3ex] \hline  

{\tt 2i11} crx& $\lambda$ & $\tau$ & $\ell+\kappa^\vee$ & $t$\\[.3ex] \hline 
\\[-2.2ex]

{\tt 2i11} Cay& $\lambda$ &
$\begin{matrix}{\tau-\tau_\alpha\sigma(\alpha)}\\[.9ex]
  {-\,\tau_\beta\sigma(\beta)} \end{matrix}$ &
\small{$\begin{matrix}{\ell+\frac{g_\alpha - \ell_\alpha 
  -1}{2}\alpha^\vee}\\[.9ex]{+\,\frac{g_\beta - \ell_\beta 
  -1}{2}\beta^\vee}\end{matrix}$} & $t$ &
$ \begin{matrix}\alpha= \\[.9ex] (1+{}^\vee\theta_{y_1})\sigma(\alpha)
\end{matrix}$ \\[3ex] \hline
\\[-2.2ex]

{\tt 2i12} cr1x & $\lambda$ &  $\tau$ & $\ell+\alpha^\vee$ &  $t-s$ &
$\begin{matrix}\alpha^\vee - \beta^\vee\\ = s+\theta_x s\end{matrix}$
\\[1ex] \hline 
\\[-2.2ex]

{\tt 2i12f} Cay & $\lambda$, $\lambda + \alpha$ & $\begin{matrix}\tau  +
\tau_\beta\sigma\\[.9ex] -\, \frac{\tau_\alpha +  \tau_\beta}{2}\alpha
\end{matrix}$,\ \  $\tau_1-\sigma$  &
\small{$\begin{matrix}{\ell+\frac{g_\alpha - \ell_\alpha  
  -1}{2}\alpha^\vee}\\[.9ex]{+\,\frac{g_\beta - \ell_\beta
  -1}{2}\beta^\vee}\end{matrix}$} &  $t$ & $\begin{matrix}\tau_\alpha
+ \tau_\beta \text{\ even} \\ \alpha -  \beta\\ =
\sigma+{}^\vee\theta_{y_1} \sigma \end{matrix}$ \\[3ex] \hline  
\\[-2.2ex]

{\tt 2i12s} Cay & $\lambda$, $\lambda + \alpha$ & [none] &
\small{$\begin{matrix}{\ell+\frac{g_\alpha - \ell_\alpha 
  -1}{2}\alpha^\vee}\\[.9ex]{+\,\frac{g_\beta - \ell_\beta
  -1}{2}\beta^\vee}\end{matrix}$} &  $t$ & $\tau_\alpha + \tau_\beta$ odd
\\[3ex] \hline 
\\[-2.2ex] 

{\tt 2i22} Cay& $\begin{matrix}\lambda,\ \lambda+\kappa \text{\ \ \bf{OR}}\\
 \lambda+\alpha,\ \lambda+\beta \end{matrix}$ & $\begin{matrix} \tau -
\frac{\tau_\alpha}{2}\alpha - \frac{\tau_\beta}{2}\beta  \text{\ \
  \bf{OR}}\\ \tau -
\frac{\tau_\alpha\pm 1}{2}\alpha - \frac{\tau_\beta \mp
  1}{2}\beta \end{matrix}$ & \small{$\begin{matrix}{\ell+\frac{g_\alpha
      - \ell_\alpha - 1}{2}\alpha^\vee}\\[.9ex]{+\,\frac{g_\beta - \ell_\beta
  -1}{2}\beta^\vee}\end{matrix}$} & $t$
& $\begin{matrix}\text{$\tau_\alpha$, $\tau_\beta$ even \bf{OR}}\\
  \text{$\tau_\alpha$, $\tau_\beta$ odd} 
\end{matrix} $
\\[3ex] \hline 
\\[-2.2ex] 

{\tt 2r22} crx& $\lambda+\kappa$ &  $\tau$ & $\ell$ & $t$ \\[3ex] \hline 
\\[-2.2ex]

{\tt 2r22} Cay& \small{$\begin{matrix}{\lambda+\frac{\gamma_\alpha -
        \lambda_\alpha  - 1}{2}\alpha}\\[.9ex]{+\,\frac{\gamma_\beta -
        \lambda_\beta  - 1}{2}\beta}\end{matrix}$} &  $\tau$ & $\ell$
& $\begin{matrix}{t - t_\alpha s(\alpha^\vee)}\\[.9ex] 
  {-\,t_\beta s(\beta^\vee)} \end{matrix}$ & $\begin{matrix}\alpha^\vee
= \\[.9ex] (1+\theta_{x_1})s(\alpha^\vee) \end{matrix}$\\[3ex] \hline
\\[-2.2ex]
  
{\tt 2r21} cr1x& $\lambda+\alpha$ &  $\tau-\sigma$ & $\ell$ & $t$ &
$\begin{matrix}\alpha -   \beta\\ = \sigma +{}^\vee\theta_y
  \sigma\end{matrix}$ \\[3ex] \hline 
\\[-2.2ex]

{\tt 2r21f} Cay& \small{$\begin{matrix}{\lambda+\frac{\gamma_\alpha -
        \lambda_\alpha  - 1}{2}\alpha}\\[.9ex]{+\,\frac{\gamma_\beta -
        \lambda_\beta  -1}{2}\beta}\end{matrix}$} &  $\tau$ & $\ell$,
$\ell+\alpha^\vee$ & $\begin{matrix}t  + t_\beta  s\\[.9ex] -\, \frac{t_\alpha +
 t_\beta}{2}\alpha^\vee \end{matrix}$,\ \  $t_1-s$ 
& $\begin{matrix}t_\alpha + t_\beta \text{\ even} \\ \alpha^\vee - 
    \beta^\vee \\ =  s +\theta_{x_1} s \end{matrix}$ \\[3ex] \hline 
\\[-2.2ex]

{\tt 2r21s} Cay& \small{$\begin{matrix}{\lambda+\frac{\gamma_\alpha -
        \lambda_\alpha  - 1}{2}\alpha}\\[.9ex]{+\,\frac{\gamma_\beta -
        \lambda_\beta  - 1}{2}\beta}\end{matrix}$} & $\tau$ & $\ell$,
$\ell+\alpha^\vee$ & [none] &  $t_\alpha + t_\beta$ odd \\[3ex] \hline
\\[-2.2ex]

{\tt 2r11} Cay& \small{$\begin{matrix}{\lambda+\frac{\gamma_\alpha -
        \lambda_\alpha  - 1}{2}\alpha}\\[.9ex]{+\,\frac{\gamma_\beta -
        \lambda_\beta  - 1}{2}\beta}\end{matrix}$} & $\tau$ &
$\begin{matrix}\ell,\ \ell+\kappa^\vee \text{\ \ \bf{OR}}\\  \ell
  +\alpha^\vee,\ \ell+\beta^\vee \end{matrix}$ & \small{$\begin{matrix} t - \frac{t_\alpha}{2}\alpha^\vee -
    \frac{t_\beta}{2}\beta^\vee  \text{\ \ \bf{OR}}\\ t -
    \frac{t_\alpha\pm 1}{2}\alpha^\vee - \frac{t_\beta \mp 
  1}{2}\beta^\vee \end{matrix}$} & $\begin{matrix}\text{$t_\alpha$,
  $t_\beta$ even \bf{OR}}\\ \text{$t_\alpha$, $t_\beta$ odd}
\end{matrix} $ \\[4ex] \hline\hline 

\end{tabular}
\end{sidewaystable} 

\begin{sidewaystable}
\caption{\bf Cayley and cross actions on extended parameters: type {\tt 3}}
\label{t:shortcayleycross3} 
\begin{tabular}
{>{\small}l >{\small}c >{\small}c  >{\small}c  >{\small}c >{\small}c } \\[-1ex]

 & $\lambda|_{\mathfrak t} = (\gamma-\rho)|_{\mathfrak t}$ & $
 ({}^\vee\delta_0-1)\lambda = (1+{}^\vee\theta_y)\tau$ & $\ell|_{\mathfrak
   a} = (g-\rho^\vee)|_{\mathfrak a}$ & $(\delta_0-1)\ell =
 (1+\theta_x)t$ \\[.3ex]  

type & \normalsize$\lambda_1$ & \normalsize{$\tau_1$} & \normalsize$\ell_1$ &
\normalsize$t_1$ & \normalsize{notes}
\\[.5ex]
\hline\hline

{\tt 3C} crx & $w_\kappa\lambda + (\gamma_\kappa-2)\kappa$ &  $w_\kappa\tau$ & 
$w_\kappa \ell  + (g_\kappa-2)\kappa^\vee$ &  $w_\kappa t$  \\[.3ex]
\hline 
\\[-2.2ex]
 
{\tt 3Ci} Cay&  $\begin{matrix} \lambda\text{\ \  \bf{OR}}\\[.9ex] 
\lambda + \kappa \end{matrix}$ & $\tau - \frac{\tau_\kappa} {2}\kappa$
& \small{$\ell + (g_\alpha - 1 - \ell_\alpha)\kappa^\vee$}  &  $t$ &
$\begin{matrix}\text{$\gamma_\alpha -1 - \lambda_\alpha$ even 
    \bf{OR}}\\ \text{$\gamma_\alpha - 1- \lambda_\alpha$ odd}
\end{matrix} $
\\[.8ex]\hline
\\[-2.2ex]

{\tt 3Cr} Cay&$\lambda +\left(\gamma_\alpha - 1 - \frac{\lambda_\kappa}
  {2} \right)\kappa$ & $\tau$ & $\ell$ &  $t$ \\[.3ex]\hline 
\\[-2.2ex]

{\tt 3i} Cay& $\lambda$ &  $\tau$ & $\ell+\left(g_\alpha - 1 -
  \frac{\ell_\kappa}{2} \right)\kappa^\vee$ &  $t$
\\[.3ex]\hline \\[-2.2ex]

{\tt 3r} Cay& $\lambda + (\gamma_\alpha - 1 - \lambda_\alpha)\kappa$
&  $\tau$ & $\begin{matrix} \ell \text{\ \  \bf{OR}}\\[.9ex] 
 \ell + \kappa^\vee \end{matrix} $ &  $t - \frac{t_\kappa}{2}\kappa^\vee$
 & $\begin{matrix}\text{$g_\alpha -1- \ell_\alpha$ even
    \bf{OR}}\\ \text{$g_\alpha -1 - \ell_\alpha$ odd} \end{matrix}$
 \\[2ex]\hline\hline
\end{tabular}
\end{sidewaystable}

\begin{table}
\label{table:actionHecke}
\caption{\bf Action of Hecke operators}
\begin{tabular}{|c|c|c|c|}
\hline
\small$\kappa$-type($E$)&$T_\kappa(E)$&\small$\kappa$-type($E$)&$T_\kappa(E)$
\\
\hline
{\tt 1C+}&$w_\kappa\times E$&{\tt1C-}&$(q-1)E+q(w_\kappa\times E)$\\
\hline
{\tt 1i1}&$w_\kappa\times E+E_\kappa$&{\tt 1r2}& \small
$(q-1)E-w_\kappa\times E+(q-1)E_\kappa$\\ 
\hline
{\tt 1i2f}&$E+E_\kappa^1+E_\kappa^2$&{\tt1r1f}& \small
$(q-2)E+(q-1)(E_\kappa^1+E_\kappa^2)$\\ 
\hline
{\tt 1i2s}&$-E$&{\tt1r1f}&$qE$\\
\hline
{\tt 1ic}&$qE$&{\tt1rn}&$-E$\\
\hline
{\tt 2C+}&$w_\kappa\times E$&{\tt 2C-}&$(q^2-1)E+q^2(w_\kappa\times E)$\\
\hline
{\tt 2Ci}&$qE+(q+1)E_\kappa$&{\tt 2Cr}&$(q^2-q-1)E+(q^2-1)E_\kappa$\\
&(see Section \ref{sec:2Ci})&&(see Section \ref{sec:2Ci})\\

\hline
{\tt 2i22}&$w_\kappa\times E+E_\kappa^1+E_\kappa^2$&{\tt 2r11}& \small
$(q^2-2)E+(q^2-1)(E_\kappa^1+E_\kappa^2)$\\ 

\hline
{\tt 2i11}&$w_\kappa\times E+E_\kappa$&{\tt 2r22}&\small
$(q^2-1)E-w_\kappa\times E-(q^2-1)E_\kappa$\\  
 
\hline
{\tt 2i12}&$w_\kappa\times E+E_\kappa^1-E_\kappa^2$&{\tt 2r21}& \small $(q^2-2)E+(q^2-1)(E_\kappa^1-E_\kappa^2)$\\
&(see Section \ref{sec:2i12})&&(see Section \ref{sec:2i12})\\
\hline
{\tt 2ic}&$q^2E$&{\tt2rn}&$-E$\\

\hline
{\tt 3C+}&$w_\kappa\times E$&{\tt 3C-}&$(q^3-1)E+q^3(w_\kappa\times E)$\\
\hline
{\tt 3Ci}&$qE
+(q+1)E_\kappa$&{\tt 3Cr}&$(q^3-q-1)E+(q^3-q)E_\kappa$\\
\hline
{\tt 3i}&$qE+(q+1)E_\kappa$&{\tt 3r}&$(q^3-q-1)E+(q^3-q)E_\kappa$\\
\hline
{\tt 3i}&$q^3E$&{\tt3r}&$-E$\\
\hline
\end{tabular}
\end{table}

\begin{bibdiv}
\begin{biblist}[\normalsize]

\bib{ABV}{book}{
author={Adams, Jeffrey D.},
author={Barbasch, Dan M.},
author={Vogan, David A., Jr.},
title={The {L}anglands Classification and Irreducible Characters for
  Real Reductive Groups}, 
publisher={Birkh\"auser},
address={Boston-Basel-Berlin},
date={1992},
}

\bib{contragredient}{article}{
author={Adams, Jeffrey D.},
author={Vogan, David A., Jr.},
title={The {C}ontragredient},
YEAR = {2012},
NOTE = {{\tt arXiv:1201.0496}}
}

\bib{algorithms}{article}{
   author={Adams, Jeffrey},
   author={du Cloux, Fokko},
   title={Algorithms for representation theory of real reductive groups},
   journal={J. Inst. Math. Jussieu},
   volume={8},
   date={2009},
   number={2},
   pages={209--259},
}

\bib{herm}{article}{
author={Adams, Jeffrey},
author={Leeuwen, Marc van},
author={Trapa, Peter},
author={Vogan, David A., Jr.},
title={Unitary representations of real reductive groups},
eprint={arXiv:1212.2192 [math.RT]}, 
}

\bib{av1}{article}{
author={Adams, Jeffrey D.},
author={Vogan, David A., Jr.},
title={{$L$}-groups, projective representations, and the {L}anglands
              classification},
journal = {Amer. J. Math.},
volume = {114},
year = {1992},
number = {1},
pages = {45--138},
}

\bib{KL}{article}{
author={D.~Kazhdan},
author={G.~Lusztig},
title={Representations of Coxeter groups and Hecke algebras},
journal={Invent.\ Math.},
volume={53},
date={1979},
pages={165--184},
}

\bib{overview}{book}{
author={Knapp, Anthony W.},
title={Representation Theory of Semisimple Groups: An Overview Based on Examples},
publisher={Princeton University Press},
address={Princeton, New Jersey},
date={1986},
}

\bib{KV}{book}{
author={Knapp, Anthony W.},
author={Vogan, David A., Jr.},
title={Cohomological Induction and Unitary Representations},
publisher={Princeton University Press},
address={Princeton, New Jersey},
date={1995},
}

\bib{LC}{article}{
author={Langlands, Robert P.},
title={On the classification of representations of real algebraic groups},
pages={101--170},
booktitle={Representation Theory and Harmonic Analysis on Semisimple Lie Groups},
editor={Sally, Paul},
editor={Vogan, David A., Jr.},
series={Mathematical Surveys and Monographs},
volume={31},
publisher={American Mathematical Society},
address={Providence, Rhode Island},
date={1989},
}

\bib{LVq}{article}{
author={Lusztig, George},
author={Vogan, David A., Jr.},
title={Quasisplit Hecke algebras and symmetric spaces},
journal={Duke Math.\ J.},
volume={163},
date={2014},
number={5},
pages={983--1034},
}

\bib{Spr}{article}{
   author={Springer, T. A.},
   title={Reductive groups},
   conference={
      title={Automorphic forms, representations and $L$-functions},
      address={Proc. Sympos. Pure Math., Oregon State Univ., Corvallis,
      Ore.},
      date={1977},
   },
   book={
      series={Proc. Sympos. Pure Math., XXXIII},
      publisher={Amer. Math. Soc., Providence, R.I.},
   },
   date={1979},
   pages={3--27},
}

\bib{Tits}{article}{
   author={Tits, J.},
   title={Normalisateurs de tores. I. Groupes de Coxeter \'etendus},
   language={French},
   journal={J. Algebra},
   volume={4},
   date={1966},
   pages={96--116},
}

\bib{green}{book}{
author={Vogan, David A., Jr.},
title={Representations of Real Reductive Lie Groups},
publisher={Birk\-h\"auser},
address={Boston-Basel-Stuttgart},
date={1981},
}

\bib{IC4}{article}{
author={Vogan, David A., Jr.},
title={Irreducible characters of semisimple Lie
groups. IV. Character-multiplicity duality},
journal={Duke Math. J.},
volume={49},
number={4},
date={1982},
pages={943--1073},
}

\bib{Orange}{book}{
author={Vogan, David A., Jr.},
title={Unitary Representations of Reductive Lie Groups},
series={Annals of Mathematics Studies},
publisher={Princeton University Press},
address={Princeton, New Jersey},
date={1987},
}

\bib{steinberg}{book}{
author= {Steinberg, Robert},
title = {Endomorphisms of linear algebraic groups},
    SERIES = {Memoirs of the American Mathematical Society, No. 80},
 PUBLISHER = {American Mathematical Society},
      YEAR = {1968}
}

\bib{atlas}{misc}{
title={Atlas of Lie Groups and Representations software},
note={\url{http://www.liegroups.org}},
year={2015}
}

\end{biblist}
\end{bibdiv}

\end{document}